%% file: Master-arXiv.tex
\documentclass[oneside,a4paper,english, 11pt]{amsart}

\input{artmymacros.sty}
\usepackage{makecell, multirow, tikz-cd}

\title{Gelfand--Kirillov dimension and mod~$p$ cohomology for inner forms of~$\GL_2$}

\author{Andrea Dotto}
\address{
    Department of Mathematics,
    King's College London,
    Strand, London, WC2R 2LS, United Kingdom
}
\email{andrea.dotto@kcl.ac.uk}

\author{Bao V.~Le Hung}
\address{Department of Mathematics,
Northwestern University, 
2033 Sheridan Road, 
Evanston, Illinois 60208, USA}
\email{lhvietbao@googlemail.com}

\let\det\relax
\DeclareMathOperator{\det}{det}
\newcommand{\fourmatrix}[4]{\begin{pmatrix} #1 & #2 \\ #3 & #4 \end{pmatrix}}
\newcommand{\diag}{\operatorname{diag}}

\newcommand{\mJ}{\mathcal{J}}
\newcommand{\Syl}{\operatorname{Syl}}
\newcommand{\rec}{\operatorname{rec}}
\newcommand{\cInd}{\text{c-Ind}}

\begin{document}

\maketitle

\begin{abstract}
    Under standard assumptions, we compute the GK-dimension of Hecke eigenspaces in the mod~$p$ cohomology of an inner form~$D^\times$ of~$\GL_2$ over a totally real field unramified at~$p$, 
    allowing~$D$ to be a division algebra at~$p$.
    Our arguments also apply when~$D$ is a matrix algebra at~$p$, in which case they give a simplified proof of a theorem of Breuil--Herzig--Hu--Morra--Schraen.    
\end{abstract}

\setcounter{tocdepth}{2}
\tableofcontents

\input{introduction-arXiv}

\input{Iwasawa-arXiv}

\input{deformationrings-arXiv}

\input{multiplicityone-arXiv}

\input{global-arXiv}

\appendix

\input{formulas-arXiv}

\bibliography{Biblio}
\bibliographystyle{amsalpha}

\end{document}

%% file: introduction-arXiv.tex
\section{Introduction.}

Fix a prime number~$p$, a totally real field $F/\bQ$ in which $p$ is unramified, and a place~$v$ of~$F$ over~$p$.
Let~$D/F$ be a totally definite quaternion algebra, and let~$E$ be a sufficiently large finite extension of~$\bQ_p$, with ring of integers~$\cO$ and residue field~$\bF$. 
The space of algebraic modular forms
\[
\pi := \varinjlim_{K_v \subset D_v^\times} H^0(D^\times \backslash (D \otimes_F \bA_F^{\infty})^\times/K^vK_v, \bF)
\]
of tame level $K^v \subset (D \otimes_F \bA_F^{\infty, v})^\times$ has commuting actions of the spherical Hecke algebra
\[
\bT := \bF[T_w, S_w^{\pm 1}: \text{$D_w$ is split and $K_w \subset D_w^\times$ is maximal}]
\]
and of the $p$-adic Lie group~$D_v^\times$.
The eigenspaces~$\pi[\fm]$ corresponding to maximal ideals of~$\bT$ have been extensively studied from the point of view of local-global compatibility in the mod~$p$ Langlands correspondence:
when~$\pi[\fm] \ne 0$, one has a semisimple Galois representation
\[
\lbar r_\fm : \Gal_F \to \GL_2(\bF)
\]
associated to~$\fm$, and when~$\lbar r_\fm$ is absolutely irreducible, one expects $\pi[\fm]$ to be determined by~$\rhobar_v := \lbar r_\fm |_{\Gal_{F_v}}$ under the mod~$p$ local Langlands correspondence for~$D_v^\times$.
The breakthrough works~\cite{BHHMS, HuWangGKD, BHHMSII} have established several properties of~$\pi[\fm]$ in the case that~$D_v^\times \cong \GL_2(F_v)$, and~$\fm$ satisfies certain technical assumptions
(most notably, the Taylor--Wiles assumption, and a genericity assumption at~$p$).
In this paper, we remove the assumption that~$D_v^\times \cong \GL_2(F_v)$ is split from the main results of~\cite{BHHMS, HuWangGKD}.

\begin{thm}\label{main theorem}
Assume that~$\pi[\fm] \ne 0$, and that the following are true:
\begin{enumerate}
    \item $\lbar r_\fm|_{\Gal_{F_w}}$ is $1$-generic for all~$w \mid p$, and $9$-generic for $w = v$ (see Section~\ref{subsec: single type deformation rings} for our genericity conditions);
    \item $\lbar r_\fm|_{\Gal_{F(\zeta_p)}}$ is absolutely irreducible;
    \item if $w \nmid p$, and~$\rhobar_w$ or~$D_w$ is ramified, then the universal lifting ring of~$\rhobar_w$ with $\cO$-coefficients is formally smooth over~$\cO$; and
    \item $K^v$ is the minimal level for~$\lbar r_\fm$.
\end{enumerate}
Then $\dim_{D_v^\times} \pi[\fm] = [F_v:\bQ_p]$.
\end{thm}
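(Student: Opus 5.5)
We will follow the strategy of \cite{BHHMS, HuWangGKD}: obtain the upper bound $\dim_{D_v^\times}\pi[\fm]\le [F_v:\bQ_p]$ from a structure result for the graded module $\gr_{\fm_{I_1}}(\pi[\fm]^\vee)$ over the graded Iwasawa algebra, and the lower bound from a Taylor--Wiles patching argument. We work with the pro-$p$ Iwahori subgroup $I_1 \subset D_v^\times$. When $D_v$ is a division algebra, $I_1 = 1+\varpi_D\cO_{D_v}$ and $\gr \bF[\![I_1]\!]$ is the enveloping algebra of a graded Lie algebra of dimension $4f$, where $f=[F_v:\bQ_p]$. We expect its structure to be computed in the Iwasawa-algebra section. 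Finiteness of the Gelfand--Kirillov dimension reduces to showing that $\gr(\pi[\fm]^\vee)$ is annihilated by an explicit ideal $J$, with $\dim \gr\bF[\![I_1]\!]/J = f$. The analogue of $J$ in the split case is the ideal $(y_iz_i, z_iy_i)_i$ of \cite{BHHMS}.

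\textbf{Step 1 (patching).} Patch the spaces $\pi$ along Taylor--Wiles primes, using assumptions (2) and (4). This gives a module $M_\infty$ over $R_\infty\widehat\otimes\cO[\![D_v^\times]\!]$, where $R_\infty$ is a power series ring over the local lifting ring $R_{\rhobar_v}$. For each Serre weight or tame type $\sigma$ of $D_v^\times$, the module $M_\infty(\sigma)=\Hom_{K_v}(\sigma,M_\infty)^\vee$ is maximal Cohen--Macaulay over a multi-type deformation ring. Assumption (3) makes the local rings away from $p$ formally smooth. The $1$-genericity at $w\ne v$ is used to make the rings at those places well behaved.

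\textbf{Step 2 (multiplicity one).} Using the $9$-genericity at~$v$ and the explicit deformation rings, show that the relevant multi-type deformation rings are regular or Cohen--Macaulay domains with reduced special fibre. Then show that $M_\infty(\sigma)$ is free of rank one over them. From this, deduce the multiplicity-one statement $\dim_\bF \pi[\fm]^{K_1}[\fm_{K_1}]$-type bounds: $\pi[\fm]^{I_1}$ and the first two layers $\pi[\fm][\fm_{I_1}^2]$ have the expected $Z_1$-socle, with the same constituents as $\oplus_{\sigma\in W(\rhobar_v)}\sigma$ and its Jacobian-type extensions.

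\textbf{Step 3 (annihilator and upper bound).} Following the simplified approach, deduce from Step 2 that $\gr(\pi[\fm]^\vee)$ is a quotient of a direct sum of cyclic modules $\gr\bF[\![I_1]\!]/J_\chi$, one for each character $\chi$ of the $I_1$-invariants. This is done by checking that the generators of $J$ kill the socle filtration up to degree two, using the explicit formulas in the appendix, and then using that $J$ is generated in low degree. A Hilbert series computation then gives $\dim \le f$.

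\textbf{Step 4 (lower bound).} $M_\infty$ is Cohen--Macaulay and $\pi[\fm]^\vee = M_\infty/\fm_\infty$. Apply the standard argument of \cite{HuWangGKD}: the Auslander regularity of $\bF[\![I_1]\!]$ combined with $\dim R_{\rhobar_v}/\varpi$ gives $\dim_{D_v^\times}\pi[\fm] \ge \dim M_\infty - \dim R_\infty = f$.

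The main obstacle is Step 3 in the nonsplit case. The graded Lie algebra of $1+\varpi_D\cO_D$ is not the one in \cite{BHHMS}. The conjugation action of the Iwahori torus $\bF_{p^{2f}}^\times$ has different weights, and the candidate ideal $J$ must be identified by hand. The first approach we would try is to take $J$ generated by products of pairs of opposite-weight degree-one generators. We would then verify that these kill $\pi[\fm]^\vee$ using the $\fm_{I_1}^3$-torsion computed from the multiplicity-one results of Step 2.
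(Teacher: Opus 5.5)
\textbf{There is a genuine gap, and it is in Step~2.} Your overall frame is right: the lower bound comes from patching as in \cite{GN}, and the upper bound comes from an annihilator ideal in the graded Iwasawa algebra. But the effort is misplaced. Your Step~3 is already available. By \cite[Theorem~1.6]{BHHMS} and, for $H=\cO_D^\times$, \cite[Corollary~2.12]{HuWangD*}, the bound $\dim_{D_v^\times}\pi[\fm]\le[F_v:\bQ_p]$ follows once $\Hom_H(\chi,\pi[\fm])\to\Hom_H(W_{\chi,3},\pi[\fm])$ is an isomorphism for every $\chi$. No deformation-ring formulas enter there, and the input needed is the third layer $W_{\chi,3}$, not just $\pi[\fm][\fm_{I_1}^2]$.

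Step~2 already fails for a single character $\chi$ of $\cO_D^\times$. $M_\infty(\chi)$ is maximal Cohen--Macaulay of generic rank one over $R^{\eta,\tau(\chi)}_\infty$, a completed tensor product of copies of $\cO[\![X,Y]\!]/(XY-p)$ over a power series ring. Such factors also come from the other $p$-adic places where $D$ ramifies, so $R_\infty$ is in general not a power series ring over $R_{\rhobar_v}$. With two such factors the ring is neither regular nor factorial: $(X_1,X_2)$ is a non-principal maximal Cohen--Macaulay ideal. So Cohen--Macaulayness plus reduced special fibre gives no freeness. In the split case \cite{EGS} avoid this using congruences between tame $\GL_2(\cO_K)$-types, e.g.\ choosing a type with formally smooth deformation ring whose reduction contains the weight. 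For $\cO_D^\times$ every tame type is a character with irreducible reduction, so there is no such freedom.

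The missing idea is self-duality. The minimal coefficients of assumption~(4) can be chosen self-dual up to twist and defined over an unramified extension. This gives a Hecke-equivariant perfect pairing on algebraic modular forms, which patches (using \cite[Lemma~4.12]{Manning}) to $M_\infty(\chi)\cong\Hom(M_\infty(\chi),\omega)$. Since $R^{\eta,\tau(\chi)}$ is Gorenstein, this gives $2[M_\infty(\chi)]=0$ in $\mCl(R^{\eta,\tau(\chi)}_\infty)$. One then shows, via the Kummer sequence and absolute purity, that this class group is $2$-torsion free, so $M_\infty(\chi)$ is free.

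\textbf{For $W_{\chi,3}$ your proposal gives no mechanism at all}, and this is where the explicit local-model formulas are actually used. The steps are:
\begin{enumerate}
\item Reduce to $\lbar W_{\chi,3}=W_{\chi,3}/\chi^\perp$, using $M_\infty(\chi\alpha^2)=0$ and \cite[Lemma~10.1.13]{EGS} for the constituents $\chi\alpha_1\alpha_2$.
\item Present $\lbar W_{\chi,3}$ as the kernel of $\bigoplus_{\alpha\in\Phi}A_{\chi,\alpha}\to\chi^{\oplus\Phi}/\Delta(\chi)$.
\item Lift each length-three module $A_{\chi,\alpha}$ to a lattice in $[\chi]\oplus([\chi\alpha]\otimes V(1,-1)_j)$. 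This brings in the non-Barsotti--Tate weight $\eta+\alpha_j$.
\end{enumerate}
Cyclicity then comes down to three statements about ideals of $R^\square_{\rhobar}$:
\begin{itemize}
\item elements $x,y$ with $xy-p\in I^{\eta+\alpha_j,\tau(\chi\alpha)}$ that cut out the two Barsotti--Tate loci modulo $p$, giving cyclicity for the higher-weight lattice;
\item the identity $I^{\eta,\tau(\chi)}+I^{\eta+\alpha_j,\tau(\chi\alpha)}=I^{\eta,\tau(\chi)}+pR^\square_{\rhobar}$;
\item surjectivity of $\bigoplus_{\alpha}I^{\chi,\alpha}\to I^{\oplus\Phi}/\Delta(I)$, where $I^{\chi,\alpha}=(I^{\eta,\tau(\chi)}\cap I^{\eta+|\alpha|,\tau(\chi\alpha)})+pR^\square_{\rhobar}$.
\end{itemize}
Proving these needs a local model for a union of potentially crystalline loci with varying weights and types. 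Regularity of a single deformation ring is not enough, and neither is the annihilator ideal $J$. Without these inputs and the self-duality argument, your Step~3 has nothing to start from.
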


The symbol $\dim_{D_v^\times} \pi[\fm]$ denotes the canonical dimension of~$\pi[\fm]$ (also referred to as Gelfand--Kirillov dimension in~\cite{BHHMS}).
Assumption~(4) is explained in detail in Section~\ref{coefficients}.
As usual, it requires us to fix types at the ramified places, and Hecke operators at an auxiliary prime~$w_1$; see Theorem~\ref{main theorem with all assumptions} for a precise statement. 

\begin{rmk}
Theorem~\ref{main theorem} was previously known when $F_v = \bQ_p$ (see~\cite[Theorem~1.1]{HuWangD*} for the non-split case) or 
when $D_v^\times \cong \GL_2(F_v)$ (see~\cite[Theorem~1.1]{BHHMS} and~\cite[Theorem~1.1]{HuWangGKD}).
Our arguments work uniformly for split and non-split inner forms of~$\GL_2(F_v)$, and in the split case they give a simplified proof of the main results of~\cite{BHHMS, HuWangGKD}, 
under slightly less restrictive genericity conditions than~\cite{BHHMS}.
Furthermore, our arguments also apply to the case of Shimura curves arising from division algebras that are split at precisely one infinite place of~$F$.
\end{rmk}

We now sketch the proof of Theorem~\ref{main theorem}, focussing on the points where our arguments differ from those already available in the literature.
It suffices to prove the inequality
\begin{equation}\label{bound to prove}
\dim_{D_v^\times}\pi[\fm] \leq [F_v:\bQ_p],
\end{equation}
since the opposite inequality follows from the results of~\cite{GN}.
Let~$H \subset D_v^\times$ be the maximal compact subgroup (if~$D_v$ is not split) or the upper-triangular Iwahori subgroup (if~$D_v$ is split).
For each smooth character $\chi: H \to \bF^\times$, let $W_{\chi, 3}$ be the $3$-truncated projective envelope of~$\chi$ described in Definition~\ref{defn:Wchi,n}.
Then, as shown in~\cite[Theorem~1.6]{BHHMS} and~\cite[Corollary~2.12]{HuWangD*}, a sufficient condition for~\eqref{bound to prove} to hold
is that for all~$\chi$, the pullback map
\begin{equation}\label{cyclicity to prove}
\Hom_{\bF[H]}(\chi, \pi[\fm]) \to \Hom_{\bF[H]}(W_{\chi, 3}, \pi[\fm])
\end{equation}
is an isomorphism.
We will show this by proving that both sides have dimension one. 
In other words, we establish ``multiplicity one theorems'' for~$\chi$ and~$W_{\chi, 3}$.

We now describe the proof of multiplicity one for~$\chi$.
This result is new when~$D_v$ is a division algebra and~$f>1$; when $D_v$ is split, it follows from~\cite[Theorem~10.1.1]{EGS}, although our argument is different.
More specifically, for any $H$-representation $V$, a powerful technique to analyze $\Hom_{\bF[H]}(V, \pi[\fm])$ was introduced in~\cite{EGS}. 
The Taylor--Wiles--Kisin patching method produces a functor~$M_\infty$, from the category of $D_v^\times$-representations on $\cO$-modules, to modules over the universal deformation rings
of~$\rhobar_v$.
The functor~$M_\infty$ enjoys several exactness and rigidity properties, and there is an identification of $\Hom_{\bF[H]}(V, \pi[\fm])$ with 
$M_\infty(\cInd_H^{D^\times_v} V)/\fm$, the special fiber of the patched module $M_\infty(\cInd_H^{D^\times_v} V)$.

When~$D_v^\times$ is split and~$H$ is the Iwahori subgroup, the functor~$\cInd_{H}^{D_v^\times}(-)$ factors additionally through $\cInd_H^{\GL_2(\cO_{F_v})}(-)$.
There are many congruences between lattices in tame irreducible $E[\GL_2(\cO_{F_v})]$-modules, which are reflected in the intersection properties of tamely crystalline deformation spaces 
inside the spectrum of the universal deformation ring of~$\rhobar_v$.
Building on this observation, \cite{EGS} proved that the left-hand side of~\eqref{cyclicity to prove} has dimension 1 when $D_v$ is split. 
This observation cannot be replicated when~$D_v$ is not split, since the tame $E$-representations of $H = \cO_{D_v}^\times$ 
are one-dimensional, and so we need a new argument.
In the case of Shimura curves, one possibility is to exploit the geometry of a semistable model at level~$1+\fm_{D_v}$
to reduce to the split case~\cite{BKmultiplicity}, and in the totally definite case over~$\bQ$ one can exploit the uniformization of the supersingular locus on the modular curve
(see~\cite{Chengyang}).
In this paper, we develop a different technique, which works uniformly for Shimura curves and totally definite quaternion algebras, 
building on an analogue with $p$-adic coefficients of the methods of~\cite{Manning}. 
This is the reason we impose Assumption~(4) in Theorem~\ref{main theorem}.

We now turn to the proof of multiplicity one for~$W_{\chi, 3}$.
When $D_v^\times$ is split, this was established in~\cite{BHHMS}, using the following strategy:
\begin{itemize}
\item Construct a proper quotient~$L$ of $\cInd_H^{\GL_2(\cO_{F_v})}(W_{\chi, 3})$, and reduce the problem to proving multiplicity one for~$L$.  
\item Lift~$L$ to an $\cO$-lattice $L^\circ$ in a direct sum of tame locally algebraic representations of $\GL_2(\cO_{F_v})$.
\item Compute the dimension of $M_\infty(\cInd_{\GL_2(\cO_{F_v})}^{D_v^\times} L^\circ)/\fm$, via a finer study of intersections of tamely crystalline deformation spaces. 
\end{itemize} 
More precisely, the representation~$L$ is a $2$-truncated projective envelope of the irreducible $\F[\GL_2(\cO_{F_v})]$-module with highest weight~$\chi$.
The implementation in~\cite{BHHMS} of the first two steps of this outline involves some fairly technical arguments about the representation theory of $\GL_2(\cO_{F_v})$ in mixed characteristic. 
By contrast, a key simplification in our approach is to work directly with~$H$, whose representation theory is simpler, as its irreducible representations are one-dimensional.
This comes at an expense for the last step: the patched modules that we need to analyze become larger than those in ~\cite{BHHMS}. 
Fortunately, this can be handled by techniques introduced in~\cite{LLHMK1}. 
The statements about deformation rings that ultimately solve the problem are given in 
Propositions~\ref{higherweight}, \ref{ideals for arm cyclicity} and~\ref{ideals for Wchi3}, which we prove by explicit calculation, 
making use of the local model theory of~\cite{LLHLMmodels,LLHMK1}.
In fact, we build upon these results, and we compute in Proposition~\ref{multitype}
a local model for a union of deformation spaces where the weights and inertial types are allowed to vary simultaneously.

\begin{rmk} 
When~$D_v$ is not split, it seems unavoidable to work directly with~$W_{\chi, 3}$, since in this case, $H$ coincides with the maximal compact subgroup of~$D_v^\times$,
and so the induction functor~$\cInd_{H}^{D_v^\times}(-)$ cannot be further simplified.
As previously mentioned, the case of Theorem~\ref{main theorem} with~$D_v$ nonsplit and~$f = 1$ is the main result of~\cite{HuWangD*}.
The paper~\cite{HuWangD*} also works directly with~$W_{\chi, 3}$, but takes a different approach to the problem of lifting it to characteristic zero, and proving multiplicity one.
Most notably, the requisite computations in Galois deformation theory are performed by reducing to the split case and applying the $p$-adic local Langlands correspondence
for~$\GL_2(\bQ_p)$, which is not available when~$f > 1$. 
\end{rmk}

\subsection{Acknowledgments.}
AD is grateful to Yongquan Hu for some preliminary exchanges on the subject of this paper.
AD was supported by a Royal Society University Research Fellowship.

BLH ~acknowledges support from the National Science Foundation under grants Nos.~DMS-1952678 and DMS-2302619 and the Alfred P.~Sloan Foundation.

The authors express their gratitude to the special trimester ``The Arithmetic of the Langlands Program'', held at
the Hausdorff Institute of Mathematics, where parts of this paper were written.

\subsection{Notation and preliminaries.}\label{subsect: preliminaries}
Fix a prime number~$p > 5$ and an algebraic closure $\lbar \bQ_p/\bQ_p$.
Let~$E \subset \overline{\bQ}_p$ be a finite unramified extension of $\bQ_p$ with ring of integers~$\cO$, uniformizer~$p$ 
and residue field~$\bF$.
Let~$k$ be a finite extension of~$\bF_p$, of degree~$f$, and let~$K$ be the unramified extension of~$\bQ_p$ with residue field~$k$.
Write~$q = p^f$.
We fix an algebraic closure~$\lbar K / K$, and write~$\lbar \bF$ for the residue field of~$\lbar E = \lbar \bQ_p$, which is an algebraic closure of~$\bF$.
We write~$\Gal_K := \Gal(\lbar K/K)$, and use similar notation for the absolute Galois group of any field with a fixed algebraic closure.
For every positive integer~$n$ we write~$K_n$ for the unramified extension of~$K$ in~$\lbar K$ of degree~$n$, and~$k_n$ for its residue field.
Recall the character
\[
\omega_n: \Gal_{K_n} \to k_n^\times
\]
defined by $g(p^{1/(p^{n}-1)}) = \omega_n(g)p^{1/(p^{n}-1)}$.
We write~$\cJ$ for the set of ring homomorphisms $k \to \bF$, and we assume that $\bF$ is large enough that the quadratic extension of~$k$ embeds in~$\bF$.

We will write~$\Frob_{p^i}$ for the $p^i$-th power morphism on any commutative $\bF_p$-algebra.
We say that an ideal~$J$ in a ring~$R$ is $p$-saturated if $R/J$ is $p$-torsion free.
In general, the $p$-saturation of~$J$ is the preimage in~$R$ of $(R/J)[p^\infty]$.

Let~$D$ be a quaternion division algebra with centre~$K$, and write~$\cO_D$, resp.\ $k_D$ for its ring of integers, resp.\ residue field.
We fix a uniformizer~$\Pi_D$, and write
\[
U^n_D = 1 + \Pi_D^n \cO_D.
\]
We write~$I$ for the upper-triangular Iwahori subgroup of~$\GL_2(K)$, and $\cO_D^{\times}$ for the group of units of~$\cO_D$.
We will typically be working with a pair $(G, H) \in \{(\GL_2(K), I), (D^\times, \cO_D^\times)\}$. 
In this situation, $H$ has a unique pro-$p$ Sylow subgroup~$H_1$, which equals the pro-$p$ Iwahori subgroup~$I_1$ when~$H = I$, and equals the 
group~$U^1_D$ of $1$-units when~$H = \cO_D^\times$.
We write~$Z \cong K^\times$ for the centre of~$G$, and $Z_1 := Z \cap H_1$.

\subsubsection{Reductive groups.}
Let~$G_0 := \Res_{\cO_K/\bZ_p} \GL_2$, and define similarly the upper-triangular Borel subgroup~$B_0$ with unipotent radical~$U_0$, the lower-triangular unipotent subgroup~$U^-_0$, and 
the diagonal maximal torus~$T_0$.
Let $\uline G := G_0 \times_{\bZ_p} \cO$, and define similarly~$\uline B, \uline U, \uline U^-, \uline T$.
Let~$\uline W := W(\uline B, \uline T)$ be the Weyl group, $\Phi := \Phi(\uline G, \uline T)$ be the root system 
and $\Phi^+ := \Phi(\uline B, \uline T)$ be the set of $\uline B$-positive roots.
For all~$j \in \cJ$ we let~$\alpha_j \in \Phi^+$ be the positive root that is nonzero precisely at the embedding~$j$.
Then~$\Phi^+ = \{\alpha_j: j \in \cJ\}$.

If~$a, b \in \bZ$, we write $(a, b) \in X^*(\uline T)$ for the character 
\[
\sum_{j \in \cJ}(a, b)_j \in (\bZ^2)^{\cJ} \cong X^*(\uline T),
\]
i.e.\ we identify~$\bZ^2$ with its image under the diagonal embedding $\bZ^2 \to X^*(\uline T)$.
We let $\eta := (1, 0)$, and we write~$\uline C_0$ for the lowest $\eta$-shifted $p$-alcove in $X^*(\uline T)$.

Let $\tld{\uline W} := \uline W \ltimes X^*(\uline T)$ be the extended Weyl group of~$\uline T$ in~$\uline G$.
It is partially ordered by the Bruhat order induced by~$\uline {B}$ (extended to~$\tld{\uline W}$ from the affine Weyl group $\uline W_a := \uline W \ltimes \bZ\Phi$ in the usual way).
We write~$\tld {\uline W}^\vee$ for the same group with the partial order induced by the opposite Borel subgroup to~$\uline B$.
For~$\mu \in X^*(\uline T)$ we write $t_\mu \in \uline{\tld W}^\vee$ for the corresponding translation, 
and $\Adm^\vee(\mu)$ for the $\mu$-admissible set in~$\tld{\uline W}^\vee$, defined by
\[
\Adm^\vee(\mu) := \{\tld w \in \tld{\uline W}^\vee : \tld w \leq t_{w(\mu)} \text{ for some } w \in \uline W\}.
\]
We then have
\begin{gather*}
\Adm^\vee\sweight = \prod_{j \in \cJ}\{ t_{(2, 1)_j}, t_{(1, 2)_j}, t_{(1, 2)_j}s_j \}\\
\Adm^\vee\lweight = \prod_{j \in \cJ}\{t_{(2, 1)_j}, t_{(1, 2)_j}, t_{(1, 2)_j}s_j, t_{(3, 0)_j}, t_{(0, 3)_j}s_j, t_{(0, 3)_j}, t_{(2, 1)_j}s_j\},
\end{gather*}
where~$s_j$ is the element of~$\uline W$ which is nontrivial precisely in the embedding~$j$.

\subsubsection{Loop groups.}\label{loop groups}
We make use of notation from~\cite{LLHLMmodels} about loop groups, except that we will always work over the base $\Spec \cO$, with~$t := -p$.
If~$R$ is a Noetherian $\cO$-algebra, we thus write
\begin{gather*}
L^+\cM(R) := \{A \in M_2(R[\![v+p]\!]): A_{21} \in vR[\![v+p]\!]\}\\
L\cG(R) := \{g \in \GL_2(R(\!(v+p)\!)): g_{21} \in v R(\!(v+p)\!)\}\\
L^{[0, h]}\GL_2(R) := \{g \in \GL_2(R[\![v]\!]): v^h g^{-1} \in \GL_2(R[\![v]\!])\}\\
L^+\cG(R) := \{g \in \GL_2(R[\![v+p]\!]): g_{21} \in vR[\![v+p]\!]\}\\
L^{--}\cG(R) := \{ g \in \GL_2(R[(v+p)^{-1}]): g_{11}, g_{22} \in 1+(v+p)^{-1}R[(v+p)^{-1}],\\ 
g_{12} \in (v+p)^{-1}R[(v+p)^{-1}], g_{21} \in v(v+p)^{-1}R[(v+p)^{-1}] \}
\end{gather*}
We also write~$\cI$ for the standard upper-triangular Iwahori group scheme over~$\bF$, and $\cI_1$ for its pro-unipotent radical, so that $\cI = T \ltimes \cI_1$.
Note that if~$R$ is a Noetherian $\bF$-algebra then
\begin{gather*}
L^+\cG(R) = \cI(R)\\
L^{--}\cG(R) = \{g \in \GL_2(R[v^{-1}]): g_{11}, g_{22} \in 1 + v^{-1}R[v^{-1}], g_{12} \in v^{-1}R[v^{-1}]\}.
\end{gather*}

\subsubsection{Characters.}
The group $\uline G(\lbar \bF)$ has a geometric Frobenius endomorphism, 
which coincides with the map induced by the $p$-th power map on $\lbar \bF$. 
We denote it by~$F$.
There is an isomorphism $X_*(\uline T) \otimes_{\bZ} \lbar \bF^\times \to \uline T(\lbar \bF)$, equivariant for~$\uline W$ and~$F$, which induces for all~$w \in \uline W$ an isomorphism
\begin{equation}\label{characters of finite torus}
X^*(\uline T)/(wF-1)X^*(\uline T) \isom \Hom(T(\lbar \bF)^{wF}, \lbar \bF^\times).
\end{equation}
We fix throughout the paper an isomorphism $j_0: \lbar K \to \lbar E = \lbar \bQ_p$.
Its restriction $j_0: k \to \bF$ is an element of~$\cJ$.
This choice of~$j_0$ then allows us to replace the target of~\eqref{characters of finite torus} with $\Hom(H, \lbar \bF^\times)$, for some $H \in \{I, \cO_D^\times\}$ depending on~$w$, 
as we now recall.

Write $\pr : \uline W \cong S_2^{\cJ} \to S_2$ for the product map.
For all~$w \in \uline W$, the choice of $j_0 \in \cJ$ sets up an isomorphism
\begin{align*}
\Gm(\lbar \bF^{\Frob_q}) \times \Gm(\lbar \bF^{\Frob_q}) \to \;\; & \uline T(\lbar \bF)^{wF} \text { if~$\pr(w) = 1$}\\ 
\Gm(\lbar \bF^{\Frob_{q^2}}) \to \;\; & \uline T(\lbar \bF)^{wF} \text{ if~$\pr(w) \ne 1$.}
\end{align*}
Since $I/I_1 \cong k^\times \times k^\times$, and~$\cO_D^\times/(1+\fm_D) \cong k_D^\times$,
the choice of~$j_0$ also induces an isomorphism $\uline T(\lbar \bF)^{wF} \to I/I_1$ if~$\pr(w) = 1$, 
and a $\Frob_q$-conjugacy class of isomorphisms $\uline T(\lbar \bF)^{wF} \to \cO_D^\times/(1+\fm_D)$ if~$\pr(w) \ne~1$.
In the latter case, we make an arbitrary choice of representative of this conjugacy class (which has two elements).
Since we are ultimately interested in $N_G(H)$-conjugacy classes of characters $H \to \bF^\times$, our results will be unaffected by this choice.
In summary, for all~$w \in \uline W$ we now obtain from~\eqref{characters of finite torus} an isomorphism
\begin{equation}\label{definition of chi_w}
\chi_w : X^*(\uline T)/(wF-1)X^*(\uline T) \to \Hom(H, \bF^\times),
\end{equation}
where~$H = I$ if~$\pr(w) = 1$ and~$H = \cO_D^\times$ if~$\pr(w) \ne 1$.

For definiteness, throughout the paper we will work with the maps~$\chi_{w_H}$ 
associated to $w_H \in \uline W$
defined as follows: when~$H = I$, we let~$w_H = 1$, and when~$H = \cO_D^\times$, we let~$w_H \ne 1$ precisely
at the embedding~$j_0$.
We will often omit~$\chi_{w_H}$ from the notation, and for all~$\mu \in X^*(\uline T)$ we will write~$\mu$ for the $H$-character~$\chi_{w_H}(\mu)$. 
A consideration of $p$-adic expansions shows that
the maps~$\chi_{w_H}$ have the following elementary property.
\begin{lemma}\label{products of embeddings}
For all~$\alpha \in \Phi^+$, choose~$n_\alpha \in \{0, \pm 1, \pm 2\}$, and let $\mu := \sum_{\alpha \in \Phi^+}n_\alpha \alpha$.
Assume that $p>3$ and $\chi_{w_H}(\mu) = 0$.
Then $\mu = 0$.
\end{lemma}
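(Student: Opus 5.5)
Since $\chi_{w_H}$ is induced by~\eqref{characters of finite torus}, the condition $\chi_{w_H}(\mu) = 0$ says that $\mu \in (w_H F - 1)X^*(\uline T)$. Equivalently, the character $\mu$ is trivial on the finite torus $\uline T(\lbar\bF)^{w_H F}$. I will compute this character explicitly as a $p$-adic expansion and read off its coefficients. Write $\mathcal{J} = \{j_0, j_1, \dots, j_{f-1}\}$ with $j_i = \Frob_p^{i} \circ j_0$. Each $\alpha_{j_i} = (1,-1)_{j_i}$, so $\mu = \sum_i n_i (1,-1)_{j_i}$ with $n_i \in \{0,\pm1,\pm2\}$.

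\textbf{Case $H = I$.} The torus is $k^\times \times k^\times$. Via $j_0$, the character $\chi_1(\mu)$ sends $\diag(x,y)$ to $(x/y)^{N}$, with $N = \sum_{i=0}^{f-1} n_i p^i$. Since $x/y$ ranges over all of $k^\times$, which is cyclic of order $q-1$, triviality means $(q-1) \mid N$. Now $|N| \le 2(p^f-1)/(p-1) < q-1$ as soon as $p > 3$, so $N = 0$. Uniqueness of balanced base-$p$ expansions then gives all $n_i = 0$: reduce mod $p$ to get $n_0 \equiv 0$, and $|n_0| \le 2 < p$ forces $n_0 = 0$; induct.

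\textbf{Case $H = \cO_D^\times$.} The torus is $k_D^\times \cong \lbar\bF^{\Frob_{q^2}, \times}$, which is cyclic of order $q^2-1$. The twisted Frobenius $w_H F$ swaps the two coordinates once, at $j_0$, so a character $(a_i,b_i)_{j_i}$ restricts to $z \mapsto z^{M}$ with
\[
M = \sum_i a_i p^i + \sum_i b_i p^{i+f}
\]
(up to the choice of conjugacy-class representative, which only multiplies $M$ by $q$ and does not affect the conclusion). For $\mu$ this gives
\[
M = \sum_i n_i p^i - \sum_i n_i p^{i+f} = (1-q)N.
\]
Triviality means $(q^2-1) \mid (q-1)N$, i.e.\ $(q+1) \mid N$. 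Again $|N| < q+1$ because $2(p^f-1)/(p-1) < p^f+1$ for $p>3$, so $N = 0$, and the base-$p$ argument gives $\mu = 0$.

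\textbf{Main obstacle.} The only delicate point is getting the identification in the division case right, namely the exact exponents produced by the twisted Frobenius. I would verify it by checking the map $z \mapsto (z, z^{q})$ on $\uline T(\lbar \bF)^{w_H F}$ at $j_0$ and propagating it through the other embeddings by $\Frob_p$.
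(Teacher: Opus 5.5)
Your proposal is correct, and it is the argument the paper intends: the paper's proof is only the remark that the lemma follows from ``a consideration of $p$-adic expansions''. Your computation supplies those details, namely the exponents $N=\sum_i n_i p^i$ on $k^\times$ (resp.\ $(1-q)N$ on $k_D^\times$), the bound $|N|\le 2(q-1)/(p-1)<q-1$ for $p>3$, and uniqueness of base-$p$ expansions with digits in $\{0,\pm1,\pm2\}$.
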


\subsubsection{Inertial types.}
Recall that an inertial $\cO$-type is a representation $I_K \to \GL_2(\cO)$, with open kernel, and which can be extended to a representation of~$W_K$.
We associate to every pair $(s, \mu) \in \uline W \times X^*(\uline T)$ a ${\uline T}^\vee(\cO)$-valued character $\tau(s, \mu)$ by interpreting~$\mu$ as a cocharacter of~$\uline T^\vee$ 
and using the formula
\begin{equation}\label{definition of tau(s, mu)}
\tau(s, \mu) := \left (\sum_{i=0}^{d-1}(F^* \circ s^{-1})^i(\mu)\circ \omega_d \right ) : I_K \to \uline T^\vee(\cO).
\end{equation}
where~$F^*$ is the endomorphism of~$X_*(\uline T^\vee)$ corresponding to~$F$ under the identification~$X_*(\uline T^\vee) = X^*(\uline T)$,
and $d$ is the smallest positive integer such that $(F^* \circ s^{-1})^d = p^d$.
We obtain a tame inertial $\cO$-type, which we also denote~$\tau(s, \mu)$, by composing~$\tau(s, \mu)$ with $\uline T^\vee(\cO) \xrightarrow{j_0} \GL_2(\cO)$. 
Using the Teichm\"uller lift, and reduction modulo the maximal ideal of~$\mO$, we can identify the isomorphism classes of tame inertial $\cO$-types and~$\bF$-types.
We will sometimes denote the reduction of $\tau(s, \mu)$ by~$\lbar \tau(s, \mu)$.

Recall that a \emph{lowest alcove presentation} of a tame inertial $\cO$-type~$\tau$ is a pair $(w, \nu) \in \uline W \times X^*(\uline T)$ such that $\tau \cong \tau(w, \nu + \eta)$
and~$\nu \in \uline C_0$.
Given a lowest alcove presentation of~$\tau$, we define
\begin{equation}\label{group element associated to presentation of type}
\tld w^*(\tau) = w^{-1}t_{\nu+\eta} \in \tld{\uline W}.
\end{equation}
As usual, we say that~$\tau$ is \emph{$m$-deep} if it admits a lowest alcove presentation~$(w, \nu)$ such that
\[
m < \langle \nu + \eta, \alpha^\vee \rangle < p-m
\]
for all~$\alpha \in \Phi^+$.

\subsubsection{Inertial local Langlands correspondence.}
Let~$(G, H) \in \{(\GL_2(K), I), (D^\times, \cO_D^\times)\}$.
If~$\chi : H \to \bF^\times$ is a smooth character, and~$\chi$ is regular (i.e.\ $\chi$ is not inflated through the determinant or the reduced norm), then $(H, \chi)$ is a type
for a Bernstein component of the category $\mathrm{sm.}\,\lbar E[G]$ of smooth $\lbar E[G]$-modules.
The inertial local Langlands correspondence associates to this component an isomorphism class $\tau(\chi)$ of tame inertial $E$-types. 
We record the following compatibility between this correspondence and the morphism~\eqref{definition of chi_w}.

\begin{lemma}\label{explicit inertial type}
Let~$w_H \in \uline W$ be the identity if~$H = I$, and the nontrivial element in embedding~$j_0$ if~$H = \cO_D^\times$.
Let~$\chi: H \to \bF^\times$ be a regular smooth character, and let~$\mu$ be a preimage of~$\chi$ under the map
\[\chi_{w_H}: X^*(\uline T) \to \Hom(H, \bF^\times)\]
defined in~\eqref{definition of chi_w}.
Then $\tau(\chi) \cong \tau(w_H, \mu)$.
\end{lemma}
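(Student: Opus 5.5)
We prove Lemma~\ref{explicit inertial type} by making both sides explicit in terms of characters of the finite torus $\uline T(\lbar\bF)^{w_HF}$ and comparing them through local class field theory. We treat the two cases separately.

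\emph{Split case, $H = I$, $w_H = 1$.} Here $\chi = \chi_1 \otimes \chi_2$ is a character of $I/I_1 \cong k^\times \times k^\times$, and it is regular precisely when $\chi_1 \neq \chi_2$. The Bernstein component with type $(I,\chi)$ is the principal series component containing $\Ind_B^G(\tld\chi_1 \otimes \tld\chi_2)$, where $\tld\chi_i$ is a tamely ramified character of $K^\times$ whose restriction to $\cO_K^\times$ is the Teichm\"uller lift of $\chi_i$. The inertial local Langlands correspondence sends this component to $\tld\chi_1|_{I_K} \oplus \tld\chi_2|_{I_K}$, viewed via local class field theory. With the normalization of the Artin map sending uniformizers to geometric Frobenius, the character $\omega_1 = \omega_f$ of $I_K$ corresponds to the character $\cO_K^\times \to k^\times$ given by reduction, up to a sign convention that we fix once. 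Next we unwind \eqref{characters of finite torus} for $w = 1$. A weight $\mu = (a_j,b_j)_j$ gives the character $x \mapsto \prod_j j(x)^{a_j}$ on the first factor of $k^\times \times k^\times$, and similarly with the $b_j$ on the second factor. Writing $j = j_0 \circ \Frob_{p^i}$, this character equals $j_0(x)^{\sum_i a_{j_0\circ\Frob_{p^i}}p^i}$. On the other hand, \eqref{definition of tau(s, mu)} with $s = 1$ and $d = f$ gives $\tau(1,\mu) = \sum_{i=0}^{f-1} (F^*)^i(\mu)\circ\omega_f$, and its two diagonal entries are $\omega_f^{\sum_i p^i a_{\cdot}}$ and $\omega_f^{\sum_i p^i b_{\cdot}}$ with the same indexing. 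Composing with $j_0$, the two sides match, so $\tau(\chi) \cong \tau(1,\mu)$.

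\emph{Nonsplit case, $H = \cO_D^\times$.} Here $\chi$ factors through $k_D^\times \cong k_2^\times$, and it is regular exactly when $\chi \neq \chi^q$, i.e.\ when $\chi$ does not factor through the norm to $k^\times$. The type $(\cO_D^\times,\chi)$ cuts out the irreducible representations of $D^\times$ whose restriction to $\cO_D^\times$ contains $\chi$. Under Jacquet--Langlands these correspond to depth-zero supercuspidals of $\GL_2(K)$ attached to the regular character $\chi$ of $k_2^\times$. By the tame local Langlands correspondence, the corresponding inertial type is $\tld\chi \oplus \tld\chi^q$ restricted to $I_K = I_{K_2}$, where $\omega_{2f}$ corresponds to reduction $\cO_{K_2}^\times \to k_2^\times$. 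We then compute $\tau(w_H,\mu)$ from \eqref{definition of tau(s, mu)}. Since $\pr(w_H) \neq 1$, we have $d = 2f$, and $(F^* \circ w_H^{-1})^i(\mu)$ cycles through the coordinates of $\mu$, swapping the two entries exactly once per pass through $j_0$. The first diagonal entry is therefore $\omega_{2f}^{n}$ with
\[ n = \sum_{i=0}^{f-1} p^i a_{(i)} + \sum_{i=0}^{f-1} p^{f+i} b_{(i)}, \]
where the indexing is as in the split case, up to the convention of where the swap occurs, and the second entry is $\omega_{2f}^{qn}$. Unwinding \eqref{characters of finite torus} for $w_H$, using the identification $\Gm(\lbar\bF^{\Frob_{q^2}}) \cong \uline T(\lbar\bF)^{w_HF}$ induced by $j_0$, shows that $\chi_{w_H}(\mu)$ is $x \mapsto j_0(x)^n$ on $k_2^\times$. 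The two sides again agree. The ambiguity in the choice of isomorphism $\uline T(\lbar\bF)^{w_HF} \cong k_D^\times$ replaces $n$ by $qn$, which is harmless because $\tau$ is a direct sum invariant under this swap.

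\emph{Main obstacle.} The difficulty is purely one of bookkeeping: reconciling the normalizations of the Artin map and of $\omega_d$, the identification $X_*(\uline T^\vee) = X^*(\uline T)$ together with the direction of $F^*$, and the position of $j_0$ in the twisted Frobenius orbit. An error in any of these would replace $\mu$ by its dual, or $n$ by $-n$ or by $p^k n$. We would fix all conventions using the case $f = 1$ and check that the result agrees with the standard formula for the $\GL_2(\bQ_p)$ principal series and supercuspidal types. The general case then follows formally from the orbit computation above.
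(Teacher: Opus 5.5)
Your proposal is correct in outline, but it takes a different route from the paper. You make both sides explicit through class field theory. For $\tau(\chi)$ you use the explicit tame local Langlands correspondence: principal series in the split case, and in the nonsplit case level-zero Jacquet--Langlands followed by the depth-zero parameter $\mathrm{Ind}_{W_{K_2}}^{W_K}\tilde\chi$. For $\tau(w_H,\mu)$ and $\chi_{w_H}(\mu)$ you unwind the twisted Frobenius orbit to a common exponent $n$.

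The paper never touches the Artin map. It uses Jantzen's virtual characters $R_s(\mu)$ and the fact \cite[Proposition~2.5.5]{LLHLMmodels} that $(\mathrm{GL}_2(\mathcal{O}_K), R_w(\nu))$ is a type for the component with inertial type $\tau(w,\nu)$. The lemma then reduces to comparing $(H,\chi)$ with the $\mathrm{GL}_2(\mathcal{O}_K)$-type $R_{w_H}(\mu)$. In the split case this is the identity $R_{w_H}(\mu)=\mathrm{Ind}_I^{\mathrm{GL}_2(\mathcal{O}_K)}\chi$. In the nonsplit case it is the level-zero Jacquet--Langlands criterion of \cite{BHlevelzero}: a trace identity on elliptic elements, which holds because $R_{w_H}(\mu)$ and $\chi_{w_H}(\mu)$ are built from the same isomorphism~\eqref{characters of finite torus}.

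The paper's route therefore imports all normalizations from \cite{LLHLMmodels}: the Artin map versus the Kummer definition of $\omega_d$, the direction of $F^*$, and the position of $j_0$. That is exactly the bookkeeping you name as your main obstacle and leave undone. Your route avoids Deligne--Lusztig theory, but it must actually carry that bookkeeping out.

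Two points to tighten if you pursue it:
\begin{itemize}
\item Your sentence identifying the Jacquet--Langlands transfer of the $\chi$-component with the depth-zero supercuspidal attached to $\chi$ is precisely the step the paper proves, so you should cite or prove it. It holds only up to unramified twist and $\chi\mapsto\chi^q$. Likewise, the depth-zero Langlands parameter is $\mathrm{Ind}(\tilde\chi\delta)$ with $\delta$ unramified quadratic. All of these are invisible on $I_K$, so they do no harm.
\item The sign convention is not yours to ``fix once''. It is forced by the Artin normalization built into the local Langlands correspondence together with the paper's definition of $\omega_d$. The opposite choice yields the dual type $\tau(w_H,-\mu)$, so your final step has to be a verification against those conventions, not a choice.
\end{itemize}
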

\begin{proof}
Recall from e.g.~\cite[Section~4.1]{herzig-duke} that Jantzen has constructed a map from $\uline W \times X^*(\uline T)$ to the set of virtual $E$-characters 
of $\uline G(\bF_p)$, denoted
\[
(s, \mu) \mapsto R_s(\mu).
\]
If $\tau = \tau(w, \nu)$ is a 0-deep 
inertial $E$-type, then the virtual character~$R_w(\nu)$ is the character of an absolutely irreducible $E[\GL_2(\bF_q)]$-module, 
and $(\GL_2(\cO_K), R_w(\nu))$ is a type for the Bernstein component corresponding to~$\tau$ under
the inertial local Langlands correspondence, see for example~\cite[Proposition~2.5.5]{LLHLMmodels}.
So it suffices to prove that  
\begin{enumerate}
\item If~$H = I$, then $(H, \chi)$ is a type for the principal series Bernstein component with type $(\GL_2(\cO_K), R_{w_H}(\mu))$.
\item If~$H = \cO_D^\times$, then $(H, \chi)$ is a type for the Jacquet--Langlands transfer of the supercuspidal Bernstein component with type $(\GL_2(\cO_K), R_{w_H}(\mu))$.
\end{enumerate}
The first statement follows because if~$H = I$ then
\[
R_{w_H}(\mu) = \Ind_{I}^{\GL_2(\cO_K)}(\chi).
\]
For the second statement, by the results of~\cite{BHlevelzero},
it suffices to prove that if~$z \in \GL_2(k)$ is a semisimple element with irreducible minimal polynomial, 
and~$z_D \in k_D$ has the same minimal polynomial over~$k$ as~$z$, then
\[
\operatorname{trace}(z|R_{w_H}(\mu)) = (-1)^{2/\deg(z)}\frac{|\GL_{2/\deg(z)}(k_{\deg(z)})|_{p'}}{q^2-1}\sum_{\gamma \in \Gal(k_{\deg(z)}/k)}\chi(\gamma z_D).
\] 
This is true by definition of the map $(w, \nu) \mapsto R_w(\nu)$ and the map~\eqref{definition of chi_w} sending~$\mu$ to~$\chi$.
\end{proof}

\begin{lemma}\label{inertial JL}
Let $\chi : H \to \bF^\times$ be a regular smooth character, and let $\alpha \in \Phi$.
Let $(w, \nu)$ be a lowest alcove presentation of~$\tau(\chi)$.
Then there exists~$\varepsilon \in \{\pm 1\}$ such that $(w, \nu +\varepsilon\alpha)$
is a lowest alcove presentation of $\tau(\chi\alpha)$.
\end{lemma}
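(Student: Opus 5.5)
The plan is to reduce everything to Lemma~\ref{explicit inertial type} and to the description of~$\tau(s,\mu)$ in~\eqref{definition of tau(s, mu)}.

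First, fix a preimage~$\mu$ of~$\chi$ under~$\chi_{w_H}$. Since~$\chi_{w_H}$ is induced by a group homomorphism~\eqref{characters of finite torus}, $\mu+\alpha$ is a preimage of~$\chi\alpha$, where we identify~$\alpha$ with the character~$\chi_{w_H}(\alpha)$. By Lemma~\ref{explicit inertial type} we then have $\tau(\chi)\cong\tau(w_H,\mu)$ and $\tau(\chi\alpha)\cong\tau(w_H,\mu+\alpha)$. (Both are regular, since $\tau(\chi\alpha)$ is defined by hypothesis.)

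Second, relate a lowest alcove presentation $(w,\nu)$ of~$\tau(\chi)$ to the pair $(w_H,\mu)$. The isomorphism $\tau(w_H,\mu)\cong\tau(w,\nu+\eta)$ comes from the standard equivalence on pairs $(s,\lambda)$. This equivalence is generated by:
\begin{itemize}
\item conjugation $(s,\lambda)\sim(\sigma s F(\sigma)^{-1}$-type twist$,\sigma\lambda)$ for $\sigma\in\uline W$;
\item changing~$\lambda$ by $(sF-1)$-type translations, i.e.\ modifying~$\lambda$ within its class in $X^*(\uline T)/(F^*s^{-1}-1)X^*(\uline T)$, which does not change the character in~\eqref{definition of tau(s, mu)}.
\end{itemize}
So there exist $\sigma\in\uline W$ and $\lambda'\in(F^*w_H^{-1}-1)X^*(\uline T)$ such that $\nu+\eta=\sigma\mu+\lambda'$ and~$w$ is the corresponding conjugate of~$w_H$. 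Applying the same $\sigma$ and~$\lambda'$ to $(w_H,\mu+\alpha)$ gives
\[
\tau(\chi\alpha)\cong\tau(w,\nu+\sigma\alpha+\eta).
\]
Because~$\uline W$ acts on~$\Phi$ factorwise by signs, $\sigma\alpha=\varepsilon\alpha$ for some $\varepsilon\in\{\pm1\}$. Here~$\varepsilon$ is determined by whether~$\sigma$ is nontrivial at the embedding where~$\alpha$ is supported. This is the candidate~$\varepsilon$.

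Third, it remains to check that $\nu+\varepsilon\alpha\in\uline C_0$, so that $(w,\nu+\varepsilon\alpha)$ really is a lowest alcove presentation. Since~$\alpha$ is supported at a single embedding~$j$, the pairing $\langle\nu+\eta,\beta^\vee\rangle$ only changes for $\beta=\alpha_j$, and there it changes by~$\pm2$. The condition $0<\langle\nu+\eta,\alpha_j^\vee\rangle<p$ survives unless~$\nu$ sits within distance~$2$ of a wall at~$j$.

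This is the step I expect to be the main obstacle. I would handle it in one of two ways:
\begin{itemize}
\item Use the genericity implicit in the setting (the lemma will be applied to types that are at least $2$-deep, so the condition is automatic).
\item If no depth is assumed, show that when the shift leaves~$\uline C_0$ one can re-choose the presentation. Changing~$\varepsilon$ is not allowed in general, but reflecting through the wall combined with a change of~$(\sigma,\lambda')$ (using that $\chi$ and $\chi\alpha$ are regular, so the resulting point is not on a wall) recovers a presentation of the stated form. This relies on the fact that $\nu\mapsto\nu+\eta$ together with the reflection at~$j$ and a $p$-translation preserves~$\uline C_0$ up to the $\pm$ ambiguity.
\end{itemize}
I would try the first option first, checking the depth hypotheses in the later applications, and otherwise carry out the second by a direct $p$-adic expansion computation of the kind used in Lemma~\ref{products of embeddings}.
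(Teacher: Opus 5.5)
Your Steps~1 and~2 are essentially the paper's proof, packaged differently. After invoking Lemma~\ref{explicit inertial type}, the paper uses the multiplicativity $\tau(s,\mu+\alpha)=\tau(s,\mu)\tau(s,\alpha)$. It then checks directly from~\eqref{definition of tau(s, mu)} that $\tau(w_H,\alpha)=\tau(w,\alpha)^{\pm 1}$. The swap ambiguity in the isomorphism $\tau(w_H,\mu)\cong\tau(w,\nu+\eta)$ does no harm, because the two diagonal entries of $\tau(s,\alpha)$ are inverse to each other. Your route instead needs that every such isomorphism is induced by a twisted $\uline W$-conjugation together with a translation in $(F^*w^{-1}-1)X^*(\uline T)$. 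That is a true, standard fact, but you assert it without proof and describe the moves only loosely. The multiplicative argument avoids needing it.

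Your Step~3 addresses something the paper's proof does not check at all. The paper only establishes $\tau(\chi\alpha)\cong\tau(w,\nu+\varepsilon\alpha+\eta)$, never $\nu+\varepsilon\alpha\in\uline C_0$.

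Your first option is the correct resolution. Every use of the lemma (Propositions~\ref{higherweight}, \ref{ideals for arm cyclicity} and~\ref{ideals for Wchi3}) has $\rhobar$ $9$-generic and $R^{\eta,\tau(\chi)}_{\rhobar}\neq 0$, so $\tau(\chi)$ is $6$-deep. Only the pairing with $\alpha_j^\vee$ changes, and it changes by $\pm 2$, so the shifted weight stays in $\uline C_0$. You should state a depth hypothesis explicitly; $2$-deep suffices.

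Drop the second option: without a depth hypothesis the statement is false, so no re-choice of $(\sigma,\lambda')$ can save it. For example, take $f=1$, $H=I$, $\alpha=\alpha_{j_0}$ and $\nu=(b+p-3,b)$. Then $\langle\nu+\eta,\alpha^\vee\rangle=p-2$, so $\nu\in\uline C_0$. Let $\chi=\chi_{w_H}(\nu+\eta)$; by Lemma~\ref{explicit inertial type}, $(1,\nu)$ is a lowest alcove presentation of $\tau(\chi)$, and both $\chi$ and $\chi\alpha$ are regular.
\begin{itemize}
\item We have $\tau(\chi\alpha)\cong\tau(1,(b+p-1,b-1))\cong\omega_1^{b}\oplus\omega_1^{b-1}$. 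So the type identity holds with your $\varepsilon=+1$.
\item However $\langle\nu+\alpha+\eta,\alpha^\vee\rangle=p$, so $\nu+\alpha\notin\uline C_0$.
\item On the other hand $\nu-\alpha\in\uline C_0$, but $(1,\nu-\alpha)$ presents $\omega_1^{b-2}\oplus\omega_1^{b+1}$. This is not isomorphic to $\tau(\chi\alpha)$, since $p>3$.
\end{itemize}
So neither sign gives a lowest alcove presentation of $\tau(\chi\alpha)$ of the form $(1,\nu\pm\alpha)$.
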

\begin{proof}
Let~$\mu$ be a preimage of~$\chi$ under the map~$\chi_{w_H}$.
Then $\tau(\chi) = \tau(w_H, \mu)$ and~$\tau(\chi\alpha) = \tau(w_H, \mu + \alpha)$, by Lemma~\ref{explicit inertial type}.
Since $\tau(w_H, \mu + \alpha) = \tau(w_H, \mu)\tau(w_H, \alpha)$ as characters $I_K \to T^\vee(E)$, and similarly 
$\tau(w, \nu + \alpha) = \tau(w, \nu)\tau(w, \alpha)$,
it suffices to verify that
\[\tau(w_H, \alpha) = \tau(w, \alpha)^{\pm 1}.\]
This is immediate from~\eqref{definition of tau(s, mu)}.
(The same formula can be used to compute the sign, but we will not need this.)
\end{proof}

\subsubsection{Algebraic representations.}
We will use the following notation for algebraic representations of~$D^\times$.
Since~$E$ contains the quadratic unramified extension of~$K$, for every embedding $K \to E$ there exists 
a unique $\GL_2(E)$-conjugacy class of $K$-linear ring homomorphisms $D \to M_2(E)$.
This determines $\uline G(E)$-conjugacy classes of embeddings
\[
D \to \prod_{j \in \cJ} M_2(E)
\]
and
\[
D^\times \to \prod_{j\in \cJ} \GL_2(E) = \uline G(E).
\]
If~$\lambda \in X^*(\underline T)^+$ is a dominant character, we let~$V(\lambda)$ be the restriction to~$D^\times$, via any embedding in the conjugacy class just defined, 
of the corresponding algebraic $E$-representation of $\underline G(E)$.

\subsubsection{Galois deformation rings.}
Let $\rhobar: \Gal_K \to \GL_2(\bF)$ be a continuous representation.
We write~$R^\square_{\rhobar}$ for the universal lifting ring of~$\rhobar$ to complete Noetherian local $\cO$-algebras with residue field~$\bF$.
If~$\lambda \in X^*(\uline T)^+$ and~$\tau$ is a two-dimensional tame inertial $\cO$-type, we write~$R^{\lambda, \tau}_{\rhobar}$, resp.\ $R^{\leq \lambda, \tau}_{\rhobar}$, for the
reduced $\cO$-flat quotients of~$R^\square_{\rhobar}$ parametrizing potentially crystalline representations with Hodge--Tate weights~$\lambda$, resp.\ bounded by~$\lambda$,
and inertial type~$\tau$.
We write~$I^{\lambda, \tau}$ and~$I^{\leq\lambda, \tau}$ for their ideals in $R^\square_{\rhobar}$.

\subsubsection{Schur indices.}\label{subsec:Schur indices}
If~$\Gamma$ is a finite group, and $V$ is a finite-dimensional $\lbar E[\Gamma]$-module,
we write~$E(V)$ for the subfield of~$\lbar E$ generated by $\operatorname{tr}(\gamma: V \to V)$ for all~$\gamma \in \Gamma$.
Equivalently, $E(V)$ is the fixed field of $\{g \in \Gal(\lbar E/E): g^*V \cong V\}$.

\begin{lemma}\label{lem:Schur indices}
Let~$\Gamma$ be a finite group, and let $V$ be an irreducible $\lbar E[\Gamma]$-module.
Then $E(V)/E$ is unramified if and only if there exists an unramified extension~$L$ of~$E$ in~$\lbar E$ such that~$V$ descends to an $L[\Gamma]$-module.
\end{lemma}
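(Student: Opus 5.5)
The forward direction is formal; for the converse, the plan is to take the simple component of $E(V)[\Gamma]$ attached to~$V$ and split it over an unramified extension, using the local class field theory fact that unramified extensions split central division algebras over $p$-adic fields.

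\emph{The ``if'' direction.} Suppose~$V \cong V_L \otimes_L \lbar E$ for some $L[\Gamma]$-module~$V_L$, with $L/E$ unramified. Then $\operatorname{tr}(\gamma \mid V) = \operatorname{tr}(\gamma \mid V_L) \in L$ for every~$\gamma \in \Gamma$. Hence $E(V) \subset L$, and $E(V)/E$ is unramified, being a subextension of an unramified extension.

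\emph{The ``only if'' direction.} Write $E' := E(V)$, and assume $E'/E$ is unramified.
\begin{enumerate}
\item The character $\chi_V$ of~$V$ takes values in~$E'$, so its central idempotent $e_V = \frac{\dim V}{|\Gamma|}\sum_{\gamma}\chi_V(\gamma^{-1})\gamma$ lies in $E'[\Gamma]$.
\item Set $A := e_V E'[\Gamma]$. Then $A$ is a simple $E'$-algebra with $A \otimes_{E'} \lbar E \cong \operatorname{End}_{\lbar E}(V)$.
\item In particular $A$ is central simple over~$E'$. The centre of $A \otimes \lbar E$ is~$\lbar E$, so the centre of~$A$ is~$E'$.
\item By Wedderburn, $A \cong M_n(\Delta)$ for a central division algebra~$\Delta$ over~$E'$ of some index~$m$, the Schur index of~$V$ over~$E'$.
\end{enumerate}

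The key input is local class field theory. Over the $p$-adic field~$E'$, every central division algebra of index~$m$ is split by any field extension of degree~$m$, in particular by the unramified extension~$L$ of~$E'$ of degree~$m$. Indeed, restriction $\operatorname{Br}(E') \to \operatorname{Br}(L)$ is multiplication by~$[L:E'] = m$ on Hasse invariants in~$\bQ/\bZ$, and the invariant of~$\Delta$ has order~$m$. Since $E'/E$ is unramified, so is~$L/E$.

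Then $A \otimes_{E'} L \cong M_{nm}(L)$. Let~$V_L$ be its unique simple module, an $L[\Gamma]$-module via $L[\Gamma] \to A \otimes_{E'} L$. It remains to show $V_L \otimes_L \lbar E \cong V$:
\begin{itemize}
\item $V_L \otimes_L \lbar E$ is the unique simple module of $A \otimes_{E'} \lbar E \cong \operatorname{End}(V)$, hence is isomorphic to~$V$ as a module over $\operatorname{End}(V)$.
\item Alternatively, both have dimension $nm$ over the respective field, and $V_L \otimes_L \lbar E$ is simple over the full matrix algebra, so it equals~$V$.
\end{itemize}
Since $\Gamma$ acts through $A \otimes_{E'} \lbar E$, this is an isomorphism of $\lbar E[\Gamma]$-modules, so~$V$ descends to the unramified extension~$L$.

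The only non-formal step is the splitting of~$\Delta$ by the unramified extension of degree~$m$. This is exactly where the $p$-adic nature of~$E$ is used, and I would cite it from local class field theory.
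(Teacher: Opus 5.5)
Your proof is correct and follows essentially the same route as the paper. The paper cites Schur index theory as a black box: a central division algebra $D(V)$ over $E(V)$ such that $V$ descends to a field $L \supset E(V)$ exactly when $D(V)\otimes_{E(V)} L$ is split. It combines this with the fact that central division algebras over local fields split over unramified extensions. You unpack the needed half of that black box via the simple component $e_V E(V)[\Gamma] \cong M_n(\Delta)$, and handle the other direction by the trace inclusion $E(V) \subset L$.
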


\begin{proof}
By the theory of Schur indices, there exists a central division algebra~$D(V)$ over~$E(V)$ such that, for every subfield $E(V) \subset L \subset \lbar E$,
the representation~$V$ descends to~$L$ if and only if $D(V) \otimes_{E(V)} L$ is split.
Then the lemma is a consequence of the fact that every central division algebra over a local field splits over an unramified extension.
\end{proof}

%% file: Iwasawa-arXiv.tex
\section{Representations of compact subgroups of~$G$.}\label{D* representations}

\subsection{Truncated projective envelopes.}
Choose $(G, H) \in \{(\GL_2(K), I), (D^\times, \cO_D^\times)\}$, let~$Z$ be the centre of~$G$, 
and let $H_1 := \Syl_p(H)$ and~$Z_1 := Z \cap H_1$.

\begin{defn}\label{defn:Wchi,n}
Let~$\chi : H \to \bF^\times$ be a smooth character.
For every positive integer~$n > 0$, let~$W_{\chi, n}$ be the quotient
\[
W_{\chi, n} := \Proj_{\bF[\![H/Z_1]\!]}(\chi)/\fm^n,
\]
of the projective cover of $\chi$ in the category of $\bF[\![H/Z_1]\!]$-modules, where~$\fm$ is the maximal ideal of the Iwasawa algebra $\bF[\![H_1/Z_1]\!]$.
\end{defn}

Note that~$W_{\chi, n}$ is a smooth $H/Z_1$-representation, and that we have an isomorphism 
\[
W_{\chi, n} \cong \chi \otimes_\bF W_{1_H, n}
\]
where~$1_H$ denotes the trivial character.
The associated graded of the $H$-cosocle (equivalently, $\fm$-adic) filtration on~$W_{1_H, 3}$ is
\begin{gather*}
\gr^0_\fm W_{1_H, 3} = \bF\\
\gr^1_\fm W_{1_H, 3} = \bigoplus_{\alpha \in \Phi} \bF \alpha = \bigoplus_{i \in \cJ}\bF \alpha_i \oplus \bF \alpha_i^{-1}\\
\gr^2_\fm W_{1_H, 3} = \bF^{2f} \oplus \bigoplus_{\alpha \in \Phi}\bF\alpha^2 \oplus 
\bigoplus_{\substack{\{\alpha, \beta\} \subset \Phi\\\alpha \ne \pm \beta}}\bF \alpha\beta
\end{gather*}
(This is a consequence of the Poincar\'e--Birkhoff--Witt theorem for $\gr_\fm \bF[\![H/Z_1]\!]$, which is isomorphic to an enveloping algebra, compare \cite[Corollary~2.11]{HuWangD*} and~\cite[Formula~(44)]{BHHMS}.)
\begin{lemma}\label{multiplicity of characters in Wchi3}\leavevmode Recall our assumption that~$p>3$.
\begin{enumerate}
\item Let~$\chi^\perp$ be the complement in~$\gr^2_\fm W_{\chi, 3}$ of the $\chi$-isotypic component of $\gr^2_\fm W_{\chi, 3}$.
Then
\[
\gr^2_\fm W_{\chi, 3} = \chi^\perp \oplus \bF^{2f}\chi
\] 
and~$\chi^\perp$ is multiplicity-free.
\item For all~$\alpha \in \Phi$, the character $\chi\alpha : H \to \bF^\times$ has multiplicity one in~$W_{\chi, 3}^{\mathrm{ss}}$.
\end{enumerate}
\end{lemma}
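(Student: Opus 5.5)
Since $W_{\chi,3} \cong \chi \otimes W_{1_H,3}$, the plan is to work only with the explicit description of $\gr_\fm W_{1_H,3}$ recorded above and twist by $\chi$ at the end. All $H$-characters involved have the form $\chi\cdot\chi_{w_H}(\mu)$, where $\mu$ is a sum of at most two roots. So deciding whether two of them coincide reduces, after cancelling $\chi$, to deciding whether $\chi_{w_H}(\mu-\mu')=0$ for a difference $\mu-\mu'=\sum_{\alpha\in\Phi^+}n_\alpha\alpha$. Lemma~\ref{products of embeddings} then does the work, provided every such difference has coefficients $n_\alpha\in\{0,\pm1,\pm2\}$.

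For part~(1), the characters occurring in $\gr^2_\fm W_{\chi,3}$ are:
\begin{itemize}
\item $\chi$ with multiplicity $2f$;
\item $\chi\alpha^2$ for $\alpha\in\Phi$;
\item $\chi\alpha\beta$ for unordered pairs $\{\alpha,\beta\}\subset\Phi$ with $\alpha\ne\pm\beta$.
\end{itemize}
The associated $\mu$'s are $0$, $2\alpha$, and $\alpha+\beta$. I will write each in the basis $\Phi^+=\{\alpha_j\}$: every $\mu$ has coefficients in $\{0,\pm1,\pm2\}$ with total absolute value at most $2$. The argument then has three steps.
\begin{enumerate}
\item None of the nonzero $\mu$'s maps to $0$. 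This holds because $2\alpha\ne0$ and $\alpha+\beta\ne0$ when $\alpha\ne-\beta$, and Lemma~\ref{products of embeddings} gives the result. Hence the $\chi$-isotypic part is exactly $\bF^{2f}\chi$.
\item Two distinct items in the list $\{2\alpha\}\cup\{\alpha+\beta\}$ are distinct as elements of $X^*(\uline T)$. This is visible from supports: $2\alpha$ has a single coefficient $\pm2$, while $\alpha+\beta$ has either two coefficients $\pm1$ (if $\alpha,\beta$ lie at different embeddings) or a single $\pm2$ (but then $\alpha=\beta$, which is the square case). The sign pattern recovers the unordered pair.
\item The difference of two distinct items has coefficients in $\{0,\pm1,\pm2\}$. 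Here one must check that the total absolute value, at most $4$, spreads so that no single coefficient exceeds $2$ in absolute value. The only danger is $2\alpha_j-(-2\alpha_j)=4\alpha_j$, i.e.\ the pair $\alpha_j^2$ versus $\alpha_j^{-2}$, together with the analogous cases $2\alpha_j-(-\alpha_j-\beta)$, which give coefficient $3$.
\end{enumerate}

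The third step is the main obstacle: Lemma~\ref{products of embeddings} as stated does not cover coefficients $3$ or $4$. For these cases I would go back to the $p$-adic expansion argument directly. Under $\chi_{w_H}$, the character $\alpha_j$ corresponds to an exponent $\pm(p^{j}\text{-type term})$ modulo $q-1$, or modulo $q^2-1$ in the division case with the twist at $j_0$. So $\chi_{w_H}(\mu)=0$ means $\sum_j c_j p^{j}\equiv0$ modulo $q-1$ (respectively $q^2-1$) for small integers $c_j$. The proof of Lemma~\ref{products of embeddings} uses only $|c_j|$ bounded by something less than $p/2$ so that the expansion is forced to vanish. Since $p>5$, coefficients up to $4$ (at most doubled in the division case) still satisfy the bound, so the same argument applies verbatim. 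This is the point I would check carefully, especially the $q^2-1$ normalisation when $H=\cO_D^\times$, where $\alpha_{j_0}$ contributes $p^{j_0}-p^{j_0+f}$-type terms.

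For part~(2), $W_{\chi,3}^{\mathrm{ss}}$ has constituents $\chi$ in degree $0$, $\chi\beta$ for $\beta\in\Phi$ in degree $1$, and the characters from part~(1) in degree $2$. The constituent $\chi\alpha$ appears in degree $1$, and I will show it appears exactly once:
\begin{itemize}
\item It does not equal $\chi$ or $\chi\beta$ for $\beta\ne\alpha$, because the differences $\alpha$ and $\alpha-\beta$ have coefficients in $\{0,\pm1,\pm2\}$ and are nonzero, so Lemma~\ref{products of embeddings} applies.
\item It does not equal any degree-$2$ character, because the differences $\alpha$, $\alpha-2\gamma$ and $\alpha-\gamma-\delta$ are nonzero elements of $X^*(\uline T)$, by a parity or total-degree count in the root lattice. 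Their coefficients are bounded by $3$, so they are again handled by the extended version of Lemma~\ref{products of embeddings} from part~(1).
\end{itemize}
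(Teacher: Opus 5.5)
Your argument is correct and follows the paper's route. You twist to $\chi=1$, read off the constituents of $W_{1_H,3}$, and separate them by applying Lemma~\ref{products of embeddings} to differences of weights, together with the linear independence of $\Phi^+$. Your extension is exactly what the paper's proof leaves out. The paper applies Lemma~\ref{products of embeddings} to $\alpha_1+\alpha_2-\beta_1-\beta_2$, and in part~(2) to differences such as $\alpha-2(-\alpha)=3\alpha$, although these can have coefficients $\pm3$ or $\pm4$, outside the lemma's hypothesis. The extension genuinely needs the standing assumption $p>5$ rather than the $p>3$ quoted in the statement. For $H=I$ and $q=p=5$, the characters $\alpha^{2}$ and $\alpha^{-2}$ of $I/I_1\cong k^\times\times k^\times$ coincide, so $\chi^\perp$ is not multiplicity-free.

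One correction to your justification of the extension. A bound $|c_j|<p/2$ would not allow coefficient $4$ when $p=7$, but no such bound is needed; a size estimate suffices.
\begin{itemize}
\item Index the positive roots as $\alpha_i$, $0\le i\le f-1$, so that $\chi_{w_H}(\alpha_i)$ is $\operatorname{diag}(a,d)\mapsto j_0(a/d)^{p^i}$ on $I/I_1$, resp.\ $x\mapsto x^{\pm(q-1)p^i}$ on $k_D^\times$. In particular the $p^{i+f}$-terms you were worried about cause no harm.
\item For $\mu=\sum_i n_i\alpha_i$ put $N=\sum_i n_ip^i$. Then $\chi_{w_H}(\mu)=0$ means $(q-1)\mid N$ when $H=I$, resp.\ $(q+1)\mid N$ when $H=\mathcal{O}_D^\times$.
\item In all your cases $\sum_i|n_i|\le 4$, so $|N|\le 4p^{f-1}$. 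This is less than $q-1$, hence also less than $q+1$, as soon as $p\ge 7$.
\item Finally $N\neq 0$ whenever $\mu\neq0$, because $|n_i|\le 4<p$.
\end{itemize}
This gives the extended version of Lemma~\ref{products of embeddings} that both parts of your argument require.
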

\begin{proof}
After a twist, we can assume without loss of generality that~$\chi$ is the trivial character.
The description of $\gr_\fm^2 W_{\chi, 3}$ is then a direct consequence of the description of~$W_{1_H, 3}$ given in the paragraph above.
To prove that~$\chi^\perp$ is multiplicity-free, we need to prove that if $\alpha_1 \ne -\alpha_2 \in \Phi$ and~$\beta_1 \ne -\beta_2 \in \Phi$ then
\[
\alpha_1\alpha_2 = \beta_1\beta_2 \in \Hom(H, \bF^\times) \implies \{\alpha_1, \alpha_2\} = \{\beta_1, \beta_2\}. 
\]
If~$\alpha_1\alpha = \beta_1\beta_2$, then Lemma~\ref{products of embeddings} implies that $\alpha_1+\alpha_2 = \beta_1+\beta_2$. 
Since~$\Phi^+$ is $\bQ$-linearly independent in~$X^*(\uline T)_\bQ$, this implies that
$\{\alpha_1, \alpha_2\} = \{\beta_1, \beta_2\}$ and concludes the proof of part~(1).

Part~(2) is a direct consequence of Lemma~\ref{products of embeddings}.\qedhere
\end{proof}

\begin{defn}
We write~$\lbar W_{\chi, 3}$ for the quotient of~$W_{\chi, 3}$ by $\chi^\perp$.
\end{defn}
It follows from the discussion above that 
\begin{gather*}
\gr^0_\fm \lbar W_{\chi, 3} = \chi\\
\gr^1_\fm \lbar W_{\chi, 3} = \bigoplus_{\alpha \in \Phi} \bF \chi\alpha\\
\gr^2_\fm \lbar W_{\chi, 3}= \bF^{2f}\chi. 
\end{gather*}
Note that~$W_{\chi, 2}$ is a quotient of~$\lbar W_{\chi, 3}$.
The next lemma will be useful in Section~\ref{multiplicity}.

\begin{lemma}\label{W2inW3}
Let~$\alpha \in \Phi$.
Then every nonzero $H$-equivariant morphism $W_{\chi\alpha, 2} \to W_{\chi, 3}$ is injective.
Hence $\soc_H W_{\chi, 3} = \gr^2_\fm W_{\chi, 3}$.
\end{lemma}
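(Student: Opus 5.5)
The plan is to work entirely at the level of the associated graded module, using that $\gr_\fm \bF[\![H_1/Z_1]\!]$ is an enveloping algebra, hence an integral domain. By the twist isomorphism $W_{\chi,n}\cong \chi\otimes W_{1_H,n}$, the graded module $\gr_\fm W_{\chi,3}$ is free of rank one over $A:=\gr_\fm\bF[\![H_1/Z_1]\!]/A_{\geq 3}$, generated in degree $0$ by a $\chi$-eigenvector. Similarly $\gr_\fm W_{\chi\alpha,2}$ is $A/A_{\geq 2}$ generated by a $\chi\alpha$-eigenvector. For each $\beta\in\Phi$, fix a nonzero element $X_\beta\in A_1$ of weight $\beta$.

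\emph{Step 1: locate the image of the generator.} Let $f\colon W_{\chi\alpha,2}\to W_{\chi,3}$ be nonzero, and let $e$ be a $\chi\alpha$-eigenvector generating $W_{\chi\alpha,2}$. Such an $e$ exists since $H/H_1$ has order prime to~$p$. Because $f\neq 0$ and $e$ generates, $v:=f(e)$ is a nonzero $\chi\alpha$-eigenvector in $W_{\chi,3}$. By Lemma~\ref{multiplicity of characters in Wchi3}(2), $\chi\alpha$ occurs exactly once in $W_{\chi,3}^{\mathrm{ss}}$, namely in $\gr^1_\fm$. This forces $v\in\fm W_{\chi,3}\setminus \fm^2W_{\chi,3}$, so the image $\bar v$ of $v$ in $\gr^1_\fm W_{\chi,3}$ is a nonzero multiple of $X_\alpha\cdot 1$. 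I will double-check the layer count directly via Lemma~\ref{products of embeddings}: $\chi\alpha$ cannot equal $\chi$, $\chi\beta^2$ or $\chi\beta\gamma$, because $\alpha\ne 0,2\beta,\beta+\gamma$ by linear independence of $\Phi^+$.

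\emph{Step 2: injectivity.} The socle of $W_{\chi\alpha,2}$ is exactly $\fm W_{\chi\alpha,2}=\bigoplus_\beta \bF X_\beta e$. Indeed, $\fm^2$ kills this layer. Conversely, any element with nonzero degree-$0$ component $c\,e$ satisfies $X_\beta(ce+\dots)\equiv cX_\beta e\ne 0$, using that the graded pieces are free over $A/A_{\ge2}$. So it suffices to show $f$ is injective on this socle. Its $H$-isotypic pieces are the lines $\bF X_\beta e$, and these have pairwise distinct characters $\chi\alpha\beta$ (again by Lemma~\ref{products of embeddings}). Hence it is enough to check $f(X_\beta e)=X_\beta v\neq 0$ for each $\beta$. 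In the graded module, $X_\beta v$ lies in $\fm^2W_{\chi,3}=\gr^2$, and its class is $X_\beta\bar v$, a nonzero multiple of $X_\beta X_\alpha\cdot 1\in A_2$. This element is nonzero because the enveloping algebra is a domain and $A_2$ survives the truncation at degree $3$. So $f$ is injective on the socle, hence injective.

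\emph{Step 3: the socle of $W_{\chi,3}$.} Clearly $\gr^2_\fm W_{\chi,3}=\fm^2W_{\chi,3}\subseteq\soc_H W_{\chi,3}$, since $\fm^3=0$. For the reverse inclusion, note that the socle is $H$-stable and $H/H_1$ acts semisimply, so it suffices to treat eigenvectors $w\notin\fm^2W_{\chi,3}$. If $w$ has nonzero degree-$0$ image, then $X_\beta w\ne0$ by the domain property. Otherwise, $w$ has nonzero image in $\gr^1$ of some weight $\chi\alpha$. Since $\fm^2w\subseteq \fm^3W_{\chi,3}=0$ and $W_{\chi\alpha,2}$ is projective among $\bF[\![H/Z_1]\!]/\fm^2$-modules, $w$ is the image of the generator under a nonzero map $W_{\chi\alpha,2}\to W_{\chi,3}$. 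By Step 2, $X_\beta w\neq0$, so $w\notin\soc_H$.

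The main obstacle I expect is Step 1, namely justifying that $v$ has nonzero leading term in $\gr^1$. This rests on the multiplicity-one statement and on checking that the PBW description of the graded pieces is compatible with weights. The rest is formal once $\gr_\fm$ of the Iwasawa algebra is known to be a domain.
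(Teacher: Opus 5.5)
Your proposal is correct and is essentially the paper's argument. The paper deduces the lemma from the fact that $\gr_\fm \bF[\![H_1/Z_1]\!]$ has no zero-divisors, citing \cite[Lemma~6.1.2]{BHHMS} for $H=I$, and \cite[Proposition~2.5]{HuWangD*} together with Lemma~\ref{multiplicity of characters in Wchi3}(2) for $H=\cO_D^\times$. This is exactly the combination you use to place $f(e)$ in $\fm W_{\chi,3}\setminus\fm^2 W_{\chi,3}$ and then show $X_\beta f(e)\neq 0$; you have simply written out the details the paper leaves to those references, including the deduction of the socle statement.
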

\begin{proof}
This is a consequence of the fact that $\gr_{\fm}\bF[\![H_1/Z_1]\!]$ has no zero-divisors, see \cite[Lemma~6.1.2]{BHHMS} in the case $H = I$. 
Given~\cite[Proposition~2.5]{HuWangD*} and Lemma~\ref{multiplicity of characters in Wchi3}(2), the same argument goes through when~$H = \cO_D^\times$.
\end{proof}

\subsection{A presentation of~$\lbar W_{\chi,3}$.}
Our next results give a presentation for~$\lbar W_{\chi, 3}$, which we will use in Section~\ref{multiplicity} to prove a cyclicity theorem for its patched module.

\begin{lemma}\label{arm definition}
Let~$\chi : H \to \bF^\times$ be a smooth character, and let~$\alpha \in \Phi$.
There exist unique isomorphism classes of smooth $\bF[\![H/Z_1]\!]$-representations $E_{\chi, \alpha}$ and~$A_{\chi, \alpha}$ such that
\begin{gather*}
\gr^0_\fm(E_{\chi, \alpha}) = \chi, \;\;\; \gr^1_\fm(E_{\chi, \alpha}) = \chi\alpha\\
\gr^0_\fm(A_{\chi, \alpha}) = \chi, \;\;\; \gr^1_\fm(A_{\chi, \alpha}) = \chi\alpha, \;\;\; \gr^2_\fm(A_{\chi, \alpha}) = \chi, \;\;\;
\end{gather*}
i.e. $E_{\chi, \alpha} = (\chi\alpha - \chi)$ and~$A_{\chi, \alpha} = (\chi-\chi\alpha-\chi)$.
\end{lemma}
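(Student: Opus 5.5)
Both representations will be constructed as quotients of~$W_{\chi,3}$ (resp.~$W_{\chi,2}$), and uniqueness will come from the fact that the relevant $\Ext^1$-spaces between the characters involved are one-dimensional.

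\textbf{Existence.} Start from the description of $\gr_\fm W_{\chi,3}$ and Lemma~\ref{multiplicity of characters in Wchi3}. For $E_{\chi,\alpha}$, take the quotient of $W_{\chi,2}$ by the $H$-stable complement $\bigoplus_{\beta \neq \alpha}\bF\chi\beta$ in $\gr^1_\fm W_{\chi,2}=\soc_H W_{\chi,2}$. This complement is $H$-stable because $\gr^1$ is semisimple and is the socle. Since $\chi\alpha$ occurs with multiplicity one (Lemma~\ref{multiplicity of characters in Wchi3}(2)), the subspace is canonical.

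For $A_{\chi,\alpha}$, take the quotient of $\lbar W_{\chi,3}$ by the following submodule $N$:
\begin{itemize}
\item the preimage of $\bigoplus_{\beta\ne\alpha}\bF\chi\beta\subset\gr^1_\fm$,
\item together with a suitable codimension-one subspace of $\gr^2_\fm\lbar W_{\chi,3}=\bF^{2f}\chi$.
\end{itemize}
To choose the latter, consider the submodule $X\subset\lbar W_{\chi,3}$ generated by a lift $x$ of the $\chi\alpha$-line in $\gr^1$. Using Lemma~\ref{W2inW3}, the image of $W_{\chi\alpha,2}\to W_{\chi,3}$ is injective. Its $\gr^1$-part therefore contains $\chi\alpha\cdot\alpha^{-1}=\chi$ with nonzero image in $\gr^2_\fm W_{\chi,3}$, and this image lies in the $\bF^{2f}\chi$ summand rather than in $\chi^\perp$. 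Hence $\fm x$ projects to a nonzero line $\ell\subset\gr^2_\fm\lbar W_{\chi,3}$. Take a complementary hyperplane $C$ to $\ell$ and set $N$ to be the span of $C$ and the preimage described above. One checks that $N$ is $H$-stable, using that $\gr^2$ is the socle and that $\fm$ maps the $\chi\beta$ lifts into $\gr^2$. The quotient then has graded pieces $\chi,\chi\alpha,\chi$, and it is uniserial because $\fm x\ne 0$ in it.

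\textbf{Uniqueness.} Any such $E$ is a uniserial module with cosocle~$\chi$, so it is a quotient of $W_{\chi,2}$. Since $\chi\alpha$ has multiplicity one in $\gr^1_\fm W_{\chi,2}$, the kernel is forced.

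For $A$, it is a quotient of $W_{\chi,3}$ factoring through $\lbar W_{\chi,3}$, because its $\gr^2$ is $\chi$ and so $\chi^\perp$ must die. The kernel must contain the $\chi\beta$-preimages for $\beta \neq \alpha$. So $A$ is a quotient of $Q:=\lbar W_{\chi,3}/(\text{those})$, which has graded pieces $\chi,\chi\alpha,\bF^{2f}\chi$, and the kernel is a hyperplane in $\bF^{2f}\chi$ not containing~$\ell$.

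It remains to show that all such quotients are isomorphic. This is the main obstacle, since the hyperplanes genuinely vary. I would handle it by showing that in $Q$ the submodule generated by $x$ has $\gr^2$ exactly equal to~$\ell$, so that $\fm^2 Q=\ell\oplus(\text{image of }\fm^2\text{ acting on the top})$. Equivalently, $A$ is determined by the extension class of $E_{\chi\alpha,\alpha^{-1}}$ by the top $\chi$ through the cup-product/Yoneda structure. Concretely, $A_{\chi,\alpha}$ is an extension of $E_{\chi,\alpha}$ by $\chi$ whose restriction to $\chi\alpha$ is the unique nonsplit class. Here I would use that $\dim\Ext^1_{H/Z_1}(\chi\alpha,\chi)=1$, which follows from the multiplicity-one statement in $\gr^1$, and that $\Ext^1_{H/Z_1}(\chi,\chi)=\Hom(H_1/Z_1,\bF)^H$ acts on the set of such extensions.

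Different hyperplanes differ by adding a class in $\Ext^1(\chi,\chi)$ pulled back along $E_{\chi,\alpha}\to\chi$. Finally, I would show that these modifications produce isomorphic modules by twisting the splitting of the top $\chi$: an automorphism of $E_{\chi,\alpha}$ composed with a map $E_{\chi,\alpha}\to\chi$ absorbs the difference. If this absorption argument fails, I would fall back on defining $A_{\chi,\alpha}$ as the image of $W_{\chi,3}$ in $W_{\chi,3}/\ldots$ generated canonically via $W_{\chi\alpha,2}\hookrightarrow W_{\chi,3}$, and interpret ``unique'' relative to that construction.
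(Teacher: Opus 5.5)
Your treatment of $E_{\chi,\alpha}$ is fine; it amounts to the paper's observation that $\Ext^1(\chi,\chi\alpha)$ is one-dimensional. For $A_{\chi,\alpha}$, however, neither existence nor uniqueness is established, and the obstacle you identify comes from a miscalculation.

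\textbf{Existence.} The preimage in $\lbar W_{\chi,3}$ of $\bigoplus_{\beta\neq\alpha}\bF\chi\beta\subset\gr^1_\fm$ contains all of $\fm^2\lbar W_{\chi,3}$. Quotienting by it therefore only gives $E_{\chi,\alpha}$. Suppose you mean instead the span of lifts $x_\beta$ ($\beta\neq\alpha$) together with an arbitrary hyperplane $C$ complementary to $\ell$. That is not a submodule. Any submodule containing $x_\beta$ contains $\fm x_\beta$, and by the same argument via Lemma~\ref{W2inW3} that you used for $\ell$, this is a \emph{nonzero} line $\ell_\beta\subset\gr^2_\fm\lbar W_{\chi,3}$. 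So $C$ is forced to contain $\sum_{\beta\neq\alpha}\ell_\beta$. Moreover, nothing in your argument shows that $\ell\not\subset\sum_{\beta\neq\alpha}\ell_\beta$, and this is exactly what existence requires.

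\textbf{Uniqueness.} By the same observation, your $Q$ does not have $\gr^2=\bF^{2f}\chi$, and the kernels do not vary. The ``absorption'' argument is not a proof. You write $\Ext^1(\chi,\chi)=\Hom(H_1/Z_1,\bF)^H$ but never observe that it vanishes, which it does because $\gr^1_\fm W_{1_H,3}=\bigoplus_{\alpha\in\Phi}\bF\alpha$ has no trivial summand. The fallback of redefining ``unique'' changes the statement.

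\textbf{The missing ingredient is a dimension count.} Let $\mu\cong H/H_1$ be a prime-to-$p$ complement, and take $x_\beta$ to be the $\mu$-eigenvector with eigencharacter $\chi\beta$.
\begin{itemize}
\item $\ell_\beta:=\fm x_\beta$ has dimension at most one, since only the $\beta^{-1}$-direction of $\fm/\fm^2$ can land in the $\chi$-isotypic $\gr^2$.
\item Nakayama gives $\fm^2\lbar W_{\chi,3}=\sum_{\beta\in\Phi}\ell_\beta$.
\item Since $\dim\gr^2_\fm\lbar W_{\chi,3}=2f=|\Phi|$, this forces $\gr^2_\fm\lbar W_{\chi,3}=\bigoplus_{\beta}\ell_\beta$ with every $\ell_\beta\neq 0$.
\end{itemize}
As you note, any $A$ is a quotient of $\lbar W_{\chi,3}$. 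Comparing $\mu$-eigencharacters, it must kill $x_\beta$ for $\beta\neq\alpha$, hence also $\ell_\beta$. A dimension count then shows the kernel is exactly $\bigoplus_{\beta\neq\alpha}(\bF x_\beta\oplus\ell_\beta)$. Conversely, the quotient by this submodule has graded pieces $\chi,\chi\alpha,\chi$ because $\ell_\alpha$ survives. This gives existence and uniqueness at once.

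The paper argues through $\Ext$ instead. The vanishing $\Ext^1(\chi,\chi)=0$ makes $\Ext^1(E_{\chi,\alpha},\chi)$ inject into the line $\Ext^1(\chi\alpha,\chi)$, which gives uniqueness. The vanishing $\Ext^2(\chi,\chi)=H^2(H/Z_1,\bF)=0$, obtained from~\cite{DLHcohomology} and the K\"unneth formula, kills the obstruction to lifting, which gives existence. In the quotient approach, the independence of the $\ell_\beta$ replaces that $\Ext^2$-vanishing, and it uses only the description of $\gr_\fm W_{1_H,3}$ recorded before the lemma. Your proposal supplies neither ingredient.
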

\begin{proof}
It suffices to prove that $\Ext^1_{\mathrm{sm.}\,\bF[H/Z_1]}(\chi, \chi \alpha)$ is one-dimensional, and that 
\[
\Ext^1_{\mathrm{sm.}\,\bF[H/Z_1]}(\chi, \chi) = \Ext^2_{\mathrm{sm.}\,\bF[H/Z_1]}(\chi, \chi) = 0.
\]
The statement about~$\Ext^1$ is a direct consequence of the description of $\gr^1_\fm W_{\chi, 3}$.
On the other hand, we have
\[
\Ext^i_{\mathrm{sm.}\,\bF[H/Z_1]}(\chi, \chi) \cong H^i(H/Z_1, \bF).
\] 
Since $H \cong Z_1 \times H/Z_1$, we have
\[
H^*(H, \bF) = H^*(Z_1, \bF) \otimes_{\bF} H^*(H/Z_1, \bF)
\]
hence the vanishing of $\Ext^1$ and~$\Ext^2$ follows if we prove that there exists a graded $\bF$-algebra~$\Lambda$ such that $\Lambda^0 = \bF, \Lambda^1 = \Lambda^2 = 0$, and
$H^*(H, \bF) = H^*(Z_1, \bF) \otimes_{\bF} \Lambda$.
This is a consequence of~\cite[Proposition~5.5]{DLHcohomology} when~$H = I$, and~\cite[Proposition~5.12]{DLHcohomology} when~$H = \cO_D^\times$.
\end{proof}

\begin{prop}\label{presenting Wchi3}
Let~$\chi : H \to \bF^\times$ be a smooth character, and write $\Delta(\chi) \subset \chi^{2f}$ for the diagonal subspace. 
Then there exists an exact sequence
\[
0 \to \lbar W_{\chi, 3} \to \bigoplus_{\alpha \in \Phi}A_{\chi, \alpha} \to \chi^{\oplus \Phi}/\Delta(\chi) \to 0
\]
such that the composite $\lbar W_{\chi, 3} \to A_{\chi, \alpha}$ is surjective for every $\alpha \in \Phi$.
\end{prop}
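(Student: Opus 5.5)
Our plan is to build the map $\lbar W_{\chi, 3} \to \bigoplus_{\alpha}A_{\chi, \alpha}$ from quotients of $\lbar W_{\chi,3}$, and then identify kernel and cokernel by counting Jordan--H\"older factors using Lemma~\ref{multiplicity of characters in Wchi3}.

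\emph{Surjections $\lbar W_{\chi,3} \to A_{\chi,\alpha}$.} Fix $\alpha \in \Phi$. Since $\lbar W_{\chi,3}$ is a quotient of $W_{\chi,3} = \Proj(\chi)/\fm^3$ and $A_{\chi,\alpha}$ has cosocle $\chi$ and Loewy length $3$, projectivity of $\Proj(\chi)$ gives a surjection $W_{\chi,3} \to A_{\chi,\alpha}$. This map must kill $\chi^\perp$: the characters occurring in $\chi^\perp$ are of the form $\chi\beta\gamma$ with $\beta \neq -\gamma$ (or $\chi\beta^2$), and by Lemma~\ref{products of embeddings} none of them equals $\chi$ or $\chi\alpha$. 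These are the only constituents of $A_{\chi,\alpha}$, and the map is compatible with the $\fm$-adic filtrations. Hence we obtain a surjection $f_\alpha : \lbar W_{\chi,3} \to A_{\chi,\alpha}$, and we set $f := \bigoplus_\alpha f_\alpha$.

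\emph{Injectivity of $f$.} It suffices to show that $f$ is injective on the socle of $\lbar W_{\chi,3}$. We first identify this socle with $\gr^2_\fm \lbar W_{\chi,3} = \bF^{2f}\chi$. This follows from Lemma~\ref{W2inW3}, which gives $\soc W_{\chi,3} = \gr^2_\fm W_{\chi,3}$. Passing to the quotient by $\chi^\perp$ needs an argument: every element of $\gr^1$ is hit nontrivially by $\fm$ into the $\chi$-part, because each $\chi\alpha$ lies in the image of $W_{\chi\alpha,2} \hookrightarrow W_{\chi,3}$ and $\alpha\cdot\alpha^{-1}$ contributes to the $\chi$-isotypic part.

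Next we describe $\gr^2$ concretely. Its $\bF^{2f}$ part corresponds, via PBW for $\gr_\fm \bF[\![H_1/Z_1]\!]$, to the span of the products $e_\alpha e_{-\alpha}$ and $e_{-\alpha}e_\alpha$ modulo the center, and it is spanned by these as $\alpha$ ranges over $\Phi$. The map $f_\alpha$ on $\gr^2$ is a linear functional $\ell_\alpha : \bF^{2f} \to \bF$, recording the image of $e_{-\alpha}e_{\alpha}$ (going up through $\chi\alpha$). Injectivity on the socle then amounts to the $2f$ functionals $\ell_\alpha$ being linearly independent. This is the main obstacle. We would verify it using the explicit structure of $\gr_\fm \bF[\![H_1/Z_1]\!]$: in the split case this is \cite[Formula~(44)]{BHHMS}, and in the division case \cite[Corollary~2.11]{HuWangD*}. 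There the relation $[e_\alpha, e_{-\alpha}] = h_\alpha$ lies in degree $2$, and the elements $e_\alpha e_{-\alpha}$ for $\alpha \in \Phi$ form a basis of the $\chi$-part of $\gr^2$.

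\emph{Cokernel.} Counting dimensions, $\dim \lbar W_{\chi,3} = 1 + 2f + 2f$, while $\dim \bigoplus_\alpha A_{\chi,\alpha} = 3\cdot 2f$. So the cokernel has dimension $2f - 1$. Its constituents are all $\chi$: the $\gr^1$ pieces $\chi\alpha$ map isomorphically onto the corresponding summands by Lemma~\ref{multiplicity of characters in Wchi3}(2), and the socles match. Because $\Ext^1(\chi,\chi) = 0$ by Lemma~\ref{arm definition}, the cokernel is semisimple. It remains to identify it with $\chi^{\oplus\Phi}/\Delta(\chi)$. For this we note that the cosocle map $\chi \to \bigoplus_\alpha \cosoc A_{\chi,\alpha} = \chi^{\oplus \Phi}$ is diagonal, and after normalizing the $f_\alpha$ the quotient by the image is exactly $\chi^{\oplus\Phi}/\Delta(\chi)$. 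The socle identification then follows since the socle part is surjective, by dimension count.
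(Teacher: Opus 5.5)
Your argument is correct, but it handles the key step differently from the paper.

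\textbf{The paper's route.} It first determines the structure of $\lbar W_{\chi,3}$ itself. Since $\dim\Hom(W_{\chi\alpha,2},\lbar W_{\chi,3})=1$, and by Lemma~\ref{W2inW3} the image of a nonzero map is $E_{\chi\alpha,-\alpha}$, a length count gives $\fm\lbar W_{\chi,3}\cong\bigoplus_{\alpha\in\Phi}E_{\chi\alpha,-\alpha}$. The surjections onto $A_{\chi,\alpha}$ are normalized to be the identity on $E_{\chi\alpha,-\alpha}$; they vanish on the other summands because $\Hom(E_{\chi\beta,-\beta},E_{\chi\alpha,-\alpha})=0$ for $\beta\neq\alpha$. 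The snake lemma then gives injectivity and the cokernel $\chi^{\oplus\Phi}/\Delta(\chi)$, with its natural map, all at once.

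\textbf{Your route.} You build the $f_\alpha$ directly by projectivity and reduce injectivity to independence of the socle functionals $\ell_\alpha$. You then get the cokernel from Jordan--H\"older counting, $\Ext^1(\chi,\chi)=0$ and the diagonal cosocle map.

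\textbf{Your ``main obstacle'' is formal.} It is the paper's decomposition in disguise, and it does not need the bracket $[e_\alpha,e_{-\alpha}]=h_\alpha$. Let $v_0$ generate $\lbar W_{\chi,3}$. Then $\fm^2\lbar W_{\chi,3}$ is spanned by the eigenvectors $e_\gamma e_\beta v_0$. These vanish unless $\chi\beta\gamma=\chi$, i.e.\ $\gamma=-\beta$ by Lemma~\ref{products of embeddings}. So the $2f$ vectors $e_{-\beta}e_\beta v_0$ form a basis of this $2f$-dimensional space. Next, $\ell_\alpha(e_{-\beta}e_\beta v_0)=0$ for $\beta\neq\alpha$: otherwise $f_\alpha(e_\beta v_0)$ would be a $\chi\beta$-eigenvector in $A_{\chi,\alpha}$, which has no such constituent. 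Finally $\ell_\alpha\neq0$, because $f_\alpha(\fm^2\lbar W_{\chi,3})=\fm^2A_{\chi,\alpha}$. Hence the matrix $(\ell_\alpha(e_{-\beta}e_\beta v_0))_{\alpha,\beta}$ is diagonal and invertible. You should state this vanishing explicitly, since it is what actually forces independence.

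\textbf{Two simplifications.} First, you can skip identifying the socle: $f$ is a filtered map whose associated graded is injective in degrees $0$, $1$ and $2$, so $f$ is injective. Second, to see that $f_\alpha$ kills $\chi^\perp$ you only need that $\chi^\perp$ has no $\chi$-constituent, which holds by definition, since $f_\alpha(\fm^2W_{\chi,3})\subset\fm^2A_{\chi,\alpha}\cong\chi$.

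The paper's route is coordinate-free and produces the natural maps directly; yours makes the mechanism of injectivity completely explicit.
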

\begin{proof}
By Lemma~\ref{multiplicity of characters in Wchi3}(2) we know that for every~$\alpha \in \Phi$ we have
\[
\dim_\bF \Hom_{\bF[H/Z_1]}(W_{\chi \alpha, 2}, \lbar W_{\chi, 3}) = 1.
\]
The image of a nonzero element is a quotient of~$W_{\chi\alpha, 2}$ whose socle is a direct sum of copies of~$\chi$, by Lemma~\ref{W2inW3}. 
Hence the image is a nonsplit extension of~$\chi \alpha$ by~$\chi$, and so it is isomorphic to~$E_{\chi\alpha, -\alpha}$, by Lemma~\ref{arm definition}. 
It follows that there is a map
\[
\bigoplus_{\alpha \in \Phi} E_{\chi\alpha, -\alpha} \to \fm\lbar W_{\chi, 3}
\]
which is furthermore surjective (because it is surjective on $H$-cosocles).
Comparing lengths, we see that it is an isomorphism. 
So we have an exact sequence
\[
0 \to \bigoplus_{\alpha \in \Phi} E_{\chi\alpha, -\alpha} \to \lbar W_{\chi, 3} \to \chi \to 0.
\]
Since $\Hom(E_{\chi\alpha, -\alpha}, E_{\chi\beta, -\beta}) = 0$ if~$\alpha \ne \beta \in \Phi$, and it is one-dimensional if~$\alpha = \beta$,
we can choose surjections $\lbar W_{\chi, 3} \to A_{\chi, \alpha}$ such that the diagram
\[
\begin{tikzcd}
0 \arrow[r] & \bigoplus_{\alpha \in \Phi} E_{\chi\alpha, -\alpha} \arrow[d, "\id"] \arrow[r] & \lbar W_{\chi, 3} \arrow[d] \arrow[r] & \chi \arrow[d, "\Delta"] \arrow[r]& 0\\
0 \arrow[r] & \bigoplus_{\alpha \in \Phi} E_{\chi\alpha, -\alpha} \arrow[r] & \bigoplus_{\alpha \in \Phi} A_{\chi, \alpha} \arrow[r] & \bigoplus_{\alpha \in \Phi} \chi \arrow[r] & 0
\end{tikzcd}
\]
commutes.
The snake lemma then produces an isomorphism
\[
\coker\left (\lbar W_{\chi, 3} \to \bigoplus_{\alpha \in \Phi}A_{\chi, \alpha}\right ) \to \coker (\chi \to \chi^{\oplus \Phi})
\]
and shows that $\lbar W_{\chi, 3} \to \bigoplus_{\alpha \in \Phi}A_{\chi, \alpha}$ is injective,
as desired.
\end{proof}

\begin{prop}\label{arm lifting}
Let~$j \in \cJ$, and choose $\alpha \in \{\pm \alpha_j\} \subset \Phi$.
Then:
\begin{enumerate}
\item $[\chi] \oplus ([\chi\alpha] \otimes_E V(1, -1)_j)$ contains a unique $E^\times \times E^\times$-homothety class 
of $\mO[H/Z_1]$-lattices $\tld A_{\chi, \alpha}$ with irreducible $H$-cosocle isomorphic to~$\chi$. 
\item There is an exact sequence
\begin{equation}\label{arm lifting sequence}
0 \to \chi \alpha^2 \to \tld A_{\chi, \alpha} \otimes_\mO \bF \to A_{\chi, \alpha} \to 0.
\end{equation}
\item Let $\cL_\chi, \cL_{\chi\alpha}$ be the images of~$\tld A_{\chi, \alpha}$ in $[\chi]$, resp.\ $[\chi\alpha] \otimes_E V(1, -1)_j$.
Then there is an exact sequence
\begin{equation}\label{arm exact sequence}
0 \to \tld A_{\chi, \alpha} \to \cL_\chi \oplus \cL_{\chi\alpha} \to \chi \to 0.
\end{equation}
\end{enumerate}
\end{prop}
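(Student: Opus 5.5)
The plan is to reduce everything to Hom-spaces out of the projective envelope of~$\chi$, and then to check the one remaining depth statement on an explicit lattice.

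\emph{Reduction of $V(1,-1)_j$.} First I would record what $V_2 := [\chi\alpha] \otimes_E V(1,-1)_j$ looks like.
\begin{itemize}
\item $V(1,-1)_j$ is the three-dimensional adjoint-type representation at the embedding~$j$. Its torus weights are $\alpha_j, 0, \alpha_j^{-1}$; for $H = \cO_D^\times$ one transports these through~$\chi_{w_H}$.
\item Its restriction to~$H$ is irreducible over~$E$, because $H$ is Zariski dense in the relevant algebraic group.
\item Consequently the reduction of any lattice in $V_2$ has semisimplification $\chi\alpha^2 \oplus \chi\alpha \oplus \chi$.
\item By Lemma~\ref{products of embeddings} these three characters are pairwise distinct. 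In particular $\chi$ occurs exactly once in the reduction of~$V_2$, and once in the reduction of~$[\chi]$.
\end{itemize}

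\emph{Part (1).} Let $\tld P_\chi$ be the projective envelope of (the Teichm\"uller lift of)~$\chi$ over $\cO[\![H/Z_1]\!]$. A lattice with irreducible cosocle~$\chi$ is exactly the image of a map $\tld P_\chi \to [\chi] \oplus V_2$ that is surjective after inverting~$p$. Moreover
\[
\dim_E \Hom_H(\tld P_\chi, V) = \text{multiplicity of } \chi \text{ in the reduction of } V.
\]
So $\Hom_H(\tld P_\chi, [\chi] \oplus V_2) \cong E \oplus E$, with one line for each summand. Generic surjectivity forces both components to be nonzero, and rescaling the two components independently gives exactly one $E^\times \times E^\times$-homothety class of images. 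The same count, applied to each summand separately, shows that $\cL_\chi$ and $\cL_{\chi\alpha}$ are the unique lattices (up to homothety) with cosocle~$\chi$ in $[\chi]$ and in~$V_2$.

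\emph{Parts (3) and (2).} For (3), normalize the cosocle maps $\cL_\chi \to \chi$ and $\cL_{\chi\alpha} \to \chi$ so that the generator of $\tld A_{\chi,\alpha}$ has image $(1,1)$. Then $\tld A_{\chi,\alpha}$ lies in the kernel of the difference map $\cL_\chi \oplus \cL_{\chi\alpha} \to \chi$. Equality with this kernel is equivalent to
\[
\tld A_{\chi,\alpha} \cap (\cL_\chi \oplus 0) = p\cL_\chi,
\]
that is, to the congruence between the two summands having depth exactly one. By the usual argument this depth is the largest~$n$ for which there is an $H$-equivariant surjection $\cL_{\chi\alpha} \to (\cO/p^n)(\chi)$.
\begin{itemize}
\item Depth $\ge 1$ is automatic, since both lattices have cosocle~$\chi$.
\item Depth $\le 1$ is the main obstacle, since it does not follow from dimension counts.
\end{itemize}
To settle it, I would write down $\cL_{\chi\alpha}$ explicitly in a weight basis $x^2, xy, y^2$ of $V(1,-1)_j$.
\begin{itemize}
\item For $H = I$, the upper unipotent elements act via $e_j$ and the lower ones via $f_j$ with a factor of~$p$.
\item For $H = \cO_D^\times$, I would use $\Pi_D$ together with the embedding $D \to M_2(E)$.
\end{itemize}
From this I would check that $\cL_{\chi\alpha}$ is generated by the lowest-weight vector and that $\cL_{\chi\alpha} \otimes \bF$ is uniserial $(\chi - \chi\alpha - \chi\alpha^2)$. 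Then a surjection onto $(\cO/p^2)(\chi)$ would force $\chi$ to occur twice in the reduction, which is impossible.

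Granting (3), I would reduce~\eqref{arm exact sequence} modulo~$p$. This gives $\tld A_{\chi,\alpha} \otimes \bF$ with semisimplification $\chi, \chi\alpha, \chi\alpha^2, \chi$ and irreducible cosocle~$\chi$. The explicit uniserial structure above then shows that its quotient by the socle constituent $\chi\alpha^2$ has Loewy layers $\chi, \chi\alpha, \chi$. Lemma~\ref{arm definition} identifies this quotient with $A_{\chi,\alpha}$, which gives~\eqref{arm lifting sequence}.
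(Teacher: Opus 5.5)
Your part~(1), and your reduction of part~(3) to showing that there is no $H$-equivariant surjection $\cL_{\chi\alpha} \to (\cO/p^2)(\chi)$, agree with the paper. The paper phrases the latter as: an $H$-stable sublattice $\cL' \subset \cL_{\chi\alpha}$ with $H$ acting on $\cL_{\chi\alpha}/\cL'$ through $[\chi]$ has colength at most one.

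The gap is in how you close this step. A surjection $\cL_{\chi\alpha} \to (\cO/p^2)(\chi)$ does not factor through $\cL_{\chi\alpha} \otimes_{\cO} \bF$. So it only shows that $\chi$ occurs twice in $\cL_{\chi\alpha}/p^2\cL_{\chi\alpha}$, which is automatic, and there is no contradiction with multiplicity one. More fundamentally, uniseriality of $\cL_{\chi\alpha} \otimes_\cO \bF$ and generation of $\cL_{\chi\alpha}$ by an extremal weight vector depend only on the $\bF[H]$-module $\cL_{\chi\alpha} \otimes_\cO \bF$, whereas the depth is invisible modulo~$p$. Here is a counterexample. Take the group of matrices in $\GL_2(\cO_K)$ that are upper-triangular with Teichm\"uller diagonal entries modulo~$p^2$. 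It acts on the reduction of $\cO x^2 \oplus \cO xy \oplus \cO y^2 \subset (\Sym^2 E^2 \otimes \det^{-1})_j$ through $B(k)$, exactly as $I$ does, so that reduction is uniserial and generated by~$y^2$. Yet this group stabilizes $\cO x^2 \oplus \cO xy \oplus p^2\cO y^2$ and acts on the quotient $\cO/p^2$ through a Teichm\"uller character. So the two checks you list cannot give depth~$\leq 1$.

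What is missing is an element of~$H$ acting nontrivially at level exactly~$p$. Your remark that lower unipotents act via $f_j$ with a factor of $p$ is the right input, but it must be applied directly to sublattices, not through the mod~$p$ structure. The paper proceeds as follows.
\begin{itemize}
\item Untwist by $[\chi\alpha]$, so that $\cL \subset (\Sym^2 E^2 \otimes \det^{-1})_j$ and $H$ acts on $\cL/\cL'$ through, say, $[\alpha_j] \neq 1$.
\item Normalize $\cL \cap Ex^2 = \cO x^2$, so that $\cO x^2 \to \cL/\cL'$ is onto.
\item Use $g = \fourmatrix{1}{1}{p}{1}$, which lies in the image of~$H$ (as the image of $1+\Pi_D$ when $H = \cO_D^\times$).
\item The $\mu$-isotypic components of $g(x^2)$, a unit multiple of $(x+py)^2$, give $pxy \in \cL$.
\item Since $\mu$ acts trivially on $xy$ but through $[\alpha_j]$ on $\cL/\cL'$, this forces $pxy \in \cL'$.
\item The $x^2$-component of $g(pxy)$, a unit multiple of $p(x+py)(x+y)$, then gives $px^2 \in \cL'$.
\end{itemize}

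There is also a torus alternative. Take $\diag(1, 1+p) \in I$, resp.\ $1+p\zeta \in \cO_D^\times$ with $\zeta$ the Teichm\"uller lift of an element of $k_D \setminus k$. This element lies in $H_1$, so it is killed by all tame characters, and it commutes with~$\mu$. It acts on the $\chi$-weight line of $\cL_{\chi\alpha}$ by a scalar in $1 + p\cO^\times$. Since the cokernel is a quotient of that line, it is killed by~$p$.

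Finally, part~(2) does not need part~(3). As in the paper, the vanishing of $\Ext^1(\chi,\chi)$ and $\Ext^1(\chi,\chi\alpha^{\pm 2})$ forces the Loewy layers $\chi$, $\chi\alpha$, $\chi \oplus \chi\alpha^2$ on $\tld A_{\chi,\alpha} \otimes_\cO \bF$. Lemma~\ref{arm definition} then identifies the quotient by $\chi\alpha^2$ with $A_{\chi,\alpha}$. Your version of~(2) implicitly needs this Ext-vanishing as well, to split off $\chi\alpha^2$ as a submodule.
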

\begin{proof}
By construction, $V(1, -1)_j = (\Sym^2 E^2 \otimes_E \det^{-1})_{j}$, and so the semisimplified mod~$\pi_E$ reduction of~$V(1, -1)_j$ is 
$\alpha \oplus 1 \oplus \alpha^{-1}$.
Part~(1) then follows from the fact 
that the semisimplified mod~$p$ reductions of $[\chi]$ and $[\chi\alpha] \otimes_E V(1, -1)_j$ contain~$\chi$ with multiplicity one.
Indeed, it suffices to let~$\tld A_{\chi, \alpha}$ be the image of a morphism 
\[\Proj_{\cO[\![H/Z_1]\!]}(\chi) \to [\chi] \oplus ([\chi\alpha] \otimes_E V(1, -1)_j)\]
with nonzero projection on both summands, and to note that such a morphism is unique up to scaling by $E^\times \times E^\times$.
It is then true by construction that $\tld A_{\chi, \alpha} \otimes_\mO \bF$ has irreducible cosocle isomorphic to~$\chi$, 
that~$Z_1$ acts trivially on $\tld A_{\chi, \alpha}$, and that
\[
(\tld A_{\chi, \alpha} \otimes_\mO \bF)^{\mathrm{ss}} = \chi^{\oplus 2} \oplus \chi\alpha \oplus \chi \alpha^2.
\]
Since $\Ext^1_{\bF[\![H/Z_1]\!]}(\chi, \chi) \cong \Ext^1_{\bF[\![H/Z_1]\!]}(\chi, \chi\alpha^{\pm 2}) = 0$, it now follows that
\[
\gr^0_{\fm}(\tld A_{\chi, \alpha} \otimes_{\cO} \bF) = \chi,\; \gr^1_\fm(\tld A_{\chi, \alpha} \otimes_{\cO} \bF) =  \chi\alpha, \; \gr^2_\fm(\tld A_{\chi, \alpha} \otimes_{\cO} \bF) 
= \chi \oplus \chi\alpha^2.
\]
The uniqueness part of Lemma~\ref{arm definition} now implies that the cokernel of $\chi\alpha^2 \to \tld A_{\chi, \alpha} \otimes_\cO \bF$ is isomorphic to~$A_{\chi, \alpha}$, 
and part~(2) follows.

To prove part~(3), observe that the cokernel of $\tld A_{\chi, \alpha} \to \cL_\chi \oplus \cL_{\chi\alpha}$ is an $H$-stable quotient of~$\cL_{\chi\alpha}$
on which $H$ acts via~$[\chi]$.
So it suffices to prove that if $\cL \subset (\Sym^2 E^2 \otimes \det^{-1})_j$ is an $H$-stable $\cO$-lattice, and $\cL' \subset \cL$ is an $H$-stable sublattice
such that $H$ acts on $\cL/\cL'$ via $[\alpha_j]$ or~$[\alpha_j^{-1}]$, then $\cL/\cL'$ has $\cO$-length at most one.

Let~$\mu = \diag([k^\times], [k^\times])$ when~$H = I$, and~$\mu = [k_D^\times]$ when~$H = \cO_D^\times$.
Then~$\mu$ acting on $(\Sym^2 E^2 \otimes \det^{-1})_j$ 
has eigenvectors~$x^2, xy, y^2$ with eigencharacter~$[\alpha_j], 1, [\alpha_j^{-1}]$ respectively.
Assume first that the $H$-action on~$\cL/\cL'$ is via~$[\alpha_j]$.
Rescaling~$\cL$, we can assume that $\cL \cap Ex^2 = \cO x^2$. 
Since~$\mu$ acts on $\cL/\cL'$ via~$[\alpha_j]$, we see that $\cO x^2 \to \cL/\cL'$ is surjective. 
So it suffices to prove that $p\cO x^2 \subset \cL' \cap Ex^2$.
Note that
\[
\fourmatrix{1}{1}{p}{1} \in \operatorname{image}(H \to \Aut_E(V(1, -1)_j)),
\]
since it is the image of $1+\Pi_D$ when~$H = \cO_D^\times$.
This implies $(x+py)^2 = x^2+2pxy+p^2y^2 \in \cL$, and so its $\mu$-isotypic components $x^2, 2pxy, p^2y^2$ are also contained in $\cL$.
Since~$\mu$ acts trivially on~$pxy$, we see that $pxy = 0 \in \cL/\cL'$, and so~$pxy \in \cL'$, 
which in turn implies $p(x+py)(x+y) = p(x^2+(1+p)xy+py^2) \in \cL'$.
Thus $px^2 \in \cL'$, as desired.
The proof for~$[\alpha_j^{-1}]$ is similar, using that $y^2 \in \cL$ implies $(x+y)^2 \in \cL$, hence $xy \in \cL'$, hence $py^2 \in \cL'$.
\end{proof}

\subsection{Construction of essentially self-dual lattices.}
We end this section with some properties of types for the Bernstein components of unit groups of quaternion algebras.
They will be used in Section~\ref{global} to find coefficients for algebraic modular forms (at $\ell$-adic places for $\ell \ne p$)
which are simultaneously self-dual, minimal, and defined over the absolutely unramified $p$-adic field~$E$.
Accordingly, we let~$F_w/\bQ_\ell$ be a finite extension, we let~$B/F_w$ be a quaternion division algebra, and we write~$W_{F_w}$ for the Weil group of~$F_w$.
We write~$q_w$ for the cardinality of the residue field of~$F_w$, and~$\nu$ for the reduced norm of~$B^\times$. 
We normalize the local Langlands correspondence~$\rec_{B^\times}$ for $B^\times$ with $\lbar E$-coefficients 
so that it commutes with duals, twists and central characters; this requires us to choose
a square root of $|k_{F_w}|$ in~$\lbar E$.
Since by assumption $\bQ_{p^2} \subset E$, this square root is contained in~$E$, and so~$\rec_{B^\times}$ has the following rationality property, using notation from Section~\ref{subsec:Schur indices}:
$E_{W_{F_w}}(\rec_{B^\times} V) = E_{B^\times}(V)$ for all irreducible smooth $\lbar E[B^\times]$-modules~$V$.

\begin{lemma}\label{self-duality for D*}
Let $(\tau : W_{F_w} \to \GL_2(\lbar E), N)$ be a Langlands parameter for~$B^\times$, and let~$\xi := \det (\tau)$. 
Let~$V$ be an $\lbar E[\cO_B^\times]$-type for the Bernstein component of~$\lbar E[B^\times]$ corresponding to~$(\tau, N)$ under~$\rec_{B^\times}$.
Assume that~$E_{W_{F_w}}(\tau) = E$.
Then:
\begin{enumerate}
    \item The contragredient $V^\vee$ is $B^\times$-conjugate to $V \otimes (\xi \circ \nu)^{-1}$, where~$\nu : B^\times \to F_w^\times$ is the reduced norm.
    \item $V$ descends to an $L[\cO_B^\times]$-module~$V_0$ for some unramified extension $E \subset L \subset \lbar E$, and
    \item $V_0$ contains a unique homothety class of $\cO_L[\cO_B^\times]$-stable lattices.
\end{enumerate}
\end{lemma}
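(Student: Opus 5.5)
For part~(1), we use the compatibility of $\rec_{B^\times}$ with duals and twists. The contragredient of the Bernstein component of $(\tau, N)$ corresponds to $(\tau^\vee, N^\vee)$. For a two-dimensional parameter, $\tau^\vee \cong \tau \otimes \xi^{-1}$, since $\tau \otimes \tau \to \det\tau$ is a perfect pairing, and $N$ transforms compatibly. Hence the component of $V^\vee$ equals the component of the twist by $(\xi\circ\nu)^{-1}$, using that $\rec_{B^\times}$ commutes with twists and that $\xi$ corresponds to $\xi\circ\nu$ via class field theory. Now $V^\vee$ is a type for the dual component, and $V \otimes (\xi\circ\nu)^{-1}$ is a type for the twisted one; note that $\xi\circ\nu$ restricted to $\cO_B^\times$ is a character. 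So it suffices to know that $\cO_B^\times$-types for a given Bernstein component of $B^\times$ are unique up to $B^\times$-conjugacy. I would justify this from the structure of $B^\times$. Its Bernstein components are cosets of unramified twists of a single irreducible $\pi$, because $B^\times/Z$ is compact, or equivalently because every irreducible representation is supercuspidal-type modulo unramified characters. The typical representation of $\cO_B^\times$ can then be taken to be an irreducible constituent of $\pi|_{\cO_B^\times}$. Since $\cO_B^\times$ is normal in $B^\times$ with $B^\times/\cO_B^\times Z \cong \bZ/2$ generated by $\Pi_B$, Clifford theory shows these constituents form a single $B^\times$-orbit; this is the ``$B^\times$-conjugate'' in the statement. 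Any type must be one of them, because $\Hom_{\cO_B^\times}(V,\pi)\neq0$ characterizes the component.

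For part~(2), the rationality property gives $E_{B^\times}(\pi) = E_{W_{F_w}}(\tau) = E$. We need $E(V)/E$ unramified for the finite group $\Gamma = \cO_B^\times/U_B^n$ through which $V$ factors, and then we apply Lemma~\ref{lem:Schur indices}. Every $g \in \Gal(\lbar E/E)$ fixes the isomorphism class of $\pi$. By the Clifford description above, $g^*V$ is therefore either $V$ or its $\Pi_B$-conjugate, so $[E(V):E]\le 2$. Moreover, $E(V)$ is contained in the field generated by the traces of $V$ and its conjugate. I would show that a quadratic $E(V)$ can always be chosen inside the unramified quadratic extension. The traces of $\pi|_{\cO_B^\times}$ summed over the orbit lie in $E$, and the splitting of the orbit is governed by the action of $\Pi_B$, whose square is central. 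This is the step I expect to be the main obstacle. Instead I would try to argue directly: the character values of $V$ lie in a cyclotomic field $E(\zeta_m)$ with $m$ dividing $|\Gamma|$, and the $p$-part of $|\Gamma|$ is what needs controlling. When $\ell \ne p$, the group $\Gamma$ has order prime to $p$ up to factors of $q_w^2-1$, so the $\ell$-power roots of unity generate an unramified extension of $E$. Since $\ell\neq p$, $E(\zeta_{\ell^k})/E$ is unramified, and the prime-to-$p$ part is harmless. The only remaining danger is $p$-power roots of unity coming from $p$ dividing $q_w^2-1$. There, the $p$-part of $\Gamma$ sits in the abelian quotient $k_B^\times$ twisted by characters, and I would use that $E(V)\subset E_{B^\times}(\pi)$ up to a degree-two extension to rule out ramification. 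Since $\Gal(E(V)/E)$ has order at most $2$ and $p>5$, a ramified quadratic extension would have to be tamely ramified. This can be excluded by noting that $g^*V \cong V^{\Pi_B}$ must act on the $p$-parts of the central character in a way incompatible with inertia.

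For part~(3), once $V_0$ is defined over $L$, I would reduce mod the maximal ideal of $\cO_L$. Uniqueness of the homothety class of stable lattices holds when the reduction of some, hence every, lattice is absolutely irreducible; this is the standard argument via Brauer--Nesbitt and a lattice-chain argument. Since $\ell \neq p$, the pro-$\ell$ part $U_B^1$ of $\cO_B^\times$ has order prime to $p$, and $V|_{U^1_B}$ reduces to a semisimple representation whose constituents reduce irreducibly, because for $p$-groups of order prime to $\ell$ the reduction of irreducibles is irreducible; here the roles are swapped and $U_B^1$ is pro-$\ell$, so $p\nmid|U_B^1/U_B^n|$. By Clifford theory, irreducibility of $V$ is then controlled by the inertia of the $U_B^1$-constituent inside the cyclic quotient $k_B^\times$. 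That quotient has order $q_w^2-1$, possibly divisible by $p$. In that case I would check the Mackey/Clifford data to see that the reduction stays irreducible, which is the second delicate point after the unramifiedness in part~(2).
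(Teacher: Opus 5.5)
Your part~(1) is essentially the paper's argument: $\tau^\vee\cong\tau\otimes\xi^{-1}$, compatibility of $\rec_{B^\times}$ with duals and twists, and Clifford theory for $\cO_B^\times Z$, which has index two in $B^\times$. The paper simply disposes of the case $N\neq0$ separately, where $V=\chi\circ\nu|_{\cO_B^\times}$. The first step of your part~(2), the bound $[E(V):E]\le 2$ from the Galois action on $\{V,(\mathrm{ad}\,\Pi_B)^*V\}$, also matches the paper.

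The genuine gap is the next step of part~(2), exactly where you flag it. Knowing that $E(V)E^{\nr}\subset E^{\nr}(\zeta_{p^k})$ and that the ramification index is at most $2$ is not enough. The group $\Gal(E^{\nr}(\zeta_{p^k})/E^{\nr})\cong(\bZ/p^k)^\times$ is cyclic of even order, so $E^{\nr}(\zeta_{p^k})$ contains the tamely ramified quadratic field $E^{\nr}(\sqrt p)$. A trace that is a \emph{sum} of roots of unity (a Gauss sum, say) could a priori generate it.

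Your proposed exclusion via the central character cannot work. That character is given by $\xi$, so it is $E$-valued and $\Pi_B$-invariant, and it imposes no constraint. What is missing is control on the \emph{shape} of the traces.

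The paper gets this control as follows. Take a $(1+\fm_B)$-constituent $W$ of $V$ with stabilizer $N=\mu\ltimes(1+\fm_B)$, and write $V=\Ind_N^{\cO_B^\times}\tilde W$. Since $|\mu|$ divides $q_w^2-1$ and is prime to $\ell$, $W$ has a canonical extension $W^{\mathrm{can}}$ to $N$: the unique one with determinant trivial on $\mu$. By Glauberman's theorem its character field is $E_{1+\fm_B}(W)$, hence unramified over $E$. Then $\tilde W=\chi\otimes W^{\mathrm{can}}$ and $V\cong\tilde\chi\otimes\Ind_N^{\cO_B^\times}W^{\mathrm{can}}$. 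So every trace is a \emph{monomial} $\zeta_g x_g$ with $x_g\in E^{\nr}$ and $\zeta_g\in\mu_{p^\infty}$.

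A nonzero monomial $\zeta x$ generates all of $E^{\nr}(\zeta)$, which has ramification index at least $p-1>2$ when $\zeta\neq1$. The degree bound therefore forces $E(V)\subset E^{\nr}$, and Lemma~\ref{lem:Schur indices} applies.

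Your ``incompatible with inertia'' intuition can be made precise in the same framework. Inertia fixes $W$, whose character values lie in $\bQ(\zeta_{\ell^n})$, and hence fixes $W^{\mathrm{can}}$ by uniqueness. So it moves $V$ only through the $p$-part of $\chi$, and a nontrivial $p$-part would give an orbit of size at least $p-1>2$.

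Part~(3) is left unfinished, and the difficulty you anticipate when $p\mid q_w^2-1$ does not arise. Write $r_p(\cdot)$ for the semisimplified reduction. Reduction is an $\cO_B^\times$-equivariant bijection between irreducible representations of the pro-$\ell$ group $1+\fm_B$ in characteristic $0$ and in characteristic $p$. Hence $r_p(W)$ is irreducible with the same stabilizer $N$. Next, $r_p(\tilde W)$ is irreducible because its restriction to $1+\fm_B$ is $r_p(W)$. Clifford theory, valid over any field, then shows that $r_p(V_0)\cong\Ind_N^{\cO_B^\times}r_p(\tilde W)$ is irreducible, which gives uniqueness of the lattice. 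The order of $\mu$ plays no role.
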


\begin{rmk}\label{self-duality for tame characters}
Similarly to Lemma~\ref{self-duality for D*}(1), note also that if $(G, H) \in \{(\GL_2, I), (D^\times, \cO_D^\times)\}$ and $\chi : H \to \cO^\times$ is a tame smooth character, 
and $\xi = \chi |_{\cO_F^\times}$,
then $\chi^{-1} \otimes (\xi \circ \nu)$ is conjugate to~$\chi$ under the normalizer of~$H$ in~$G$.
\end{rmk}

\begin{proof}
Assume first that~$N \ne 0$.
Then $\tau = \chi \oplus \chi \nr_{q_w}$ for some character $\chi: F_w^\times \to \lbar E^\times$. 
Since~$E_{W_{F_w}}(\tau) = E$, and the trace of~$\tau$ is $(\nr_{q_w}+1)\chi$, we see that~$\chi$ takes values in~$E$.
On the other hand, $(\tau, N)$ is the Langlands parameter of an unramified twist of $\chi \circ \nu$.
Hence $V = \chi \circ \nu |_{\cO_{B}^\times}$, and all assertions of the lemma are immediate.

Assume now that~$N = 0$, so that~$\tau$ is irreducible.
Let~$V'$ be an irreducible smooth $\lbar E[B^\times]$-module with~$\operatorname{rec}_{B^\times}(V') = \tau$.
Since $\tau^\vee \cong \tau \otimes \xi^{-1}$,
and~$\operatorname{rec}_{B^\times}$ commutes with duals and character twists, we see that $(V')^\vee \cong V' \otimes (\xi \circ \nu)^{-1}$.
Now the first statement follows because $V'|_{\cO_B^\times}$ is isomorphic to either~$V$ or the direct sum of~$V$ and a $B^\times$-conjugate of~$V$.

We now prove the second statement.
By Lemma~\ref{lem:Schur indices}
it suffices to prove that $E_{\cO_B^\times}(V)/E$ is unramified.
The rationality properties of the local Langlands correspondence imply that $E_{B^\times}(V') = E_{W_{F_w}}(\rec_{B^\times}V') = E$.
Since~$\Gal(\lbar E/E_{B^\times}(V'))$ is the stabilizer of the $\lbar E[B^\times]$-isomorphism class of~$V'$, it acts on $\{V, (\ad\,\Pi_B)^*(V)\}$,
and the stabilizer of~$V$ under this action is $\Gal(\lbar E/E_{\cO_B^\times}(V))$.
Hence~$E_{\cO_B^\times}(V)$ has degree at most two over~$E_{B^\times}(V') = E$, and so
$E_{\cO_B^\times}(V)$ has absolute ramification degree~$\leq 2$.

Next, let~$W \subset V$ be an irreducible submodule for $\lbar E[1+\fm_{B}]$.
Since 
\[
\cO_B^\times \cong k_{B}^\times \ltimes (1+\fm_{B}),
\]
if~$N$ is the normalizer in $\cO_B^\times$ of the isomorphism class of~$W$, then there exists an $\lbar E[N]$-module~$\tld W$ such that
$\tld W|_{1+\fm_{B}} \cong W$ and $V = \Ind_N^{\cO_B^\times} \tld W$.
Let~$\mu \subset k_{B}^\times$ be the subgroup such that $N = \mu \ltimes (1+\fm_{B})$.
Note that $\tld W$ is a twist by a character $\chi: N/(1+\fm_{B_w}) \cong \mu \to \lbar E^\times$ 
of the unique extension $W^{\mathrm{can}}$ of~$W$ to~$N$ 
such that $\det(W^\mathrm{can})$ is trivial on~$\mu$.
By~\cite[Theorem~2(b)]{Glauberman}, the trace of~$W^{\operatorname{can}}$ is $\bZ$-valued on $\mu$, and $E_{N}(W^{\operatorname{can}}) = E_{1+\fm_B}(W)$.
Since~$1+\fm_B$ is a pro-$\ell$ group, we conclude that $E_N(W^{\operatorname{can}})$ is unramified over~$E$. 
By Lemma~\ref{lem:Schur indices}, $W^{\operatorname{can}}$ descends to the maximal unramified extension~$E^{\nr}$ of~$E$ in~$\lbar E$,
and so the same is true for $\Ind_{N}^{\cO_B^\times}W^{\operatorname{can}}$. 
We conclude that $E_{\cO_B^\times}(\Ind_{N}^{\cO_B^\times}W^{\operatorname{can}})$
is unramified over~$E$.

If we now choose an extension $\tld \chi : \cO_B^\times \to \lbar E^\times$ of~$\chi$, the projection formula implies that 
\[
V = \tld \chi \otimes_{\lbar E} \Ind_N^{\cO_B^\times}(W^{\operatorname{can}}).
\]
Hence, for all~$g \in \cO_B^\times$, there exist $x_g \in E^{\nr}$ and~$\zeta_g \in \mu_{p^\infty}(\lbar E)$ such that
\[
\operatorname{tr}(g: V \to V) = \zeta_g x_g.
\]
From this we conclude that either $\operatorname{tr}(g: V \to V) \in E^{\nr}$ for all~$g$, or $E_{\cO_B^\times}(V) E^{\nr}$ contains a nontrivial $p$-th root of unity.
Since~$p-1 >2$, the latter option contradicts the fact that the absolute ramification index of $E_{\cO_B^\times}(V)$ is~$\leq 2$.
Hence $E_{\cO_B^\times}(V) \subset E^{\nr}$, which concludes the proof of the second statement.

For the third statement, it suffices to prove that the semisimplifed mod~$p$ reduction $r_p(V_0)$ of~$V_0$ is irreducible.
Using notation from the previous paragraph, we have $V_0 = \Ind_N^{\cO_B^\times} \tld W$, where
$\tld W|_{1+\fm_B} = W$ is irreducible.
Recall now that, since $1+\fm_B$ is a pro-$\ell$ group,
$r_p: \Irr_L(1+\fm_B) \to \Irr_{k_L}(1+\fm_B)$ is an $\cO_B^\times$-equivariant bijection.
This immediately implies that $r_p(W)$ is irreducible,
and~$N$ is the normalizer in $\cO_B^\times$ of the isomorphism class of $r_p(W)$.
Hence~$r_p(\tld W)$ is irreducible,
and~$N$ is the normalizer of~$r_p(\tld W)$ in~$\cO_B^\times$. 
We conclude that $r_p(V_0)$ is irreducible, since it is isomorphic to
$\Ind_N^{\cO_B^\times}r_p(\tld W)$.
\end{proof}

%% file: deformationrings-arXiv.tex
\section{Galois deformation rings.}\label{deformation rings}

\subsection{Moduli spaces of Kisin modules.}\label{moduli of Kisin modules}
We continue to use the notations and conventions from section \ref{subsect: preliminaries}.
Let~$\tau$ be a two-dimensional tame inertial $\cO$-type with a fixed lowest alcove presentation~$(w, \nu)$.
Let~$\lambda \in X^*(\uline T)^+$, and assume that $\sweight \leq \lambda \leq \lweight$.
In this section we describe the smooth atlas for the moduli stack of Kisin modules~$Y^{\leq \lambda, \tau}$ introduced in~\cite{LLHLMmodels}.
We emphasize that some of the objects defined below depend on~$(w, \nu)$, rather than just on~$\tau$, although we will often omit this dependence from the notation.
Recall the following objects from~\cite{EGstack, LLHLMmodels}:
\begin{enumerate}
    \item The Emerton--Gee stacks $\cX^{\lambda, \tau}$, resp.\ $\cX^{\leq \lambda, \tau}$, parametrizing potentially crystalline $\Gal_K$-representations on $p$-complete $\cO$-modules, 
    with Hodge--Tate weights~$\lambda$, resp.\ $\leq \lambda$,
    and inertial type~$\tau$.
    \item The moduli stack~$\PhiMod_K^{\et, 2}$ of \'etale $\varphi$-modules of rank two for $K_\infty := \bigcup_{n \geq 0} K(\sqrt[p^n]{-p})$.
    \item The moduli stack~$Y^{\leq \lambda, \tau}$ of Breuil--Kisin modules of rank two for~$K_\infty$, with descent datum~$\tau$, and
    elementary divisors bounded by~$\lambda$.
\end{enumerate} 
If~$\tld z \in \uline{\tld W}$ there is an open substack $Y^{\leq \lambda, \tau}(\tld z)$, parametrizing Kisin modules that admit a $\tld z$-gauge basis 
in the sense of~\cite[Section~5.2]{LLHLMmodels}.
As $\tld z$ varies, the substacks $Y^{\leq \lambda, \tau}(\tld z)$ assemble into an open cover of $Y^{\leq \lambda, \tau}$.
Furthermore, if~$\tau$ is 4-deep in~$\uline{C_0}$, 
then $Y^{\leq \lambda, \tau}(\tld z)$ is not empty if and only if $\tld z \in \Adm^\vee\lambda$, by~\cite[Corollary~5.3.4]{LLHLMmodels}.

Next, recall that there is a ${\uline T}^\vee$-torsor 
\begin{equation}\label{torus cover}
\tld U(\tld z, \leq \lambda) \to Y^{\leq \lambda, \tau}(\tld z) \subset Y^{\leq \lambda, \tau},
\end{equation}
defined as follows.
For all $\tld z_j = z_j t_{\nu_j} \in \tld W$ and~$\lambda_j \in X^*(T)$, let~$\tld U(\tld z_j, \leq \lambda_j)$ be the $p$-flat affine $p$-adic formal $\cO$-scheme 
whose points in an $p$-flat, $p$-complete $\cO$-algebra~$R$ 
are the matrices $A^{(j)} \in M_2(R[v+p])$ such that
\begin{enumerate}
\item $A_{21}^{(j)} \in vM_2(R[v+p])$.
\item $\deg A_{ik}^{(j)} \leq \nu_{j, k} - \delta_{i < z_j(k)}$, with equality when $i = z_j(k)$.
\item The leading coefficient of~$A_{z_j(k)k}^{(j)}$ is a unit in~$R$.
\item $\det A^{(j)} \in R^\times (v+p)^3$.
\item All entries of~$A^{(j)}$ are divisible by $(v+p)^{\lambda_{j, 2}}$.
\end{enumerate}
We then put 
\[
\tld U(\tld z, \leq \lambda) := \prod_{j \in \cJ} \tld U(\tld z_j, \leq \lambda_j).
\]
Note that $\tld U (\tld z, \leq \lambda)$ is independent of~$\tau$; by contrast, 
the torus action on $\tld U (\tld z, \leq\lambda)$ whose quotient is~\eqref{torus cover} is the $(w, \nu)$-twisted conjugation action~\cite[Theorem~5.3.1]{LLHLMmodels} 
and so depends on~$(w, \nu)$.
We introduce the notation $R^{\leq \lambda_j, \tld z_j} := \cO (\tld U(\tld z_j \leq \lambda_j))$ 
and $R^{\leq \lambda, \tld z} := \bigotimes_{\cO, j}^{\wedge} R^{\leq \lambda_j, \tld z_j}$;
we also write $A^{\tld z_j} \in M_2(R^{\leq \lambda_j, \tld z_j}[v+p])$ for the universal object.
Then, by definition, there is a presentation $R^{\leq \lambda_j, \tld z_j} = T^{\tld z_j}/I^{\leq \lambda_j, \tld z_j}$, where~$T^{\tld z_j}$ is the $p$-completion of a polynomial ring
as in Section~\ref{subsec: gauges}, 
the universal matrices $A^{\tld z_j}$ are as written in Section~\ref{subsec: gauges}, 
and~$I^{\leq \lambda_j, \tld z_j}$ is the $p$-saturation of the ideal of~$T^{\tld z_j}$ generated by
\begin{gather*}
\det A^{\tld z_j} \in (T^{\tld z_j})^\times(v+p)^3, \text{ and }\\
A^{\tld z_j} \in (v+p)M_2(T^{\tld z_j}[\![v+p]\!]) \text{ if  $\lambda_j = (2, 1)_j$}.
\end{gather*}

\subsection{Monodromy condition.}\label{monodromy condition}
We now introduce the $\tau$-monodromy condition on~$A^{\tld z_j}$, and use it to construct a smooth atlas of~$\cX^{\leq \lambda, \tau}$ from the atlas of~$Y^{\leq \lambda, \tau}$
defined in Section~\ref{moduli of Kisin modules}.
We also recall its truncated version, which we will compute explicitly in Section~\ref{truncated monodromy condition}. 

\begin{defn}\label{defn: monodromy ideals}
Let $(\fM^{\tau}, \beta^{\tau})$ be the Kisin module with $R^{\leq \lambda, \tld z}$-coefficients and $\tld z$-gauge basis classified by~\eqref{torus cover}.
We define the following ideals of~$R^{\leq \lambda, \tld z}$:
\begin{enumerate}
\item $I_{\fM^{\tau}, \nabla_\infty}$ is the ideal described in the statement of~\cite[Proposition~7.1.6]{LLHLMmodels}. By definition, it is $p$-saturated.
\item $I_{\fM^{\tau}, \beta^{\tau}, \nabla_1}$ 
is the ideal defined in~\cite[Definition~7.1.8]{LLHLMmodels}. It need not be $p$-saturated.
\item $I^{\sat}_{\fM^{\tau}, \beta^{\tau}, \nabla_1}$ is the $p$-saturation of~$I_{\fM^{\tau}, \beta^{\tau}, \nabla_1}$.
\end{enumerate}
\end{defn}

By construction,
there exist ideals $I_{\fM^{\tau}, \beta^{\tau}, \nabla_1}^{(j)} \subset R^{\leq \lambda_j, \tld z_j}$ such that
\begin{equation}\label{factorization of truncated monodromy}
I_{\fM^{\tau}, \beta^{\tau}, \nabla_1} = \sum_{j \in \cJ}I_{\fM^{\tau}, \beta^{\tau}, \nabla_1}^{(j)}R^{\leq \lambda, \tld z}.
\end{equation}
Note that~\eqref{factorization of truncated monodromy} implies that
\[
I^{\sat}_{\fM^{\tau}, \beta^{\tau}, \nabla_1} = \sum_{j \in \cJ}I^{(j), \sat}_{\fM^{\tau}, \beta^{\tau}, \nabla_1}R^{\leq \lambda, \tld z}.
\]
where $I^{(j), \sat}_{\fM^{\tau}, \beta^{\tau}, \nabla_1}$ is the $p$-saturation of $I^{(j)}_{\fM^{\tau}, \beta^{\tau}, \nabla_1}$ in~$R^{\leq \lambda_j, \tld z_j}$.

\begin{rmk}
In contrast with $I_{\fM^{\tau}, \beta^{\tau}, \nabla_1}$, the ideal $I_{\fM^{\tau}, \nabla_\infty}$ 
cannot in general be written as a sum over~$j \in \cJ$
of ideals of~$R^{\lambda_j, \tld z_j}$.
In a similar vein, $I_{\fM^{\tau}, \beta^{\tau}, \nabla_1}^{(j)}$ depends on~$(w, \nu)$, and not just on $(w_j, \nu_j)$.
\end{rmk}

By~\cite[Remark~7.1.9]{LLHLMmodels}, the ideal~$I_{\fM^{\tau}, \beta^{\tau}, \nabla_1}^{(j)}$ is generated by the condition
\[
\left ( v\frac d{dv} A^{\tld z_j} - A^{\tld z_j}\diag(\ba_{\tau, j}) \right )(v+p)^3(A^{\tld z_j})^{-1} \in (v+p)^2 L^+\cM(R^{\leq \lambda_j, \tld z_j}),
\]
for a quantity $\ba_{\tau, j} \in \cO^{\oplus 2}$ defined in~\emph{loc.\ cit}.
We will need the following property of~$\ba_{\tau, j}$, which we will often apply implicitly, 
in situations in which~$\nu$ is assumed to be at least $5$-deep in~$\uline C_0$.

\begin{lemma}\label{structure constants}
Let~$\kappa^{\tau}_j := \langle \ba_{\tau, j}, (1, -1)_j\rangle$, and let~$(w, \nu)$ be our fixed lowest alcove presentation of~$\tau$.
Then
\[
\kappa^{\tau}_j \equiv \langle w^{-1}(\nu+\eta), (1, -1)_j \rangle \mod p\cO.
\]
Hence, if~$\nu$ is $m$-deep in~$\uline C_0$, then
$\kappa_j^\tau$ is not congruent to $\pm n \text{ mod } p\cO$ for all~$0 \leq n \leq m$.
\end{lemma}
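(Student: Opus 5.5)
The plan is to unwind the definition of $\ba_{\tau, j}$ in \cite[Remark~7.1.9]{LLHLMmodels} (equivalently \cite[Section~7.1]{LLHLMmodels}), where it is given in terms of the lowest alcove presentation $(w, \nu)$. There, $\ba_{\tau, j}$ is built from the exponents of the descent datum of $\tau$. Concretely, its entries are the $p$-adic numbers $\frac{1}{1-p^{f'}}\sum_{i} p^{i}\, (\text{entries of } \mathbf{a}^{(j+i)})$, suitably permuted by the orientation $s_{\mathrm{or}}$. Here $\mathbf{a}^{(j')}$ are the integer vectors obtained from $w^{-1}(\nu+\eta)$, and $f'$ is either $f$ or $2f$ according to whether $\pr(w)$ is trivial.

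The first step is to write out this formula and reduce it modulo~$p$. Since $1-p^{f'} \equiv 1 \bmod p$, every term with $i \geq 1$ vanishes mod~$p$, so only the $i = 0$ term survives. We should therefore get
\[
\ba_{\tau, j} \equiv (w^{-1}(\nu+\eta))_j \bmod p\cO,
\]
possibly up to swapping the two coordinates. The second step is to pair with $(1,-1)_j$, which gives the claimed congruence for $\kappa^\tau_j$.

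The main obstacle is the bookkeeping of conventions between the two papers: the orientation, the shift by $\eta$, and whether the relevant permutation is $w$ or $w^{-1}$ in embedding~$j$. We must also check that our normalization $t = -p$ and our definition of $\tld w^*(\tau) = w^{-1}t_{\nu+\eta}$ match those of \cite{LLHLMmodels}. I would handle this by tracking one embedding carefully, using the definition of $\tau(s,\mu)$ in \eqref{definition of tau(s, mu)}. A mismatched convention could only change the sign of $\kappa^\tau_j$, and the second assertion of the lemma is insensitive to sign.

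For the second assertion, $w_j^{-1}$ permutes the two coordinates, so $\langle w^{-1}(\nu+\eta), (1,-1)_j\rangle = \pm\langle \nu+\eta, \alpha_j^\vee\rangle$. Now $m$-depth gives $m < \langle \nu+\eta, \alpha_j^\vee\rangle < p-m$. If $\pm\langle \nu+\eta,\alpha_j^\vee\rangle \equiv \pm n \pmod p$ with $0\le n\le m$, then the integer $\langle \nu+\eta,\alpha_j^\vee\rangle$, which lies in $(m, p-m)$, would be congruent mod~$p$ to some integer in $[-m, m]$. This is impossible: any integer congruent to one in $[-m,m]$ lies in $[-m, m]$ or in $[p-m, p+m]$ after reduction to the range $(-m, p+m)$, and both of these avoid $(m, p-m)$. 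Since $\cO$ is unramified, congruence mod $p\cO$ between integers is the same as congruence mod~$p$, which completes the argument.
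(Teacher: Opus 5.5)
Your proposal is correct and is essentially the argument behind the paper's proof, which simply cites \cite[Lemma~7.3.1]{LLHLMmodels}: that lemma amounts to your mod-$p$ reduction, in which $1-p^{f'}\equiv 1 \bmod p$ leaves only the leading term of the $p$-adic expression defining $\ba_{\tau,j}$, and your interval argument then correctly derives the non-congruences from $m$-depth. One caution on the bookkeeping you defer: your claim that a mismatched convention can only flip the sign is too strong, because a mismatch in how the $p$-adic sum is indexed would change which embedding's term survives; so the first congruence really does require matching the conventions of \cite{LLHLMmodels} exactly, whereas the depth consequence is unaffected, since depth holds at every embedding.
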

\begin{proof}
See~\cite[Lemma~7.3.1]{LLHLMmodels}.
\end{proof}

As explained in~\cite[Appendix~3]{FengLeHung}, the ring
\[
(R^{\leq \lambda_j, \tld z_j}/I^{(j)}_{\fM^\tau, \beta^\tau, \nabla_1}) \otimes_\cO E
\]
decomposes as a direct product indexed by the dominant weights~$\lambda_j' \leq \lambda_j$.
The factor indexed by~$\lambda_j'$ corresponds to the locus on which the elementary divisors of~$A^{\tld z_j}$ are given by~$\lambda'_j$.
We momentarily introduce the notation $I^{(j), \sat, \lambda'_j}_{\fM^\tau, \beta^\tau, \nabla_1}$ for the $p$-saturated ideal of 
$R^{\leq \lambda_j, \tld z_j}/I^{(j)}_{\fM^\tau, \beta^\tau, \nabla_1}$
corresponding to the factor indexed by~$\lambda_j'$.
Similarly, we write
\[
I^{\sat, \lambda'}_{\fM^\tau, \beta^\tau, \nabla_1} := \sum_{j \in \mJ}I^{(j), \sat, \lambda'_j}_{\fM^\tau, \beta^\tau, \nabla_1}R^{\leq \lambda, \tld z}.
\]
In computations, it will be useful to pull back these ideals through the map $T^{\tld z} \to R^{\leq \lambda, \tld z}$,
and so we introduce the following definition.

\begin{defn}\label{defn: pulled back monodromy ideals}
Recall the surjections $T^{\tld z_j} \to R^{\leq \lambda_j, \tld z_j}$ with kernel~$I^{\leq \lambda_j, \tld z_j}$, and their tensor product $T^{\tld z} \to R^{\leq \lambda, \tld z}$. 
We introduce the following ideals:
\begin{enumerate}
\item $I_{\nabla_\infty}^{\leq \lambda, \tau}$ is the preimage in $T^{\tld z}$ of $I_{\fM^\tau, \nabla_\infty}$.
\item $I_{\nabla_1}^{\leq \lambda, \tau}$, resp.\ $I_{\nabla_1}^{\lambda, \tau}$, 
is the preimage in $T^{\tld z}$ of $I_{\fM^\tau, \beta^\tau, \nabla_1}^{\sat}$, resp.\
$I_{\fM^\tau, \beta^\tau, \nabla_1}^{\sat, \lambda}$.
\item $I_{\nabla_1}^{\leq \lambda_j, \tau}$, resp.\ $I_{\nabla_1}^{\lambda_j, \tau}$, 
is the preimage in $T^{\tld z_j}$ of $I_{\fM^\tau, \beta^\tau, \nabla_1}^{(j),\sat}$, resp.\
$I_{\fM^\tau, \beta^\tau, \nabla_1}^{(j),\sat, \lambda_j}$. 
\end{enumerate}
\end{defn}
These ideals can be used to construct a smooth atlas for the stack~$\cX^{\lambda, \tau}$, in the following way.
Assume that~$\nu$ is $5$-deep in~$\uline C_0$.
By~\cite[Proposition~7.2.3, Proposition~5.4.2]{LLHLMmodels}, if $\tld z \in \Adm^\vee\lambda$ then there is a commutative diagram of $\Ind$-algebraic stacks over~$\Spf\,\cO$:
\begin{equation}\label{presentation of stacks}
\begin{tikzcd}
\tld U(\tld z, \leq \lambda) \arrow[r] & Y^{\leq \lambda, \tau}(\tld z) \arrow[r] & Y^{\leq \lambda, \tau} \arrow[r] & \PhiMod_{K, 2}^{\et}\\
\tld \cX^{\leq \lambda, \tau}(\tld z) \arrow[r] \arrow[u] & \cX^{\leq \lambda, \tau}(\tld z) \arrow[r] \arrow[u] & \cX^{\leq \lambda, \tau} \arrow[u] \arrow[ur] &
\end{tikzcd}
\end{equation}
with the following properties:
\begin{enumerate}
    \item The squares are Cartesian, and define the stacks $\cX^{\leq \lambda, \tau}(\tld z)$ and~$\tld \cX^{\leq \lambda, \tau}(\tld z)$. 
    \item The vertical arrows are closed immersions, and the ideal of 
    $\tld \cX^{\leq \lambda, \tau}(\tld z) \to \tld U(\tld z, \leq \lambda)$ is $I^{\leq \lambda, \tau}_{\nabla_\infty}$.
    \item The composite of the top row classifies the \'etale $\varphi$-module whose matrices of~$\varphi$ are the~$A^{\tld z_j}\tld w^*(\tau)_j$,
    where~$\tld w^*(\tau)$ is defined in~\eqref{group element associated to presentation of type}.
\end{enumerate}

If~$\lambda' \leq \lambda$ is a dominant weight, we can pull back $\tld \cX^{\leq \lambda, \tau}(\tld z)$ through the closed immersion $\cX^{\lambda', \tau} \to \cX^{\leq \lambda, \tau}$.
We write $\tld \cX^{\lambda', \tau}(\tld z)$ for this pullback, and
$I_{\nabla_\infty}^{\lambda', \tau} \subset T^{\tld z}$ for the pullback of the ideal of $R^{\leq \lambda, \tld z}$ 
corresponding to~$\tld\cX^{\lambda', \tau}(\tld z)$ under~\eqref{presentation of stacks}.
Note that~$I_{\nabla_\infty}^{\lambda', \tau}$ and $I_{\nabla_1}^{\lambda', \tau}$ are independent of the choice of~$\lambda$ such that $\lambda' \leq \lambda$.

\subsection{Truncated monodromy condition.}\label{truncated monodromy condition}
The truncated monodromy condition is an approximation of the monodromy condition, in the following sense:
by~\cite[Theorem~C.3.1, Remark~C.3.4]{FengLeHung}, 
if $\tau = \tau(w, \nu)$, $\nu$ is $m$-deep in~$\uline{C_0}$, and~$m > 4$,
then we have equalities of ideals 
\begin{equation}\label{approximated local models}
I^{\leq \lambda, \tau}_{\nabla_1} + p^{m-4} T^{\tld z} = I^{\leq \lambda, \tau}_{\nabla_\infty} + p^{m-4}T^{\tld z}
\end{equation}
and
\begin{equation}\label{fixed weight approximated local models}
I^{\lambda, \tau}_{\nabla_1} + p^{m-4} T^{\tld z} = I^{\lambda, \tau}_{\nabla_\infty} + p^{m-4}T^{\tld z}.
\end{equation}
We now compute $I_{\nabla_1}^{\leq \lambda, \tau}$ and $I_{\nabla_1}^{\lambda, \tau}$ explicitly.

\begin{thm}\label{weight at most (3, 0)}
Let~$\tau$ be a two-dimensional tame inertial $\cO$-type with lowest alcove presentation~$(w, \nu)$, let~$\sweight \leq \lambda \leq \lweight$, and let $\tld z \in \Adm^\vee \lambda$.
Assume that~$\nu$ is $5$-deep in~$\uline C_0$.
Then $I^{\leq \lambda_j, \tau}_{\nabla_1} \subset T^{\tld z_j}$ and $I^{\lambda_j, \tau}_{\nabla_1} \subset T^{\tld z_j}$ coincide with the ideals defined in Section~\ref{subsec: single-type approximation}.
\end{thm}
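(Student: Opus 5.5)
The proof is a direct computation, one embedding $j \in \cJ$ at a time. By~\eqref{factorization of truncated monodromy} and Definition~\ref{defn: pulled back monodromy ideals}, the ideal $I^{\leq \lambda_j, \tau}_{\nabla_1}$ is the preimage in $T^{\tld z_j}$ of the $p$-saturation of $I^{(j)}_{\fM^\tau, \beta^\tau, \nabla_1}$. By~\cite[Remark~7.1.9]{LLHLMmodels}, this ideal is generated by the entries of the principal part of
\[
\left( v\tfrac{d}{dv} A^{\tld z_j} - A^{\tld z_j}\diag(\ba_{\tau, j}) \right)(v+p)^3(A^{\tld z_j})^{-1}
\]
modulo $(v+p)^2 L^+\cM$. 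So for each of the seven shapes $\tld z_j \in \Adm^\vee\lweight$ I will do the following:
\begin{enumerate}
\item Write the universal matrix $A^{\tld z_j}$ from Section~\ref{subsec: gauges}.
\item Compute $(v+p)^3(A^{\tld z_j})^{-1}$ as the adjugate times a unit, using $\det A^{\tld z_j} \in (T^{\tld z_j})^\times (v+p)^3$.
\item Expand everything in powers of $v+p$, substituting $v = (v+p) - p$.
\item Read off the coefficients of $(v+p)^0$ and $(v+p)^1$ in each entry, remembering the extra divisibility by $v$ in the $(2,1)$-entry.
\end{enumerate}
These coefficients, together with the generators of $I^{\leq\lambda_j,\tld z_j}$ (the determinant condition, and $A \in (v+p)M_2$ when $\lambda_j = \sweight_j$), give an explicit, generally non-saturated, ideal $J_j$. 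The shapes $t_{(2,1)_j}$, $t_{(1,2)_j}$, $t_{(1,2)_j}s_j$ cover $\lambda_j \leq \sweight_j$; the remaining four are needed for $\lweight_j$.

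The second step is to saturate $J_j$. I will show that the ideal $J'_j$ written in Section~\ref{subsec: single-type approximation} satisfies $J_j \subset J'_j$ and $p^N J'_j \subset J_j$, and that $T^{\tld z_j}/J'_j$ is $p$-torsion free. Together these give $J'_j = J_j^{\sat}$. The inclusion $p^NJ'_j\subset J_j$ comes from dividing the raw generators by the constants that appear. These constants are $p$, and expressions such as $\kappa^\tau_j \pm n$ with $0\le n\le 3$. By Lemma~\ref{structure constants} and $5$-depth, $\kappa^\tau_j \pm n$ is a unit for $0 \leq n \leq 3$, so the only genuine denominators are powers of $p$. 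For $p$-torsion freeness I will check that $J'_j$ cuts out an $\cO$-flat scheme. The plan is to exhibit $T^{\tld z_j}/J'_j$ as a complete intersection, or a finite union of explicit components, of the expected relative dimension over $\cO$, whose special fibre is reduced of the same dimension; flatness then follows by the usual dimension count (a Cohen--Macaulay ring with equidimensional fibres).

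For the fixed-weight ideals $I^{\lambda_j,\tau}_{\nabla_1}$, I will use the decomposition of $(T^{\tld z_j}/J_j)[1/p]$ by elementary divisors from~\cite[Appendix~3]{FengLeHung}. The factor for $\lambda'_j$ is cut out by $J_j[1/p]$ together with the condition that the elementary divisors of $A^{\tld z_j}$ equal $\lambda'_j$. For $\lambda'_j = \lweight_j$ this means $A^{\tld z_j} \not\equiv 0 \bmod (v+p)$; for $\lambda'_j = \sweight_j$ it means $A^{\tld z_j} \equiv 0 \bmod (v+p)$. In practice I expect the $\sweight_j$ component to be given by the vanishing of the constant terms. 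The $\lweight_j$ component is then obtained as the colon ideal $J_j^{\sat} : (\text{constant-term entries})^\infty$, intersected back with $T^{\tld z_j}$. I will compare each of these with the ideal listed in Section~\ref{subsec: single-type approximation} by the same saturation and flatness check.

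The main obstacle is the saturation and flatness verification for the shapes of $\lweight$ where the two weights intersect, such as $t_{(1,2)_j}$ and $t_{(2,1)_j}s_j$. There the raw ideal $J_j$ has embedded $p$-torsion, and the special fibre of the saturation is a union of components coming from both weights. Here I will first try to show that the proposed generators form a Gröbner basis for a suitable term order. If they do, I will compare the Hilbert series of the special fibre of $T/J'_j$ with that of the generic fibre $T[1/p]/J_j[1/p]$; equality of these rules out torsion.
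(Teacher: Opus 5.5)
Your plan is essentially the paper's proof. Both compute the truncated monodromy condition shape by shape using the adjugate and the determinant condition, $p$-saturate, and identify the fixed-weight pieces through the elementary-divisor decomposition of the generic fibre. The paper takes three shortcuts: it compares the two ideals directly after inverting $p$ rather than forming a colon ideal; it cites \cite{BHHMS} for the three shapes in $\Adm^\vee(2,1)_j$; and it obtains $t_{(0,3)_j}$ and $t_{(2,1)_j}s_j$ from $t_{(3,0)_j}$ and $t_{(0,3)_j}s_j$ by conjugating with $s_jt_{(1,0)_j}$.

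In your flatness step, drop the requirement that the special fibre be reduced, since it is false: for $I^{\leq(3,0)_j,\tau}_{\nabla_1}$ on $t_{(2,1)_j}$ the special fibre is cut out by $b_0^2c_1$. The Gr\"obner/Hilbert-series step is also unnecessary. After solving the relations that are linear in the eliminable variables, each nonzero quotient is either a $p$-completed Laurent polynomial ring or a hypersurface $T'/(f)$ with $f\notin pT'$. It is therefore $p$-torsion free, because $p$ is prime in $T'$. Finally, $t_{(2,1)_j}s_j$ is not a shape where the two weights meet, since its $(2,1)_j$-piece is empty.
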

\begin{proof}
Throughout this proof, we will write $I_{\nabla_1, \mathrm{ex}}^{\leq \lambda_j, \tau}$ and $I_{\nabla_1, \mathrm{ex}}^{\lambda_j, \tau}$ for the ideals 
defined in Section~\ref{subsec: single-type approximation}, which by inspection are $p$-saturated.
We begin by proving the equality $I_{\nabla_1}^{\leq \lambda_j, \tau} = I_{\nabla_1, \mathrm{ex}}^{\leq \lambda_j, \tau}$. 
If $\tld z_j \in \Adm^\vee(2, 1)_j$, this follows from calculations
in~\cite{BHHMS}: we refer to~\cite[Tables~1--3]{BHHMS} and to the proof 
of~\cite[Proposition~4.2.1]{BHHMS}.
We give details for the other cases.

Assume that $\tld z_j \not \in \Adm^\vee(2, 1)_j$ and $\lambda_j = (2, 1)_j$. 
Then $I^{\leq \lambda_j, \tau}_{\nabla_1, \mathrm{ex}} = T^{\tld z_j}$, by definition.
On the other hand, the ideal $I_{\nabla_1}^{\leq \lambda_j, \tau}$ is the $p$-saturation in $T^{\tld z_j}$ of an ideal that contains~$p$: 
in fact, the matrix $A^{\tld z_j}$ has an entry which is either~$1$ or~$v$, and
so the condition $A^{\tld z_j} \in (v+p)M_2(T^{\tld z_j}[\![v+p]\!])$ 
generates an ideal of~$T^{\tld z_j}$ that contains~$p$.
Hence~$I_{\nabla_1}^{\leq \lambda_j, \tau} = I_{\nabla_1, \mathrm{ex}}^{\leq \lambda_j, \tau}$ is true in this case.

There remains to consider the case that~$\tld z_j \not\in \Adm^\vee(2, 1)_j$ and $\lambda_j = (3, 0)_j$.
Because of the condition on the determinant, 
the ideal $I_{\fM^{\tau}, \beta^{\tau}, \nabla_1} \subset R^{\leq (3, 0)_j, \tld z_j}$ is generated by
\[
\left ( v\frac{d}{dv}A^{\tld z_j}-A^{\tld z_j}\fourmatrix{\kappa^{\tau}_j}{0}{0}{0} \right )(A^{\tld z_j})^{\mathrm{cof}} \in (v+p)^2L^+\cM\left (R^{\leq (3, 0)_j, \tld z_j} \right ).
\]
where~$A^{\mathrm{cof}}$ is the transpose of the cofactor matrix of~$A$.

Assume first that $\tld z_j = t_{(3, 0)_j}$.
Then the ideal generated by the condition
\[
\det A^{\tld z_j} \in (T^{\tld z_j})^\times(v+p)^3
\]
is generated by~$a_0, a_1, a_2$.
This implies that
\begin{equation*}
\left ( v\frac{d}{dv} A^{\tld z_j} - A^{\tld z_j}\fourmatrix{\kappa^{\tau}_j}{0}{0}{0} \right ) (A^{\tld z_j})^{\mathrm{cof}}\equiv 
\diag(a_3, d_0)\fourmatrix{0}{0}{vX}{0} \text{ mod } (v+p)^2 L^+\cM\left (R^{\leq (3, 0)_j, \tld z_j} \right ).
\end{equation*}
where $X := (v+p)((2-\kappa_j^\tau)c_1-2pc_2)+((1-\kappa_j^\tau)c_0-pc_1)$.
Hence by definition $I_{\fM^\tau, \beta^\tau, \nabla_1}$ is generated by the conditions $X|_{v=-p}=\frac{d}{dv}|_{v=-p}X=0$
and so
$I_{\fM^\tau, \beta^\tau, \nabla_1}= ((\kappa_j^\tau-2)c_1+2pc_2, (\kappa_j^\tau-1)c_0+pc_1)$.
Hence $I_{\nabla_1}^{\leq (3, 0)_j, \tau}$ coincides with 
\[
I^{\leq (3, 0)_j, \tau}_{\nabla_1, \mathrm{ex}} := (a_0, a_1, a_2, (\kappa_j^\tau-2)c_1+2pc_2, (\kappa_j^\tau-1)c_0+pc_1),
\]
as desired.

Assume now that~$\tld z_j = t_{(0, 3)_j}s$.
The ideal generated by the condition
\[
\det A^{\tld z_j} \in (T^{\tld z_j})^\times(v+p)^3
\]
is generated by
\[(d_0a_2-(c_1-p), d_0a_1-(c_0-pc_1), d_0a_0+pc_0).\]
Define
\[
\fourmatrix X 0 Y 0 := v\frac d {dv} A^{\tld z_j} - A^{\tld z_j} \fourmatrix{\kappa_j^\tau} 0 0 0,
\]
noting that the second column of the right-hand side is indeed zero.
Then
\[
\left ( v\frac d {dv} A^{\tld z_j} - A^{\tld z_j} \fourmatrix{\kappa_j^\tau} 0 0 0 \right ) (A^{\tld z_j})^{\mathrm{cof}} = \fourmatrix {c_2d_0 X} {-b_0X} {c_2d_0 Y} {-b_0Y}.
\]
Since~$b_0, c_2$ are units, we see that~$I_{\fM^\tau, \beta^\tau, \nabla_1}$ is generated by
\[
X \in (v+p)^2R^{\leq (3, 0)_j, \tau}[\![v+p]\!], Y \in (v+p)^2R^{\leq (3, 0)_j, \tau}[\![v+p]\!], d_0Y \in v(v+p)^2R^{\leq (3, 0)_j, \tau}[\![v+p]\!].
\]
We compute that
\begin{align*}
X &= 2a_2 v(v+p)+a_1 v - \kappa_j^\tau(a_2(v+p)^2+a_1(v+p)+a_0)\\
&\equiv (v+p)(-2a_2p+(1-\kappa_j^\tau)a_1) + (-pa_1-\kappa_j^\tau a_0) \text{ mod } (v+p)^2R^{\leq (3, 0)_j, \tau}[\![v+p]\!]
\end{align*}
and 
\begin{align*}
Y &= v((1-\kappa_j^\tau)((v+p)^2+c_1(v+p)+c_0)+2v(v+p)+c_1v)\\
&\equiv v((v+p)((2-\kappa_j^\tau)c_1-2p)+((1-\kappa_j^\tau)c_0-pc_1)) \text{ mod } v(v+p)^2R^{\leq (3, 0)_j, \tau}[\![v+p]\!].
\end{align*}
Hence $I_{\fM^\tau, \beta^\tau, \nabla_1}$ is generated by %
\[
((\kappa_j^\tau-1)a_1 + 2pa_2, \kappa_j^\tau a_0+pa_1, (\kappa_j^\tau-2)c_1+2p, (\kappa_j^\tau-1)c_0+pc_1)
\]
and so $I_{\nabla_1}^{\leq (3, 0)_j, \tau}$ is the $p$-saturation in $T^{\tld z_j}$ of
\[
(d_0a_2-(c_1-p), d_0a_1-(c_0-pc_1), d_0a_0+pc_0, (\kappa_j^\tau-1)a_1 + 2pa_2, \kappa_j^\tau a_0+pa_1, (\kappa_j^\tau-2)c_1+2p, (\kappa_j^\tau-1)c_0+pc_1),
\]
which is readily seen to coincide with $I_{\nabla_1, \mathrm{ex}}^{\leq (3, 0)_j, \tau}$, as desired.

The remaining two cases $\tld z_j \in \{t_{(0, 3)_j}, st_{(1, 2)_j}\}$ follow by conjugation by the Iwahori-normalizing matrix $st_{(1, 0)_j}$ (or can be proved directly in the same way).

We now prove that $I_{\nabla_1, \mathrm{ex}}^{\lambda_j, \tau} = I_{\nabla_1}^{\lambda_j, \tau}$.
Since these are $p$-saturated ideals of $T^{\tld z_j}/I_{\nabla_1}^{\leq (3, 0)_j, \tau}$, it suffices to prove that they have the same extension to an ideal of 
$S := \bigl( T^{\tld z_j}/I_{\nabla_1}^{\leq (3, 0)_j, \tau} \bigr) \otimes_{\cO} E$.
This can be verified by inspection from the formulas in Section~A.2, and is immediate whenever $\tld z_j \not \in \Adm^\vee(2, 1)$.
We give details in the case $\tld z_j = t_{(2, 1)_j}$.
By definition, the extension of $I_{\nabla_1}^{\lambda_j, \tau}$ to $S$
is generated by the condition that the elementary divisors of any specialization of $A^{\tld z_j}$ at a closed point of~$S$ are given by~$\lambda_j$.
When $\lambda_j = (2, 1)_j$, because of the determinant condition, this means that all entries of $A^{\tld z_j}$ vanish at~$v = -p$. 
This shows that
$I_{\nabla_1, \mathrm{ex}}^{(2, 1)_j, \tau}S = I_{\nabla_1}^{(2, 1)_j, \tau}S$.
When $\lambda = (3, 0)_j$, the first four relations in $I_{\nabla_1}^{\leq (3, 0)_j, \tau}$
imply that $b_0$ is invertible in $S/I_{\nabla_1}^{(3, 0)_j, \tau}S$. 
Then the fifth relation 
\[
b_0((\kappa_j^\tau-1)(\kappa_j^\tau-2)b_0c_1-2p) = 0
\]
implies that
$(\kappa_j^\tau-1)(\kappa_j^\tau-2)b_0c_1-2p \in I_{\nabla_1}^{(3, 0)_j, \tau}S$, and so
$I_{\nabla_1, \mathrm{ex}}^{(3, 0)_j, \tau}S \subset I_{\nabla_1}^{(3, 0)_j, \tau}S$.
The reverse inclusion follows from the fact that every specialization of $A^{\tld z_j}$ to a closed point of $S/I_{\nabla_1, \mathrm{ex}}^{(3, 0)_j, \tau}S$ has elementary divisors~$(3, 0)$, 
which is because 
\[
(\kappa_j^\tau-1)(\kappa_j^\tau-2)b_0c_1-2p \in I_{\nabla_1, \mathrm{ex}}^{(3, 0)_j, \tau}
\]
implies that $b_0$ is a unit in $S/I_{\nabla_1, \mathrm{ex}}^{(3, 0)_j, \tau}S$
(because so is~$2p$).
\end{proof}

\subsection{Single-type Galois deformation rings.}\label{subsec: single type deformation rings}
Let $\rhobar: G_{K} \to \GL_2(\bF)$ be a continuous representation.
Choose a lowest alcove presentation~$(\mathfrak{w}, \mu)$ of the tame inertial $\bF$-type $\rhobar^{\mss}|_{I_{K}}$, 
so that $\rhobar^{\mss}|_{I_{K}} = \overline\tau(\mathfrak{w}, \mu+\eta)$.
We assume that~$\mathfrak{w} = 1$ when~$\rhobar$ is reducible, and we define
\[\tld w^*(\rhobar) := \mathfrak{w}^{-1}t_{\mu+\eta},\]
and we say that~$\rhobar$ is $m$-generic if~$\mu$ is $m$-deep in~$\uline C_0$.
We assume from now on that~$\rhobar$ is $9$-generic, although weaker genericity conditions are sometimes possible.
Looking ahead, the reason for $9$-genericity is that we will sometimes require the exponent in~\eqref{approximated local models} and~\eqref{fixed weight approximated local models} 
to be at least~$2$. 
We thus need that $R_{\rhobar}^{\lambda, \tau} \ne 0$ implies
that~$\tau$ is $6$-deep, which is guaranteed when~$\rhobar$ is $9$-generic.
This also turns out to be enough genericity for the rest of our results.

In this section we explain how to use the diagram~\eqref{presentation of stacks} to compute presentations of potentially crystalline deformation rings of~$\rhobar$ by passing to 
versal rings of $\cX^{\lambda, \tau}$.
We assume that $(2, 1) \leq \lambda \leq (3, 0)$, and we begin by recalling the classification of tame inertial $\cO$-types such that $R_{\rhobar}^{\lambda, \tau} \ne 0$,
or equivalently $\rhobar \in \cX^{\lambda, \tau}(\bF)$.
We do this in more detail than strictly necessary, since we will need some of this material in later sections of the paper 
(for example, in the ``covering" part of the proof of Proposition~\ref{Wchi3}).

Recall from e.g.~\cite[Proposition~3.1]{Dan}
that there exist unique $N \in \uline U^-(\bF)$, $\Delta \in \uline T^\vee(\bF)$ such that 
the \'etale $\varphi$-module of~$\rhobar|_{\Gal_{K_\infty}}$ over~$\cO_{\cE, \bF}$ has matrices
\begin{equation}\label{matrices of Phi on rhobar}
\Phi_{\rhobar}^{(j)} := \Delta_j N_j\tld w^*(\rhobar)_j.
\end{equation}
Furthermore, $N_j = 1$ for all~$j$ if and only if~$\rhobar$ is semisimple.
If~$\tau$ is a tame inertial $\cO$-type, it admits at most one lowest alcove presentation~$(w, \nu)$ such that
\[
\tld w^*(\rhobar, \tau) := \tld w^*(\rhobar)\tld w^*(\tau)^{-1} \in \Adm^\vee(3, 0),
\]
where~$\tld w^*(\tau)$ is defined in~\eqref{group element associated to presentation of type}.
In the rest of the paper, we will restrict to inertial types that admit such a presentation, and we will implicitly work with this presentation.
It will be useful to introduce the sets
\begin{gather*}
\Adm^\vee_{\rhobar}(2, 1)_j := \{\tld w_j \in \Adm^\vee(2, 1)_j: N_j \ne 1 \implies \tld w_j \ne t_{(1, 2)_j} \}\\
\Adm^\vee_{\rhobar}(3, 0)_j := \{\tld w_j \in \Adm^\vee(3, 0)_j: N_j \ne 1 \implies \tld w_j \ne t_{(0, 3)_j} \}.
\end{gather*}
and write $\Adm^\vee_{\rhobar}(\lambda) := \prod_{j \in \cJ} \Adm^\vee_{\rhobar}(\lambda_j)$ for any~$\lambda$ such that $(2, 1) \leq \lambda \leq (3, 0)$.

\begin{lemma}\label{types lifting rhobar}
Assume that~$\rhobar$ is $9$-generic.
Let~$\tau$ be a $5$-deep tame inertial $\cO$-type, and let~$(2, 1) \leq \lambda \leq (3, 0)$ be a dominant weight.
The following are equivalent:
\begin{enumerate}
    \item $\rhobar \in \cX^{\lambda, \tau}(\bF)$.
    \item $\rhobar \in \cX^{\lambda, \tau}(\tld w^*(\rhobar, \tau))$.
    \item for all~$j \in \cJ$, $\tld w^*(\rhobar, \tau)_j \in \Adm^\vee(\lambda_j)$ and $N_j \tld w^*(\rhobar, \tau)_j \in \cI(\bF) \Adm^\vee(\lambda_j) \cI(\bF)$.
    \item for all~$j \in \cJ$, $\tld w^*(\rhobar, \tau)_j \in \Adm^\vee_{\rhobar}(\lambda_j)$.
\end{enumerate}
\end{lemma}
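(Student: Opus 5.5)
We prove the cycle (1) $\Rightarrow$ (3) $\Rightarrow$ (4) $\Rightarrow$ (2) $\Rightarrow$ (1). The inputs are the diagram~\eqref{presentation of stacks}, the explicit form~\eqref{matrices of Phi on rhobar} of the $\varphi$-module of~$\rhobar$, and the description of the special fibres of the local models from~\cite{LLHLMmodels}.

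The implication (2) $\Rightarrow$ (1) is immediate, since $\cX^{\lambda, \tau}(\tld z)$ is an open substack of~$\cX^{\lambda, \tau}$.

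For (1) $\Rightarrow$ (3), suppose $\rhobar \in \cX^{\lambda, \tau}(\bF)$. Then the $\varphi$-module of~$\rhobar|_{\Gal_{K_\infty}}$ arises from a Kisin module with descent datum~$\tau$ whose elementary divisors are bounded by~$\lambda$. Since~$\tau$ is $5$-deep, this Kisin module admits a $\tld z$-gauge basis for some $\tld z \in \Adm^\vee(\lambda)$, by~\cite[Corollary~5.3.4]{LLHLMmodels}. By property~(3) of~\eqref{presentation of stacks}, the matrices of~$\varphi$ are then $A_j\tld w^*(\tau)_j$ for some $A_j$ in the $\bF$-points of $\tld U(\tld z_j, \leq \lambda_j)$. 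In the special fibre these points lie in $\cI(\bF)\tld z_j\cI(\bF)$, and their closure is $\overline{\cI \tld z_j \cI}$.

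Compare this with~\eqref{matrices of Phi on rhobar}. We obtain
\[
\Delta_j N_j \tld w^*(\rhobar)_j \sim A_j\tld w^*(\tau)_j
\]
up to $\varphi$-conjugation by $L^+\cG(\bF) = \cI(\bF)$, twisted by $\tld w^*(\tau)$. Cancel $\tld w^*(\tau)_j$ on the right and move~$\Delta_j$ into the torus. The genericity of~$\rhobar$ then lets us remove the $\varphi$-twist in the conjugation, so that it becomes ordinary $\cI(\bF)$-double coset membership. This is the standard argument of~\cite[Section~5]{LLHLMmodels}: with depth at least~$5$, the twisted conjugation is absorbed into $\cI_1$ by a contraction argument. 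The conclusion is
\[
N_j\tld w^*(\rhobar, \tau)_j \in \cI(\bF)\,\overline{\cI\tld z_j \cI}(\bF) \subset \cI(\bF)\Adm^\vee(\lambda_j)\cI(\bF).
\]
Since $N_j \in \uline U^-(\bF)$ is unipotent, the torus part of $\tld w^*(\rhobar, \tau)_j$ is read off from this double coset. Combined with the uniqueness of the presentation, this forces $\tld w^*(\rhobar, \tau)_j \in \Adm^\vee(\lambda_j)$.

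The implication (3) $\Leftrightarrow$ (4) is a finite check in $\GL_2(\bF(\!(v)\!))$, done one embedding at a time. When $N_j = 1$ there is nothing to prove. When $N_j \ne 1$, write $N_j = \fourmatrix{1}{0}{x}{1}$ with $x \ne 0$. For each $\tld w \in \Adm^\vee(\lambda_j)$ we check whether $N_j \tld w$ lies in $\cI\Adm^\vee(\lambda_j)\cI$ by computing its elementary divisors together with the valuation of the $(2,1)$ entry. Carrying this out, the only failures are $\tld w = t_{(1,2)_j}$ for $\lambda_j = (2,1)_j$, and $\tld w = t_{(0,3)_j}$ for $\lambda_j = (3,0)_j$. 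In both cases the lower-left entry $x v^{\ast}$ has too small a $v$-adic valuation, so the product lands in a strictly larger Schubert cell, namely $t_{(2,1)}$, respectively $t_{(3,0)}$, relative to the opposite order. This is exactly the definition of~$\Adm^\vee_{\rhobar}$.

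For (4) $\Rightarrow$ (2), set $\tld z := \tld w^*(\rhobar, \tau)$. We must produce an $\bF$-point of $\tld U(\tld z, \leq \lambda)$ lying in the closed subscheme $\tld\cX^{\lambda, \tau}(\tld z)$ and mapping to~$\rhobar$. The natural candidate is $A_j := \Delta_j N_j \tld z_j$; by~(3) it is of the required shape, after an $\cI$-conjugation that the gauge allows. Next we check that this point satisfies the special fibre of the monodromy ideal and the fixed-weight ideal $I^{\lambda,\tau}_{\nabla_\infty}$. By~\eqref{approximated local models} and~\eqref{fixed weight approximated local models}, modulo~$p$ these agree with $I^{\lambda, \tau}_{\nabla_1}$. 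Theorem~\ref{weight at most (3, 0)} identifies $I^{\lambda, \tau}_{\nabla_1}$ with the explicit ideals, so the condition becomes a direct inspection of those ideals reduced mod~$p$.

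I expect this last verification to be the main obstacle. For $\lambda_j = (3,0)_j$, the special fibre of $I^{(3,0)_j,\tau}_{\nabla_1}$ is not simply the Schubert cell. One has to check that the specific unipotent parameter~$x$ satisfies the leftover equations, for instance the fifth relation $(\kappa_j^\tau-1)(\kappa_j^\tau-2)b_0c_1 - 2p$, which reduces mod~$p$ to a condition on~$b_0c_1$. This is where the genericity bound $\kappa_j^\tau \not\equiv 0, \pm1, \pm2$ from Lemma~\ref{structure constants} has to be used.
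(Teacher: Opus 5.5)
\textbf{The main gap is in (4)$\Rightarrow$(2).} Your outline otherwise follows the paper's: (2)$\Rightarrow$(1) is trivial, (1)$\Rightarrow$(3) compares Frobenius matrices, (3)$\Leftrightarrow$(4) is a case check, and the converse exhibits $\Delta_jN_j\tld w^*(\rhobar,\tau)_j$ as an $\bF$-point of the local model.

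What you construct is an $\bF$-point of $\tld\cX^{\lambda,\tau}(\tld z)$ whose Frobenius matrices are $\Delta_jN_j\tld z_j\tld w^*(\tau)_j=\Phi^{(j)}_{\rhobar}$. The diagram~\eqref{presentation of stacks} only records the \'etale $\varphi$-module over $K_\infty$. So this point gives you some $\rhobar_1\in\cX^{\lambda,\tau}(\tld z)(\bF)$ with $\rhobar_1|_{\Gal_{K_\infty}}\cong\rhobar|_{\Gal_{K_\infty}}$, and nothing more. Restriction to $\Gal_{K_\infty}$ is not fully faithful in general: the Kummer class of $-p$ in $H^1(\Gal_K,\lbar\epsilon)$ dies on $\Gal_{K_\infty}$. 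So ``mapping to $\rhobar$'' is exactly the part that remains to be proved, and your argument never addresses it. The paper closes this step with \cite[Lemma~7.2.10]{LLHLMmodels}, which uses the genericity hypotheses to conclude $\rhobar\cong\rhobar_1$. You need that input, or an equivalent argument, to reach~(2).

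\textbf{Secondary problems.}
\begin{itemize}
\item \emph{The case check in (3)$\Leftrightarrow$(4).} Your list of failing pairs is correct, but the justification is wrong. Elementary divisors are unchanged by left multiplication by $N_j\in\GL_2(\bF[\![v]\!])$, so they cannot detect anything here. A direct computation gives, for $N_j\neq 1$,
\[
N_jt_{(1,2)_j}\in\cI t_{(2,1)_j}s\cI, \qquad N_jt_{(0,3)_j}\in\cI t_{(3,0)_j}s\cI.
\]
Landing in $\cI t_{(2,1)_j}\cI$ or $\cI t_{(3,0)_j}\cI$, as you claim, would \emph{not} be a failure, since $t_{(2,1)_j}$ and $t_{(3,0)_j}$ are admissible.
\item \emph{The ``main obstacle'' you anticipate is not one.} The matrix $\Delta_jN_j\tld z_j$ is already in $\tld z_j$-gauge form, with no $\cI$-conjugation needed. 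All its non-unit coordinates vanish except the one carrying $x$; for example, $c_1=x$ and $b_0=0$ when $\tld z_j=t_{(2,1)_j}$. Hence every generator of $I^{\lambda_j,\tau}_{\nabla_1}$ vanishes mod~$p$ by inspection, and the bound $\kappa^\tau_j\not\equiv 0,\pm1,\pm2$ is never used there.
\item \emph{Conjugation in (1)$\Rightarrow$(3).} The conjugating elements a priori lie in $L\GL_2(\bF)$. Moving them into $\cI(\bF)$ requires \cite[Lemma~5.4.4]{LLHLMmodels}, which needs both Frobenius matrices in $L^{[0,3]}\GL_2(\bF)$. That in turn uses the standing convention $\tld w^*(\rhobar,\tau)\in\Adm^\vee(3,0)$.
\item \emph{The chart points in (1)$\Rightarrow$(3).} Points of the chart $\tld U(\tld z,\leq\lambda)$ are not confined to $\cI\tld z_j\cI$. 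The correct input is that $\bF$-points of $Y^{\leq\lambda,\tau}$ have matrices in $\cI\Adm^\vee(\lambda_j)\cI$ \cite[Corollary~5.3.5]{LLHLMmodels}.
\item \emph{Admissibility of $\tld w^*(\rhobar,\tau)_j$.} Your derivation of $\tld w^*(\rhobar,\tau)_j\in\Adm^\vee(\lambda_j)$ from the double coset does work under the standing convention. Elementary divisors rule out $t_{(3,0)_j}$, $t_{(0,3)_j}s$ and $t_{(0,3)_j}$, and the last case is settled by $N_jt_{(2,1)_j}s\in\cI t_{(2,1)_j}s\cI$. The paper instead cites \cite[Corollary~5.5.8]{LLHLMmodels}.
\end{itemize}
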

\begin{proof}
We begin by showing the equivalence of the first three statements. Note that~(2) implies~(1) by definition.
We shorten notation to $\tld w := \tld w^*(\rhobar, \tau)$.

(1) implies~(3): We have $\tld w \in \Adm^\vee(\lambda)$, by~\cite[Corollary~5.5.8]{LLHLMmodels}, and the fact that 
$\rhobar \in \cX^{\lambda, \tau}(\bF)$ implies $\rhobar^{\mathrm{ss}} \in \cX^{\lambda, \tau}(\bF)$.
Furthermore, by assumption, there is a Kisin module~$\fM \in Y^{\lambda, \tau}(\bF)$ such that $\fM \otimes_{\fS_\bF} \cO_{\cE, \bF}$ is isomorphic to the $\varphi$-module of $\rhobar|_{\Gal_{K_\infty}}$.
Choosing an eigenbasis of~$\fM$, and writing~$A^{(j)}$ for the corresponding matrices of Frobenius,
there thus exist $X_j \in L\GL_2(\bF)$ such that
\[
X_j\Phi_{\rhobar}^{(j)} = A^{(j)}\tld w^*(\tau)\varphi(X_{j \circ \Frob_p}) \text{ for all } j \in \cJ.
\] 
Now $\Phi_{\rhobar}^{(j)} = (\Delta^{(j)}N_j\tld w_j)\tld w^*(\tau)_j$, by definition, and $\Delta^{(j)}N_j\tld w_j, A^{(j)} \in L^{[0, 3]}\GL_2(\bF)$.
Since~$\tau$ is $4$-deep, we conclude from~\cite[Lemma~5.4.4]{LLHLMmodels} that $X_j \in \cI(\bF)$ for all~$j$.
In turn, \cite[Lemma~5.2.2]{LLHLMmodels} then implies that 
\[
N_j \tld w_j \in T(\bF)(\cI(\bF)A^{(j)})T(\bF).
\]
Since $A^{(j)} \subset \cI(\bF) \Adm^\vee(\lambda_j) \cI(\bF)$ by~\cite[Corollary~5.3.5]{LLHLMmodels}, this implies~(3).

(3) implies~(2): Assuming~(3), one checks directly from the formulas in Section~\ref{subsec: gauges} that $\Delta^{(j)}N_j\tld w_j$ defines an $\F$-point of
$T^{\tld w_j}/(p, I_{\nabla_1}^{\lambda_j, \tau})$.
Now \eqref{fixed weight approximated local models} implies that 
\[T^{\tld w_j}/(p, I_{\nabla_1}^{\lambda_j, \tau}) = T^{\tld w_j}/(p, I_{\nabla_\infty}^{\lambda_j, \tau}),\]
and so we obtain a Galois representation
$\rhobar_1 \in \cX^{\lambda, \tau}(\tld w)(\bF)$ such that $\rhobar|_{\Gal_{K_\infty}} \cong \rhobar_1|_{\Gal_{K_\infty}}$.
By~\cite[Lemma~7.2.10~(2), (3)]{LLHLMmodels}, this implies that $\rhobar \cong \rhobar_1$ as $\Gal_K$-representations, which implies~(2).

Finally, one sees by inspection that~(3) and~(4) are equivalent: the content of this statement is that, for all $\tld w_j \in \Adm^\vee(\lambda_j)$,
we have $N_j \tld w_j \not \in \cI(\bF) \Adm^\vee(\lambda_j)\cI(\bF)$ if and only if $N_j \ne 1$ and 
\[
(\tld w_j, \lambda_j) \in \{(t_{(1, 2)_j}, (2, 1)_j), (t_{(0, 3)_j}, (3, 0)_j)\}.\qedhere
\]
\end{proof}

The following property of $\Adm^\vee_{\rhobar}\sweight$ is a direct consequence of its definition.

\begin{lemma}\label{hypercube}
Let~$\tld u \in \Adm^\vee_{\rhobar}\sweight$.
\begin{enumerate}
\item If $\alpha \in \Phi$, then $\tld u t_{2 \alpha} \not \in \Adm^\vee_{\rhobar}(2, 1)$.
\item If $\alpha_1, \alpha_2 \in \Phi$ are linearly independent roots, then $\tld u t_{\alpha_1 + \alpha_2} \in \Adm^\vee_{\rhobar}(2, 1)$ implies $\tld u t_{\alpha_1}, \tld u t_{\alpha_2} \in \Adm^\vee_{\rhobar}\sweight$.
\end{enumerate}
\end{lemma}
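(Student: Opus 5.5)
Since the product decomposition $\Adm^\vee_{\rhobar}\sweight = \prod_{j \in \cJ}\Adm^\vee_{\rhobar}(2,1)_j$ is compatible with translation by $t_\mu$ for $\mu \in X^*(\uline T) = \prod_j \bZ^2$, the plan is to reduce both statements to a single embedding. Every root $\alpha \in \Phi$ equals $\pm\alpha_j$ for a unique $j$, so $\tld u t_{2\alpha}$ differs from $\tld u$ only in the $j$-component. For~(2), two linearly independent roots are supported on distinct embeddings $j_1 \ne j_2$, because $\pm\alpha_j$ are linearly dependent. So $\tld u t_{\alpha_1+\alpha_2}$ has component $\tld u_{j_1} t_{\alpha_1}$ at $j_1$, component $\tld u_{j_2} t_{\alpha_2}$ at $j_2$, and agrees with $\tld u$ elsewhere. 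The condition $\tld u t_{\alpha_1+\alpha_2} \in \Adm^\vee_{\rhobar}(2,1)$ therefore says exactly that $\tld u_{j_i} t_{\alpha_i} \in \Adm^\vee_{\rhobar}(2,1)_{j_i}$ for $i=1,2$ (together with $\tld u_j \in \Adm^\vee_{\rhobar}(2,1)_j$ for the other $j$, which holds by assumption). This is precisely the assertion that $\tld u t_{\alpha_1}, \tld u t_{\alpha_2} \in \Adm^\vee_{\rhobar}\sweight$. So~(2) is formal and needs no case analysis.

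For~(1) I will do a finite check in a single embedding $j$, using the explicit description $\Adm^\vee(2,1)_j = \{t_{(2,1)_j}, t_{(1,2)_j}, t_{(1,2)_j}s_j\}$ and the convention $\alpha_j = (1,-1)_j$. Write $\tld u_j = t_\lambda w$ with $w \in \{1, s_j\}$. Then $\tld u_j t_{2\alpha} = t_{\lambda + 2w(\alpha)} w$. Its translation part is $\lambda \pm (2,-2)_j$, whose two coordinates differ by $\langle\lambda, \alpha_j^\vee\rangle \pm 4$.

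The translation parts appearing in $\Adm^\vee(2,1)_j$ are $(2,1)$ and $(1,2)$, for which $\langle \lambda, \alpha_j^\vee\rangle = \pm 1$. Since $|{\pm 1} \pm 4| \ge 3$, no translate lands in $\Adm^\vee(2,1)_j$, and a fortiori none lands in the subset $\Adm^\vee_{\rhobar}(2,1)_j$. Note that this argument does not even use the $\rhobar$-refinement.

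The only real obstacle is bookkeeping: fixing the convention for multiplying $t_\mu$ on the right of $t_\lambda s$ in $\tld{\uline W}$, and the identification $\alpha_j = (1,-1)_j$. Neither affects the conclusion, since the estimate on $\langle\cdot,\alpha_j^\vee\rangle$ is symmetric under both signs and under the action of $\uline W$.
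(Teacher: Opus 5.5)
Your proof is correct and follows the paper's argument: the paper proves (1) ``by inspection'' and deduces (2) from the componentwise definition $\Adm^\vee_{\rhobar}(2,1)=\prod_{j}\Adm^\vee_{\rhobar}(2,1)_j$, exactly as you do. Your computation that the translation part of $\tilde u_j t_{2\alpha}$ pairs with $\alpha_j^\vee$ to a value in $\{\pm 3,\pm 5\}$, and so never to $\pm 1$, just makes that inspection explicit.
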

\begin{proof}
The first statement is true by inspection. 
The second statement is a consequence of the componentwise nature of
the definition of~$\Adm^\vee_{\rhobar}\sweight$.
\end{proof}

If $\tld w \in \Adm_{\rhobar}^\vee\lweight$, we define $\tau_{\tld w}$ to be the unique inertial $\cO$-type which admits a lowest alcove presentation 
such that $\tld w^*(\rhobar, \tau_{\tld w}) = \tld w$.
Since $\rhobar$ is $9$-generic, the type~$\tau_{\tld w}$ is $6$-deep, and so
Lemma~\ref{types lifting rhobar} imply that there is an $\F$-point $A_{\rhobar, \tld w} = (A_{\rhobar, \tld w}^{(j)})_{j \in \cJ}$ of $T^{\tld w}/I_{\nabla_\infty}^{\lambda, \tau_{\tld w}}$ 
whose image in $\PhiMod^{\et}_{K, 2}(\bF)$
is $\rhobar|_{\Gal_{K_\infty}}$.
We thus see from~\eqref{presentation of stacks} that the
completion of $\tld \cX^{\lambda, \tau_{\tld w}}$ at~$\rhobar$ is isomorphic to
\[
\Spf \left ( T^{\tld w}/I_{\nabla_\infty}^{\lambda, \tau_{\tld w}} \right )^\wedge_{A_{\rhobar, \tld w}}.
\]
Since~$R^{\lambda, \tau_{\tld w}}_{\rhobar}$ is formally smooth of dimension~4 over a minimal versal ring to~$\cX^{\lambda, \tau_{\tld w}}$ at~$\rhobar$
(compare the proof of~\cite[Theorem~4.8.14]{EGstack})
we thus obtain an isomorphism
\begin{equation}\label{local models fixed weight}
R^{\lambda, \tau_{\tld w}}_{\rhobar}[\![X_1, \ldots, X_{2f}]\!] \cong \left (T^{\tld w}/I_{\nabla_\infty}^{\lambda, \tau_{\tld w}} \right )^\wedge_{A_{\rhobar, \tld w}}[\![Y_1, Y_2, Y_3, Y_4]\!].
\end{equation}

\subsection{Class groups of potentially Barsotti--Tate deformation rings.}
In this section we prove that the Weil class groups of the rings~$R^{\eta, \tau}_{\rhobar}$ are $2$-torsion free. 
We will only need to assume that~$\rhobar$ is $1$-generic.
After a twist, it suffices to prove that the class group of $R^{\sweight, \tau}_{\rhobar}$ is $2$-torsion free, which is what we will do.

\begin{prop}\label{Gorenstein}
Assume that $\rhobar$ is $1$-generic and $\tld w \in \Adm^\vee_{\rhobar}\sweight$.
Then~$R^{\sweight, \tau_{\tld w}}_{\rhobar}$ is equisingular to
\[
\widehat\bigotimes_{\substack{j \in \cJ:\;N_j = \id,\\\tld w_j = t_{(1, 2)_j}s}} \cO[\![X_j, Y_j]\!]/(X_jY_j-p).
\]
Thus $R_{\rhobar}^{\sweight, \tau_{\tld w}}$ is a complete intersection, hence a Gorenstein local ring.
\end{prop}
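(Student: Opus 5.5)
The plan is to use the local model of Section~\ref{subsec: single type deformation rings}, namely the isomorphism~\eqref{local models fixed weight} for $\lambda = \sweight$:
\[
R^{\sweight, \tau_{\tld w}}_{\rhobar}[\![X_1, \ldots, X_{2f}]\!] \cong \bigl(T^{\tld w}/I^{\sweight, \tau_{\tld w}}_{\nabla_\infty}\bigr)^\wedge_{A_{\rhobar, \tld w}}[\![Y_1, \ldots, Y_4]\!].
\]
Two points need care here. That isomorphism was stated under $9$-genericity, but for $\lambda = \sweight$ it only needs $1$-genericity. This is because in the Barsotti--Tate case the monodromy condition is automatic: by~\cite[Section~7]{LLHLMmodels}, the ideal $I^{\sweight, \tau}_{\nabla_\infty}$ is just the $p$-saturated ideal $I^{\leq \sweight, \tld z}$ cut out by the height and determinant conditions, as in the work of~\cite{BHHMS}. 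I would cite this directly rather than go through~\eqref{approximated local models}.

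Granting this, equisingularity reduces to computing the completion of $R^{\leq \sweight, \tld w} = \widehat\bigotimes_j R^{\leq (2,1)_j, \tld w_j}$ at the point $A_{\rhobar, \tld w}$. Since the ring is a completed tensor product over $j$, and the point decomposes as a product over $j$, the completion is the completed tensor product of the completions of the factors $R^{\leq (2,1)_j, \tld w_j}$ at $A^{(j)}_{\rhobar, \tld w}$. So it suffices to treat each embedding separately. There are three cases for $\tld w_j \in \Adm^\vee\sweight_j$: $t_{(2,1)_j}$, $t_{(1,2)_j}$ and $t_{(1,2)_j}s_j$.

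For each case I would write out the universal matrix from Section~\ref{subsec: gauges} and impose the two conditions: divisibility by $(v+p)$ and $\det \in (\cdot)^\times (v+p)^3$.
\begin{itemize}
\item For $\tld z_j = t_{(2,1)_j}$ and $t_{(1,2)_j}$, the resulting ring is formally smooth over $\cO$: it is a power series ring in the remaining unit and off-diagonal variables.
\item For $t_{(1,2)_j}s_j$ the matrix has the shape $\fourmatrix{(v+p)c'}{b}{v d}{(v+p)a'}$ up to units, and the determinant condition gives one relation $xy = p$ (up to a unit) between two coordinates.
\end{itemize}
At the point $A^{(j)}_{\rhobar, \tld w}$ one of two things happens. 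If $N_j \ne 1$, one of these two coordinates specializes to a nonzero element of $\bF$, so it is a unit in the completion and the factor is formally smooth. If $N_j = 1$, both coordinates vanish at the point, and the completion is $\cO[\![X_j, Y_j]\!]/(X_jY_j - p)$ times formally smooth variables.

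This identifies $R^{\sweight, \tau_{\tld w}}_{\rhobar}$, up to adding power series variables on both sides, with $\widehat\bigotimes_{j} \cO[\![X_j,Y_j]\!]/(X_jY_j-p)$ over the indicated $j$. That is the meaning of equisingular.

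The Gorenstein claim then follows by a chain of standard facts. The displayed ring is a quotient of a regular power series ring by a regular sequence $(X_jY_j-p)_j$, hence a complete intersection. Being a complete intersection is insensitive to adding or removing formal power series variables. Complete intersections are Gorenstein.

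I expect the main obstacle to be the bookkeeping in the $t_{(1,2)_j}s_j$ case. There one has to check that the point $A_{\rhobar,\tld w}$ has the claimed coordinates, which are determined by $N_j$ via~\eqref{matrices of Phi on rhobar}. One also has to check that $p$-saturation does not introduce extra relations, i.e.\ that the determinant ideal is already $p$-saturated. I would verify both by direct inspection of the formulas in the appendix.
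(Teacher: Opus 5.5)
Under the standing $9$-genericity your argument is the paper's own alternative explanation (``from the point of view of the previous section''). In weight $\le \sweight$ there is no monodromy condition, so $I^{\sweight,\tau_{\tld w}}_{\nabla_\infty}=I^{\sweight,\tau_{\tld w}}_{\nabla_1}$, and one reads off the completed local rings from the explicit charts. Your case analysis is correct. For $\tld w_j=t_{(1,2)_j}s$ the conditions cut out $(a_0,b_0,c_0,d_0,a_1d_1+p)$, and the matrix becomes $\diag(b_1,c_1)(v+p)\fourmatrix{a_1}{1}{v}{d_1}$. The point $\Delta_jN_j\tld w_j$ has $a_1=0$, with $d_1$ equal, up to a unit, to the lower-left entry of $N_j$. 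The charts $t_{(2,1)_j}$ and $t_{(1,2)_j}$ are formally smooth. One small correction: $p$-saturation is not idle here. Divisibility of the lower-left entry $v((v+p)+c_0)$ by $v+p$ only yields $pc_0$, and it is saturation that produces $c_0$. The final ideal is saturated because the quotient is visibly $p$-torsion free.

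The gap is the passage from $9$-genericity to $1$-genericity. Your justification, that the monodromy condition is automatic in weight $\sweight$, only removes the need for the $p^{m-4}$-approximation \eqref{fixed weight approximated local models}. It does not address the other places where depth is used:
\begin{itemize}
\item the Cartesian diagram \eqref{presentation of stacks}, taken from \cite[Proposition~7.2.3, Proposition~5.4.2]{LLHLMmodels} under the assumption that $\nu$ is $5$-deep, and hence the isomorphism \eqref{local models fixed weight};
\item Lemma~\ref{types lifting rhobar}, which uses $4$-depth via \cite[Lemma~5.4.4]{LLHLMmodels} to force the change of basis to be Iwahori, and so to place $\rhobar$ in the chart $\tld w^*(\rhobar,\tau)$.
\end{itemize}
If $\rhobar$ is only $1$-generic, then $\tau_{\tld w}$ for $\tld w\in\Adm^\vee\sweight$ is in general only $0$-deep, so none of these inputs is available as stated.

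This matters because the proposition is used at exactly this level of genericity: in Proposition~\ref{2torsion}, and at the places $w\mid p$, $w\ne v$ in Section~\ref{global}. The paper obtains the $1$-generic case from \cite[Theorem~7.2.1]{EGS}, which is proved for generic $\rhobar$ in the sense of that paper. You should either cite that result, or supply height-one versions of the Kisin-module results with correspondingly weaker depth hypotheses.
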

\begin{proof}
This is well-known, and goes back to~\cite[Theorem~7.2.1]{EGS}.
From the point of view of the previous section,
it is a consequence of the fact that the equality $I_{\nabla_1}^{\sweight, \tau_{\tld w}}+p^{m-4} T_{\rhobar}^{\tld w} = I^{\sweight, \tau_{\tld w}}_{\nabla_\infty} + p^{m-4} T^{\tld w}_{\rhobar}$ 
in~\eqref{fixed weight approximated local models} can be improved to $I_{\nabla_1}^{\sweight, \tau_{\tld w}} = I^{\sweight, \tau_{\tld w}}_{\nabla_\infty}$, 
since there is no monodromy condition in weight $\leq \sweight$.
\end{proof}

\begin{prop}\label{2torsion}
Let~$\tau$ be a tame inertial $\cO$-type.
Assume that $\rhobar$ is $1$-generic, and that $R:= R_{\rhobar}^{\eta, \tau}$ is not zero.
Then the Weil class group~$\mCl(R)$ is $2$-torsion free.
\end{prop}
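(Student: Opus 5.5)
We reduce first, by a twist, to $\lambda = \sweight$, and identify $\tau = \tau_{\tld w}$ with $\tld w \in \Adm^\vee_{\rhobar}\sweight$ using Lemma~\ref{types lifting rhobar}. By Proposition~\ref{Gorenstein}, $R$ is equisingular to
\[
S := \widehat\bigotimes_{j \in J} \cO[\![X_j, Y_j]\!]/(X_jY_j-p),
\]
where $J := \{j \in \cJ : N_j = \id,\ \tld w_j = t_{(1, 2)_j}s\}$. More precisely, $R[\![\underline{Z}]\!] \cong S[\![\underline{Z}']\!]$ for suitable sets of power series variables. The class group of a normal Noetherian domain is unchanged by adjoining formal power series variables; this is the standard theorem for power series over a normal domain (e.g.\ Fossum, or Danilov's results, valid here since the rings are complete). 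It therefore suffices to show that $R$ and $S$ are normal domains and that $\mCl(S)$ has no $2$-torsion. Normality holds because $S$ is a complete intersection, hence Cohen--Macaulay, and is regular in codimension one. The latter can be seen directly: the singular locus of $S$ is cut out by the vanishing of $p$ and of at least two of the variables $X_j, Y_j$ for a single $j$, which has codimension $\geq 2$. Serre's criterion then gives normality. That $S$ is a domain follows since $S[1/p] \cong \bigl(\widehat\bigotimes_j \cO[\![X_j]\!][X_j^{-1}]\bigr)[1/p]$ is a localization of a domain and $S$ is $p$-torsion free.

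Next we compute $\mCl(S)$ by Nagata's theorem. Since $S/p$ is the completed tensor product of the rings $\bF[\![X_j,Y_j]\!]/(X_jY_j)$, the prime divisors of $S$ containing $p$ are the ideals $\fp_\epsilon$, one for each choice function $\epsilon$ picking one of $X_j$ or $Y_j$ in every factor. There are $2^{|J|}$ of these. Localizing at $p$ inverts $p$, which makes every $X_j$ a unit, since $X_j$ divides $p$. Thus $S[1/p] = A[1/p]$ with $A := \cO[\![X_j : j \in J]\!]$ regular, and $A[1/p]$ is a localization of a UFD, hence a UFD. Nagata's exact sequence then gives that $\mCl(S)$ is generated by the classes $[\fp_\epsilon]$, subject to the relations coming from divisors of the elements $p$, $X_j$, $Y_j$.

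Compute these divisors. We have $\operatorname{div}(X_j) = \sum_{\epsilon(j)=X} [\fp_\epsilon]$, $\operatorname{div}(Y_j) = \sum_{\epsilon(j)=Y}[\fp_\epsilon]$, and $\operatorname{div}(p)$ is the sum of all $[\fp_\epsilon]$. The valuations are $1$, since $S$ is reduced mod $p$ generically along each $\fp_\epsilon$. So $\mCl(S)$ is the quotient of $\bZ^{\{X,Y\}^J}$ by the subgroup $L$ generated by the indicator vectors of the slabs $\{\epsilon(j)=X\}$ and $\{\epsilon(j)=Y\}$. This relies on the units and remaining principal divisors of $S[1/p]$ being generated by monomials in the $X_j$ times elements of $A^\times$. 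That is where the UFD property of $A[1/p]$ enters: a principal divisor of $S$ supported at $p$ corresponds to a unit of $A[1/p]$, and since $A$ is a UFD these units are $A^\times$ times monomials in $p$ and the $X_j$.

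The combinatorial step, which I expect to be the main point to get right, is to show that $\bZ^{\{X,Y\}^J}/L$ is torsion free. For this we identify $\bZ^{\{X,Y\}^J}$ with $\bigotimes_j \bZ^{\{X,Y\}}$. Then $L$ is the span of the tensors $e_{\epsilon(j)} \otimes \mathbf{1}^{\otimes(J\setminus j)}$, where $\mathbf{1}=e_X+e_Y$, and these are saturated: pick the basis $\{\mathbf 1, e_X\}$ of each factor $\bZ^2$. In the induced tensor basis, $L$ is spanned by the basis vectors $\mathbf 1^{\otimes J}$ and $e_X^{(j)}\otimes\mathbf 1^{\otimes(J\setminus j)}$, together with combinations of these. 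A subgroup spanned by a subset of a basis has torsion-free quotient. Hence $\mCl(S)$ is free abelian of rank $2^{|J|}-|J|-1$, and in particular it is $2$-torsion free.
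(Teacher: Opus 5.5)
Your route differs from the paper's. You use Nagata's localization sequence for the element $p$ and then compute the lattice of principal divisors supported on $V(p)$. The paper instead passes to $\Pic(U)[2]$ via the Kummer sequence, purity, and the rigid generic fiber. Your saturation argument for the span $L$ of the slab indicators is correct; it is an integral version of the parity computation that ends the paper's proof.

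\textbf{The gap.} The step $S[1/p] = A[1/p]$ with $A = \cO[\![X_j : j \in J]\!]$ is false, and everything after Nagata rests on it.
\begin{itemize}
\item You observe that each $X_j$ becomes a unit in $S[1/p]$. But $X_j$ is a non-unit prime of $A[1/p]$, and $A[1/p]^\times = A^\times p^{\bZ}$, so the unit group you invoke does not contain the monomials you need.
\item Inverting the $X_j$ as well does not help. The element $X_j - p = X_j(1-Y_j)$ is a unit in $S[1/p]$, since $1-Y_j \in S^\times$. Yet it vanishes at the $E$-point $X_i \mapsto p$ (for all $i \in J$) of $A[1/p, X_i^{-1} : i \in J]$.
\item The underlying reason is the completion. $S$ contains series $\sum_{i \geq 1} b_i Y_j^i = \sum_{i \geq 1} b_i p^i X_j^{-i}$. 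So $S[1/p]$ behaves like a ring of bounded functions on a product of open annuli, not like a localization of a regular local ring.
\end{itemize}
Consequently you have proved neither $\mCl(S[1/p]) = 0$ nor that $S[1/p]^\times$ is $S^\times$ times monomials in $p, X_j, Y_j$. Nagata's theorem by itself only gives
\[
0 \to \bZ^{\{\fp_\epsilon\}}/\operatorname{div}(S[1/p]^\times) \to \mCl(S) \to \mCl(S[1/p]) \to 0,
\]
so those two facts carry the entire content. (The same false identification is used for the domain property, where it is not needed: a normal Noetherian local ring is a domain.)

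\textbf{What would close it.} For $2$-torsion it suffices to know two things: $\Pic(S[1/p])[2] = 0$, and $S[1/p]^\times$ is generated modulo squares by $p$, the $X_j$ and $\cO^\times$. Combined with your saturation of $L$, these make both ends of the sequence above $2$-torsion free. But this input is exactly the statement that $H^1_{\et}(\Spec S[1/p], \mu_2)$ is spanned by the Kummer classes of $p$, the $X_j$ and a nonsquare unit. The paper obtains it from Fujiwara's comparison with the rigid generic fiber, which is a product of open balls and open annuli. There is no algebraic shortcut through $\cO[\![X_j]\!]$.

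\textbf{A second problem.} Invariance of the class group under adjoining power series variables is not a general theorem, even for complete normal local domains: Samuel's question has a negative answer. Only injectivity of $\mCl(A) \to \mCl(A[\![T]\!])$ holds in general. So you must carry out the computation on $S[\![Z']\!]$, or directly on the presentation $R \cong \cO[\![x_j, y_j, z_k]\!]/(x_jy_j - p)$ as the paper does, rather than on $S$.
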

\begin{proof} 
By Proposition~\ref{Gorenstein} there is a presentation
\[
R=\cO[\![x_j,y_j,z_k: 1\leq j\leq a, 1\leq k \leq b]\!]/(x_jy_j-p)
\] 
for some integers~$a, b \geq 0$.
Let $U$ be the smooth locus of the map $\Spec R \rightarrow \Spec \cO$.
Note that $U$ is exactly the complement of the locus in which $x_j=y_j=0$ for some $j$, and hence the complement $\Spec R \setminus U$ has codimension at least 2. 
Thus $\mCl(R) \cong \mCl(U) = \Pic(U)$, and so it suffices to prove that $\mathrm{Pic}(U)[2]=0$.
We have the Kummer short exact sequence
\[0\to \cO(U)^{\times}/(\cO(U)^{\times})^2\to H^1_{\et}(U,\mu_2 )\to \Pic(U)[2]\to 0.\]
Since $R$ is normal, we have $\cO(U) = R$, and since $p$ is odd, the natural map $R^\times \otimes_\bZ \bZ/2 \to \bF^\times \otimes_\bZ \bZ/2 \cong \bZ/2$
is an isomorphism.
Thus we need to show that $H^1_{\et}(U,\mu_2 ) = \bZ/2$.

Let $j:U[\frac{1}{p}]\into U$, $i:U/p\into U$ are the natural (open, respectively closed) immersions. 
The generic fibre $U[\frac 1 p]$ is isomorphic to $\Spec R[\frac 1 p]$. 
The irreducible components of the special fiber $U/p$ are geometrically irreducible, and they are in bijection with subsets $S\subset \{1,2, \ldots, f\}$, 
where the component corresponding to $S$ is given by the locus $x_j=y_i=0$ where $j\in S, i\notin S$.
The exact triangle
\[   i_*i^!\mu_2\to \mu_2\to Rj_*\mu_2\to\]
and absolute cohomological purity
$i^!\mu_2=\bF_2[-2]$ (see~\cite[Th\'eor\`eme~3.1.1]{Rioupurity}) give the exact sequence
\[
\xymatrix{0\ar[r] &H^1_{\et}(U,\mu_2 )\ar[r] &H^1_{\et}(U[\frac{1}{p}],\mu_2 )\ar[r]^{\res}& \sum_{\mathrm{Irr}(U_{\lbar \bF})} \F_2\ar[r] &\cdots}
\]
This sequence also admits the following geometric interpretation: Let $\xi \in H^1_{\et}(U[\frac{1}{p}],\mu_2 )$ 
correspond to a degree $2$ \'{e}tale cover $V\to U[\frac{1}{p}]$. 
If~$\fp \subset R$ is a minimal prime above~$pR$,
the normalization $\tld{U}$ of $U$ in $V$ gives a degree $2$ extension~$R'_\fp$ of the localization $R_{\mathfrak{p}}$. 
Then the projection of~$\res(\xi)$ at the irreducible component of~$U_{\lbar \bF}$ 
corresponding to $\mathfrak{p}$ is exactly the degree of the residue field extension of~$R'_\fp/R_\fp$. 
In particular, we see that $\res(\xi)$ is trivial at the component corresponding to $\mathfrak{p}$ 
exactly when $\tld{U}\to U$ is unramified at $\mathfrak{p}$.

Since~$R$ is $p$-complete, \cite[Corollary~6.6.3, Proposition~4.3.3]{Fujiwara} implies that
\[
H^1_{\et}(\Spec R[\frac 1 p], \mu_2) \cong H^1_{\et}((\Spf R)^{\rig}, \mu_2).
\]
Now, since the rigid analytic fiber~$(\Spf R)^{\rig}$ is a product of open balls and open annuli, we see that 
\[H^1_{\et}(U[\frac{1}{p}],\mu_2 ) = H^1_{\et}(\Spec R[\frac 1 p], \mu_2) \cong \left (\bigoplus_{1\leq j\leq a} \F_2\xi_j \right ) \oplus \F_2\kappa \oplus \F_2\kappa^{\nr}\] 
where $\xi_j$, resp.\ $\kappa$, resp.\ $\kappa^{\nr}$ is represented by the double \'{e}tale cover of $U[\frac{1}{p}]$ extracting $\sqrt{X_j}$, 
resp.\ $\sqrt{p}$, resp.\ $\bF^\times/(\bF^\times)^2$.

Recalling our parametrization of $\mathrm{Irr}(U/p)$ by subsets $S\subset \{1,\ldots, f\}$, we compute
\begin{enumerate}
\item $\res(\xi_j)$ is non-trivial exactly at the components corresponding to $S$ containing $j$: If $\mathfrak{p}$ is the prime corresponding to the component $S$, a local uniformizer of $R_{\mathfrak{p}}$ is given by $X_j$ if $j\in S$, and is given by $Y_j=pX_j^{-1}$ otherwise. This shows $\tld{U}$ is ramified at $\mathfrak{p}$ exactly when $j\in S$.
\item $\res(\kappa)$ is non-trivial at each component of $U/p$: this is because $p$ is a uniformizer of $R_{\mathfrak{p}}$ for any minimal prime $\mathfrak{p}$ containing $p$. 
\item $\res(\kappa^{\nr}) = 0$, because~$\kappa^{\nr}$ is everywhere unramified.
\end{enumerate}
We now claim that $\ker(\res) = \bF_2 \kappa^{\nr}$, which finishes the proof.
Indeed suppose 
\[
\res(\alpha\kappa+\sum_{j=1}^a a_j\xi_j)=0.
\] 
Let $T:=\{j: a_j\neq 0\}$, then the component of $\res(\alpha\kappa+\sum a_j\xi_j)$ at $S$ is given by $\alpha+|S\cap T|$. 
Thus we learn that $|S\cap T|=\alpha$ mod $2$ for any $S\subset \{1,\ldots, f\}$, which implies that~$T$ is empty and~$\alpha = 0$, as desired.
\end{proof}

\subsection{Multi-type Galois deformation rings.}
We continue to assume that~$\rhobar : \Gal_K \to \GL_2(\bF)$ is a $9$-generic representation.
Fix $\tld u \in \Adm_{\rhobar}^\vee\sweight$, and choose
\begin{equation}\label{definition of T}
T \subset \{\lambda \in X^*(\uline T): \sweight \leq \lambda \leq \lweight\} \times \{\tld u, \tld u t_{\pm \alpha_j}: j \in \cJ\}.
\end{equation}
Define
\[
I^T := \bigcap_{(\lambda, \tld w) \in T} I^{\lambda, \tau_{\tld w}} \subset R^\square_{\rhobar}.
\]
In this section we will compute a model for the multi-type deformation ring $R^{T}_{\rhobar} := R^\square_{\rhobar}/I^{T}$.
Recall from Section~\ref{subsec: gauges} the ring $T^{\tld w} = \bigotimes^{\wedge}_{\cO, j} T^{\tld w_j}$, with universal matrices
$A^{\tld w_j} \in M_2(T^{\tld w_j}[v])$. 
In Section~\ref{subsec: charts for PhiMod} we have introduced a $p$-complete $\cO$-algebra~$S^{\tld u} = \bigotimes^\wedge_{\cO, j} S^{\tld u_j}$, and for all
$\tld w_j \in \{\tld u_j, \tld u_j t_{\pm \alpha_j} \}$, 
a surjection 
\[
\pr_{\tld w_j}: S^{\tld u_j} \to T^{\tld w_j}.
\]
We write $\pr_{\tld w} := \bigotimes^\wedge_{\cO, j} \pr_{\tld w_j}$.
We have also introduced matrices $\Psi^{\tld u_j} \in M_2(S^{\tld u_j}[v^{\pm 1}])$ 
such that
\begin{equation}\label{specialization of Psi}
\pr_{\tld w_j}(\Psi^{\tld u_j}) = A^{\tld w_j}\tld w_j^{-1}.
\end{equation}

\begin{defn}\label{K-ideals}
    Let~$(\lambda, \tld w) \in T$. Then:
    \begin{enumerate}
    \item $K^{\det} \subset S^{\tld u}$ is the ideal generated by the conditions
\begin{equation}\label{determinant condition on S}
\det(\Psi^{\tld u_j}) \in (S^{\tld u_j})^\times v^{-3}(v+p)^3.
\end{equation}
    \item $K_{\nabla_\infty}^{\lambda, \tau_{\tld w}} := \pr_{\tld w}^{-1}(I_{\nabla_\infty}^{\lambda, \tau_{\tld w}}) \subset S^{\tld u}$.
    \item $K^{\lambda_j, \tau_{\tld w}}_{\nabla_1} := \pr_{\tld w_j}^{-1}(I_{\nabla_1}^{\lambda_j, \tau_{\tld w}}) \subset S^{\tld u_j}$.
    \item $K^{\lambda, \tau_{\tld w}}_{\nabla_1} := \pr_{\tld w}^{-1}(I_{\nabla_1}^{\lambda, \tau_{\tld w}}) = \sum_{j \in \cJ} K^{\lambda_j, \tau_{\tld w}}_{\nabla_1}S^{\tld u} \subset S^{\tld u}$.
    \end{enumerate}
\end{defn}

Note that $K^{\det} \subset K_{\nabla_\infty}^{\lambda, \tau_{\tld w}}$ for all~$(\lambda, \tld w) \in T$.
In the next result we show that the ideals~$K_{\nabla_1}^{\lambda, \tau_{\tld w}}$ are an algebraic approximation to the ideals~$K_{\nabla_\infty}^{\lambda, \tau_{\tld w}}$, 
in the same sense as Section~\ref{truncated monodromy condition}.
These ideals can be computed from the definition of~$\pr_{\tld w_j}$ and~$I^{\lambda_j, \tau_{\tld w}}_{\nabla_1}$:
it suffices to compute preimages of the generators of~$I^{\lambda_j, \tau_{\tld w}}_{\nabla_1, j}$ under~$\pr_{\tld w_j}$, and to add generators of~$\ker \pr_{\tld w_j}$.
A choice of generators is listed in Section~\ref{subsec: multi-type approximation} in the cases we will need in the sequel.

\begin{lemma}\label{multitype local models}
Let~$(\lambda, \tld w) \in T$. Then
    \begin{equation}\label{congruence for K}
    K^{\lambda, \tau_{\tld w}}_{\nabla_1} + p^{2}S^{\tld u} = K_{\nabla_\infty}^{\lambda, \tau_{\tld w}} + p^{2}S^{\tld u}.
\end{equation}
\end{lemma}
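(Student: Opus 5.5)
The plan is to pull back the congruence~\eqref{fixed weight approximated local models} along the surjection $\pr_{\tld w} : S^{\tld u} \to T^{\tld w}$. Write $\tau := \tau_{\tld w}$.

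\textbf{Depth.} Since $\rhobar$ is $9$-generic and $\tld w \in \Adm^\vee_{\rhobar}\lweight$, the type $\tau$ (with the lowest alcove presentation fixed so that $\tld w^*(\rhobar, \tau) = \tld w$) is $6$-deep. This is the remark preceding~\eqref{local models fixed weight}.

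\textbf{Congruence in $T^{\tld w}$.} Apply~\eqref{fixed weight approximated local models} with $m = 6$, so that $m - 4 = 2$. This gives
\[
I^{\lambda, \tau}_{\nabla_1} + p^2 T^{\tld w} = I^{\lambda, \tau}_{\nabla_\infty} + p^2 T^{\tld w}.
\]
The ideals here are the ones pulled back to $T^{\tld w}$ in Definition~\ref{defn: pulled back monodromy ideals}, so no further pullback along $T^{\tld w} \to R^{\leq \lambda, \tld w}$ is needed. I should check that~\eqref{fixed weight approximated local models} is stated for the pair $(\lambda, \tld w)$ with $\tld w \in \Adm^\vee\lambda$, which is our situation because $(\lambda, \tld w) \in T$ and $\rhobar \in \cX^{\lambda,\tau}(\tld w)$ by Lemma~\ref{types lifting rhobar}. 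If $\tld w \notin \Adm^\vee(\lambda)$, both ideals are the unit ideal (compare the argument in the proof of Theorem~\ref{weight at most (3, 0)}), and the congruence is trivial.

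\textbf{Pullback.} Since $\pr_{\tld w}$ is surjective, taking preimages commutes with sums for ideals containing the kernel. Concretely, for any ideal $J \subset T^{\tld w}$ we have $\pr_{\tld w}^{-1}(J + p^2 T^{\tld w}) = \pr_{\tld w}^{-1}(J) + p^2 S^{\tld u}$. Indeed, if $\pr(s) = x + p^2 t$, pick a lift $t'$ of $t$; then $s - p^2 t' \in \pr^{-1}(J)$. Applying this to $J = I^{\lambda,\tau}_{\nabla_1}$ and $J = I^{\lambda,\tau}_{\nabla_\infty}$ yields~\eqref{congruence for K}, using the definitions in Definition~\ref{K-ideals}(2),(4).

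\textbf{Remaining consistency check.} The description $K^{\lambda,\tau}_{\nabla_1} = \sum_j K^{\lambda_j,\tau}_{\nabla_1} S^{\tld u}$ must be compatible with the preimage definition. This uses that $\pr_{\tld w}$ is the completed tensor product of the surjections $\pr_{\tld w_j}$, so that $\ker \pr_{\tld w} = \sum_j \ker(\pr_{\tld w_j}) S^{\tld u}$. It also uses the factorization~\eqref{factorization of truncated monodromy} of the saturated ideal.

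\textbf{Main obstacle.} I expect the only real subtlety to be the depth bookkeeping. One needs the presentation of $\tau_{\tld w}$ used in defining the ideals to be exactly the $6$-deep one coming from $9$-genericity, since the exponent $m-4$ must reach $2$. One also needs completed tensor products to preserve surjectivity and the kernel description, which is standard for $p$-completions of polynomial rings.
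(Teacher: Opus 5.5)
Your proposal is correct and is essentially the paper's proof. The paper uses the same three ingredients: $\tau_{\tld w}$ is $6$-deep by $9$-genericity, so \eqref{fixed weight approximated local models} holds in $T^{\tld w}$ with exponent $m-4=2$, and for the surjection $\pr_{\tld w}$ one has the identity $\pr_{\tld w}^{-1}(J+p^2T^{\tld w})=\pr_{\tld w}^{-1}(J)+p^2S^{\tld u}$. Your case distinction on whether $\tld w\in\Adm^\vee\lambda$ and your consistency check on $\sum_j K^{\lambda_j,\tau}_{\nabla_1}S^{\tld u}$ are not needed: the ideals are defined through an ambient weight such as $(3,0)$, which always contains $\tld w$, and the lemma only uses the preimage definitions.
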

\begin{proof}
By definition, we have $K_{\nabla_1}^{\lambda, \tau_{\tld w}} = \pr_{\tld w}^{-1}(I_{\nabla_1}^{\lambda, \tau_{\tld w}})$ and 
$K^{\lambda, \tau_{\tld w}}_{\nabla_\infty} = \pr_{\tld w}^{-1}(I^{\lambda, \tau_{\tld w}}_{\nabla_\infty})$.
Then the lemma follows from~\eqref{fixed weight approximated local models}, the fact that~$\tau_{\tld w}$ is $6$-deep (since~$\rhobar$ is $9$-generic),
and the general fact that, if $\varphi: A \to B$ is a surjective ring homomorphism, and~$I \subset B$ is an ideal, then
\[\varphi^{-1}(I +p^n B) = \varphi^{-1}(I) + p^n A\]
for all~$n \geq 0$.
\end{proof}

Because of the determinant condition on~$K^{\det}$, 
the matrices $\Phi^{\tld u_j} := \Psi^{\tld u_j}\tld w^*(\rhobar)$
are the matrices of Frobenius for an \'etale $\varphi$-module~$\cM$ over~$\cO_{\cE, S^{\tld u}/K^{\det}}$, i.e. they define an $S^{\tld u}/K^{\det}$-valued point of $\PhiMod_{K, 2}^{\et}$. 
Because of~\eqref{specialization of Psi}, $\cM$ has the property that, for all $(\lambda, \tld w) \in T$,
\begin{gather}\label{M interpolates}
\cM \otimes_{S^{\tld u}/K^{\det}} S^{\tld u}/K_{\nabla_\infty}^{\lambda, \tau_{\tld w}} \cong 
\fM^{\tau_{\tld w}} \otimes_{\fS_{T^{\tld w}}} \cO_{\cE, T^{\tld w}/I_{\nabla_\infty}^{\lambda, \tau_{\tld w}}}.
\end{gather}
Indeed, $\fM^{\tau_{\tld w}} \otimes_{\fS_{T^{\tld w}}} \cO_{\cE, T^{\tld w}/I_{\nabla_\infty}^{\lambda, \tau_{\tld w}}}$ has matrices of Frobenius given by
$A^{\tld w_j} \tld w^*(\tau_{\tld w}) = (A^{\tld w_j}\tld w_j^{-1})\tld w^*(\rhobar)$, which is the specialization of $\Phi^{\tld u_j}$ under~$\pr_{\tld w_j}$.
The \'etale $\varphi$-module~$\cM$ gives rise to a classifying map
\[\operatorname{ch}_\cM: \Spf(S^{\tld u}/K^{\det}) \to \Phi\text{-Mod}_{K, 2}^{\text{\'et}}.\]
There is a shifted conjugation action of~$\uline T^\vee$ on $\Spf(S^{\tld u}/K^{\det})$, defined by
\[
(t.\Phi^{\tld u})_j = t_j \Phi^{\tld u_j} t^{-1}_{j \circ \Frob_p},
\]
and~$\operatorname{ch}_\cM$ is constant on the orbits of this action.

\begin{prop}\label{monomorphism}
The map~$\operatorname{ch}_\cM$ induces a monomorphism
\[
[\Spec (S^{\tld u}/K^{\det} \otimes_{\cO} \bF)/\uline{T}^\vee] \to \PhiMod_{K, 2}^{\et} \times_\cO \bF.
\]
\end{prop}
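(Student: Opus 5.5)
To prove Proposition~\ref{monomorphism}, I would show that for every (reduced, or more generally arbitrary) $\bF$-algebra $R$ and any two points $x, x' \in (S^{\tld u}/K^{\det})(R)$ such that $\operatorname{ch}_\cM(x)$ and $\operatorname{ch}_\cM(x')$ are isomorphic \'etale $\varphi$-modules, there is a unique $t \in \uline T^\vee(R)$ with $t.x = x'$. Concretely, an isomorphism of the two $\varphi$-modules is given by matrices $X_j \in \GL_2(R(\!(v)\!))$ with
\[
X_j \Psi^{\tld u_j}(x)\tld w^*(\rhobar)_j = \Psi^{\tld u_j}(x')\tld w^*(\rhobar)_j\, \varphi(X_{j\circ \Frob_p}) .
\]
The plan is to show that any such $X_j$ must lie in $\uline T^\vee(R)$, i.e.\ be a constant diagonal matrix. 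Uniqueness of $t$ is then just the statement that $X_j$ is determined by the isomorphism, and the isomorphism is unique once $X_j$ is forced to be constant diagonal (stabilizers are trivial as in \cite[Lemma~5.4.4]{LLHLMmodels}).

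First I would use the shape of $\Psi^{\tld u_j}$ from Section~\ref{subsec: charts for PhiMod}. Mod~$p$, the determinant condition~\eqref{determinant condition on S} says $\det \Psi^{\tld u_j} \in R^\times$. The entries of $\Psi^{\tld u_j}$ are controlled Laurent polynomials in $v$ of bounded degree, lying in a fixed "$L^{--}\cG$-type" neighbourhood of the union of cells $\tld u_j, \tld u_jt_{\pm\alpha_j}$, and $\Psi^{\tld u_j}\tld w^*(\rhobar)_j$ has $v$-degrees bounded in terms of $(3,0)$. The key step is then a version of \cite[Lemma~5.4.4]{LLHLMmodels}. Rewrite the equation as
\[
X_j = \Psi'_j\,\mathrm{Ad}\bigl(\tld w^*(\rhobar)_j\bigr)\bigl(\varphi(X_{j\circ\Frob_p})\bigr)\,(\Psi_j)^{-1}.
\]
Since $\rhobar$ is $9$-generic, $\mathrm{Ad}(\tld w^*(\rhobar)_j)$ applied to $\varphi(X)$ multiplies the off-diagonal $v$-adic valuations by $p$ and shifts them by $\langle \mu+\eta, \alpha\rangle$, which is very large. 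Meanwhile $\Psi, \Psi'$ only perturb valuations by a bounded amount (at most $3$ or $4$).

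Iterating this around the $f$ embeddings, a contraction argument forces $X_j$ into $\cI(R)$ (with the relevant valuation bounds), then into $L^{--}\cG\cap \cI$-type conditions, and finally into $T^\vee(R)$. This is done in the style of \cite[Proposition~5.4.2]{LLHLMmodels}, where $Y^{\leq\lambda,\tau}\to\PhiMod$ is shown to be a monomorphism on gauge charts in the deep case.

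The main obstacle is that $S^{\tld u}$ interpolates several cells $\tld u_j, \tld u_jt_{\pm\alpha_j}$ at once. The valuation bounds on $\Psi^{\tld u_j}$ are therefore weaker than for a single $A^{\tld w_j}$: there is no Kisin lattice, only the $\varphi$-module. I would handle this by checking from the explicit form of $\Psi^{\tld u_j}$ that $\Psi^{\tld u_j}$ and its inverse lie in $v^{-a}M_2(R[\![v]\!])$ for small $a$ (something like $a\le 3$). I would also check that their "Iwahori-ness" fails only by these bounded shifts. Then I would verify that the genericity margin (the $9$ in $9$-generic) dominates these shifts in the contraction estimate. Beyond that, the argument is the standard bootstrapping: first show the entries of $X_j$ have no poles, then show the off-diagonal and nonconstant parts vanish.
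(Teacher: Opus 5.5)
Your architecture matches the paper's: first confine the isomorphism $(X_j)_j$ to the Iwahori, then kill its pro-unipotent part.

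The first step is fine. Your bound $a\le 3$ is exactly the statement $v^3\Psi^{\tld u_j}\in L^{[0,6]}\GL_2(R)$ modulo $p$. This holds uniformly in the cells that $S^{\tld u}$ interpolates, so \cite[Lemma~5.4.4]{LLHLMmodels} applies directly, since $\tld w^*(\rhobar)$ is $7$-deep, and gives $X_j\in L^+\cG_\bF(R)=\cI(R)$. The multi-cell nature of $S^{\tld u}$ is therefore not the real obstacle.

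The gap is in the second step. Write $X_j=t_jY_j$ with $t\in\uline T^\vee(R)$ and $Y_j\in\cI_1(R)$. You propose to show $Y_j=1$ by further valuation estimates (``the genericity margin dominates these shifts'', ``show the off-diagonal and nonconstant parts vanish''). No argument of that kind can work. For any $Y\in\cI_1(R)^{\cJ}$, writing $\Phi^{(j)}=\Psi_j\tld w^*(\rhobar)_j$, one has
\[
Y_j\Phi^{(j)}\varphi(Y_{j\circ\Frob_p})^{-1}\tld w^*(\rhobar)_j^{-1}=Y_j\Psi_j\,\mathrm{Ad}\bigl(\tld w^*(\rhobar)_j\bigr)\bigl(\varphi(Y_{j\circ\Frob_p})\bigr)^{-1}.
\]
By depth the last factor lies in $\cI(R)$, so $v^3$ times the right-hand side again lies in $L^{[0,6]}\GL_2(R)$. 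All pole and determinant bounds are invariant under the very $\cI_1$-action you are trying to show is trivial.

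What breaks the symmetry is the normalization of the chart. Modulo $p$ one has $\Psi^{\tld u_j}\in\uline T^\vee(R)\cdot L^{--}\cG(R)$: after removing $\diag(\alpha,\delta)$, the diagonal entries lie in $1+v^{-1}R[v^{-1}]$ and the upper-right entry lies in $v^{-1}R[v^{-1}]$. The paper uses the contraction you have in mind, in the form of \cite[Lemma~5.2.2]{LLHLMmodels}, only to turn the twisted relation $Y(t.\Phi_1)=\Phi_2\varphi(Y)$ into a left translation
\[
\Phi_2\tld w^*(\rhobar)^{-1}=Y'\,(t.\Phi_1)\tld w^*(\rhobar)^{-1},
\]
with $Y'\in\cI_1(R)^{\cJ}$ and $Y'\neq 1$ whenever $Y\ne 1$. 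It then concludes from the injectivity of the multiplication map $\cI_1(R)\times T^\vee(R)\times L^{--}\cG(R)\to L\cG(R)$ \cite[Lemma~3.2.2]{LLHLMmodels}. Your phrase ``$L^{--}\cG$-type neighbourhood'' gestures at this, but the slice property is never stated or used, and it is the missing idea.

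A smaller point: $t$ is not unique and stabilizers are not trivial, since scalar elements of $\uline T^\vee(R)$ fix every point under shifted conjugation. What the monomorphism property needs is that every isomorphism $(X_j)_j$ of the two $\varphi$-modules is itself a torus element. That is what your main argument aims at anyway.
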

\begin{proof}
Let $R$ be an $\bF$-algebra and $x_1,x_2$ be two $R$-points of $\Spec\!\!(S^{\tld u}/K^{\det})$, 
i.e.\ matrices of Frobenius $(\Phi^{(j)}_1)_{j \in \mJ},(\Phi^{(j)}_2)_{j \in \mJ}$, 
which give rise to the same $R$-point of 
$\PhiMod_{K, 2}^{\et}(R)$.
We need to prove that~$\Phi_1$ and~$\Phi_2$ are $\uline T^\vee$-conjugate.
Since the source and target of $\operatorname{ch}_\cM$ are limit-preserving, we can assume that~$R$ is Noetherian, and so we can use notation from Section~\ref{loop groups}.
Our assumption is that there exists $(X_j)_{j \in \mJ}\in L\cG_\bF(R)^{\cJ}$ such that
\[X_j\Phi^{(j)}_1=\Phi^{(j)}_2\varphi(X_{j\circ \Frob_p})\]
For~$i \in \{1, 2\}$, let $M_i^{(j)} := \Phi_i^{(j)}v^3\tld w^*(\rhobar)_j^{-1}$. 
These are specializations of the matrix $\Psi^{\tld u_j}v^3$ displayed in
Section~\ref{subsec: charts for PhiMod}, and so they have the following properties:
\begin{itemize}
\item $M_i^{(j)}\in \Mat_2(R[\![v]\!])$ and $\det M_i \in R^\times v^6$, in particular $v^6M_i^{-1}\in\Mat_2(R[\![v]\!])$ (recall that~$p$ is zero in~$R$).
\item $v^{-3}M_i^{(j)}\in T^\vee(R) \times L^{--}\cG_\bF(R)$ (see Section~\ref{loop groups} for~$L^{--}\cG$).
\item $\Phi^{(j)}_iv^3=M^{(j)}_i\tld{w}^*(\rhobar)_j$.
\end{itemize}
The first property implies that $M_i^{(j)} \in L^{[0, 6]}\GL_2(R)$, and so~\cite[Lemma~5.4.4]{LLHLMmodels} shows that~$X_j \in L^+\cG_\bF(R)$ for all~$j \in \cJ$
(using that~$\tld w^*(\rhobar)$ is $7$-deep).
Since $L^+\cG_{\bF}(R)$ coincides with the standard upper-triangular Iwahori subgroup~$\cI$ of $L\GL_2$, 
we see that there exist $t = (t_j)_{j \in \cJ} \in \uline T^\vee(R)$ and $(Y_j)_{j \in \cJ} \in \cI_1(R)^{\cJ}$ 
such that
\[
Y_j(t . \Phi_1)^{(j)} = \Phi_2^{(j)}\varphi(Y_{j \circ \Frob_p}).
\]
It thus suffices to prove that~$Y_j = 1$ for all~$j$.
By the properties of $M_i^{(j)}$, we have 
\begin{equation}\label{first inclusion in monomorphism}
\Phi_iv^3 \in \prod_{j \in \cJ} L^{[0, 6]}\GL_2(R) \tld w^*(\rhobar)_j
\end{equation}
and
\begin{equation}\label{second inclusion in monomorphism}
\Phi_i \in \prod_{j \in \cJ}(T^\vee(R) \times L^{--}\cG(R))\tld w^*(\rhobar)_j.
\end{equation}
Note that the right-hand side of~\eqref{first inclusion in monomorphism} and~\eqref{second inclusion in monomorphism} is stable under the shifted conjugation action of $\uline T^\vee(R)$.
If~$Y_j \ne 1$ for some~$j$, by~\cite[Lemma~5.2.2]{LLHLMmodels} and~\eqref{first inclusion in monomorphism} we find that there exists $Y' \ne 1 \in \cI_1(R)^{\cJ}$ such that
$(\Phi_1v^3)\tld w^*(\rhobar)^{-1} = Y'(t \cdot \Phi_1v^3)\tld w^*(\rhobar)^{-1}$,
which implies that
$\Phi_1\tld w^*(\rhobar)^{-1} = Y'(t \cdot \Phi_1)\tld w^*(\rhobar)^{-1}$.
Because of~\eqref{second inclusion in monomorphism}, this is a contradiction to the fact that the multiplication map
\[
\cI_1(R) \times T^\vee(R) \times L^{--}\cG(R) \to L\cG(R)
\]
is injective, by~\cite[Lemma~3.2.2]{LLHLMmodels}.
This concludes the proof.
\end{proof}

\begin{rmk}
By a consideration of $R$-points for any $\cO/p^a$-algebra, and an induction on~$a$ as in the proofs of~\cite[Proposition~5.4.3, Proposition~5.2.7]{LLHLMmodels},
one can show that Proposition~\ref{monomorphism} remains true without tensoring with~$\bF$.
\end{rmk}

By construction, there is a point $S^{\tld u}/K^{\det} \to \bF$ such that the corresponding specialization of $\cM$ is the \'etale $\varphi$-module of 
$\rhobar_\infty := \rhobar |_{\Gal_{K_\infty}}$.
It is defined by sending $\Psi^{\tld u_j}$ to the matrix~$\Delta_j N_j$ defined in Section~\ref{subsec: single type deformation rings}.
By Proposition~\ref{monomorphism}, it is unique up to shifted conjugation,
and we denote it by~$\cM_{\rhobar}$. 
We define
\[
S^{\tld u}_{\rhobar} := (S^{\tld u})^\wedge_{\cM_{\rhobar}}, \;\;\; 
K_{\rhobar}^{\lambda, \tau_{\tld w}} := K_{\nabla_\infty}^{\lambda, \tau_{\tld w}}\cdot S^{\tld u}_{\rhobar}. \;\;\;
\]

\begin{prop}\label{multitype}
For any~$T$ as in~\eqref{definition of T},
there exists an isomorphism 
\begin{equation}\label{multitype isomorphism}
\left ( S^{\tld u}_{\rhobar}/\bigcap_{(\lambda, \tau_{\tld w}) \in T}K_{\rhobar}^{\lambda, \tau_{\tld w}} \right )[\![Y_1, Y_2, Y_3, Y_4]\!] \cong R^T_{\rhobar}[\![X_1, \ldots, X_{2f}]\!]
\end{equation}
such that for all~$(\lambda, \tld w) \in T$, the ideal generated by $K_{\rhobar}^{\lambda, \tau_{\tld w}}$ is mapped to the ideal generated by $I^{\lambda, \tau_{\tld w}}$.    
\end{prop}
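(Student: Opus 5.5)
The plan is to mimic the single-type argument leading to~\eqref{local models fixed weight}, replacing the Kisin-module chart $\tld U(\tld w,\leq\lambda)$ by the $\varphi$-module chart $\Spf(S^{\tld u}/K^{\det})$, which is common to all types appearing in~$T$.

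First, I would consider the closed formal subscheme $\cZ := \Spf\bigl(S^{\tld u}/\bigcap_{(\lambda,\tld w)\in T}K^{\lambda,\tau_{\tld w}}_{\nabla_\infty}\bigr)$ of $\Spf(S^{\tld u}/K^{\det})$. It is $\uline T^\vee$-stable, since each $K^{\lambda,\tau_{\tld w}}_{\nabla_\infty}$ is: the shifted conjugation action on $S^{\tld u}$ is compatible under $\pr_{\tld w}$ with the $(w,\nu)$-twisted conjugation on $\tld U(\tld w,\leq\lambda)$. By~\eqref{M interpolates} and the diagram~\eqref{presentation of stacks}, the map $\operatorname{ch}_\cM$ restricted to each $\Spf(S^{\tld u}/K^{\lambda,\tau_{\tld w}}_{\nabla_\infty})$ factors through $\cX^{\lambda,\tau_{\tld w}}(\tld w)\to\PhiMod^{\et}_{K,2}$. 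Indeed, $\pr_{\tld w}$ identifies $S^{\tld u}/K^{\lambda,\tau_{\tld w}}_{\nabla_\infty}$ with $T^{\tld w}/I^{\lambda,\tau_{\tld w}}_{\nabla_\infty}$, and the quotient of the latter by the torus is $\cX^{\lambda,\tau_{\tld w}}(\tld w)$. Taking the scheme-theoretic union then gives a map $[\cZ/\uline T^\vee]\to\cX^T:=\bigcup_{T}\cX^{\lambda,\tau_{\tld w}}\subset\cX_{K,2}$. On each piece this map is an isomorphism onto the open $\cX^{\lambda,\tau_{\tld w}}(\tld w)$.

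Second, I would show that this map is an open immersion near~$\rhobar$, or at least an isomorphism after completing at $\cM_{\rhobar}$. This is where Proposition~\ref{monomorphism} enters: $[\cZ/\uline T^\vee]\to\cX^T$ is a monomorphism on special fibres, and the Remark after it extends this to the whole tower, so the map is a monomorphism. It is also a closed immersion on each of the closed pieces $\cX^{\lambda,\tau_{\tld w}}$. Moreover, the point $\rhobar$ lies in the image of each piece by Lemma~\ref{types lifting rhobar}(2). Using the Emerton--Gee theory, the completions of $\cX^{\lambda,\tau_{\tld w}}$ at $\rhobar$ are the deformation problems whose versal rings are the $R^{\lambda,\tau_{\tld w}}_{\rhobar}$ up to $2f$ (resp.~$4$) formal variables. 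I would then check that the completed map $\widehat{[\cZ/\uline T^\vee]}_{\rhobar}\to\widehat{\cX^T}_{\rhobar}$ is a closed immersion whose image contains every component. Since $\cX^T$ is by definition the reduced, $p$-flat union of these components, and $\cZ$ is $p$-flat because an intersection of $p$-saturated ideals is $p$-saturated, the map is an isomorphism. Concretely, I would pass to a common formally smooth cover: choose $\Spf R^\square_{\rhobar}\to\cX_{K,2}$, pull back the $\uline T^\vee$-torsor given by $\operatorname{ch}_\cM$ on the completion, and obtain a formal scheme that is formally smooth over both $\Spf S^{\tld u}_{\rhobar}$ (with relative dimension~$4$, the framing minus the torus) and $\Spf R^\square_{\rhobar}$ (with relative dimension $2f=\dim\uline T^\vee$). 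Writing these covers as power series rings gives the isomorphism~\eqref{multitype isomorphism}, with $K^{\lambda,\tau_{\tld w}}_{\rhobar}$ corresponding to $I^{\lambda,\tau_{\tld w}}$ because the constructions are compatible for each $(\lambda,\tld w)$ separately.

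The main obstacle is the formal smoothness of the completion of $\Spf(S^{\tld u}/K^{\det})$ at $\cM_{\rhobar}$ over the completion of $\PhiMod^{\et}$, equivalently over $\cX_{K,2}$ along the relevant locus. This is needed because $S^{\tld u}$ is not a chart for any single Kisin stack, so formal smoothness cannot be read off from~\eqref{presentation of stacks}. I would first try to deduce it from the monomorphism property together with a tangent-space count: the images of $\Spf(T^{\tld w})$ in $\Spf S^{\tld u}$, as $\tld w$ ranges over $\tld u,\tld u t_{\pm\alpha_j}$, jointly cover the relevant locus. Intersections commute with completion and with the flat base change to power series rings, and the intersection of ideals is preserved under the faithfully flat maps in the construction. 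Together these would give the statement for the intersection ideal $I^T$.
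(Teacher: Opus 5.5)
Your route, a morphism of stacks $[\mathcal{Z}/\uline T^\vee]\to\cX^T$ followed by a common smooth cover, is not the paper's, and it has a genuine gap at its first step: that morphism is never constructed. The only structure on $\Spf(S^{\tld u}/K^{\det})$ is the \'etale $\varphi$-module $\cM$, so $\operatorname{ch}_\cM$ lands in $\PhiMod^{\et}_{K,2}$. A $\Gal_K$-structure, i.e.\ a map to $\cX^{\lambda,\tau_{\tld w}}(\tld w)$, exists only on each closed piece $\Spf(S^{\tld u}/K^{\lambda,\tau_{\tld w}}_{\nabla_\infty})$ separately.

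Maps defined on closed pieces do not glue to their scheme-theoretic union for free. With three or more pieces, $S^{\tld u}/\bigcap K^{\lambda,\tau_{\tld w}}_{\nabla_\infty}$ is in general not the limit of the pieces over their intersections (think of three lines through a point). Even for two pieces, you would need the two $\Gal_K$-structures to agree on the scheme-theoretic intersection. That amounts to knowing that, near $\rhobar$, a $\Gal_K$-deformation is determined by its restriction to $\Gal_{K_\infty}$, and you never establish it. So you never produce any ring homomorphism relating $R^T_{\rhobar}$ and $S^{\tld u}_{\rhobar}/\bigcap K^{\lambda,\tau_{\tld w}}_{\rhobar}$, which is the whole content of the proposition. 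Given such a map, your monomorphism-plus-reducedness argument would finish.

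The ``main obstacle'' you single out is both unnecessary and false as stated. $[\Spf(S^{\tld u}/K^{\det})/\uline T^\vee]$ is only a monomorphism into $\PhiMod^{\et}_{K,2}$, whose versal rings have infinite-dimensional tangent spaces, so it cannot be formally smooth over it. Also, $S^{\tld u}/K^{\det}$ does not map to $\cX_{K,2}$ at all.

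The missing idea, which is how the paper argues, is to bypass $\cX^T$ and compare images of a single ring inside $\prod_{(\lambda,\tld w)\in T}(T^{\tld w}/I^{\lambda,\tau_{\tld w}}_{\nabla_\infty})^\wedge_{A_{\rhobar,\tld w}}[\![Y_j]\!]$. Take the versal ring $R^\square_{\rhobar_\infty}$ of $\PhiMod^{\et}_{K,2}$ at $\rhobar_\infty$. It has two maps out of it:
\begin{itemize}
\item Passing to $\Gal_{K_\infty}$ gives $R^\square_{\rhobar_\infty}\to R^\square_{\rhobar}[\![X_i]\!]$, which is surjective by a Galois cohomology computation. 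This is exactly the input your argument lacks.
\item The module $\cM$ gives $R^\square_{\rhobar_\infty}\to(S^{\tld u}_{\rhobar}/K^{\det})[\![Y_j]\!]$. It is surjective because Proposition~\ref{monomorphism} gives injectivity on tangent spaces modulo~$p$, and both sides are $p$-adically complete.
\end{itemize}
By~\eqref{M interpolates} the two composites into the product agree. Hence the image of $R^\square_{\rhobar_\infty}$ is at once the image of $R^\square_{\rhobar}[\![X_i]\!]$, which is $R^T_{\rhobar}[\![X_i]\!]$, and the image of $(S^{\tld u}_{\rhobar}/K^{\det})[\![Y_j]\!]$, which is the left-hand side of~\eqref{multitype isomorphism}. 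The compatibility of $K^{\lambda,\tau_{\tld w}}_{\rhobar}$ with $I^{\lambda,\tau_{\tld w}}$ is then automatic, since both generate the kernel of the projection to the $(\lambda,\tld w)$-factor. Only the monomorphism property is used; no gluing into $\cX_{K,2}$ and no formal smoothness are needed.
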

\begin{proof}
Consider the map
\[
R^\square_{\rhobar}[\![X_i]\!] \to \prod_{(\lambda, \tau_{\tld w}) \in T}R_{\rhobar}^{\lambda, \tau_{\tld w}}[\![X_i]\!] = 
\prod_{(\lambda, \tld w) \in T}(T^{\tld w}/I_{\nabla_\infty}^{\lambda, \tau_{\tld w}})^\wedge_{A_{\rhobar, \tld w}}[\![Y_j]\!]
\]
obtained as a product of~\eqref{local models fixed weight}, whose image is by definition $R^T_{\rhobar}[\![X_i]\!]$.
Now let $R_{\rhobar_\infty}^\square$ be the universal lifting ring of $\cM_{\rhobar}$, which is the ring denoted~$R^\square$ in~\cite[Section~5.4]{EGschemetheoretic}.
Passing to the $\varphi$-module of the universal lift of~$\rhobar$ yields a continuous morphism $R^\square_{\rhobar_\infty} \to R_{\rhobar}^\square[\![X_i]\!]$,
which is seen to be surjective by a computation in Galois cohomology.
It thus suffices to prove that the composition
\[
R^\square_{\rhobar_\infty} \to 
\prod_{(\lambda, \tld w) \in T}(T^{\tld w}/I_{\nabla_\infty}^{\lambda, \tau_{\tld w}})^\wedge_{A_{\rhobar, \tld w}}[\![Y_j]\!]
\]
factors as a composition
\[
R^\square_{\rhobar_\infty} \to (S^{\tld u}_{\rhobar}/K^{\det}S_{\rhobar}^{\tld u})[\![Y_j]\!] \xrightarrow{\prod \pr_{\tld w}} \prod_{(\lambda, \tld w) \in T}(T^{\tld w}/I_{\nabla_\infty}^{\lambda, \tau_{\tld w}})^\wedge_{A_{\rhobar, \tld w}}[\![Y_j]\!].
\]
in which the first arrow is surjective.
The factorization exists by~\eqref{M interpolates}. %
Since, by~\cite[Section~5.4.12]{EGschemetheoretic}, $R^\square_{\rhobar_\infty}$ is a versal ring to 
$\PhiMod_{K, 2}^{\et}$ at~$\rhobar_{\infty}$, the first arrow is surjective by Proposition~\ref{monomorphism} (note that since source and target are $p$-adically separated and complete,
it suffices to check surjectivity after quotienting out by~$p$).
\end{proof}

We now record some consequences of Proposition~\ref{multitype}.
Recall that~$H$ is either the upper-triangular Iwahori subgroup of~$\GL_2(K)$, or the maximal compact subgroup of~$D^\times$.

\begin{prop}\label{higherweight}
Let~$\chi : H \to \cO^\times$ be a regular tame character, choose $j \in \cJ$, and let~$\alpha \in \{\pm \alpha_j\} \subset \Phi$. 
Assume that $\rhobar$ is $9$-generic, that $R^{\eta, \tau(\chi)}_{\rhobar} \ne 0$, and that~$R^{\eta, \tau(\chi\alpha)}_{\rhobar} \ne 0$.
Then there exist two elements~$x, y$ of $R:= R^\square_{\rhobar}$
such that
\begin{enumerate}
    \item $xy-p \in I^{\eta+\alpha_j, \tau(\chi\alpha)}$;
    \item $I^{\eta+\alpha_j, \tau(\chi\alpha)} + xR = I^{\eta, \tau(\chi)}+pR$;
    \item $I^{\eta+\alpha_j, \tau(\chi\alpha)} + yR = I^{\eta, \tau(\chi\alpha)}+pR$.
\end{enumerate}
\end{prop}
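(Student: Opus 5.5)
The plan is to reduce the statement to an explicit computation in the multi-type local model of Proposition~\ref{multitype}. By Lemma~\ref{explicit inertial type} and Lemma~\ref{inertial JL}, the types $\tau(\chi)$ and $\tau(\chi\alpha)$ have lowest alcove presentations differing by $\pm\alpha$ in the embedding~$j$. Since both $R^{\eta,\tau(\chi)}_{\rhobar}$ and $R^{\eta,\tau(\chi\alpha)}_{\rhobar}$ are nonzero, Lemma~\ref{types lifting rhobar} lets us write $\tau(\chi)=\tau_{\tld u}$ and $\tau(\chi\alpha)=\tau_{\tld u t_{\varepsilon\alpha_j}}$ for some $\tld u\in \Adm^\vee_{\rhobar}\sweight$ and some sign~$\varepsilon$. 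After twisting so that $\eta$ becomes $\sweight$, take
\[
T=\{(\sweight,\tld u),\ (\sweight,\tld u t_{\varepsilon\alpha_j}),\ (\sweight+\alpha_j, \tld u t_{\varepsilon\alpha_j})\}.
\]
All three weights and elements lie in the range allowed by~\eqref{definition of T}. Proposition~\ref{multitype} then identifies $R^T_{\rhobar}$, up to formal variables, with a quotient of $S^{\tld u}_{\rhobar}$, and this identification matches ideals. Statements (1)--(3) involve only these three ideals and~$p$, and the formal variables are harmless. So it suffices to find $x,y\in S^{\tld u}_{\rhobar}$ satisfying the analogous identities for the ideals $K^{\lambda,\tau_{\tld w}}_{\rhobar}$, and then lift $x,y$ to $R^\square_{\rhobar}$.

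Next I would use Lemma~\ref{multitype local models} to replace the $K_{\nabla_\infty}$ ideals by the explicit $K_{\nabla_1}$ ideals modulo~$p^2$. Statements (2) and (3) already contain $p$ on the right-hand side, and (1) is a congruence, so working modulo $p^2$ should suffice. For (1), write $xy-p=z$ with $z\in K_{\nabla_1}+p^2S$. Then $z\in K_{\nabla_\infty}+p^2S$, and the $p^2$ term can be absorbed by modifying $x$ by a multiple of $p y'$ once we know $y$ divides $p$ modulo the ideal. Alternatively, we iterate $p$-adically, using completeness and the fact that $p\in (x,y)$ modulo the ideal. Since all ideals decompose as sums over embeddings, and the multi-type data differ from $\tld u$ only at~$j$, the problem reduces to a single embedding~$j$, on the ring $S^{\tld u_j}$.

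The core step, and the main obstacle, is the explicit computation at embedding~$j$. For each possibility $\tld u_j\in\{t_{(2,1)_j},t_{(1,2)_j},t_{(1,2)_j}s_j\}$ and each sign, I would take the generators of $K^{(3,0)_j\text{ or }(2,1)_j,\tau}_{\nabla_1}$ listed in Section~\ref{subsec: multi-type approximation}. The weight $\sweight+\alpha_j$ has component $(3,0)_j$ at~$j$. Inside the weight-$(3,0)_j$ locus I would identify the natural coordinates $x,y$ that satisfy a relation of the form $xy=p\cdot(\text{unit})$. Concretely, this should be the relation $(\kappa-1)(\kappa-2)b_0c_1-2p$ found in the proof of Theorem~\ref{weight at most (3, 0)}, with $x$ and $y$ related to $b_0$ and~$c_1$, after adjusting by units coming from Lemma~\ref{structure constants}. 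Note that $\kappa$ is not $1,2$ mod~$p$ by genericity, so these factors are units.

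It then remains to check two reductions modulo $p$ and the relevant coordinate. Setting $x=0$ in the $(3,0)_j$ ideal should give the $(2,1)_j$ ideal for $\tau(\chi)$ under $\pr_{\tld u_j}$ modulo~$p$. Setting $y=0$ should give the one for $\tau(\chi\alpha)$. This matching of special fibres, which says that the weight-$(3,0)$ special fibre is the union of the two Barsotti--Tate special fibres, is where the case analysis is heaviest. It is also where I expect the main difficulty, since the coordinates must be chosen compatibly through the surjections $\pr_{\tld w_j}$.
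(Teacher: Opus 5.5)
Your proposal is correct and is essentially the paper's proof. It uses the same three-element $T$ and the same reduction through Proposition~\ref{multitype}. It chooses $x,y$ as the two factors of the weight-$(3,0)_j$ generator $(\kappa-1)(\kappa-2)c_0b_1-2p$ (your $b_0c_1$ is the same generator in the other chart), which pull back to $x=\beta_1$, $y=\gamma_2$ when $\tld u_j=t_{(2,1)_j}$; it absorbs the $p^2$-error in (1) by rescaling $y$ by a unit, and checks (2), (3) for the $K_{\nabla_1}$ ideals by inspection. Two small refinements: the case $\tld u_j=t_{(1,2)_j}s_j$ never occurs, since $\tld u$ and $\tld u t_{\pm\alpha}$ both lie in $\Adm^\vee_{\rhobar}(2,1)$, forcing $\{\tld u_j,(\tld u t_{\pm\alpha})_j\}=\{t_{(2,1)_j},t_{(1,2)_j}\}$; and to pass from $K_{\nabla_1}$ to $K_{\nabla_\infty}$ in (2) and (3) you need $p$ in the left-hand sides too, which is exactly what (1) provides.
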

\begin{proof}
After a twist, it suffices to prove the proposition with~$\eta$ replaced by~$\sweight$.
Let~$\lambda = \sweight + \alpha_j$.
By Lemma~\ref{inertial JL} we can choose lowest alcove presentations of~$\tau(\chi), \tau(\chi\alpha)$ such that, 
if we write $\tld u := \tld w^*(\rhobar, \tau(\chi))$ and~$\tld w = \tld w^*(\rhobar, \tau(\chi\alpha))$
(so that $\tau(\chi) = \tau_{\tld u}$ and~$\tau(\chi\alpha) = \tau_{\tld w}$)
then $\tld w = \tld u t_{\pm \alpha}$.
Furthermore, Lemma~\ref{types lifting rhobar} shows that $\tld u, \tld w \in \Adm^\vee_{\rhobar}\sweight$.
These facts imply that $\{\tld u_j, \tld w_j\} = \{t_{({2, 1})_j}, t_{({1, 2})_j}\}$.

Define 
\[
T := \{(\sweight, \tld u), (\sweight, \tld w), (\lambda, \tld w)\}.
\]
Then it suffices to prove the proposition with $R$ replaced by~$R^T_{\rhobar}$.
Hence, by Proposition~\ref{multitype}, it suffices to construct~$x, y \in S^{\tld u}$ such that
\begin{enumerate}
    \item $xy - p \in K_{\nabla_\infty}^{\lambda, \tau_{\tld w}}$;
    \item $K_{\nabla_\infty}^{\lambda, \tau_{\tld w}} + x S^{\tld u} = K_{\nabla_\infty}^{\sweight, \tau_{\tld u}} + p S^{\tld u}$;
    \item $K_{\nabla_\infty}^{\lambda, \tau_{\tld w}} + y S^{\tld u} = K_{\nabla_\infty}^{\sweight, \tau_{\tld w}} + p S^{\tld u}$.
\end{enumerate}
We will do this in the case where $\tld u_j = t_{(2, 1)_j}, \tld w_j = t_{(1, 2)_j}$; the other case is similar, and can be reduced to this after conjugation by a normalizer of Iwahori.

Recall the description of~$S^{\tld u}$ in Section~\ref{subsec: charts for PhiMod},
and the description of $K_{\nabla_1}^{\lambda, \tau_{\tld w}}$ in Section~\ref{subsec: multi-type approximation}.
Let $x = \beta_1, y = \gamma_2 \in S^{\tld u_j}$.
After rescaling~$y$ by an element of $\bZ_{(p)}^\times$, we see that 
\[
xy - p \in K^{(3, 0)_j, \tau_{\tld w}}_{\nabla_1} \subset K_{\nabla_\infty}^{\lambda, \tau_{\tld w}} + p^{2}S^{\tld u}, 
\]
where the inclusion is a consequence of Lemma~\ref{multitype local models}.
Hence, after rescaling~$y$ by an element of $(S^{\tld u})^\times$, we see that part~(1) is true.

We now prove part~(2).
Recall that~$\lambda_j = (3, 0)_j$ and $\lambda_i = (2, 1)_i$ if~$i \ne j \in \cJ$.
By inspection of the formulas in Section~\ref{subsec: multi-type approximation}, we have
\[K^{\lambda_j, \tau_{\tld w}}_{\nabla_1} + x S^{\tld u_j} = K^{(2, 1)_j, \tau_{\tld u}}_{\nabla_1}+pS^{\tld u_j}.\]
On the other hand, the fact that $\tld w = \tld u t_{\pm \alpha}$ implies that, for all~$i \ne j \in \cJ$, we have
\[K^{\lambda_i, \tau_{\tld w}}_{\nabla_1} = K^{(2, 1)_i, \tau_{\tld u}}_{\nabla_1}.\]
This justifies the first equality in
\begin{equation}\label{p-comparison higher weight}
K^{\lambda, \tau_{\tld w}}_{\nabla_1} + xS^{\tld u} = K^{\sweight, \tau_{\tld u}}_{\nabla_1} + pS^{\tld u} = K_{\nabla_\infty}^{\sweight, \tau_{\tld u}} + pS^{\tld u},
\end{equation}
whereas the second equality is a consequence of Lemma~\ref{multitype local models}.
There remains to prove that
\[
K^{\lambda, \tau_{\tld w}}_{\nabla_1} + xS^{\tld u} = K_{\nabla_\infty}^{\lambda, \tau_{\tld w}} + xS^{\tld u}.
\]
Again by Lemma~\ref{multitype local models}, it suffices to prove that~$p$ is contained in both sides of this equation.
It is contained in the left-hand side by~\eqref{p-comparison higher weight}, and in the right-hand side by part~(1).
This concludes the proof of part~(2).
The proof of part~(3) is similar.
\end{proof}

\begin{prop}\label{ideals for arm cyclicity}
Let~$\chi : H \to \cO^\times$ be a regular tame character.
Assume that $\rhobar$ is $9$-generic and $R^{\eta, \tau(\chi)}_{\rhobar} \ne 0$.
Choose $j \in \cJ$ and $\alpha \in \{\pm\alpha_j\} \subset \Phi$.
Then
\[
I^{\eta, \tau(\chi)} + I^{\eta+\alpha_j, \tau(\chi\alpha)} = I^{\eta, \tau(\chi)} + pR^\square_{\rhobar}.
\]
\end{prop}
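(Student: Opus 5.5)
The plan is to deduce the statement from the multi-type local model of Proposition~\ref{multitype}, reusing the mechanism of Proposition~\ref{higherweight} wherever possible. After twisting, we may replace~$\eta$ by~$\sweight$. Put $\lambda := \sweight + \alpha_j$. As in the proof of Proposition~\ref{higherweight}, use Lemma~\ref{inertial JL} to choose lowest alcove presentations with $\tld u := \tld w^*(\rhobar, \tau(\chi)) \in \Adm^\vee_{\rhobar}\sweight$ and $\tld w := \tld w^*(\rhobar, \tau(\chi\alpha)) = \tld u t_{\pm\alpha}$. The inclusion $\subset$ is the real content: we must show $p \in I^{\sweight, \tau(\chi)} + I^{\lambda, \tau(\chi\alpha)}$, and that $I^{\lambda, \tau(\chi\alpha)} \subset I^{\sweight, \tau(\chi)} + pR^\square_{\rhobar}$. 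The reverse inclusion then follows at once. Note that the right-hand side is a proper ideal, so in particular we need $R^{\lambda, \tau(\chi\alpha)}_{\rhobar} \neq 0$.

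First, I would check this nonvanishing with Lemma~\ref{types lifting rhobar}(4). In the embeddings $i \ne j$ we have $\tld w_i = \tld u_i \in \Adm^\vee_{\rhobar}(2,1)_i$. At~$j$, since $\tld u_j \in \Adm^\vee_{\rhobar}(2,1)_j$, the element $\tld u_j t_{\pm\alpha_j}$ lies in $\Adm^\vee(3,0)_j$, and an inspection of the list shows that it is never the excluded element $t_{(0,3)_j}$ when $N_j \neq 1$. This is a finite case check over the three possible values of~$\tld u_j$.

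Next, I would take $T := \{(\sweight, \tld u), (\lambda, \tld w)\}$. By Proposition~\ref{multitype} it then suffices to prove the corresponding identity
\[
K_{\nabla_\infty}^{\sweight, \tau_{\tld u}} + K_{\nabla_\infty}^{\lambda, \tau_{\tld w}} = K_{\nabla_\infty}^{\sweight, \tau_{\tld u}} + pS^{\tld u}
\]
in $S^{\tld u}_{\rhobar}$. In the embeddings $i \ne j$ the truncated ideals $K_{\nabla_1}^{\lambda_i, \tau_{\tld w}}$ and $K_{\nabla_1}^{(2,1)_i, \tau_{\tld u}}$ coincide, as observed in the proof of Proposition~\ref{higherweight}. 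So the truncated version reduces to a one-embedding identity
\[
K_{\nabla_1}^{(2,1)_j, \tau_{\tld u}} + K_{\nabla_1}^{(3,0)_j, \tau_{\tld w}} = K_{\nabla_1}^{(2,1)_j, \tau_{\tld u}} + pS^{\tld u_j}.
\]
I would verify this by inspecting the explicit generators of Section~\ref{subsec: multi-type approximation}, treating each possible $(\tld u_j, \tld w_j)$ separately and using conjugation by an Iwahori normalizer to cut down the cases.

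In the case where $R^{\sweight, \tau(\chi\alpha)}_{\rhobar} \ne 0$ the computation is already done by Proposition~\ref{higherweight}. Its part~(2) gives $I^{\lambda,\tau(\chi\alpha)} \subset I^{\sweight,\tau(\chi)} + pR$, and also $x \in I^{\sweight,\tau(\chi)} + pR$. Part~(1) gives $p \equiv xy$ modulo $I^{\lambda,\tau(\chi\alpha)}$. Writing $x = i + pr$ with $i \in I^{\sweight,\tau(\chi)}$ yields $p(1 - ry) \in I^{\sweight,\tau(\chi)} + I^{\lambda,\tau(\chi\alpha)}$. Since $y$ lies in the maximal ideal, $1 - ry$ is a unit, so $p$ lies in the left-hand side.

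In the remaining cases (where $\tld w_j$ is not in $\Adm^\vee_{\rhobar}(2,1)_j$) I expect the same structure: the generators should contain an element congruent to~$p$ modulo $K_{\nabla_1}^{(2,1)_j,\tau_{\tld u}}$, times a unit. Finally I would pass from $\nabla_1$ to $\nabla_\infty$ exactly as in Proposition~\ref{higherweight}. Once $p$ is known to lie in both sides, Lemma~\ref{multitype local models} (which gives equality modulo~$p^2$) upgrades the truncated identity to the true one.

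The main obstacle is this explicit case analysis of the truncated monodromy ideals for the pairs $(\tld u_j, \tld w_j)$ not covered by Proposition~\ref{higherweight}. I would first try to reduce these pairs to the $t_{(2,1)_j}/t_{(1,2)_j}$ computation by conjugation, and only then fall back on direct inspection of the formulas.
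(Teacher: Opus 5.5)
This is essentially the paper's proof: twist to weight $(2,1)$, take the two-element set $T$, pass to $S^{\tld u}$ via Proposition~\ref{multitype}, reduce to the truncated ideals in the single embedding~$j$ using Lemma~\ref{multitype local models} together with $p$-adic completeness, and finish by inspecting the formulas. Your shortcut through Proposition~\ref{higherweight} when $R^{\eta,\tau(\chi\alpha)}_{\rhobar}\neq 0$ is a valid variant for that sub-case.

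The inspection you left as an expectation works as you predict. Modulo~$p$, every generator of $K^{(3,0)_j,\tau_{\tld w}}_{\nabla_1}$ lies in $K^{(2,1)_j,\tau_{\tld u}}_{\nabla_1}$. A unit multiple of~$p$ is obtained by subtracting, with the unit controlled by Lemma~\ref{structure constants}:
\begin{itemize}
\item when $\tld u_j=t_{(2,1)_j}$, subtract $\delta_1+p\in K^{(2,1)_j,\tau_{\tld u}}_{\nabla_1}$ from $\delta_1+2p$ or from $\delta_1+2(\kappa_j^{\tau_{\tld w}}-1)^{-1}p$ in $K^{(3,0)_j,\tau_{\tld w}}_{\nabla_1}$;
\item when $\tld u_j=t_{(1,2)_j}s$, subtract $\alpha_1+p$, resp.\ $\delta_1+p$, from $\alpha_1+2p$, resp.\ $\delta_1+2p$;
\item the case $\tld u_j=t_{(1,2)_j}$ follows by conjugation.
\end{itemize}
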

\begin{proof}
It suffices to prove that $I^{\eta+\alpha_j, \tau(\chi\alpha)} \subset I^{\eta, \tau(\chi)} + pR^\square_{\rhobar}$ and that 
$p \in I^{\eta, \tau(\chi)} + I^{\eta+\alpha_j, \tau(\chi\alpha)}$.    
After a twist, it suffices to prove this with~$\eta$ replaced by~$\sweight$.
By Lemma~\ref{inertial JL} and Lemma~\ref{types lifting rhobar} we can find lowest alcove presentations of~$\tau(\chi), \tau(\chi\alpha)$ such that, writing $\tld u := \tld w^*(\rhobar, \tau(\chi))$ and~$\tld w = \tld w^*(\rhobar, \tau(\chi\alpha))$,
we have $\tld u \in \Adm^\vee_{\rhobar}\sweight$ and $\tld w = \tld u t_{\pm \alpha}$.
Then $\tau(\chi) = \tau_{\tld u}$ and~$\tau(\chi\alpha) = \tau_{\tld w}$.

Define $T := \{(\sweight, \tld u), (\sweight + \alpha_j, \tld w)\}$.
By Proposition~\ref{multitype isomorphism}, it suffices to prove that
\begin{enumerate}
\item $K_{\nabla_\infty}^{\sweight+\alpha_j, \tau_{\tld w}} \subset K_{\nabla_\infty}^{\sweight, \tau_{\tld u}} + pS^{\tld u}$. 
\item $p \in K_{\nabla_\infty}^{\sweight, \tau_{\tld u}} + K_{\nabla_\infty}^{\sweight+\alpha_j, \tau_{\tld w}}$.
\end{enumerate}
We begin by proving part~(1).
Because of Lemma~\ref{multitype local models}, it suffices to prove that
\[
K_{\nabla_1}^{\sweight+\alpha_j, \tau_{\tld w}} + pS^{\tld u} \subset K_{\nabla_1}^{\sweight, \tau_{\tld u}} + pS^{\tld u},
\]
which is a consequence of the formulas in Section~\ref{subsec: multi-type approximation}.

We now prove part~(2).
Note that, if~$J \subset S^{\tld u}$ is any ideal,
then $p \in J$ if and only if $p \in J + p^2 S^{\tld u}$, because $S^{\tld u}$ is $p$-complete.
So, by Lemma~\ref{multitype local models}, it suffices to prove that $p \in K_{\nabla_1}^{(2, 1), \tau_{\tld u}} + K_{\nabla_1}^{(2, 1)+\alpha_j, \tau_{\tld w}}$. 
In turn, it suffices to prove that
\begin{equation}\label{containment of p}
p \in K^{(2, 1)_j, \tau_{\tld u}}_{\nabla_1} + K^{(3, 0)_j, \tau_{\tld w}}_{\nabla_1} \subset S^{\tld u_j}.
\end{equation}
Again, this is true by inspection.
In more detail, if~$\tld u_j = t_{(2, 1)_j}$, then this follows when $\tld w_j = t_{(1, 2)_j}$ from
\[\delta_1+p \in K^{(2, 1)_j, \tau_{\tld u}}_{\nabla_1},\;\;\; \delta_1+(\kappa^{\tld w}_j -1)^{-1}2p \in K^{(3, 0)_j, \tau_{\tld w}}_{\nabla_1}
\]
and when~$\tld w_j = t_{(3, 0)_j}$ from
\[
\delta_1+p \in K^{(2, 1)_j, \tau_{\tld u}}_{\nabla_1},\;\;\; \delta_1+2p \in K^{(3, 0)_j, \tau_{\tld w}}_{\nabla_1}.
\]
The other cases are similar.
\end{proof}
For the next result, we introduce the following notation for all $\alpha \in \Phi$: we put $|\alpha| := \alpha$ when~$\alpha$ is positive 
and $|\alpha| := -\alpha$ when~$\alpha$ is negative.

\begin{prop}\label{ideals for Wchi3}
Let~$\chi : H \to \cO^\times$ be a regular tame character.
Assume that $\rhobar$ is $9$-generic and $R^{\eta, \tau(\chi)}_{\rhobar} \ne 0$.
Let~$I := I^{\eta, \tau(\chi)} + pR^\square_{\rhobar}$, and for all~$\alpha \in \Phi$, let 
\[I^{\chi, \alpha} := (I^{\eta, \tau(\chi)} \cap I^{\eta+|\alpha|, \tau(\chi\alpha)}) + pR^\square_{\rhobar}.\]
Then the sum of the inclusions $I^{\chi, \alpha} \to I$ induce a surjection
\[\bigoplus_{\alpha \in \Phi}I^{\chi, \alpha} \to I^{\oplus \Phi}/\Delta(I)\] 
where~$\Delta(I)$ denotes the diagonal embedding of~$I$.
\end{prop}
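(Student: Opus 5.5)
Write $\chi_\alpha := \chi\alpha$ and $\lambda_\alpha := \eta+|\alpha|$. The statement is equivalent to saying that the map $\bigoplus_{\alpha} I^{\chi,\alpha}\to I^{\oplus\Phi}/\Delta(I)$ is surjective. Since $\Delta(I)$ is the diagonal, this amounts to showing that for each $\beta\in\Phi$ the vector $e_\beta(x)$ (with $x\in I$ in position $\beta$ and $0$ elsewhere) lies in $\bigoplus_\alpha I^{\chi,\alpha} + \Delta(I)$.

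Modulo $\Delta(I)$, the vector $e_\beta(x)$ is equivalent to $-\sum_{\alpha\ne\beta} e_\alpha(x)$. It therefore suffices to show that every element $x\in I$ can be written as
\[
x = \sum_{\alpha\in\Phi} x_\alpha, \qquad x_\alpha\in I^{\chi,\alpha}.
\]
Given such a decomposition, $e_\beta(x) \equiv e_\beta(x) - \Delta(x) = -\sum_{\alpha\ne\beta} e_\alpha(x)$, and we must then realise each $e_\alpha(x)$. More directly: for each $\alpha$ put $y_\alpha := x_\alpha$ in slot $\alpha$. Then $\sum_\alpha e_\alpha(x_\alpha) - \Delta(x)$ equals $e_\alpha(x_\alpha - x)$ summed over the slots, which is not quite $e_\beta(x)$.

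The cleaner reformulation is as follows. The cokernel of $\bigoplus I^{\chi,\alpha}\to I^{\oplus\Phi}/\Delta(I)$ is $\bigl(\bigoplus_\alpha I/I^{\chi,\alpha}\bigr)/\Delta(I)$. This vanishes if and only if the diagonal map
\[
I\to \bigoplus_\alpha I/I^{\chi,\alpha}
\]
is surjective. That is a Chinese remainder statement: it holds if for each $\beta$ we have $I = I^{\chi,\beta} + \bigcap_{\alpha\ne\beta} I^{\chi,\alpha}$, or equivalently if the quotients $I/I^{\chi,\alpha}$ are "independent".

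So the plan is to prove this Chinese remainder property. The first step is to reduce via Proposition~\ref{multitype}. Take
\[
T := \{(\eta,\tld u)\}\cup\{(\lambda_\alpha,\tld u t_{\pm\alpha}) : \alpha\in\Phi\},
\]
using Lemma~\ref{inertial JL} and Lemma~\ref{types lifting rhobar} exactly as in Proposition~\ref{ideals for arm cyclicity}, after twisting $\eta$ to $\sweight$. All ideals involved contain $I^T$, so we may work in $S^{\tld u}_{\rhobar}$ with the ideals $K^{\lambda,\tau_{\tld w}}_{\rhobar}$. Since every ideal in question contains $p$, Lemma~\ref{multitype local models} lets us replace $K_{\nabla_\infty}$ by the explicit $K_{\nabla_1}$, modulo $p$. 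Explicitly, the step is to pass to $\bar S := S^{\tld u}/(p, K^{\sweight,\tau_{\tld u}}_{\nabla_1})$. This ring is a completed tensor product over $j\in\cJ$ of the rings $\bar S_j := S^{\tld u_j}/(p,K^{(2,1)_j,\tau_{\tld u}}_{\nabla_1})$.

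Next, identify the image of $I^{\chi,\alpha}$ in $I/(I^{\eta,\tau(\chi)}+p)$. Its image in $\bar S$ is the ideal $\bar J_\alpha$ generated by $K^{\lambda_\alpha,\tau_{\tld u t_{\pm\alpha}}}_{\nabla_1}\cap K^{\sweight,\tau_{\tld u}}$, and hence computable from Section~\ref{subsec: multi-type approximation}. Because $\tld u t_{\pm\alpha}$ differs from $\tld u$ only at the embedding $j$ with $\alpha=\pm\alpha_j$, and the weight $\lambda_\alpha$ equals $(2,1)_i$ away from $j$, the factors at $i\ne j$ agree. So $\bar J_\alpha$ is extended from an ideal $\bar J_{\alpha}^{(j)}\subset\bar S_j$, in the spirit of Proposition~\ref{higherweight}: for example, it is generated by a single element such as $x=\beta_1$ or $y=\gamma_2$ modulo the single-type ideal.

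The main obstacle is the Chinese remainder property at a fixed $j$, where the two roots $\pm\alpha_j$ both contribute ideals of the same factor $\bar S_j$. We need
\[
\bar J_{\alpha_j}^{(j)}+\bar J_{-\alpha_j}^{(j)} = I\bar S_j,
\]
that is, the two generators together with the rest generate the maximal-type ideal image; I would check this by inspection of the formulas in Section~\ref{subsec: multi-type approximation}. The images of $I$ in the various factors are then handled by the tensor product structure. Across different $j$, the ideals live in different tensor factors, and for a completed tensor product, the ideals $J_j\otimes(\text{rest})$ indexed by distinct $j$ satisfy the required comaximality within $I$ because they are extended from independent variables. Finally, one must lift the surjectivity from the reduction mod $I^{\eta,\tau(\chi)}+p$ back to $I$ itself. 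This works because each $I^{\chi,\alpha}$ contains $I^{\eta,\tau(\chi)}\cdot$(something) and $p$; more precisely, $I^{\chi,\alpha}\supset I^{\eta,\tau(\chi)}I^{\eta+|\alpha|,\tau(\chi\alpha)}+p$. I would therefore set up the argument on $I/(p+\text{products})$ and use Nakayama over the complete local ring.
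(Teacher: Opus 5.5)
Your reformulation is correct: the cokernel is $\bigl(\bigoplus_\alpha I/I^{\chi,\alpha}\bigr)/\Delta(I)$, so you need the diagonal map $I\to\bigoplus_\alpha I/I^{\chi,\alpha}$ to be onto. Your outline also matches the paper: pass to $R^T_{\rhobar}$ via Proposition~\ref{multitype}, note that for $\alpha=\pm\alpha_j$ nothing changes away from the embedding $j$, and isolate a statement at $j$ saying that $\alpha_j$ and $-\alpha_j$ together account for everything.

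\textbf{First gap.} The step you treat as formal is where the difficulty lies. Write $A=I^{\eta,\tau(\chi)}$ and $B=I^{\eta+|\alpha|,\tau(\chi\alpha)}$. The ideal $I^{\chi,\alpha}=(A\cap B)+pR^\square_{\rhobar}$ is formed by intersecting \emph{before} adding $p$. Lemma~\ref{multitype local models} only controls $A$ and $B$ separately modulo $p^2$, and that does not determine $(A\cap B)+p$. For instance, Proposition~\ref{ideals for arm cyclicity} gives $(A+p)\cap(B+p)=B+p$, which is in general strictly larger. So you cannot simply swap $K_{\nabla_\infty}$ for $K_{\nabla_1}$.

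The paper never compares these ideals. It first uses Nakayama to reduce to the fibre $V=K\otimes_{S^{\tld u}}\bF_{\rhobar}=K/\fm K$. There the claim becomes $V_\alpha+\bigcap_{\beta\ne\alpha}V_\beta=V$, where $V_\alpha$ is the image of $I^{\chi,\alpha}$. It then shows these images agree for $\nabla_1$ and $\nabla_\infty$ using Lemma~\ref{intersection distortion}(2). The essential extra input there is $p\in A+B$ for both versions of the ideals, which is step~(2) of Proposition~\ref{ideals for arm cyclicity}.

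Your substitute, passing to $\bar S=S^{\tld u}/(p,K^{(2,1),\tau_{\tld u}}_{\nabla_1})$, does not work. The ideal $I$ and every $I^{\chi,\alpha}$ map to zero there, so the identity $\bar J_{\alpha_j}+\bar J_{-\alpha_j}=I\bar S_j$ you propose to check is vacuous. Likewise, the final lifting step via $I/(p+\text{products})$ does not reduce the problem to anything computable; what is needed is Nakayama modulo $\fm I$.

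\textbf{Second gap.} Even in the correct setting, the local statement cannot be checked by inspection. That statement is that the images in $V$ of $K^{(2,1)_j,\tau_{\tld u}}_{\nabla_1}\cap K^{(3,0)_j,\tau_{\tld u t_{\alpha}}}_{\nabla_1}$ and of $K^{(2,1)_j,\tau_{\tld u}}_{\nabla_1}\cap K^{(3,0)_j,\tau_{\tld u t_{-\alpha}}}_{\nabla_1}$ together span $(K^{(2,1)_j,\tau_{\tld u}}_{\nabla_1},p)\otimes\bF_{\rhobar}$. Section~\ref{subsec: multi-type approximation} gives generators of each ideal, not of their intersection.

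The paper manufactures elements of these intersections with two tools. One is Lemma~\ref{intersection distortion}(1): an element of $(I_1+p\fm)\cap(I_2+p\fm)$ agrees modulo $\fm K$ with an element of $I_1\cap I_2$. The other is Lemma~\ref{linear distortion}. It then uses Lemma~\ref{structure constants} and the genericity of $\rhobar$ to see that the resulting coefficients are units, and that the relevant pairs of linear combinations are independent (e.g.\ those in $\alpha_1,\delta_1$ when $\tld u_j=t_{(1,2)_j}s$). It also needs a separate argument for $\gamma_1$ when $\gamma_2$ is a unit, which can happen for non-split $\rhobar$.

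Finally, your description of $\bar J_\alpha$ as generated by a single element such as $\beta_1$ or $\gamma_2$ is borrowed from Proposition~\ref{higherweight}. It does not describe these images, which are several-dimensional subspaces of $V$.
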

\begin{proof}
After a twist, it suffices to prove the proposition with~$\eta$ replaced by~$\sweight$.
Choose a lowest alcove presentation of~$\tau(\chi)$ such that $\tld u := \tld w^*(\rhobar, \tau) \in \Adm^\vee_{\rhobar}\sweight$.
Then Lemma~\ref{inertial JL} shows that
\[
\{\tld w^*(\rhobar, \tau(\chi\alpha)): \alpha \in \Phi\} = \{\tld u t_{\alpha}: \alpha \in \Phi\},
\]
and so 
\[\{\tau(\chi\alpha): \alpha \in \Phi\} = \{\tau_{\tld u t_{\alpha}}: \alpha \in \Phi\}.\]
An application of Nakayama's lemma then shows that 
it suffices to prove the proposition with $R^\square_{\rhobar}$ replaced by~$R^T_{\rhobar}$, where 
\[
T := \{((2, 1), \tld u)\} \cup \{((2, 1)+|\alpha|, \tld ut_{\alpha}): \alpha \in \Phi\}.
\]
To do so, we will apply Proposition~\ref{multitype}.
For all $\alpha \in \Phi$, define
\begin{gather*}
K := K^{\sweight, \tau_{\tld u}}_{\nabla_1} + p S^{\tld u} =  K_{\nabla_\infty}^{\sweight, \tau_{\tld u}} + pS^{\tld u}\\
K_{\nabla_\infty, \alpha} := (K_{\nabla_\infty}^{\sweight, \tau_{\tld u}} \cap K_{\nabla_\infty}^{\sweight +|\alpha|, \tau_{\tld ut_{\alpha}}}) + pS^{\tld u}\\
K_{\nabla_1, \alpha} := (K_{\nabla_1}^{\sweight, \tau_{\tld u}} \cap K_{\nabla_1}^{\sweight +|\alpha|, \tau_{\tld ut_{\alpha}}}) + pS^{\tld u}
\end{gather*}
where the equality in the first line is a consequence of Lemma~\ref{multitype local models}.
Then, by Proposition~\ref{multitype}, it suffices to prove that the sum of inclusions
\begin{equation}\label{to prove surjective II}
\bigoplus_{\alpha \in \Phi}K_{\nabla_\infty, \alpha} \to K^{\oplus \Phi}/\Delta(K)
\end{equation}
becomes surjective after completing at $\cM_{\rhobar}$.
Write $S^{\tld u} \to \bF_{\rhobar}$ for the residue field at the maximal ideal corresponding to~$\cM_{\rhobar}$. 
Let~$V := K \otimes_{S^{\tld u}} \bF_{\rhobar}$, and define
\begin{gather*}
V_{\nabla_\infty, \alpha} := \operatorname{image}(K_{\nabla_\infty, \alpha} \to V)\\
V_{\nabla_1, \alpha} := \operatorname{image}(K_{\nabla_1, \alpha} \to V)
\end{gather*}
By Nakayama's lemma,
surjectivity of~\eqref{to prove surjective II} after completion at $\cM_{\rhobar}$ is implied by the following statement, which is what we will prove:
for all~$\alpha \in \Phi$, we have
\begin{equation}\label{to prove for gluing}
V_{\nabla_\infty, \alpha} + \bigcap_{\beta \in \Phi: \beta \ne \alpha}V_{\nabla_\infty, \beta}  = V.
\end{equation}
We begin by proving that $V_{\nabla_\infty, \alpha} = V_{\nabla_1, \alpha}$ for all~$\alpha \in \Phi$.
For this, it suffices to prove that
\[\operatorname{image}(K_{\nabla_\infty}^{\sweight, \tau_{\tld u}} \cap K_{\nabla_\infty}^{\sweight +|\alpha|, \tau_{\tld ut_{\alpha}}} \to V) = 
\operatorname{image}(K_{\nabla_1}^{\sweight, \tau_{\tld u}} \cap K_{\nabla_1}^{\sweight +|\alpha|, \tau_{\tld ut_{\alpha}}} \to V).\]
This is a special case of~\eqref{comparing intersections} below, applied to $I_1 = K_{\nabla_1}^{\sweight, \tau_{\tld u}}, J_1 = K_{\nabla_1}^{\sweight +|\alpha|, \tau_{\tld ut_{\alpha}}}$, and
$I_\infty = K_{\nabla_\infty}^{\sweight, \tau_{\tld u}}, J_\infty = K_{\nabla_\infty}^{\sweight +|\alpha|, \tau_{\tld ut_{\alpha}}}$.
Note that the assumptions of~\eqref{comparing intersections} are met, by Lemma~\ref{multitype local models}, which shows the congruence modulo~$p^2$,
and Proposition~\ref{ideals for arm cyclicity}, which shows that $p \in I_\infty + J_\infty$ and $p \in I_1 + J_1$ (this is step~(2) in the proof of Proposition~\ref{ideals for arm cyclicity}).

It thus suffices to prove that, for all~$\alpha \in \Phi$,
\begin{equation}\label{to prove for gluing II}
V_{\nabla_1, \alpha} + \bigcap_{\beta \in \Phi: \beta \ne \alpha}V_{\nabla_1, \beta} = V.
\end{equation}
To do so, observe first that by definition
\[
V = 
\sum_{i \in \cJ}(K^{(2, 1)_i, \tau_{\tld u}}_{\nabla_1}, p) \otimes_{S^{\tld u_i}} \bF_{\rhobar}.  
\]
Next, choose~$\alpha \in \Phi$, and let~$j \in \cJ$ be the embedding in which~$\alpha$ is not trivial, so that~$|\alpha| = \alpha_j$.
We next show that
\begin{equation}\label{intermediate step for gluing}
\sum_{i \ne j}(K^{(2, 1)_i, \tau_{\tld u}}_{\nabla_1},p) \otimes_{S^{\tld u_i}} \bF_{\rhobar}  \subset V_{\nabla_1, \alpha}.
\end{equation}
Observe that $\sweight$ and $\sweight + |\alpha|$ only differ at~$j$, and the same is true for $\tld u$ and $\tld u t_{\alpha}$.
Hence
\[
K^{(2, 1)_i, \tau_{\tld u}}_{\nabla_1} = K^{((2, 1)+|\alpha|)_i, \tau_{\tld u t_\alpha}}_{\nabla_1}
\]
as ideals of $S^{\tld u_i}$, for all~$i \ne j$, which immediately implies~\eqref{intermediate step for gluing}.
To conclude the proof of~\eqref{to prove for gluing II}, it thus suffices to prove that
\[
(K^{(2, 1)_j, \tau_{\tld u}}_{\nabla_1}, p) \otimes_{S^{\tld u_j}} \bF_{\rhobar} \;\; \subset V_{\nabla_1, \alpha} + \bigcap_{\beta \in \Phi: \beta \ne \alpha}V_{\nabla_1, \beta}. 
\]
To do this, it suffices to prove that the inclusion
\[
(K^{(2, 1)_j, \tau_{\tld u}}_{\nabla_1}\cap K^{(3, 0)_j, \tau_{\tld u t_{\alpha}}}_{\nabla_1}, p) \otimes_{S^{\tld u_j}} \bF_{\rhobar} + 
(K^{(2, 1)_j, \tau_{\tld u}}_{\nabla_1}\cap K^{(3, 0)_j, \tau_{\tld u t_{-\alpha}}}_{\nabla_1}, p) \otimes_{S^{\tld u_j}} \bF_{\rhobar}
\subset
(K^{(2, 1)_j, \tau_{\tld u}}_{\nabla_1}, p) \otimes_{S^{\tld u_j}} \bF_{\rhobar},
\]
is an equality, because for all~$\beta \in \Phi \setminus \{\alpha\}$ we have 
\[
(K^{(2, 1)_j, \tau_{\tld u}}_{\nabla_1}\cap K^{(3, 0)_j, \tau_{\tld u t_{-\alpha}}}_{\nabla_1}, p) \otimes_{S^{\tld u_j}} \bF_{\rhobar} \subset V_{\nabla_1, \beta}.
\]
We have therefore reduced the proof of the proposition to showing that the natural map
\begin{multline}\label{to prove surjective}
(K^{(2, 1)_j, \tau_{\tld u}}_{\nabla_1}\cap K^{(3, 0)_j, \tau_{\tld u t_{\alpha}}}_{\nabla_1}, p) \otimes_{S^{\tld u_j}} \bF_{\rhobar} + 
(K^{(2, 1)_j, \tau_{\tld u}}_{\nabla_1}\cap K^{(3, 0)_j, \tau_{\tld u t_{-\alpha}}}_{\nabla_1}, p) \otimes_{S^{\tld u_j}} \bF_{\rhobar}
\to\\
(K^{(2, 1)_j, \tau_{\tld u}}_{\nabla_1}, p) \otimes_{S^{\tld u_j}} \bF_{\rhobar}
\end{multline}
is surjective for any~$j \in \cJ$ and any~$\alpha \in \{\pm \alpha_j\}$.
Note that we can assume without loss of generality that~$\alpha = \alpha_j$.

Fix~$j \in \cJ$, let~$\fm$ be the kernel of $S^{\tld u_j} \to \bF_{\rhobar}$, and let $S_{\rhobar}^{\tld u_j}$ be the completion of~$S^{\tld u_j}$ at~$\fm$.
Since~$S_{\rhobar}^{\tld u_j}$ is flat over~$S^{\tld u_j}$, 
it suffices to prove surjectivity of~\eqref{to prove surjective}
after completion at~$\fm$
(i.e.\ after replacing~$S^{\tld u_j}$ in~\eqref{to prove surjective} with~$S^{\tld u_j}_{\rhobar}$).
This is true by inspection of the formulas in Section~\ref{subsec: multi-type approximation}, 
and repeated applications of Lemma~\ref{intersection distortion} and Lemma~\ref{linear distortion}.

In more detail, let~$\mathfrak{I}$ be the image of~\eqref{to prove surjective}. 
Assume first that $\tld u_j = t_{(2, 1)_j}$.
Then
\begin{equation}\label{formula for K modulo p I}
(K^{(2, 1)_j, \tau_{\tld u}}_{\nabla_1}, p) \otimes_{S^{\tld u_j}_{\rhobar}} \bF_{\rhobar} = \langle p, \alpha_0, \beta_0, \gamma_0, \delta_0, \beta_1, \alpha_2, \delta_1, \gamma_1, \alpha_1 \rangle.
\end{equation}
Note that, since $\Phi^{\tld u_j} := \Psi^{\tld u_j}\tld w^*(\rhobar)$ is congruent modulo~$\fm$ to the matrix~$\Phi^{(j)}_{\rhobar}$ of Frobenius on the $\varphi$-module of~$\rhobar$, 
the formula~\eqref{matrices of Phi on rhobar} shows that~\eqref{formula for K modulo p I} is indeed contained in~$\fm$. However, $\gamma_2$ may or may not be in~$\fm$.

Since $\alpha_1, \beta_1\in K^{(2, 1)_j, \tau_{\tld u}}_{\nabla_1} \cap K^{(3, 0)_j, \tau_{\tld u t_\alpha}}_{\nabla_1}$, we see that $\alpha_1, \beta_1 \in \mathfrak{I}$.
Since 
\[
\gamma_0 + p\gamma_1 + p^2\gamma_2 \in K^{(2, 1)_j, \tau_{\tld u}}_{\nabla_1} \cap K^{(3, 0)_j, \tau_{\tld u t_{-\alpha}}}_{\nabla_1}
\]
we see that~$\gamma_0 \in \mathfrak{I}$.
We now prove that $\alpha_0, \beta_0, \delta_0 \in \mathfrak{I}$.
By Lemma~\ref{intersection distortion} it suffices to prove that
\begin{gather*}
\alpha_0 \in (K^{(2, 1)_j, \tau_{\tld u}}_{\nabla_1}+p\fm) \cap (K^{(3, 0)_j, \tau_{\tld u t_\alpha}}_{\nabla_1}+p\fm)\\
\beta_0 \in (K^{(2, 1)_j, \tau_{\tld u}}_{\nabla_1}+p\fm) \cap (K^{(3, 0)_j, \tau_{\tld u t_\alpha}}_{\nabla_1}+p\fm)\\
\delta_0 \in (K^{(2, 1)_j, \tau_{\tld u}}_{\nabla_1}+p\fm) \cap (K^{(3, 0)_j, \tau_{\tld ut_\alpha}}_{\nabla_1}+p\fm).
\end{gather*}
These are because
\begin{gather*}
\alpha_0+p\alpha_1+p^2\alpha_2+p^3 \in K^{(2, 1)_j, \tau_{\tld u}}_{\nabla_1} \text{ and } \alpha_0 \in K^{(3, 0)_j, \tau_{\tld u t_\alpha}}_{\nabla_1}\\
\beta_0+p\beta_1 \in K^{(2, 1)_j, \tau_{\tld u}}_{\nabla_1} \text{ and } \beta_0 \in K^{(3, 0)_j, \tau_{\tld u t_\alpha}}_{\nabla_1}\\ 
\delta_0+p\delta_1+p^2 \in K^{(2, 1)_j, \tau_{\tld u}}_{\nabla_1} \text{ and } \delta_0-p^2 \in K^{(3, 0)_j, \tau_{\tld u t_\alpha}}_{\nabla_1}.
\end{gather*}
When $\gamma_2 \in \fm$, a similar argument also works for~$\gamma_1$, using
\[
\text{$\gamma_1 + p\gamma_2 \in K^{(2, 1)_j, \tau_{\tld u}}_{\nabla_1}$ and $\gamma_1+2p\gamma_2 \in K^{(3, 0)_j, \tau_{\tld u t_{-\alpha}}}_{\nabla_1}$}.
\]
However, when~$\rhobar$ is not split, it may be the case that $\gamma_2 \not \in \fm$; we will treat this case after proving
that~$\alpha_2, \delta_1 \in \mathfrak{I}$, which we do next.
Since 
\[
\alpha_2+\delta_1+2p-\beta_1\gamma_2 \in K^{(2, 1)_j, \tau_{\tld u}}_{\nabla_1} \cap K^{(3, 0)_j, \tau_{\tld ut_{-\alpha}}}_{\nabla_1}, 
\]
we see that $\alpha_2+\delta_1 \in \mathfrak{I}$ (recall we already know that~$\beta_1 \in \mathfrak{I}$).
On the other hand, for all~$s \in \bZ_p$ we have $\alpha_2-\delta_1 + s\beta_1\gamma_2 \in K^{(2, 1)_j, \tau_{\tld u}}_{\nabla_1}$, and there exists
$s \in \bZ_p$ such that $\alpha_2-\delta_1 + s\beta_1\gamma_2 \in K^{(3, 0)_j, \tau_{\tld ut_{-\alpha}}}_{\nabla_1}$.
Hence $\alpha_2-\delta_1 \in \mathfrak{I}$.
Since~$p \ne 2$, this concludes the proof that $\alpha_2, \delta_1 \in \mathfrak{I}$.
There remains to prove that $\gamma_1 \in \mathfrak{I}$.
Lemma~\ref{linear distortion} applied to $\delta_1 + p, \gamma_1 + p\gamma_2 \in K_{\nabla_1}^{(2, 1)_j, \tau_{\tld u}}$ and 
$\delta_1+2(\kappa_j^{\tau_{\tld u t_{-\alpha}}}-1)^{-1}p, \gamma_1 + 2p\gamma_2 \in K_{\nabla_1}^{(3, 0)_j, \tau_{\tld u t_{-\alpha}}}$ shows that
\[
\gamma_2\delta_1 + (1-2(\kappa_j^{\tau_{\tld u t_{-\alpha}}}-1)^{-1})\gamma_1 \in (K_{\nabla_1}^{(2, 1)_j, \tau_{\tld u}} \cap K_{\nabla_1}^{(3, 0)_j, \tau_{\tld u t_{-\alpha}}}, p).
\]
Since Lemma~\ref{structure constants} implies that the coefficient of~$\gamma_1$ is a unit in~$\bZ_{(p)}$, and we already know that $\delta_1 \in \mathfrak{I}$, we conclude that
$\gamma_1 \in \mathfrak{I}$.
This concludes the proof of the proposition in the case $\tld u_j = t_{(2, 1)_j}$.

The case $\tld u_j = t_{(1, 2)_j}$ is similar.

Assume now that~$\tld u_j = t_{(1, 2)_j}s$.
Then
\[
(K^{(2, 1)_j, \tau_{\tld u}}_{\nabla_1}, p) \otimes_{S^{\tld u_j}} \bF_{\rhobar} = \langle p, \alpha_0, \beta_0, \beta_1, \gamma_0, \gamma_1, \delta_0, \delta_1, \alpha_1, \beta_2\gamma_2 \rangle.
\]
Since 
\[
\alpha_0+p\alpha_1+p^2 \in K^{(2, 1)_j, \tau_{\tld u}}_{\nabla_1} \cap K^{(3, 0)_j, \tau_{\tld ut_\alpha}}_{\nabla_1}
\]
we see that $\alpha_0 \in \mathfrak{I}$.
A similar argument shows that~$\gamma_0, \delta_0, \beta_0 \in \mathfrak{I}$.
Since $\beta_1+p\beta_2 \in K^{(2, 1)_j, \tau_{\tld u}}_{\nabla_1}$ and $\beta_1 + 2(\kappa_j^{\tau_{\tld ut_\alpha}}-1)^{-1}p\beta_2 \in K^{(3, 0)_j, \tau_{\tld ut_\alpha}}_{\nabla_1}$, 
an application of Lemma~\ref{intersection distortion}
shows that $\beta_1 \in \mathfrak{I}$.
When~$\gamma_2 \in \fm$, a similar argument shows that~$\gamma_1 \in \mathfrak{I}$; 
we treat the case where~$\gamma_2$ is invertible after proving
that $\alpha_1, \delta_1 \in \mathfrak{I}$, which we do next.
Observe that $\alpha_1+2p, \delta_1 + 2(\kappa_j^{\tau_{\tld u t_\alpha}}-2)^{-1} p \in K^{(3, 0)_j, \tau_{\tld ut_\alpha}}_{\nabla_1}$ 
and $\alpha_1+p, \delta_1+p \in K^{(2, 1)_j, \tau_{\tld u}}_{\nabla_1}$.
Hence Lemma~\ref{linear distortion} shows that
\[
(1-2(\kappa_j^{\tau_{\tld u t_\alpha}}-2)^{-1})\alpha_1+\delta_1 \in \mathfrak{I}.
\]
Similarly, 
\[
\alpha_1+(1-2(\kappa_j^{\tau_{\tld u t_{-\alpha}}}-2)^{-1})\delta_1 \in \mathfrak{I}.\] 
These equations are linearly independent, and so $\alpha_1, \delta_1 \in \mathfrak{I}$. 
In fact, if they were linearly dependent then $(1-2(\kappa_j^{\tau_{\tld u t_\alpha}}-2)^{-1})(1-2(\kappa_j^{\tau_{\tld u t_{-\alpha}}}-2)^{-1}) \equiv 1 \text{ mod } p\cO$, which implies
\[
(\kappa_j^{\tau_{\tld u t_\alpha}}-2) +
(\kappa_j^{\tau_{\tld u t_{-\alpha}}}-2)-2 \equiv 0. 
\]
If~$(\mathfrak{w}, \mu)$ is our fixed lowest alcove presentation of $\rhobar^{\mathrm{ss}}|_{I_{\bQ_p}}$, then
$\tld w^*(\tau_{\tld u t_{\pm \alpha}}) = t_{\mp \alpha}\tld u^{-1} \mathfrak{w}^{-1}t_{\mu+\eta}$, hence
Lemma~\ref{structure constants} implies that
\[
\kappa_j^{\tau_{\tld u t_\alpha}} + \kappa_j^{\tau_{\tld u t_{-\alpha}}} \equiv \pm 2\langle \mu + \eta, \alpha_j \rangle.
\] 
Since $\rhobar$ is at least $4$-generic, this cannot be congruent to~$6$, which is a contradiction.

Finally, a similar argument shows that 
\[
\gamma_1(2(\kappa_j^{\tld u t_{\alpha}}-2)^{-1}-1) + \delta_1(-\gamma_2) \in \mathfrak{I}
\]
and
\[
(1-2(\kappa_j^{\tau_{\tld u t_\alpha}}-2)^{-1})\beta_2\gamma_2 + 2(\kappa_j^{\tau_{\tld u t_\alpha}}-2)^{-1}\delta_1 \in \mathfrak{I}.
\]
By Lemma~\ref{structure constants},
these imply that $\gamma_1, \beta_2\gamma_2 \in \mathfrak{I}$.
This concludes the proof.
\end{proof}

\begin{lemma}\label{intersection distortion}
Let~$S$ be a local $\mO$-algebra with residue field~$\bF$ and maximal ideal~$\fm_S$, and let~$K \subset S$ be an ideal.
\begin{enumerate}
\item Let~$I, J \subset K$ be ideals of~$S$. Assume that~$I, J$ are $p$-saturated, and that $p \in I+J$.
Let~$x \in K$, and assume that $x \in (I+p\fm_S) \cap (J+p\fm_S)$.
Then
\begin{equation}\label{element of intersection}
x + \fm_S K \in \operatorname{image}(I \cap J \to K/\fm_S K).
\end{equation}
\item Let~$I_1, I_\infty, J_1, J_\infty \subset K$ be ideals of~$S$.
Assume that~$I_1, I_\infty, J_1, J_\infty$ are $p$-saturated, that~$p \in I_1 + J_1$, $p \in I_\infty + J_\infty$, and that $I_1+p^2S = I_\infty + p^2S$, $J_1 + p^2 S = J_\infty + p^2 S$.
Then
\begin{equation}\label{comparing intersections}
\operatorname{image}(I_1 \cap J_1 \to K/\fm_S K) = \operatorname{image}(I_\infty \cap J_\infty \to K/\fm_S K).
\end{equation}
\end{enumerate}
\end{lemma}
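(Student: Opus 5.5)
For part~(1), we use the hypothesis $p \in I+J$ to write $p = i_0 + j_0$ with $i_0 \in I$ and $j_0 \in J$. By assumption $x = i + p m = j + p m'$ with $i \in I$, $j \in J$ and $m, m' \in \fm_S$. We then substitute $p = i_0 + j_0$ into each expression, choosing the decomposition so that the correction terms end up in $\fm_S K$. Concretely, set
\[
y := i + m\, i_0 \in I, \qquad y' := j + m'\, j_0 \in J,
\]
so that
\[
x = y + m j_0 = y' + m' i_0 .
\]
Since $j_0 = p - i_0$ and $i_0 \in I \subset K$, we get $j_0 \in K$ as well, using that $p \in I + J \subset K$. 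Hence $x \equiv y \equiv y' \pmod{\fm_S K}$.

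This gives an element of $I$ and an element of $J$ with the same image as $x$, but not yet an element of $I \cap J$. To get one, note that $y - y' \in \fm_S K$ lies in $I + J$. The natural step is to write $y - y' = a + b$ with $a \in I$, $b \in J$ and both $a, b \in \fm_S K$. Then $y - a = y' + b$ lies in $I \cap J$ and has the same image as $x$ in $K/\fm_S K$.

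This is where $p$-saturation should enter, and it is the main obstacle. The difference is $y - y' = m' i_0 - m j_0$, which is already written as an element of $I$ plus an element of $J$, namely $a = m' i_0$ and $b = -m j_0$. Since $m, m' \in \fm_S$ and $i_0, j_0 \in K$, both terms lie in $\fm_S K$. So the direct decomposition works, $p$-saturation may not even be needed for~(1), and
\[
y - m' i_0 = y' - m j_0 \in I \cap J
\]
with image $x$ modulo $\fm_S K$. I would double-check this identity carefully.

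For part~(2), the plan is to show the inclusion $\operatorname{image}(I_1\cap J_1) \subset \operatorname{image}(I_\infty\cap J_\infty)$; the reverse inclusion follows by symmetry. Let $x \in I_1 \cap J_1$. Since $I_1 + p^2S = I_\infty + p^2 S$, we have $x \in I_\infty + p^2 S$, and similarly $x \in J_\infty + p^2 S$. To apply part~(1) to $(I_\infty, J_\infty)$, we need $x \in (I_\infty + p\fm_S)\cap(J_\infty+p\fm_S)$ and $x \in K$; the latter holds since $I_1 \subset K$. Write $x = i + p^2 s$ with $i \in I_\infty$. Then $p^2 s = p \cdot (ps)$, and $ps \in \fm_S$ because $p \in \fm_S$ (the residue field is $\bF$). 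So $x \in I_\infty + p\fm_S$, and likewise $x \in J_\infty + p\fm_S$. Part~(1) then shows that the image of $x$ lies in $\operatorname{image}(I_\infty \cap J_\infty \to K/\fm_S K)$.

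The only subtle point is whether part~(1) genuinely needs $p$-saturation. If the argument above is correct it does not, and the saturation hypotheses are harmless extra assumptions. So I would write part~(1) carefully, checking the membership $i_0, j_0 \in K$.
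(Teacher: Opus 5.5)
Your proof is correct. Part~(2) is the same as the paper's argument, including the use of $p \in \fm_S$ to pass from $I_\infty + p^2S$ to $I_\infty + p\fm_S$.

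For part~(1) your route differs from the paper's and is more elementary. The paper cites \cite[Lemma~3.17]{LLHMK1} to obtain some $e \in \fm_S\bigl((I+pS)\cap(J+pS)\bigr)$ with $x+e \in I\cap J$. It then uses $p \in I+J \subset K$ to conclude that $e \in \fm_S K$.

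You instead write the correction down explicitly. With $p = i_0 + j_0$ and $x = i + pm = j + pm'$, the element $z = x - mj_0 - m'i_0$ equals $i + (m-m')i_0 \in I$ and also $j + (m'-m)j_0 \in J$. Moreover $x - z \in \fm_S K$, because $i_0 \in I \subset K$ and $j_0 \in J \subset K$; this is direct from the hypothesis $I, J \subset K$, so you do not need the detour through $j_0 = p - i_0$.

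Since $i_0 = p - j_0$ and $j_0 = p - i_0$ both lie in $(I+pS)\cap(J+pS)$, your $e = -(mj_0 + m'i_0)$ is an explicit witness for the conclusion of the cited lemma in this setting. Your version buys self-containedness. It also shows that the $p$-saturation hypotheses are superfluous once $p \in I+J$ is assumed, in part~(1) and hence also in part~(2); they are presumably there only to match the hypotheses of the external lemma. The paper's version buys brevity.

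The identity you flagged for double-checking does hold, so the hedging can be removed.
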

\begin{proof}
Proof of~(1):
By~\cite[Lemma~3.17]{LLHMK1} there exists $e \in \fm_S\left ( (I+pS) \cap (J+pS) \right )$ such that $x + e \in I \cap J$.
By assumption $p \in I + J \subset K$, and so $I + pS, J + pS \subset K$.
Hence $e \in \fm_S K$.
This concludes the proof.

Proof of~(2):
Let~$x \in I_1 \cap J_1$.
Since $I_1 + p^2 S = I_\infty + p^2 S$, and similarly for~$J_1$ and~$J_\infty$, we have $x \in (I_\infty + p\fm_S) \cap (J_\infty + p\fm_S)$
Hence part~(1) implies that $x + \fm_SK$ is contained in the right-hand side of~\eqref{comparing intersections}.
Hence the left-hand side of~\eqref{comparing intersections} is contained in the right-hand side.
The reverse containment is proved in the same way.
\end{proof}

\begin{lemma}\label{linear distortion}
Let~$I_1, I_2 \subset K$ be ideals of a local $\mO$-algebra~$S$ with residue field~$\bF$.
Let~$x, y \in S$, and assume that there exist $\alpha, \beta, \gamma, \delta \in S$ such that
\[(x+p\alpha, y+p\beta) \subset I_1, \;\;\; (x+p\gamma, y +p\delta) \subset I_2.\]
Then 
\[
(\delta-\beta)x + (\alpha-\gamma)y \in (I_1 \cap I_2, p).
\]
\end{lemma}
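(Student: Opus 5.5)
The plan is to manufacture elements of $I_1\cap I_2$ as products of an element of $I_1$ with an element of $I_2$. Such a product lies in both ideals. Then I will extract the claimed element by dividing by~$p$.

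Set
\[
a := x+p\alpha,\quad b := y+p\beta \in I_1, \qquad c := x+p\gamma,\quad d := y+p\delta \in I_2 .
\]
Then $ad$ and $bc$ both lie in $I_1I_2 \subset I_1\cap I_2$. Expanding, the terms $xy$ cancel in the difference:
\begin{equation*}
ad-bc = p\bigl((\delta-\beta)x+(\alpha-\gamma)y\bigr) + p^2(\alpha\delta-\beta\gamma).
\end{equation*}
Hence, writing $z := (\delta-\beta)x+(\alpha-\gamma)y$ and $z' := z+p(\alpha\delta-\beta\gamma)$, we obtain $pz' \in I_1\cap I_2$.

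The remaining step is to pass from $pz'\in I_1\cap I_2$ to $z'\in I_1\cap I_2$. Once that is done, we get $z = z' - p(\alpha\delta-\beta\gamma) \in (I_1\cap I_2, p)$, as required. This division by~$p$ is where the only real input lies, and I expect it to be the main point to check.

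For the division I will use $p$-saturation. In every application of the lemma in the proof of Proposition~\ref{ideals for Wchi3}, the ideals $I_1,I_2$ are of the form $K_{\nabla_1}^{\lambda_j,\tau}$. These are preimages of $p$-saturated ideals under the surjections $\pr_{\tld w_j}$, so they are $p$-saturated in $S^{\tld u_j}$. The same holds after the flat base change to the completion $S^{\tld u_j}_{\rhobar}$, where the quotients remain $p$-torsion free.

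The intersection of two $p$-saturated ideals is again $p$-saturated, since $S/(I_1\cap I_2)$ injects into $S/I_1\times S/I_2$, which is $p$-torsion free. So $pz'\in I_1\cap I_2$ forces $z'\in I_1\cap I_2$, which concludes the argument.

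I would state this $p$-saturation of $I_1,I_2$ explicitly, as the standing convention under which the lemma is applied. Without it, only the weaker conclusion $pz\in (I_1\cap I_2, p^2)$ is available.
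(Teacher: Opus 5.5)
Your computation $ad-bc = pz'$, with $z' = z+p(\alpha\delta-\beta\gamma)$, is correct. The gap is the step from $pz'\in I_1\cap I_2$ to $z'\in I_1\cap I_2$, which the hypotheses of the lemma do not support. In the statement, $I_1, I_2$ are arbitrary ideals; contrast Lemma~\ref{intersection distortion}, where $p$-saturation is an explicit hypothesis. In general $S/(I_1\cap I_2)$ can have $p$-torsion. So as written you prove the lemma only under an added assumption. That assumption does hold in every application in the proof of Proposition~\ref{ideals for Wchi3}, as you argue, so nothing downstream breaks. However, your closing claim that without saturation only $pz\in(I_1\cap I_2,p^2)$ is available is wrong.

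The missing observation is that the factor of $p$ can be removed \emph{before} landing in the ideals. Since $d-b=p(\delta-\beta)$ and $a-c=p(\alpha-\gamma)$, you have $ad-bc=(d-b)a+(a-c)b=p\bigl((\delta-\beta)a+(\alpha-\gamma)b\bigr)$. Now $(\delta-\beta)a+(\alpha-\gamma)b\in I_1$ and $(\delta-\beta)c+(\alpha-\gamma)d\in I_2$. Their difference is $(\delta-\beta)(a-c)+(\alpha-\gamma)(b-d)=p(\delta-\beta)(\alpha-\gamma)-p(\alpha-\gamma)(\delta-\beta)=0$. Expanding, both equal $z'$, because $(\delta-\beta)\alpha+(\alpha-\gamma)\beta=(\delta-\beta)\gamma+(\alpha-\gamma)\delta=\alpha\delta-\beta\gamma$. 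Hence $z'\in I_1\cap I_2$ directly, for arbitrary ideals, and $z=z'-p(\alpha\delta-\beta\gamma)\in(I_1\cap I_2,p)$ follows. This linear-combination argument is exactly the paper's one-line proof; no saturation is needed.
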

\begin{proof}
Since $(\delta-\beta)\alpha + (\alpha-\gamma)\beta = (\delta-\beta)\gamma + (\alpha-\gamma)\delta = \alpha\delta-\beta\gamma$, we have
\[
(\delta-\beta)x + (\alpha-\gamma)y + p(\alpha\delta-\beta\gamma) \in I_1 \cap I_2,
\]
which implies the lemma.
\end{proof}

\subsection{Fixing the determinant.}
In applications, it will be important to work with fixed-determinant Galois deformation rings.
We continue to assume that $\rhobar: \Gal_K \to \GL_2(\bF)$ is a $9$-generic Galois representation
and that~$\lambda$ is a dominant weight such that $(2, 1) \leq \lambda \leq (3, 0)$.
Let $\zeta: \Gal_K \to \cO^\times$ be a continuous lift of $\det \rhobar$.
Write $R^{\square}_{\rhobar} \to R^{\lambda, \tau, \zeta}_{\rhobar}$ for the reduced $p$-flat quotient corresponding to potentially crystalline lifts of~$\rhobar$
with Hodge--Tate weights~$\lambda$, tame inertial type~$\tau$, and determinant~$\zeta$, and write~$I^{\lambda, \tau, \zeta}$ for the kernel of this map.
Then
there exists~$f_\zeta \in R^\square_{\rhobar}$, independent of~$\tau$ and~$\lambda$, such that
\[
I^{\lambda, \tau, \zeta} = I^{\lambda, \tau} + f_\zeta R^\square_{\rhobar}
\]
for all~$\lambda, \tau$.
Hence Propositions~\ref{higherweight}, \ref{ideals for arm cyclicity} and~\ref{ideals for Wchi3} remain true if we replace~$I^{\lambda, \tau}$ with~$I^{\lambda, \tau, \zeta}$.
In Section~\ref{global} we will also need the following slight generalization.
Let~$A$ be a complete Noetherian local $\cO$-algebra, with residue field~$\bF$, which is furthermore $\cO$-torsion free.
Then~$A$ with the $\fm_A$-adic topology is isomorphic as a topological $\cO$-module to a direct product of copies of~$\cO$. 
Hence the completed tensor product $(-) \widehat \otimes_{\cO} A$ is exact on pseudocompact topological $\cO$-modules.
Let~$R_\infty := R^{\square}_{\rhobar} \widehat \otimes_{\cO} A$, and let~$I_\infty^{\lambda, \tau, \zeta}$ be the kernel of
\[
R_\infty \to R^{\lambda, \tau, \zeta}_{\rhobar} \widehat \otimes_{\cO} A.
\]
Then 
\[
I_\infty^{\lambda, \tau, \zeta} = I^{\lambda, \tau, \zeta}R_\infty = I^{\lambda, \tau, \zeta} \widehat \otimes_{\cO} A.
\]
The first equality shows that Propositions~\ref{higherweight} and~\ref{ideals for arm cyclicity} remain true with~$I^{\lambda, \tau}$ replaced by~$I_\infty^{\lambda, \tau, \zeta}$,
and the second equality together with the exactness of $(-) \widehat \otimes_{\cO} A$ shows that the same is true for Proposition~\ref{ideals for Wchi3}.

%% file: multiplicityone-arXiv.tex
\section{Multiplicity one theorems.}\label{multiplicity}

Let $\rhobar: \Gal_K \to \GL_2(\bF)$ be a $9$-generic Galois representation.
Let~$\lbar \psi := \lbar\epsilon^{-1} \det \rhobar$ and let $\psi: \Gal_K \to \cO^\times$ be the Teichm\"uller lift of~$\lbar \psi$.
We write~$R^{\square, \psi}_{\rhobar}$ is for the universal deformation ring with determinant~$\psi\epsilon$ of~$\rhobar$ to complete Noetherian local $\mO$-algebras with residue field~$\bF$.
Fix integers~$n, m \geq 0$ and define 
\begin{gather*}
R_\infty := R^{\square, \psi}_{\rhobar}[\![x_i, y_i, z_j: 1 \leq i \leq n, 1 \leq j \leq m]\!]/(x_iy_i-p)\\ 
R_\infty^{\lambda, \tau} := R_\infty \widehat\otimes_{R^\square_{\rhobar}} R^{\lambda, \tau}_{\rhobar}.
\end{gather*}

\begin{defn}
Let~$(G, H) \in \{(\GL_2, I), (D^\times, \cO_{D}^\times)\}$.
A $G$--module with an arithmetic action of~$R_\infty$ is an $R_\infty[G]$-module~$M_\infty$ such that
\benum
\item $M_\infty$ is a projective object of the category of pseudocompact $\cO[\![H]\!]$-modules.
\item $M_\infty$ is finitely generated as an $R_\infty[\![H]\!]$-module.
\item if~$\lambda \in X^*(\uline T)^+$, $\chi : H \to \cO^\times$ is a regular tame character, and~$\cL$ is an $H$-stable $\cO$-lattice in $\chi \otimes V(\lambda)$, 
then $M_\infty(\cL)$ is a maximal Cohen--Macaulay module over $R_\infty^{\lambda+\eta, \tau(\chi)}$.
\item the centre $Z(G) \cong K^\times$ acts on~$M_\infty$ via $\psi \circ \operatorname{Art}_K$.
\eenum 
\end{defn}
In~(2), and more generally if~$\cL$ is a finitely generated $\mO[\![H]\!]$-module, we use the notation
\[
M_\infty(\cL) := M_\infty \otimes_{\cO[\![H]\!]} \cL.
\]
Note that~(4) implies that~$Z_1$ acts trivially on~$M_\infty$.

\begin{defn}
A $G$-module~$M_\infty$ with arithmetic action of~$R_\infty$ is \emph{minimal} if $M_\infty(\chi)[1/\pi_E]$ is locally free of rank one 
over~$R_\infty^{\eta, \tau(\chi)}[1/\pi_E]$, 
for every tame regular character $\chi: H \to \mO^\times$. 
It is \emph{self-dual} if
\[
M_\infty(\chi) \cong \Hom_{R_\infty^{\eta, \tau(\chi)}}(M_\infty(\chi), \omega_R)
\]
as $R_\infty^{\eta, \tau(\chi)}$-modules, for every regular tame character $\chi: H \to \mO^\times$.
Here~$\omega_R$ denotes the canonical module of~$R_\infty^{\eta, \tau(\chi)}$.
\end{defn}

In the rest of this section, we will fix a $G$-module $M_\infty$ with a minimal, self-dual arithmetic action of~$R_\infty$.
In Section~\ref{global} we will construct examples by applying the Taylor--Wiles--Kisin patching method to the cohomology 
of totally definite quaternion algebras and Shimura curves over totally real fields.
The reader familiar with this construction might have expected $R_\infty$ to be formally smooth over $R^\square_{\rhobar}$ (which in our notation corresponds to the case~$m = 0$).
The need for this slightly more general choice of~$R_\infty$ comes from the minimality condition at $p$-adic places, and is explained further in Remark~\ref{explaining R_infty}.

\subsection{Multiplicity one for characters and extensions.}
\begin{prop}\label{chi}
Let~$\chi : H \to \bF^\times$ be a regular tame character.
Then the $R_\infty$-module $M_\infty(\chi)$ is cyclic.
\end{prop}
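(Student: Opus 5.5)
The plan is to lift $\chi$ to characteristic zero and show that the patched module of the lift is free of rank one. Concretely, I will prove that $M_\infty([\chi])$ is free of rank one over $R := R_\infty^{\eta, \tau(\chi)}$, where $[\chi] : H \to \cO^\times$ is the Teichm\"uller lift of $\chi$. Since $M_\infty$ is projective over $\cO[\![H]\!]$, the functor $M_\infty(-)$ is exact on finitely generated $\cO[\![H]\!]$-modules. Hence $M_\infty(\chi) = M_\infty([\chi])/\pi_E$, which is then cyclic over $R_\infty$. If $R^{\eta, \tau(\chi)}_{\rhobar} = 0$, then $M_\infty([\chi])$ is a maximal Cohen--Macaulay module over the zero ring, hence zero, and there is nothing to prove. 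So we assume $R \ne 0$.

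The first step is to record the ring-theoretic structure of $R$. By Proposition~\ref{Gorenstein}, after a twist, the ring $R^{\eta, \tau(\chi)}_{\rhobar}$ is equisingular to a completed tensor product of rings $\cO[\![X_j, Y_j]\!]/(X_jY_j - p)$. Fixing the determinant removes formally smooth variables, and base change to $R_\infty$ only adds the factors $\cO[\![x_i, y_i]\!]/(x_iy_i - p)$ and power series variables $z_j$. Therefore $R$ has the same shape as the ring $\cO[\![x_j, y_j, z_k]\!]/(x_jy_j - p)$ appearing in the proof of Proposition~\ref{2torsion}. In particular $R$ is a normal complete intersection, hence Gorenstein, so $\omega_R \cong R$. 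The argument of Proposition~\ref{2torsion} then applies verbatim, with $a$ replaced by $a + n$, and shows that $\mCl(R)$ is $2$-torsion free. This verification, namely that the proof of Proposition~\ref{2torsion} only uses the displayed shape of the ring and so survives the passage to $R_\infty$ with fixed determinant, is the main point to check. I expect it to be routine but it is where care is needed: the only inputs are the codimension of the non-smooth locus, the Kummer sequence, and purity.

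Next, set $M := M_\infty([\chi])$. By axiom~(3) with $\lambda = 0$, the module $M$ is maximal Cohen--Macaulay over $R$. Since $R$ is normal, $M$ satisfies Serre's condition $(S_2)$ and is therefore reflexive. By minimality, $M[1/\pi_E]$ is locally free of rank one over $R[1/\pi_E]$. At the generic points of the special fibre, $R$ is a discrete valuation ring and $M$ is torsion free, so $M$ is free of rank one there as well. Hence $M$ is a reflexive module of rank one and defines a class $[M] \in \mCl(R)$.

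Finally, self-duality gives $M \cong \Hom_R(M, \omega_R) = \Hom_R(M, R)$, and the class of the latter is $-[M]$. Thus $2[M] = 0$, so $[M] = 0$ by the $2$-torsion freeness established above. Hence $M \cong R$. It follows that $M_\infty(\chi) \cong R/\pi_E$ is cyclic over $R_\infty$, as desired.
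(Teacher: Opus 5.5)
Your proposal is correct and follows essentially the same argument as the paper. Reflexivity and generic rank one put $M_\infty([\chi])$ in the class group of the Gorenstein ring $R_\infty^{\eta,\tau(\chi)}$, self-duality gives $2[M]=0$, and the $2$-torsion freeness of Proposition~\ref{2torsion} forces $M$ to be free of rank one; as you note, and as the paper uses implicitly, that proof depends only on the presentation from Proposition~\ref{Gorenstein}, so it applies to $R_\infty^{\eta,\tau(\chi)}$. The only difference is cosmetic: the paper gets reflexivity from self-duality (a dual module is reflexive), whereas you get it from the maximal Cohen--Macaulay property and $(S_2)$ over a normal ring.
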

\begin{proof}
Without loss of generality, $M_\infty(\chi) \ne 0$, and so $R_{\rhobar}^{\eta, \tau(\chi)} \ne 0$.
Hence there exists a lowest alcove presentation of~$\tau(\chi)$ such that $\tld w^*(\rhobar, \tau(\chi)) \in \Adm^\vee_{\rhobar}(\eta)$, and so~$R_\infty^{\eta, \tau(\chi)}$ admits a presentation as in
Proposition~\ref{Gorenstein}.
The self-duality condition, together with~\cite[\href{https://stacks.math.columbia.edu/tag/0AV3}{Tag~0AV3}]{stacks-project}, implies that~$M_\infty(\chi)$ is a reflexive $R_\infty^{\eta, \tau(\chi)}$-module.
The minimality condition implies that~$M_\infty(\chi)$ has generic rank one.
By~\cite[\href{https://stacks.math.columbia.edu/tag/0EBM}{Tag~0EBM}]{stacks-project} we deduce that $M_\infty(\chi)$ represents an element of the class group~$\mCl(R_\infty^{\eta, \tau(\chi)})$, 
and the self-duality condition implies the relation
\[
2[M_\infty(\chi)] = [\omega_R] \in \mCl(R_\infty^{\eta, \tau(\chi)}).
\]
By Proposition~\ref{Gorenstein} this implies that $2[M_\infty(\chi)] = 0$.
By Proposition~\ref{2torsion} we deduce that $2[M_\infty(\chi)] = 0$, i.e.\ $M_\infty(\chi)$ is free of rank one over~$R_\infty^{\eta, \tau(\chi)}$, which completes the proof.
\end{proof}

\begin{prop}\label{chiext}
Let~$\chi, \chi' : H \to \F^\times$ be regular tame characters, and let
\[
0 \to \chi' \to \lbar V \to \chi \to 0 
\]
be a nonsplit extension of $\bF[\![H/Z_1]\!]$-modules.
Then the $R_\infty$-module $M_\infty(\lbar V)$ is cyclic.
\end{prop}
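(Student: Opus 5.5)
The plan is to reduce cyclicity of $M_\infty(\lbar V)$ to cyclicity of its cosocle piece $M_\infty(\chi)$, by lifting $\lbar V$ to characteristic zero and using the ideal identity of Proposition~\ref{ideals for arm cyclicity}. First we pin down $\lbar V$. By the description of $\gr^1_\fm W_{\chi,3}$, a nonsplit extension of $\chi$ by $\chi'$ exists only if $\chi' = \chi\alpha$ for some $\alpha\in\Phi$, since $\Ext^1(\chi,\chi)=0$ by Lemma~\ref{arm definition}. That lemma also gives uniqueness, so $\lbar V\cong E_{\chi,\alpha}$ with $\alpha\in\{\pm\alpha_j\}$. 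Exactness of $M_\infty$ (from projectivity over $\cO[\![H]\!]$) gives a surjection $M_\infty(\lbar V)\to M_\infty(\chi)$ with kernel $M_\infty(\chi\alpha)$. By Nakayama's lemma it suffices to show that $M_\infty(\lbar V)/\fm_{R_\infty}$ has dimension at most one, i.e.\ that $M_\infty(\chi)/\fm\to M_\infty(\lbar V)/\fm$ is an isomorphism. The other term $M_\infty(\chi\alpha)/\fm$ has dimension at most one by Proposition~\ref{chi}. We may assume $M_\infty(\chi)\ne 0$, so that $R^{\eta,\tau(\chi)}_{\rhobar}\neq 0$.

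Next we lift, using Proposition~\ref{arm lifting}. The lattice $\tld A_{\chi,\alpha}$ has mod $p$ reduction surjecting onto $A_{\chi,\alpha}$, and $A_{\chi,\alpha}$ surjects onto $E_{\chi,\alpha}$. So $M_\infty(\tld A_{\chi,\alpha})$ surjects onto $M_\infty(\lbar V)$, and it suffices to show that $M_\infty(\tld A_{\chi,\alpha})$ is cyclic. Applying $M_\infty$ to~\eqref{arm exact sequence} gives
\[
0\to M_\infty(\tld A_{\chi,\alpha})\to M_\infty(\cL_\chi)\oplus M_\infty(\cL_{\chi\alpha})\to M_\infty(\chi)\to 0 .
\]
By Proposition~\ref{chi}, $M_\infty(\cL_\chi)\cong R_\infty/I^{\eta,\tau(\chi)}$, which is free of rank one over $R_\infty^{\eta,\tau(\chi)}$. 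The module $M_\infty(\cL_{\chi\alpha})$ is maximal Cohen--Macaulay over $R_\infty^{\eta+\alpha_j,\tau(\chi\alpha)}$, with $\cL_{\chi\alpha}$ a lattice in $\chi\alpha\otimes V(1,-1)_j$. The map to $M_\infty(\chi)=R_\infty/(I^{\eta,\tau(\chi)}+p)$ is surjective from each summand. Hence $M_\infty(\tld A_{\chi,\alpha})$ is the fibre product of these two modules over $M_\infty(\chi)$.

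The key step is to show that $M_\infty(\cL_{\chi\alpha})$ is cyclic, hence equal to $R_\infty/I^{\eta+\alpha_j,\tau(\chi\alpha)}$. For this I would use Proposition~\ref{higherweight}. Reducing $\cL_{\chi\alpha}$ modulo $p$ gives a filtration whose pieces, by the weights computed in Proposition~\ref{arm lifting}, are related to $\chi$ and $\chi\alpha$ (and $\chi\alpha^2$). Together with the elements $x,y$ of Proposition~\ref{higherweight}, this should show that $M_\infty(\cL_{\chi\alpha})/\fm$ is one-dimensional. One way is to use Nakayama with the fact that $M_\infty(\cL_{\chi\alpha})/x$ is supported on $I^{\eta,\tau(\chi)}+p$ and the $\chi$-piece is already cyclic; alternatively, choose the lattice so that its cosocle is $\chi$. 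I expect this step to be the main obstacle: one must control the Cohen--Macaulay module over the non-Gorenstein-looking ring $R^{\eta+\alpha_j,\tau(\chi\alpha)}_\infty$ and use that $xy=p$ on it.

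Granting that step, $M_\infty(\tld A_{\chi,\alpha})$ is the fibre product
\[
R_\infty/I^{\eta,\tau(\chi)}\times_{R_\infty/(I^{\eta,\tau(\chi)}+p)}R_\infty/I^{\eta+\alpha_j,\tau(\chi\alpha)} .
\]
By Proposition~\ref{ideals for arm cyclicity}, $I^{\eta,\tau(\chi)}+I^{\eta+\alpha_j,\tau(\chi\alpha)}=I^{\eta,\tau(\chi)}+p$. Hence this fibre product equals $R_\infty/(I^{\eta,\tau(\chi)}\cap I^{\eta+\alpha_j,\tau(\chi\alpha)})$, by the standard Chinese-remainder identity $R/I\times_{R/(I+J)}R/J=R/(I\cap J)$. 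In particular it is cyclic, and therefore so is its quotient $M_\infty(\lbar V)$.
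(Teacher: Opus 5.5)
There is a genuine gap. The step you flag as the main obstacle, cyclicity of $M_\infty(\cL_{\chi\alpha})$, is left at ``this should show'', and it is in fact the whole content of the proposition.

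Once that step is known, nothing else in your argument is needed. Since $\cL_{\chi\alpha}$ is the image of $\Proj_{\cO[\![H/Z_1]\!]}(\chi)$, the reduction $\cL_{\chi\alpha}\otimes_{\cO}\bF$ has cosocle $\chi$ and $\gr^1_\fm(\cL_{\chi\alpha}\otimes_{\cO}\bF)=\chi\alpha$. It therefore already surjects onto $E_{\chi,\alpha}\cong\lbar V$, and cyclicity of $M_\infty(\lbar V)$ follows by right exactness. Your detour through $\tld A_{\chi,\alpha}$, the fibre product and Proposition~\ref{ideals for arm cyclicity} is exactly the proof of Proposition~\ref{arm cyclicity}. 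In the paper that proposition is deduced \emph{from} the present one: it quotes the cyclicity of $M_\infty(\cL_{\chi\alpha})$ established here. So your ordering of the dependencies is reversed, and you cannot borrow that input.

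The missing argument needs no Cohen--Macaulay or duality information on $R:=R_\infty^{\eta+\alpha_j,\tau(\chi\alpha)}$. It is a lattice argument that makes $xy=p$ do the work.
\begin{itemize}
\item[(a)] First dispose of the cases $M_\infty(\chi)=0$ or $M_\infty(\chi\alpha)=0$ via Proposition~\ref{chi}, so that Proposition~\ref{higherweight} applies. You only assumed the first.
\item[(b)] Write $\cL:=\cL_{\chi\alpha}$, and let $\cL^*\supset\cL$ be the lattice with $p\cL^*\subset\cL$ and $\cL^*/\cL$ of length two. Then $\cL^*/\cL\cong E_{\chi\alpha,\alpha}$ and $\cL/p\cL^*\cong\chi$.
\item[(c)] Since $M_\infty(\chi\alpha^2)=0$ by Lemma~\ref{hypercube}(1), we get $M_\infty(\cL^*)/M_\infty(\cL)\cong M_\infty(\chi\alpha)$. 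This is killed by $y$ by Proposition~\ref{higherweight}(3). Hence $yM_\infty(\cL^*)\subset M_\infty(\cL)$, and so $pM_\infty(\cL^*)=xyM_\infty(\cL^*)\subset xM_\infty(\cL)$.
\item[(d)] Conversely, $M_\infty(\cL)/pM_\infty(\cL^*)\cong M_\infty(\chi)$ is killed by $x$ by Proposition~\ref{higherweight}(2). So $xM_\infty(\cL)\subset pM_\infty(\cL^*)$.
\item[(e)] Therefore $pM_\infty(\cL^*)=xM_\infty(\cL)\subset\fm_R M_\infty(\cL)$, because $x\in\fm_R$ (as $R/xR\cong R^{\eta,\tau(\chi)}_\infty/p\neq0$). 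Nakayama, together with the cyclicity of $M_\infty(\chi)$, then gives the cyclicity of $M_\infty(\cL)$.
\end{itemize}
Your sketch mentions $x$, $y$ and the $\chi\alpha^2$ piece, but it never introduces the auxiliary lattice $\cL^*$ or this two-sided inclusion. Without them, the alternatives you list (the support of $M_\infty(\cL)/x$, choosing the cosocle) do not bound $\dim M_\infty(\cL)/\fm_R M_\infty(\cL)$.
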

\begin{proof}
Since~$\lbar V$ is not split, by Lemma~\ref{arm definition} there exists $\alpha \in \Phi$ such that $\chi' = \chi \alpha$ and $\lbar V \cong E_{\chi, \alpha}$. 
Let~$j$ be the embedding in which~$\alpha$ is not trivial.
If~$R^{\eta, \tau(\chi)}_\infty = 0$ or~$R_\infty^{\eta, \tau(\chi\alpha)} = 0$ then
the proposition is an immediate consequence of Proposition~\ref{chi}. 
We can therefore assume that these deformation rings do not vanish. Note that this implies that the hypotheses of Proposition~\ref{higherweight} are met.

The locally algebraic $E[H]$-representation $[\chi \alpha] \otimes V(1, -1)_j$ has semisimplified mod $\pi_E$ reduction
\[
\chi \oplus \chi\alpha \oplus \chi \alpha^2,
\]
and so it contains an $H$-stable $\mO_E$-lattice~$\cL$ such that
\[
\cosoc_H(\cL \otimes_{\cO} \bF) = \gr_{\fm}^0(\cL \otimes_{\mO} \bF) = \chi, \; \gr_{\fm}^1(\cL \otimes_{\mO} \bF) = \chi \alpha,\; 
\gr_{\fm}^2(\cL \otimes_{\mO} \bF) = \chi \alpha^2.
\]
So it suffices to prove that $M_\infty(\cL)$ is cyclic.
Let~$\cL^*$ be the $H$-stable $\mO_E$-lattice such that
\[
p\cL \subset p\cL^* \subset \cL \subset \cL^*
\]
and $\cL^*/\cL$ has length two.
We have exact sequences
\begin{equation}\label{latticeext1}
0 \to \cL \to \cL^* \to E_{\chi \alpha, \alpha} \to 0
\end{equation}
and 
\begin{equation}\label{latticeext2}
0 \to p\cL^* \to \cL \to \chi \to 0.
\end{equation}
(Recall that $\gr_\fm^0 E_{\chi \alpha, \alpha} = \chi \alpha,\; \gr_\fm^1 E_{\chi \alpha, \alpha} = \chi\alpha^2$.)

By part~(3) of the definition of an arithmetic action on~$M_\infty$, the $R_\infty$-action on $M_\infty(\cL^*)$ factors through $R := R^{\eta+\alpha_j, \tau(\chi\alpha)}_\infty$.
By the exact sequence~\eqref{latticeext2} and the cyclicity of~$M_\infty(\chi)$ (which is Proposition~\ref{chi}),
we therefore see that~$M_\infty(\cL)$ is cyclic if and only if
\[
M_\infty(p\cL^*) \subset \fm_{R_\infty} M_\infty(\cL),
\]
which we now show to be true.
By Lemma~\ref{hypercube}, we know that $M_\infty(\chi \alpha^2) =0$.
This shows that
\[
M_\infty(E_{\chi \alpha, \alpha}) \isom M_\infty(\chi \alpha)
\]
is cyclic, and
isomorphic to $R^{\eta, \tau(\chi\alpha)}_\infty/pR^{\eta, \tau(\chi\alpha)}_\infty$.

We now apply Proposition~\ref{higherweight}. 
We find that there exist $x, y \in R$ such that $xy = p$ and
\[
R/xR = R_\infty^{\eta, \tau(\chi)}/p R_\infty^{\eta, \tau(\chi)} \text{ and } R/yR = R_\infty^{\eta, \tau(\chi\alpha)}/p R_\infty^{\eta, \tau(\chi\alpha)}.
\]
Since, by assumption, these rings do not vanish, we see that $x, y \in \fm_R$.
Since we have shown that $M_\infty(E_{\chi \alpha, \alpha})$ is isomorphic to $R_\infty^{\eta, \tau(\chi\alpha)}/p R_\infty^{\eta, \tau(\chi\alpha)}$, 
we now see from~\eqref{latticeext1} that
\[
y M_\infty(\cL^*) \subset M_\infty(\cL).
\]
Multiplying this by~$x$ and using~$xy = p$, we conclude that
\[
p M_\infty(\cL^*) \subset x M_\infty(\cL).
\]
On the other hand, it follows from~\eqref{latticeext2} that
\[
x M_\infty(\cL) \subset p M_\infty(\cL^*).
\]
Hence $pM_\infty(\cL^*) = xM_\infty(\cL) \subset \fm_R M_\infty(\cL)$, which concludes the proof.
\end{proof}

\subsection{Multiplicity one for~$A_{\chi, \alpha}$.}
\begin{prop}\label{arm cyclicity}
Let~$\chi: H \to \F^\times$ be a regular tame character, let~$\alpha \in \Phi$, and recall from Lemma~\ref{arm definition} the $\F[H]$-modules~$A_{\chi, \alpha}$ of length~$3$.
Assume that $M_\infty(\chi) \ne 0$. 
Then the $R_\infty$-module $M_\infty(A_{\chi, \alpha})$ is cyclic.
\end{prop}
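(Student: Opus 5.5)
We use the lattice $\tld A_{\chi, \alpha}$ of Proposition~\ref{arm lifting} together with Proposition~\ref{ideals for arm cyclicity}. Let $j$ be the embedding where $\alpha$ is nontrivial. If $M_\infty(\chi\alpha) = 0$, then exactness of $M_\infty$ and the filtration $\chi - \chi\alpha - \chi$ give a surjection $M_\infty(A_{\chi,\alpha}) \to M_\infty(\chi)$ whose kernel is a quotient of $M_\infty(\chi)$ sitting in the socle. This case needs a separate small argument, possibly by using $\tld A_{\chi,\alpha}$ anyway. So we assume $R^{\eta, \tau(\chi)}_\infty$ and $R^{\eta, \tau(\chi\alpha)}_\infty$ are both nonzero, so that Propositions~\ref{higherweight} and~\ref{ideals for arm cyclicity} apply, the latter with $I$ replaced by $I_\infty$.

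\textbf{Step 1 (reduction to $\tld A_{\chi,\alpha}$).} By~\eqref{arm lifting sequence}, $M_\infty(A_{\chi,\alpha})$ is a quotient of $M_\infty(\tld A_{\chi,\alpha}\otimes_\cO \bF) = M_\infty(\tld A_{\chi,\alpha})/p$. Lemma~\ref{hypercube}(1) together with Lemma~\ref{explicit inertial type} shows $M_\infty(\chi\alpha^2) = 0$, so in fact $M_\infty(A_{\chi,\alpha}) = M_\infty(\tld A_{\chi,\alpha})/p$. It therefore suffices to show that $M_\infty(\tld A_{\chi,\alpha})$ is cyclic over $R_\infty$.

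\textbf{Step 2 (gluing).} Apply $M_\infty$ to~\eqref{arm exact sequence}. The lattice $\cL_\chi$ is a rank-one lattice in $[\chi]$, so $M_\infty(\cL_\chi)\cong M_\infty(\chi$-lift$)$, which by Proposition~\ref{chi} (and Nakayama) is free of rank one over $R^{\eta,\tau(\chi)}_\infty$, i.e.\ isomorphic to $R_\infty/I_\infty^{\eta,\tau(\chi)}$. The module $M_\infty(\cL_{\chi\alpha})$ is supported on $R^{\eta+\alpha_j,\tau(\chi\alpha)}_\infty$. Its cyclicity follows from Proposition~\ref{chiext} applied to a suitable lattice, exactly as in that proof where $\cL$ and $\cL^*$ were treated, possibly after choosing $\cL_{\chi\alpha}$ with cosocle $\chi$ (it is the image of a map from $\Proj(\chi)$, so its reduction has cosocle $\chi$, and its reduction modulo $\chi\alpha^2$ is the extension $E_{\chi,\alpha}$). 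So $M_\infty(\cL_{\chi\alpha}) \cong R_\infty/J$, where $J \supset I_\infty^{\eta+\alpha_j,\tau(\chi\alpha)}$; by maximal Cohen--Macaulayness and generic rank one, in fact $J = I_\infty^{\eta+\alpha_j,\tau(\chi\alpha)}$. The exact sequence then reads
\[
0 \to M_\infty(\tld A_{\chi,\alpha}) \to R_\infty/I_\infty^{\eta,\tau(\chi)} \oplus R_\infty/J \to M_\infty(\chi) \to 0,
\]
and $M_\infty(\chi) \cong R_\infty/(I_\infty^{\eta,\tau(\chi)} + pR_\infty)$.

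\textbf{Step 3 (identifying the fiber product).} Both maps to $M_\infty(\chi)$ are surjections of cyclic modules. After rescaling generators by units, they are the natural quotient maps, which requires knowing that $R_\infty/J \to M_\infty(\chi)$ has kernel exactly generated by the quotient map to $R_\infty/(J+I^{\eta,\tau(\chi)}_\infty)$. Proposition~\ref{ideals for arm cyclicity} gives
\[
I_\infty^{\eta,\tau(\chi)} + J = I_\infty^{\eta,\tau(\chi)} + pR_\infty,
\]
so both targets equal $R_\infty/(I+J)$. Hence $M_\infty(\tld A_{\chi,\alpha})$ is the fiber product $R_\infty/I \times_{R_\infty/(I+J)} R_\infty/J \cong R_\infty/(I\cap J)$, which is cyclic.

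The main obstacle is the careful identification in Steps 2–3: ensuring that $M_\infty(\cL_{\chi\alpha})$ is cyclic with annihilator exactly $I_\infty^{\eta+\alpha_j,\tau(\chi\alpha)}$, and that the maps to $M_\infty(\chi)$ match the natural quotients up to units. We handle this using the unit ambiguity of generators and the fact that $\Hom$ between cyclic modules $R/I\to R/(I+J)$ is given by multiplication by elements.
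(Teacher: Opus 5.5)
Your Steps 1--3 follow the paper's proof. You reduce to $\tld A_{\chi,\alpha}$ using $M_\infty(\chi\alpha^2)=0$, apply $M_\infty$ to \eqref{arm exact sequence}, identify $M_\infty(\tld A_{\chi,\alpha})$ as the fibre product of $M_\infty(\cL_\chi)$ and $M_\infty(\cL_{\chi\alpha})$ over $M_\infty(\chi)$, and conclude with Proposition~\ref{ideals for arm cyclicity}.

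The gap is that, as written, you leave the case $M_\infty(\chi\alpha)=0$ unproved. You are right that it is not formal: there $M_\infty(A_{\chi,\alpha})$ is an extension of $M_\infty(\chi)$ by $M_\infty(\chi)$, which need not be cyclic a priori. It needs no separate argument, though, for two reasons.
\begin{enumerate}
\item Proposition~\ref{ideals for arm cyclicity} only assumes $R^{\eta,\tau(\chi)}_{\rhobar}\neq 0$. It is Proposition~\ref{higherweight} and the body of the proof of Proposition~\ref{chiext} that need nonvanishing for $\tau(\chi\alpha)$, and you never use Proposition~\ref{higherweight} directly here.
\item $M_\infty(\cL_{\chi\alpha})$ is still cyclic. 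By exactness, $M_\infty(\cL_{\chi\alpha})/p$ is filtered by $M_\infty(\chi)$, $M_\infty(\chi\alpha)=0$ and $M_\infty(\chi\alpha^2)=0$. Hence it is isomorphic to the cyclic module $M_\infty(\chi)$, and Nakayama applies.
\end{enumerate}
So your Steps 1--3 go through verbatim in that case, and the paper's proof makes no case distinction.

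A smaller point on Step 2: you do not need the annihilator $J$ of $M_\infty(\cL_{\chi\alpha})$ to equal $I_\infty^{\eta+\alpha_j,\tau(\chi\alpha)}$. Your justification via ``generic rank one'' is not among the axioms, since minimality is only imposed in weight $\eta$. Write $I=I_\infty^{\eta,\tau(\chi)}$ and $K=I+pR_\infty$. The fibre product $R_\infty/I\times_{R_\infty/K}R_\infty/J$, with both maps surjective, is cyclic if and only if $I+J=K$. This follows from two inclusions together with Proposition~\ref{ideals for arm cyclicity}:
\begin{enumerate}
\item $I_\infty^{\eta+\alpha_j,\tau(\chi\alpha)}\subseteq J$, by axiom~(3) of an arithmetic action;
\item $J\subseteq K$, because the surjection $R_\infty/J\to R_\infty/K$ sends $1$ to a unit multiple of $1$.
\end{enumerate}
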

\begin{proof}
It follows from Lemma~\ref{hypercube} that~$M_\infty(\chi\alpha^2) = 0$, and so Proposition~\ref{arm lifting}(2)
implies that it suffices to prove the cyclicity of $M_\infty(\tld A_{\chi, \alpha})$, where $\tld A_{\chi, \alpha}$ is the image of any morphism
\[
\Proj_{\cO[\![H/Z_1]\!]}(\chi) \to [\chi] \oplus \left ([\chi\alpha] \otimes_E V(1, -1)_j \right )
\]
with nonzero projection on both summands.
Let~$\cL_\chi, \cL_{\chi\alpha}$ be the images of these projections.
By Proposition~\ref{arm lifting}(3), there is an exact sequence
\[
0 \to \tld A_{\chi, \alpha} \to \cL_\chi \oplus \cL_{\chi\alpha} \to \chi \to 0.
\]
By Proposition~\ref{chi}, $M_\infty(\cL_\chi)$ is cyclic with annihilator $I_\infty^{\eta, \tau(\chi)}$.
By Proposition~\ref{chiext}, $M_\infty(\cL_{\chi\alpha})$ is cyclic with annihilator $I_\infty^{\eta+\alpha_j, \tau(\chi\alpha)}$.
Hence the cyclicity of $M_\infty(\tld A_{\chi, \alpha})$ is equivalent to
\[
I^{\eta, \tau(\chi)}_\infty + pR_\infty = I_\infty^{\eta, \tau(\chi)} + I_\infty^{\eta+\alpha_j, \tau(\chi\alpha)},
\]
which is Proposition~\ref{ideals for arm cyclicity}. 
\end{proof}

\subsection{Multiplicity one for~$W_{\chi, 3}$.}
\begin{prop}\label{Wchi3bar}
Let $\chi : H \to \F^\times$ be a regular tame character.
Assume that $M_\infty(\chi) \ne 0$.
Then the $R_\infty$-module $M_\infty(\ovl W_{\chi, 3})$ is cyclic.
\end{prop}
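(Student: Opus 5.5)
We will deduce the cyclicity of $M_\infty(\lbar W_{\chi,3})$ from the presentation in Proposition~\ref{presenting Wchi3}, combined with the cyclicity of the arms (Proposition~\ref{arm cyclicity}) and the gluing statement for ideals (Proposition~\ref{ideals for Wchi3}, in its version with $I_\infty^{\lambda,\tau,\zeta}$). Since $M_\infty$ is projective over $\cO[\![H]\!]$, the functor $M_\infty(-)$ is exact. Applying it to the exact sequence of Proposition~\ref{presenting Wchi3} gives
\[
0 \to M_\infty(\lbar W_{\chi,3}) \to \bigoplus_{\alpha\in\Phi} M_\infty(A_{\chi,\alpha}) \to M_\infty(\chi)^{\oplus\Phi}/\Delta\bigl(M_\infty(\chi)\bigr) \to 0.
\]

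\emph{Step 1: identify the terms.} By Proposition~\ref{chi} and its proof, $M_\infty(\chi)\cong R_\infty/I$, where $I:=I_\infty^{\eta,\tau(\chi)}+pR_\infty$. By the proof of Proposition~\ref{arm cyclicity}, $M_\infty(\tld A_{\chi,\alpha})$ is cyclic with annihilator $I_\infty^{\eta,\tau(\chi)}\cap I_\infty^{\eta+|\alpha|,\tau(\chi\alpha)}$. It is the fibre product of $R_\infty/I_\infty^{\eta,\tau(\chi)}$ and $R_\infty/I_\infty^{\eta+|\alpha|,\tau(\chi\alpha)}$ over their common quotient $R_\infty/I$, the latter by Proposition~\ref{ideals for arm cyclicity}. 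Since $M_\infty(\chi\alpha^2)=0$ (Lemma~\ref{hypercube}), Proposition~\ref{arm lifting}(2) gives $M_\infty(A_{\chi,\alpha})=M_\infty(\tld A_{\chi,\alpha})\otimes\bF$. It is therefore cyclic, isomorphic to $R_\infty/I^{\chi,\alpha}$ with $I^{\chi,\alpha}=(I^{\eta,\tau(\chi)}_\infty\cap I^{\eta+|\alpha|,\tau(\chi\alpha)}_\infty)+pR_\infty$. Under these identifications, the map $M_\infty(A_{\chi,\alpha})\to M_\infty(\chi)$ is the natural surjection $R_\infty/I^{\chi,\alpha}\to R_\infty/I$. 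Here one must fix the generators compatibly: choose a generator of $M_\infty(\lbar W_{\chi,3})$ modulo $\fm$ lifting a generator of the $\chi$-cosocle, and push it to every $A_{\chi,\alpha}$. This is possible because each composite $\lbar W_{\chi,3}\to A_{\chi,\alpha}$ is surjective and the surjections were chosen to be compatible on $\chi$ in the diagram in the proof of Proposition~\ref{presenting Wchi3}.

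\emph{Step 2: compute the kernel.} With these identifications, $M_\infty(\lbar W_{\chi,3})$ is the kernel $N$ of the map $\bigoplus_\alpha R_\infty/I^{\chi,\alpha}\to (R_\infty/I)^{\oplus\Phi}/\Delta$. It contains the cyclic submodule $C:=R_\infty\cdot(1,\dots,1)\cong R_\infty/\bigcap_\alpha I^{\chi,\alpha}$. We need $N=C$. Let $(r_\alpha)\in N$. Then all $r_\alpha$ have the same image $\bar r$ in $R_\infty/I$, so subtracting $r\cdot(1,\dots,1)$ for a lift $r$ of $\bar r$ reduces us to tuples with $r_\alpha\in I/I^{\chi,\alpha}$. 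The kernel condition then says that $(r_\alpha)$ maps to an element of $\Delta(I)$ in $(I/\!\!\sim)$. More precisely, $N/C$ is identified with the cokernel of $\bigoplus_\alpha I^{\chi,\alpha}\to I^{\oplus\Phi}/\Delta(I)$, up to the choice of lifts, so $N=C$ is equivalent to the surjectivity in Proposition~\ref{ideals for Wchi3}. I would carry out this diagram chase by a snake lemma applied to
\[
0\to I^{\chi,\alpha}\to R_\infty\to R_\infty/I^{\chi,\alpha}\to 0
\]
summed over $\alpha$, mapped compatibly to the analogous sequence for $I$ modulo the diagonal.

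\emph{Main obstacle.} The real content lies in Proposition~\ref{ideals for Wchi3}, which is already available. The delicate point here is the bookkeeping in Step 1, namely checking that the isomorphisms $M_\infty(A_{\chi,\alpha})\cong R_\infty/I^{\chi,\alpha}$ can be chosen so that all maps to $M_\infty(\chi)$ agree with the diagonal description, and that $M_\infty(A_{\chi,\alpha})$ really has annihilator exactly $I^{\chi,\alpha}$ rather than something larger. For the latter I would use the exact sequence~\eqref{arm exact sequence} together with the freeness of $M_\infty(\cL_\chi)$ and $M_\infty(\cL_{\chi\alpha})$ over their respective quotients.
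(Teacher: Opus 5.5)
Your proposal is correct and follows the paper's proof. You use the same exact sequence from Proposition~\ref{presenting Wchi3}, the same identifications $M_\infty(A_{\chi,\alpha})\cong R_\infty/I^{\chi,\alpha}_\infty$ and $M_\infty(\chi)\cong R_\infty/I_\infty$, and the same reduction to Proposition~\ref{ideals for Wchi3}. The differences are small: you compute the cokernel of $C\subset N$ directly as $\operatorname{coker}\bigl(\bigoplus_\alpha I^{\chi,\alpha}_\infty\to I_\infty^{\oplus\Phi}/\Delta(I_\infty)\bigr)$ where the paper invokes the long exact $\operatorname{Tor}$ sequence, and you make explicit the compatible choice of generators, which the paper leaves implicit and which is automatic because a surjection between nonzero cyclic modules over the local ring $R_\infty$ sends generators to generators.
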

\begin{proof}
Applying $M_\infty(-)$ to the exact sequence in Proposition~\ref{presenting Wchi3} we obtain an exact sequence
\[
0 \to M_\infty(\lbar W_{\chi, 3}) \to \bigoplus_{\alpha \in \Phi} M_{\infty}(A_{\chi, \alpha}) \to M_\infty(\chi^{\oplus \Phi}/\Delta(\chi)) \to 0.
\]
By Proposition~\ref{arm cyclicity}, the module~$M_\infty(A_{\chi, \alpha})$ is cyclic with annihilator
\[
I_\infty^{\chi, \alpha} := (I_\infty^{\eta, \tau(\chi)} \cap I_\infty^{\eta+\alpha_j, \tau(\chi\alpha)})+pR_\infty,
\] 
and by Proposition~\ref{chi}, the module $M_\infty(\chi)$ is cyclic with annihilator $I_\infty := I_\infty^{\eta, \tau(\chi)} + pR_\infty$.
Note that $I_\infty^{\chi, \alpha} \subset I_\infty$.
Then, by the long exact sequence of~$\operatorname{Tor}$, the proposition is equivalent to the statement that the inclusions $I_\infty^{\chi, \alpha} \to I_\infty$ induce a surjection
\[\bigoplus_{\alpha \in \Phi} I_\infty^{\chi, \alpha} \to I_\infty^{\oplus \Phi}/\Delta(I_\infty),\]
which is Proposition~\ref{ideals for Wchi3}.
\end{proof}

\begin{cor}\label{Wchi2}
Let $\chi : H \to \F^\times$ be a regular tame character.
Assume that~$M_\infty(\chi) \ne 0$.
Then the $R_\infty$-module $M_\infty(W_{\chi, 2})$ is cyclic.
\end{cor}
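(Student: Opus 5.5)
The plan is to deduce this from Proposition~\ref{Wchi3bar} by exactness of $M_\infty(-)$. As noted after the definition of~$\lbar W_{\chi, 3}$, the module $W_{\chi, 2}$ is a quotient of~$\lbar W_{\chi, 3}$: indeed $W_{\chi,2} = W_{\chi,3}/\fm^2 W_{\chi,3}$, and $\chi^\perp \subset \gr^2_\fm W_{\chi,3} = \fm^2 W_{\chi,3}$, so the surjection $W_{\chi,3} \to W_{\chi,2}$ factors through $\lbar W_{\chi,3}$. Hence we have an exact sequence of smooth $\bF[\![H/Z_1]\!]$-modules
\[
0 \to \bF^{2f}\chi \to \lbar W_{\chi, 3} \to W_{\chi, 2} \to 0.
\]

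Since $M_\infty$ is projective as a pseudocompact $\cO[\![H]\!]$-module, the functor $M_\infty(-) = M_\infty \otimes_{\cO[\![H]\!]} (-)$ is right exact, and in fact exact. Applying it to the surjection above gives a surjection of $R_\infty$-modules
\[
M_\infty(\lbar W_{\chi, 3}) \to M_\infty(W_{\chi, 2}).
\]
By Proposition~\ref{Wchi3bar}, which applies because $M_\infty(\chi) \ne 0$, the source is a cyclic $R_\infty$-module. A quotient of a cyclic module is cyclic, so $M_\infty(W_{\chi,2})$ is cyclic.

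There is no real obstacle here. The only point to check is that $W_{\chi,2}$ really is a quotient of $\lbar W_{\chi,3}$, which is the verification in the first paragraph, and that $M_\infty(-)$ preserves surjections, which holds already by right exactness of the tensor product.
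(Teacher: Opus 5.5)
Your proposal is correct and is essentially the paper's argument: the paper also deduces the corollary directly from Proposition~\ref{Wchi3bar}, using that $W_{\chi,2}$ is a quotient of $\lbar W_{\chi,3}$ and that $M_\infty(-)$ preserves surjections. Your explicit check that $\chi^\perp \subset \fm^2 W_{\chi,3}$, so that $W_{\chi,3} \to W_{\chi,2}$ factors through $\lbar W_{\chi,3}$, just makes precise the step the paper leaves implicit.
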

\begin{proof}
This follows from Proposition~\ref{Wchi3bar}, since~$W_{\chi, 2}$ is a quotient of~$\overline W_{\chi, 3}$.
\end{proof}

\begin{prop}\label{Wchi3}
Let $\chi : H \to \F^\times$ be a regular tame character.
Assume that $M_\infty(\chi) \ne 0$.    
Then the $R_\infty$-module $M_\infty(W_{\chi, 3})$ is cyclic.
\end{prop}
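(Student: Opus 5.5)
The plan is to deduce Proposition~\ref{Wchi3} from Proposition~\ref{Wchi3bar} by showing that the extra constituents of~$W_{\chi, 3}$ contribute nothing to the cosocle of the patched module.
Consider the exact sequence $0 \to \chi^\perp \to W_{\chi, 3} \to \lbar W_{\chi, 3} \to 0$.
Here~$\chi^\perp$ lies in $\gr^2_\fm W_{\chi, 3} = \soc_H W_{\chi, 3}$ (Lemma~\ref{W2inW3}), so it is semisimple.
By Lemma~\ref{multiplicity of characters in Wchi3}(1) and the description of $\gr^2_\fm W_{1_H, 3}$, it is multiplicity-free and is the direct sum of the characters $\chi\alpha^2$ ($\alpha \in \Phi$) and $\chi\alpha\beta$ ($\alpha \ne \pm\beta$).
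Since~$M_\infty$ is exact (projectivity over $\cO[\![H]\!]$), we get $0 \to M_\infty(\chi^\perp) \to M_\infty(W_{\chi,3}) \to M_\infty(\lbar W_{\chi, 3}) \to 0$.
The target is cyclic by Proposition~\ref{Wchi3bar}, so by Nakayama's lemma it suffices to show that $M_\infty(\psi) \subset \fm_{R_\infty} M_\infty(W_{\chi, 3})$ for each summand~$\psi$ of~$\chi^\perp$.

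For $\psi = \chi\alpha^2$, we would argue as in the proofs of Propositions~\ref{chiext} and~\ref{arm cyclicity}: by Lemma~\ref{hypercube}(1), together with Lemma~\ref{inertial JL} and Lemma~\ref{types lifting rhobar}, we have $M_\infty(\chi\alpha^2) = 0$.
Now let $\psi = \chi\alpha\beta$ with~$\alpha, \beta$ linearly independent, and assume $M_\infty(\psi) \ne 0$.
Then $R^{\eta, \tau(\chi\alpha\beta)}_{\rhobar} \ne 0$, so $\tld u t_{\alpha+\beta} \in \Adm^\vee_{\rhobar}\sweight$ in the notation of Proposition~\ref{ideals for Wchi3}.
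Lemma~\ref{hypercube}(2) then shows that $\tld u t_\alpha \in \Adm^\vee_{\rhobar}\sweight$, and hence $M_\infty(\chi\alpha) \ne 0$ (using the minimality/Cohen--Macaulay properties, as in Proposition~\ref{chi}).

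Next, choose a nonzero $H$-map $W_{\chi\alpha, 2} \to W_{\chi, 3}$, which exists by Lemma~\ref{multiplicity of characters in Wchi3}(2) via the projective-cover property.
By Lemma~\ref{W2inW3} this map is injective, and its image has socle $\gr^1_\fm W_{\chi\alpha, 2} = \bigoplus_{\gamma}\chi\alpha\gamma$.
Since~$\psi$ has multiplicity one in $W_{\chi,3}$, this image contains the summand $\psi \subset \chi^\perp$.
By Corollary~\ref{Wchi2} applied to~$\chi\alpha$ (legitimate since $M_\infty(\chi\alpha) \ne 0$), the module $M_\infty(W_{\chi\alpha, 2})$ is cyclic.
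Its quotient $M_\infty(\chi\alpha)$ is nonzero, so $M_\infty(W_{\chi\alpha,2})/\fm \to M_\infty(\chi\alpha)/\fm$ is an isomorphism of one-dimensional spaces.
Therefore $M_\infty(\fm W_{\chi\alpha, 2}) \subset \fm_{R_\infty} M_\infty(W_{\chi\alpha, 2})$.
In particular $M_\infty(\psi) \subset \fm_{R_\infty}M_\infty(W_{\chi\alpha,2}) \subset \fm_{R_\infty}M_\infty(W_{\chi,3})$, where the last inclusion uses exactness of~$M_\infty$ applied to $W_{\chi\alpha,2} \hookrightarrow W_{\chi,3}$.

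Summing over all summands of~$\chi^\perp$ gives $M_\infty(\chi^\perp) \subset \fm_{R_\infty} M_\infty(W_{\chi,3})$, and hence cyclicity.
The main point to check carefully is the combinatorial step: the nonvanishing of $M_\infty(\chi\alpha\beta)$ must force the nonvanishing of $M_\infty(\chi\alpha)$.
This needs the translation between $\tau(\chi\alpha\beta)$ and $\tld u t_{\alpha+\beta}$, which should follow from Lemma~\ref{inertial JL} with consistent signs, and then Lemma~\ref{hypercube}(2).
One also needs that $M_\infty(\chi')\neq 0$ exactly when $R^{\eta,\tau(\chi')}_{\rhobar}\neq0$; if only one direction holds, we only use ``$M_\infty(\psi)\ne0 \Rightarrow R^{\eta,\tau(\psi)}\ne0$'' and ``$R^{\eta,\tau(\chi\alpha)}\ne 0 \Rightarrow M_\infty(\chi\alpha)\ne0$''.
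The second implication should follow from minimality, since $M_\infty(\chi\alpha)[1/\pi_E]$ is locally free of rank one over $R_\infty^{\eta,\tau(\chi\alpha)}[1/\pi_E]$.
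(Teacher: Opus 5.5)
Your proposal is correct and is essentially the paper's argument: it uses the same inputs, namely Lemma~\ref{hypercube} to kill the characters $\chi\alpha^2$ and to deduce $M_\infty(\chi\alpha)\ne 0$ from $M_\infty(\chi\alpha\beta)\ne 0$, the embedding $W_{\chi\alpha,2}\hookrightarrow W_{\chi,3}$ of Lemma~\ref{W2inW3}, Corollary~\ref{Wchi2} and Proposition~\ref{Wchi3bar}. The only difference is organizational: you prove $M_\infty(\chi^\perp)\subset\fm_{R_\infty}M_\infty(W_{\chi,3})$ in one step via Nakayama, whereas the paper removes the characters $\chi\alpha_1\alpha_2$ one at a time by descending induction on subspaces $V\subset\chi^\perp$ using \cite[Lemma~10.1.13]{EGS}, which rests on the same observation that a proper submodule of a cyclic module over a local ring lies in the maximal ideal times that module.
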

\begin{proof}
Let~$V$ be a nonzero subspace of $\chi^\perp$ (as defined in Lemma~\ref{multiplicity of characters in Wchi3}) such that $M_\infty(W_{\chi, 3}/V)$ is cyclic.
If~$V$ does not contain~$\chi \alpha_1\alpha_2$ for some linearly independent $\alpha_1, \alpha_2 \in \Phi$, then $M_\infty(V) = 0$, 
since $M_\infty(\chi\alpha^2) = 0$ for all~$\alpha \in \Phi$, by Lemma~\ref{hypercube}~(1).
So we are done in this case.
Otherwise, let~$U$ be the complement of $\chi \alpha_1\alpha_2$ in~$V$.
We claim that $M_\infty(W_{\chi, 3}/U)$ is also cyclic.
This claim implies the proposition by descending induction on $\dim V$, with the base case being Proposition~\ref{Wchi3bar}, which is the case $V = \chi^\perp$ 
(since $\lbar W_{\chi, 3} = W_{\chi, 3}/\chi^\perp$ by definition).

We now prove the claim.
If~$M_\infty(\chi \alpha_1\alpha_2) = 0$ then there is nothing to prove, 
since $M_\infty(V/U) = 0$ and so $M_\infty(W_{\chi, 3}/V) = M_\infty(W_{\chi, 3}/U)$.
On the other hand, if $M_\infty(\chi \alpha_1\alpha_2) \ne 0$ then Lemma~\ref{hypercube}~(2) implies that $M_\infty(\chi\alpha_1) \ne 0$ and $M_\infty(\chi\alpha_2) \ne 0$.
By Lemma~\ref{W2inW3} there is a submodule $N \subset W_{\chi, 3}$ isomorphic to~$W_{\chi\alpha_1, 2}$, hence
\[
\chi\alpha_1\alpha_2 \subset N \subset W_{\chi, 3}.
\]
Let~$\lbar N$ be the image of $N$ in~$W_{\chi, 3}/U$.
Then the kernel of the surjection $\lbar N \to \chi\alpha_1$ contains $\chi \alpha_1\alpha_2$, and $M_\infty(\lbar N)$ is cyclic, by Corollary~\ref{Wchi2}.
So $M_\infty(\lbar N)$ is a cyclic submodule of $M_\infty(W_{\chi, 3}/U)$ 
that properly contains $M_\infty(\chi\alpha_1\alpha_2)$. 
(The containment is proper because $M_\infty(\chi \alpha_1) \ne 0$.)

Now~\cite[Lemma~10.1.13]{EGS} implies that $M_\infty(W_{\chi, 3}/U)$ is cyclic if and only if 
$M_\infty(W_{\chi, 3}/U)/M_\infty(\chi\alpha_1\alpha_2)$ is cyclic.
Since 
\[
M_\infty(W_{\chi, 3}/U)/M_\infty(\chi\alpha_1\alpha_2) \cong M_\infty(W_{\chi, 3}/V), 
\]
which is cyclic by assumption, this concludes the proof.
\end{proof}

%% file: global-arXiv.tex
\section{Global applications.}\label{global}
Recall from the introduction that~$F/\bQ$ is a totally real number field in which~$p$ is unramified,
and~$D$ is a totally definite quaternion algebra with centre~$F$.
Let $\lbar r: \Gal_F \to \GL_2(\bF)$ be a continuous Galois representation. 
Fix a finite order character $\psi: \Gal_F \to \cO^\times$ such that $\det \lbar r = \lbar \psi \lbar \epsilon$.
If~$w$ is a finite place of~$F$, we write $\rhobar_w := \lbar r|_{\Gal_{F_w}}$.
We choose a place~$v|p$, and also write~$\rhobar$ for~$\rhobar_v$.
We make the following assumptions on~$\lbar r$:
\begin{enumerate}
    \item $\lbar r|_{\Gal_{F(\zeta_p)}}$ is absolutely irreducible;
    \item if~$w \mid p$, then $\rhobar_w$ is $1$-generic if $w \ne v$, and $9$-generic if $w = v$;
    \item if~$w \nmid p$, and $\rhobar_w$ or~$D_w$ is ramified, 
    then the universal lifting ring of~$\rhobar_w$ with $\cO$-coefficients and determinant~$\psi_w$ is formally smooth over~$\cO$.
\end{enumerate}
We choose an auxiliary place~$w_1$ of~$F$ as in~\cite[Section~6.2]{EGS}, a uniformizer $\pi_{w_1} \in \cO_{F_{w_1}}$, 
and an eigenvalue $\beta_{w_1} \in \bF$ of~$\rhobar_{w_1}(\Frob_{w_1})$ 
(it is one of the defining properties of~$w_1$ that~$\rhobar_{w_1}$ is unramified with distinct Frobenius eigenvalues).
We write~$S$ for the union of the sets of ramified finite places of~$\lbar r$,
ramified finite places of~$D$, and $p$-adic places of~$F$.
We choose a maximal order~$\cO_D \subset D$, and 
for each place~$w$ where~$D$ is split we choose an isomorphism $D_w \cong M_2(F_w)$ sending~$\cO_D \otimes_{\bZ} \cO_{F_w}$ to~$M_2(\cO_{F_w})$.
Let~$\bT^S := \cO[T_w, S_w^{\pm 1}: w \not \in S \cup \{w_1\}]$,
and let
\[
\fm_{\lbar r} := (T_w - (\bN w)\operatorname{tr}(\Frob_w),  S_w - (\bN w)\det(\Frob_w)) \subset \bT^S.
\]
In Section~\ref{algebraic modular forms} we will attach to~$\lbar r$ a compact open subgroup $K^v \subset (D \otimes_F \bA_F^{\infty, v})^\times$, a smooth $\cO[K^v]$-module~$\cL^v$, and for every compact open
subgroup $K_v \subset D_v^\times$ an $\cO$-module of algebraic modular forms $S(K^vK_v, \cL^v)$.
The space $S(K^vK_v, \cL^v)$ has a standard action of $\bT^S[T_{w_1}]$, for a Hecke operator~$T_{w_1}$ defined in Section~\ref{algebraic modular forms}, 
and using notation from Section~\ref{Hecke algebras}, we define a maximal ideal 
\[
\fm_{Q_0} := (\fm_{\lbar r}, T_{w_1} - \beta_1)\] 
of~$\bT^S[T_{w_1}]$.
We then define
\[
\pi := \varinjlim_{K_v \subset D_v^\times} S(K^vK_v, \cL^v \otimes_{\cO} \bF).
\]
In this section we prove the following more precise version of Theorem~\ref{main theorem}.

\begin{thm}\label{main theorem with all assumptions}
    Under Assumptions~(1), (2) and~(3) above, we have $\dim_{D_v^\times} \pi[\fm_{Q_0}] = [F_v:\bQ_p]$.
\end{thm}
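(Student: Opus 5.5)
The plan is to deduce the theorem from the local cyclicity result Proposition~\ref{Wchi3}, applied to a patched module constructed from the spaces $S(K^vK_v, \cL^v)$. We prove the two inequalities separately. The lower bound $\dim_{D_v^\times}\pi[\fm_{Q_0}] \geq [F_v:\bQ_p]$ follows from the results of~\cite{GN}, as recalled in the introduction. For the upper bound, \cite[Theorem~1.6]{BHHMS} (when $D_v$ is split) and \cite[Corollary~2.12]{HuWangD*} (when $D_v$ is not split) reduce us to showing the following: for every smooth character $\chi: H \to \bF^\times$, the pullback map $\Hom_H(\chi, \pi[\fm_{Q_0}]) \to \Hom_H(W_{\chi,3}, \pi[\fm_{Q_0}])$ is an isomorphism. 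We will show that both sides have dimension at most one and that the left side is nonzero whenever the right side is. Since the map is injective, this gives the isomorphism.

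\textbf{Patching.} First we construct the level structure: $K^v$ and the coefficients $\cL^v$, chosen so that at each ramified place $w\nmid p$ the lattice $\cL_w$ is a self-dual (up to twist), minimal type defined over $\cO$. This is where Lemma~\ref{self-duality for D*} and Remark~\ref{self-duality for tame characters} are used; for split $D_w$ one uses the analogous types for $\GL_2$, and assumption~(3) supplies formal smoothness. We then run Taylor--Wiles--Kisin patching with fixed determinant $\psi$ and Taylor--Wiles primes disjoint from $w_1$. This produces a $D_v^\times$-module $M_\infty$ over $R_\infty = R^{\square,\psi}_{\rhobar}\widehat\otimes(\text{local rings at other places})[\![z_j]\!]$. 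The local rings at $p$-adic places $w\neq v$ are taken to be potentially Barsotti--Tate rings of a suitable type. By Proposition~\ref{Gorenstein} these have the form $\cO[\![x,y]\!]/(xy-p)$, which explains why $R_\infty$ has the shape used in Section~\ref{multiplicity}. Patching also gives the identification $\Hom_H(V, \pi[\fm_{Q_0}])^\vee \cong M_\infty(V)/\fm_{R_\infty}$ for finite-length $H$-representations $V$.

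\textbf{Checking the axioms.} The standard properties of patching give projectivity over $\cO[\![H]\!]$, finite generation, the central character condition, and the maximal Cohen--Macaulay property for lattices in locally algebraic types. The last of these follows from depth counting, because $R_\infty^{\lambda+\eta,\tau}$ has the expected dimension. Minimality comes from the use of the auxiliary place $w_1$ together with the minimal types at the other places: multiplicity one holds on the generic fibre because the Hecke eigenspace is one-dimensional in characteristic zero after fixing $T_{w_1}$. Self-duality comes from Poincaré-type perfect pairings on algebraic modular forms. These pairings are $\cO$-valued precisely because the coefficients $\cL^v$ and the lattices at $p$ are self-dual up to the twists recorded in Lemma~\ref{self-duality for D*}(1) and Remark~\ref{self-duality for tame characters}, and because the Taylor--Wiles auxiliary structures are compatible with duality. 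We expect this self-duality step to be the main obstacle. Duality can twist by a conjugation under $N_G(H)$ and by the determinant, so we must check that conjugating $\chi$ by the normalizer does not change $M_\infty(\chi)$ up to isomorphism. We must also check that the pairings survive patching as an isomorphism $M_\infty(\chi)\cong\Hom(M_\infty(\chi),\omega)$. Our first approach will be to patch the pairings themselves, using the Gorenstein property from Proposition~\ref{Gorenstein} to identify the dualizing module.

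\textbf{Conclusion.} Given these axioms, take $\chi$ with $M_\infty(\chi)\neq0$. If $\chi$ is regular, Propositions~\ref{chi} and~\ref{Wchi3} show that $M_\infty(\chi)$ and $M_\infty(W_{\chi,3})$ are cyclic, so both Hom spaces have dimension at most one. Injectivity of the pullback map, together with the nonvanishing of the left side whenever the right side is nonzero (by cosocle considerations), then gives the isomorphism. If $\chi$ is non-regular, genericity of $\rhobar$ forces $M_\infty(\chi)=0$, since $\tau(\chi)$ would be non-generic. Hence $\Hom_H(W_{\chi,3},\pi[\fm_{Q_0}])=0$: a nonzero map would have image with cosocle $\chi$, and that cosocle would have to embed into $\pi[\fm_{Q_0}]$, contradicting $M_\infty(\chi)=0$.
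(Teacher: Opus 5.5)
Your route is the paper's: the lower bound from \cite{GN}, the upper bound via \cite[Theorem~1.6]{BHHMS} and \cite[Corollary~2.12]{HuWangD*}, and multiplicity one from Propositions~\ref{chi} and~\ref{Wchi3} applied to a patched $M_\infty$ with a minimal, self-dual arithmetic action. However, the target you set in the reduction, and the way you close the argument, contain a false step. The pullback map is \emph{not} an isomorphism for every $\chi$. Nor is $\Hom_H(\chi,\pi[\fm_{Q_0}])$ nonzero whenever $\Hom_H(W_{\chi,3},\pi[\fm_{Q_0}])$ is.

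Here is a counterexample. Take $\chi'$ with $M_\infty(\chi')\neq0$ and, after a twist, $\tld u:=\tld w^*(\rhobar,\tau(\chi'))\in\Adm^\vee_{\rhobar}(2,1)$. For each $j$, at least one of $\tld u t_{\pm\alpha_j}$ lies outside $\Adm^\vee_{\rhobar}(2,1)$. For instance $t_{(2,1)_j}t_{\alpha_j}=t_{(3,0)_j}$, and if $\tld u_j=t_{(1,2)_j}s$ then both do. So by Lemmas~\ref{inertial JL} and~\ref{types lifting rhobar} there is $\beta\in\{\pm\alpha_j\}$ such that $\chi:=\chi'\beta$ is regular with $M_\infty(\chi)=0$. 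Since $\chi'=\chi\beta^{-1}$ occurs in $\gr^1_\fm W_{\chi,3}$ and $M_\infty$ is exact, $M_\infty(W_{\chi,3})\neq0$. Hence $\Hom_H(W_{\chi,3},\pi[\fm_{Q_0}])\neq0=\Hom_H(\chi,\pi[\fm_{Q_0}])$.

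Your ``cosocle'' argument, used both for this claim and in the non-regular case, is where it breaks. The cosocle $\chi$ of the image of a map $W_{\chi,3}\to\pi[\fm_{Q_0}]$ is a quotient of a subrepresentation of $\pi[\fm_{Q_0}]$, not a subrepresentation. Indeed the image can be $E_{\chi,\beta^{-1}}$, whose socle is $\chi'$.

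The repair is to use the criterion in the form it actually takes. The equality $[\pi[\fm^3]:\chi]=[\pi[\fm]:\chi]$, i.e.\ bijectivity of the pullback map, is only required for characters $\chi$ occurring in $\pi[\fm_{Q_0}]^{H_1}$, equivalently those with $M_\infty(\chi)\neq0$. That is exactly the hypothesis of Proposition~\ref{Wchi3}, and the paper's phrase ``for all $\chi$'' should be read in this sense. For such $\chi$ your argument is precisely the paper's: regularity by genericity, cyclicity of $M_\infty(\chi)$ and $M_\infty(W_{\chi,3})$, and injectivity of the pullback. Nothing needs to be said about other characters. For non-regular $\chi$ you only need $M_\infty(\chi)=0$. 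Note that this does not follow from the arithmetic-action axioms, which only involve regular characters; it comes from the non-genericity of the relevant potentially semistable deformation rings.

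On self-duality, your plan is in the spirit of the paper, which makes it concrete in three steps.
\begin{enumerate}
\item At finite level, Lemma~\ref{self-duality for D*} and Remark~\ref{self-duality for tame characters} let one conjugate $(\cL^v\otimes\chi)^*\otimes(\psi\circ\det)$ to $\cL^v\otimes\chi$ by an element of the normalizer.
\item The natural pairing is composed with the Atkin--Lehner elements $\Pi_w$, $w\in Q_N\cup\{w_1\}$, so that the whole Hecke algebra acts self-adjointly, including $U_q$, $T_{w_1}$ and hence the diamond operators (Lemma~\ref{duality at finite level}).
\item \cite[Lemma~4.12]{Manning} then converts $\Hom_{R}(M_\infty(\chi),\omega_R)$ into $\Hom_{S_\infty}(M_\infty(\chi),S_\infty)$, which can be checked modulo $(J,\fa)$ at a single finite level.
\end{enumerate}
The Atkin--Lehner twist is the ingredient your phrase ``compatible with duality'' leaves unstated.
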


As explained in the introduction, to prove Theorem~\ref{main theorem with all assumptions} it suffices to prove that
\[
\dim_\bF \Hom_{\bF[\![H/Z_1]\!]}(W_{\chi, 3}, \pi[\fm_{Q_0}]) = \dim_\bF \Hom_{\bF[\![H/Z_1]\!]}(\chi, \pi[\fm_{Q_0}]) = 1
\]
for all smooth characters $\chi: H \to \bF^\times$.
By Propositions~\ref{chi} and~\ref{Wchi3}, it therefore suffices to construct a $G$-module~$M_\infty$ with a minimal, self-dual arithmetic action of~$R_\infty$, such that
\[
M_\infty/\fm_{R_\infty}M_\infty \cong \pi[\fm_{Q_0}]^\vee.
\]
By now, this is a standard construction, which goes back to~\cite{EGS,CEGGPS}.
The self-duality property has been introduced first in~\cite{Manning}, which works with $\ell$-adic coefficients with~$\ell \ne p$, and does not fix inertial types;
for this reason, it is not directly available in the literature in the form we need, and so in this section we describe the main points in the construction of~$M_\infty$,
following~\cite[Section~6]{DoLe} and~\cite{GN}.
Although we will focus on the case that~$D$ is totally definite,
the arguments also apply, with minor modifications, when~$D$ splits at precisely one infinite place of~$F$.

\subsection{Algebraic modular forms.}\label{algebraic modular forms}
Let~$K = \prod_{w} K_w \subset (D \otimes_F \bA_F^\infty)^\times$ be a compact open subgroup, and let $\cL$ be a finite free $\cO$-module with a continuous action of~$K$.
We assume that this action extends to a continuous action of~$K(\bA_F^\infty)^\times$ with~$(\bA_F^\infty)^\times$ acting by~$\psi$. 
We define $S(K, \cL)$ as the space of functions
\[
f: D^\times \backslash (D \otimes_F \bA_F^{\infty})^\times \to \cL^*:= \Hom_\cO(\cL, \cO)
\]
such that $f(gu) = u^{-1}f(g)$ for all~$u \in K(\bA_F^\infty)^\times$.

We now describe the pairing on $S(K, \cL)$ that will be the source of the self-duality property of~$M_\infty$.
If~$K_w$ acts trivially on~$\cL$, the Hecke algebra $\cO[K_w \backslash D_w^\times / K_w]$ acts on~$S(K, \cL)$ in a standard way.
The algebra $\cO[K_w \backslash D_w^\times / K_w]$ has an anti-involution~$i_w$, given by inversion on~$D_w^\times$.
Furthermore, if $\psi_w \circ \det$ is trivial on~$K_w$, there is an $\cO$-algebra automorphism of $\cO[K_w \backslash D_w^\times / K_w]$
sending the double coset operator $[K_w g K_w]$ to~$\psi_w(\det g)[K_w g K_w]$.
We denote this automorphism by $T \mapsto \psi_w(\det T)T$. 
A standard construction~\cite[Section~1]{Taylormeromorphic}
then shows that there exists a perfect pairing
\begin{equation}\label{duality pairing}
S(K, \cL) \times S(K, \cL^* \otimes (\psi \circ \det)) \to \cO
\end{equation}
such that $(T x, y) = (x, \iota_w(\psi_w(\det T)T) y)$ for all~$T \in \cO[K_w \backslash D_w^\times / K_w]$.

\subsection{Coefficients.}\label{coefficients}
For every finite place~$w \ne v$ of~$F$, we define a compact open subgroup $K_w \subset D_w^\times$ and an $\cO[K_w]$-module~$\cL_w$, in the following way:
\begin{enumerate}
\item If~$w \mid p$, we let~$K_w$ be the maximal compact subgroup of~$D_w^\times$ if~$D_w$ is not split, and the upper-triangular Iwahori subgroup of
$(\cO_D \otimes_\bZ \cO_{F_w})^\times \cong \GL_2(\cO_{F_w})$ otherwise.
In both cases, we choose a regular tame character $\chi_w : K_w \to \cO^\times$ such that 
\[
R_{\rhobar_w}^{\eta, \tau(\chi_w), \psi_w} \ne 0.
\]
We let~$\cL_w := \cO(\chi_w)$.
\item If~$w \in S, w \nmid p$, Assumption~(3) on~$\rhobar_w$ implies that there is a unique inertial type $\tau_w$ such that $R_{\rhobar_w}^{\tau_w, \psi_w} \ne 0$.
\begin{enumerate}
    \item If~$D_w$ is split, 
    we let~$n_w$ be the conductor of any lift of~$\rhobar_w$ with determinant~$\psi_w$,
    and we let~$K_w \subset \GL_2(F_w) \cong D_w^\times$ be the subgroup of matrices congruent to 
    \[
    \fourmatrix{*}{*}{0}{1} \text{modulo~$\fm_{w}^{n_w}$}.
    \]
    We let~$\cL_w := \cO$ with trivial action of $K_w$.  
    \item If $D_w$ is not split, we let~$K_w$ be the maximal compact subgroup of~$D_w^\times$. 
    The Bernstein component of $\lbar E[D_w^\times]$ corresponding to~$\tau_w$ admits types on the maximal compact subgroup~$K_w$ of~$D_w^\times$.
    These types are not unique, but they are all conjugate under~$D_w^\times$.
    We make an arbitrary choice of such a type~$V_w$.
    By Lemma~\ref{self-duality for D*}, by replacing~$E$ with a finite unramified extension we can assume that~$V_w$ descends to~$E$.
    Then there exists a unique homothety class of $\cO[K_w]$-lattices in~$V_w$, and we let~$\cL_w$ be a representative of this class.
\end{enumerate} 
\item If~$w \not \in S, w \ne w_1$, then $K_w = \GL_2(\cO_{F_w})$ and~$\cL_w := \cO$ with trivial action of~$K_w$.
\item If~$w = w_1$, then $K_w$ is the upper-triangular pro-$p$ Iwahori subgroup of 
$(\cO_D \otimes_\bZ \cO_{F_w})^\times \cong \GL_2(\cO_{F_w})$ and $\cL_w := \cO$ with trivial action of~$K_w$.
There is a Hecke operator (depending on the choice of uniformizer~$\pi_{w_1}$)
\[
T_{w_1} := K_{w_1}\fourmatrix{\pi_{w_1}}{0}{0}{1}K_{w_1} \in \cO[K_{w_1}\backslash D_{w_1}^\times/K_{w_1}].
\]
\end{enumerate}

Finally, we define
\begin{gather*}
K^v := \prod_{w \ne v} K_w \subset (D \otimes_F \bA_f^{\infty, v})^\times\\
\cL^v := \bigotimes_{w \ne v} \cL_w.
\end{gather*}
We regard~$\cL^v$ as a smooth $\cO[K^vD_v^\times]$-module by letting~$D_v^\times$ act trivially, and we assume that this action extends to $K^vD_v^\times(\bA_F^\infty)^\times$
with $(\bA_F^\infty)^\times$ acting via~$\psi$.
(We are therefore assuming that~$\psi_w$ is unramified for all $w \not \in S \cup \{w_1\}$, and tamely ramified when~$w = w_1$.)

\subsection{Galois deformation rings.}
Because of Assumption~(1) on~$\lbar r$, the usual arguments provide us with an integer $q \geq [F:\bQ]+1-|S|$, and 
for all~$N \geq 0$, a set~$Q_N$ of~$q$ Taylor--Wiles primes,
which we take to be empty when~$N = 0$.
We let $R^{\psi}_{F, S, Q_N}$ be the universal deformation ring of $\lbar r : \Gal_{F, S \cup Q_N} \to \GL_2(\bF)$ with $\cO$-coefficients and determinant~$\psi\epsilon$, 
and we let 
\begin{equation}\label{framed deformation ring}
R^{\square, \psi}_{F, S, Q_N} \cong R^{\psi}_{F, S, Q_N}[\![z_1, \ldots, z_{4|S|-1}]\!]
\end{equation}
be the deformation ring with framings
at the places~$w \in S$.
For~$w \in S, w \nmid p$, we define $R_w^{\loc}$ to be the universal deformation ring of $\rhobar_w$ with determinant $\lbar \epsilon \lbar \psi_w$.
By Assumption~(3), it is a 
power series ring over~$\cO$.
For~$w \mid p, w \ne v$, we define $R_w^{\loc} := R_{\rhobar_w}^{\eta, \tau(\chi_w), \psi_w}$, where~$\chi_w$ is defined in part~(1) of Section~\ref{coefficients}.
Finally, we write
\[
R^\loc := R^{\square, \psi_v}_{\rhobar_v} \widehat\otimes \left (\widehat{\otimes}_{w \in S \setminus \{v\}}R_w^{\loc} \right ).
\]
By Proposition~\ref{Gorenstein}, $R^{\loc}$ is isomorphic to
\[
R^{\square, \psi_v}_{\rhobar_v}[\![x_i, y_i, u_j: 1 \leq i \leq n, 1 \leq j \leq m]\!]/(x_iy_i-p)
\]
for some integers~$n, m$.

\begin{rmk}\label{explaining R_infty}
It is often the case in the literature that $R^\loc$ is a power series ring over~$R^{\square, \psi_v}_{\rhobar_v}$ rather than the more general, possibly singular ring we have defined.
This is because it is usually assumed that $D_w^\times \cong \GL_2(F_w)$ for all~$w \mid p, w \ne v$.
In more detail, by~\cite[Proposition~3.5.1]{EGS}, for any~$\rhobar_w$ satisfying our assumptions there always exists a tame inertial type $\tau_w$, which could be principal series or cuspidal, 
such that $R^{\eta, \tau_w, \psi_w}_{\rhobar_w}$ is a power series ring over~$\cO$.
Then one work instead with $K_w := \GL_2(\cO_{F_w})$, and $\cL_w$ a $K_w$-type for the Bernstein component corresponding to~$\tau_w$.
When $D_w$ is a division algebra, this is only possible if~$\tau_w$ is cuspidal.
\end{rmk}

\subsection{Hecke algebras.}\label{Hecke algebras}
Let $H_v$ be the upper-triangular Iwahori subgroup of~$D_v^\times$ (if~$D_v$ is split) or the maximal compact subgroup of~$D_v^\times$ (if~$D_v$ is not split), 
and let $\chi : H_v \to \cO^\times$ be a smooth tame character.
Let~$K^v$ be as in Section~\ref{coefficients}, and let~$K_v \subset H_v$ be a compact open subgroup.

For all~$N \geq 1$, we introduce a subgroup $K(N)^v : = \prod_{w \ne v} K(N)_w$ by putting $K(N)_w  := K_w$ if~$w \not \in Q_N$, and for $w = q \in Q_N$, 
defining $K(N)_q \subset K_q = \GL_2(\cO_{F_q})$ as the kernel of the map
\[
I_q \to B(k_{F_q}) \to T(k_{F_q}) \xrightarrow{\diag(a, d) \mapsto ad^{-1}} k_{F_q}^\times \to k_{F_q}^\times[p^\infty]
\]
where~$I_q$ is the upper-triangular Iwahori subgroup of~$\GL_2(\cO_{F_q})$.

We define $\bT(K(N)^vK_v)$ as the $\cO$-subalgebra of $\End_{\cO}(S(K(N)^vK_v, \cL^v \otimes \chi))$ generated by $(T_w, S_w: w \not \in S \cup Q_N)$ and~$T_{\pi_{w_1}}$.
We write~$\fm_{Q_N}$ for the ideal of~$\bT(K(N)^v K_v)$ generated by 
\[
(T_{\pi_{w_1}} - \beta_{w_1}, T_w - (\bN w)\operatorname{tr}(\Frob_w),  S_w - (\bN w)\det(\Frob_w): w \not \in S \cup \{w_1\} \cup Q_N).
\]
It is a maximal ideal.
If~$q \in Q_N$, we have a Hecke operator
\[
U_{q} := K(N)_q \fourmatrix{\pi_q}{0}{0}{1} K(N)_q \in \cO[K(N)_q \backslash D_q^\times / K(N)_q],
\]
and we define $\bT(K(N)^vK_v)'$ as the $\bT(K(N)^vK_v)$-subalgebra of 
\[
\End_{\cO}\left (S(K(N)^vK_v, \cL^v \otimes \chi) \right )
\]
generated by~$U_q$ for~$q \in Q_{N}$.
We choose a maximal ideal of $\bT(K(N)^vK_v)'$ lying over~$\fm_{Q_N}$, and we denote it by~$\fm_{Q_N}'$.

Let~$\Delta_{Q_N} := \prod_{q \in Q_N}k_q^\times[p^\infty]$, and write~$S_N := \cO[\Delta_{Q_N}]$.
As usual, there is an $\cO$-linear ring homomorphism
\[
R_{F, S, Q_N}^\psi \to \bT(K(N)^vK_v)'_{\fm_{Q_N}'}
\]
such that, for all~$w \not \in S \cup Q_N \cup \{w_1\}$, the image of the characteristic polynomial of $\Frob_w$ on the universal deformation is
\[
X^2-(\bN w)^{-1}T_w X + (\bN w)^{-1}S_w.
\]
Local-global compatibility at the places in~$Q_N$ implies then that $\bT(K(N)^vK_v)'_{\fm'_{Q_N}}$ contains all the diamond operators~$\langle d \rangle$ for~$d \in \Delta_{Q_N}$, i.e. the double coset operators
\[
\langle d \rangle := K(N)_{Q_N} \fourmatrix {[d]} 0 0 1 K(N)_{Q_N}.
\]
\begin{lemma}\label{duality at finite level}
The $\cO$-algebra $\bT(K(N)^vH_v)'$ is commutative, and 
\[
S(K(N)^vH_v, \cL^v \otimes \chi)^* \cong S(K(N)^vH_v, \cL^v \otimes \chi)
\]
as $\bT(K(N)^vH_v)'$-modules.
\end{lemma}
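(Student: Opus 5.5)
Here $\bT' := \bT(K(N)^vH_v)'$ and $S := S(K(N)^vH_v, \cL^v \otimes \chi)$. The plan is to build the isomorphism from the perfect pairing~\eqref{duality pairing} at level $K(N)^vH_v$ with coefficients $\cL^v\otimes\chi$. We then correct the mismatch between the two sides by an explicit twist and conjugation, and deduce commutativity from the adjointness properties of the generators. First, \eqref{duality pairing} gives a perfect $\cO$-bilinear pairing between $S$ and
\[
S(K(N)^vH_v, (\cL^v\otimes\chi)^*\otimes(\psi\circ\det)),
\]
under which each double coset operator $T$ at an unramified place has adjoint $\iota_w(\psi_w(\det T)T)$.

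The key step is to identify $(\cL^v\otimes\chi)^*\otimes(\psi\circ\det)$ with a conjugate of $\cL^v\otimes\chi$ by an element $g$ normalizing $K(N)^vH_v$. Translation by $g$ then gives a Hecke-equivariant isomorphism of the corresponding spaces of algebraic modular forms. I would do this place by place.

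At $v$ and at $w\mid p$, Remark~\ref{self-duality for tame characters} says $\chi^{-1}\otimes(\xi\circ\nu)$ is $N_G(H)$-conjugate to $\chi$. Here one needs $\xi=\chi|_{\cO_F^\times}=\psi|_{\cO_F^\times}$, which holds because the centre acts through $\psi$.

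At ramified $w\nmid p$ with $D_w$ nonsplit, Lemma~\ref{self-duality for D*}(1) gives $V_w^\vee\cong$ a $D_w^\times$-conjugate of $V_w\otimes(\xi\circ\nu)^{-1}$. Since $D_w^\times$ normalizes the maximal compact $K_w$, and by Lemma~\ref{self-duality for D*}(3) the lattice is unique up to homothety, this conjugacy passes to $\cO$-lattices: $\cL_w^*\otimes(\psi_w\circ\nu)\cong$ a conjugate of $\cL_w$.

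At split ramified $w$ the coefficients are trivial. The group of matrices $\fourmatrix{*}{*}{0}{1}$ mod $\fm_w^{n_w}$ is not preserved by inversion, but it is preserved by the Atkin--Lehner element $\fourmatrix{0}{1}{\pi_w^{n_w}}{0}$ combined with the twist by $\psi_w\circ\det$. The same applies at $w_1$ and at $q\in Q_N$, using the corresponding Atkin--Lehner type elements.

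Composing these identifications with the pairing gives a perfect pairing $\langle\,,\rangle$ on $S$ itself. We now compute the adjoints of the generators of $\bT'$ for $\langle\,,\rangle$. At unramified $w$, the standard identity $\iota_w(T_w)=S_w^{-1}T_w$, together with the $\psi$-twist, shows that $T_w$ and $S_w$ are self-adjoint. For $U_q$, $T_{\pi_{w_1}}$ and the diamond operators, the Atkin--Lehner conjugation converts the transpose-inverse operator back into $U_q$ (respectively $T_{\pi_{w_1}}$, $\langle d\rangle$) up to the central $\psi$-twist, so these are also self-adjoint.

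Commutativity of $\bT'$ then follows. The operators away from $S\cup Q_N$ commute with everything by the usual argument. For the operators at $q\in Q_N$ and at $w_1$, which lie at distinct places, commutativity is automatic. The only point to check is that each individual $U_q$ commutes with the diamond operators, which is a local computation in the Iwahori--Hecke algebra.

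Since $\bT'$ is commutative and generated by self-adjoint operators, the pairing is $\bT'$-equivariant, i.e.\ $\langle Tx,y\rangle=\langle x,Ty\rangle$ for all $T\in\bT'$. Hence $y\mapsto\langle\,\cdot\,,y\rangle$ is a $\bT'$-linear isomorphism $S\cong S^*$, which is the claim.

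The main obstacle is the self-adjointness of $U_q$ and $T_{\pi_{w_1}}$. These are not invariant under inversion, and the Atkin--Lehner element has to simultaneously normalize the level, match the coefficients, and interchange the correct double cosets. I would first verify this at $q\in Q_N$, where $\psi_q$ is unramified and the level is explicit, and then run the same computation at $w_1$ using that $\psi_{w_1}$ is tame.
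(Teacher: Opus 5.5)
Your proposal is correct and is essentially the paper's argument. Both conjugate $(\cL^v\otimes\chi)^*\otimes(\psi\circ\det)$ back to $\cL^v\otimes\chi$ using Lemma~\ref{self-duality for D*} and Remark~\ref{self-duality for tame characters}, then use~\eqref{duality pairing} with $\iota_w(T_w)=S_w^{-1}T_w$ to get self-adjointness of $T_w, S_w$. Both then replace the pairing by $(x,\prod_{w}\Pi_w^{-1}y)$ with $\Pi_w=\fourmatrix{0}{1}{\pi_w}{0}$ at $w\in Q_N\cup\{w_1\}$; the step you flag as the main obstacle comes down to the identity $\Pi_w\fourmatrix{\pi_w}{0}{0}{1}\Pi_w^{-1}=\fourmatrix{1}{0}{0}{\pi_w}$.

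Two harmless remarks. First, the diamond operators are not among the generators of $\bT(K(N)^vH_v)'$, so you need not check them. Second, at split ramified places the Atkin--Lehner element normalizes $K_wF_w^\times$ rather than $K_w$ itself. This suffices because the spaces depend only on $K(\bA_F^\infty)^\times$ and the character on it, a point the paper leaves implicit.
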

\begin{proof}
The first claim is standard: $\bT(K(N)^v H_v)$ is the image of a polynomial ring over a spherical Hecke algebra, so it is commutative, 
and the~$U_q$ commute with each other and with $\bT(K(N)^v H_v)$. 
We now prove the second claim.
Lemma~\ref{self-duality for D*} and Remark~\ref{self-duality for tame characters} imply that $(\cL^v \otimes \chi)^* \otimes (\psi \circ \det)$ and $\cL^v \otimes \chi$ are conjugate 
under the normalizer of $H_v\prod_{w \in S}K(N)_w$ in $(D \otimes_F \bA_F^\infty)^\times$.
It follows that 
\[
S(K(N)^vH_v, \cL \otimes \chi) \cong S(K(N)^vH_v, (\cL \otimes \chi)^* \otimes (\psi \circ \det))
\] 
as $\bT(K(N)^vH_v)'$-modules. 
Hence~\eqref{duality pairing} becomes a perfect pairing
\[
    S(K(N)^vH_v, \cL \otimes \chi) \times S(K(N)^vH_v, \cL \otimes \chi) \to \cO
\]
such that $(Tx, y) = (x, \iota_w(\psi_w(\det T)T)y)$ for all~$w$ such that $K(N)_w$ acts trivially on~$\cL \otimes \chi$ and $\psi_w \circ \det$ is trivial on~$K(N)_w$, 
and all $T \in \cO[K(N)_w \backslash D_w^\times / K(N)_w]$.
Observe that $\psi_w \circ \det$ is trivial on~$K(N)_w$ for all~$w \not \in S$, because we are assuming that~$\psi_w$ is an unramified character for $s \not \in S \cup \{w_1\}$, and
a tamely ramified character when~$w = w_1$.

If~$w \not \in S \cup \{w_1\} \cup Q_N$, then $i_w(T_w) = S_w^{-1}T_w$ and~$i_w(S_w) = S_w^{-1}$.
Since~$S_w$ acts on~$S(K(N)^vH_v, \cL \otimes \chi)$ by~$\psi_w(\pi_w)$, we conclude that $T_w$ and~$S_w$ are self-adjoint with respect to the pairing.
If~$w \in Q_N$ or~$w = w_1$, let
\[
\Pi_w := \fourmatrix 0 1 {\pi_w} 0 \in \GL_2(\cO_{F_w}).
\] 
Then~$\Pi_w$ acts on $S(K(N)^vH_v, \cL \otimes \chi)$, since it normalizes $K(N)_w$,
and $\iota_w(\psi_w(\det u_w)U_w) = \Pi_w U_w \Pi_w^{-1}$ on $S(K(N)^vH_v, \cL \otimes \chi)$.
Hence the formula
\[
\langle x, y \rangle := (x, \prod\nolimits_{w \in Q_N \cup \{w_1\}}\Pi_w^{-1} y)
\]
defines a perfect duality on $S(K(N)^vH_v, \cL \otimes \chi)$ such that $\langle T x, y \rangle = \langle x, T y\rangle$ for all~$T \in \bT(K(N)^vH_v)'$.
This concludes the proof.
\end{proof}

\subsection{Patching.}
Let~$g := q - [F:\bQ] + |S| -1$, and fix a non-principal ultrafilter~$\mathfrak{F}$ on~$\bZ_{>0}$.
Define
\[
R^\psi_\infty := R^{\loc}[\![t_1, \ldots, t_g]\!].
\]
The defining properties of~$Q_N$ show that we can choose a surjection
\[
R_\infty^\psi \to R^{\square, \psi}_{F, S, Q_N}.
\]
Recall from~\eqref{framed deformation ring} that $R^{\square, \psi}_{F, S, Q_N}$ is an $\cO[\![z_1, \ldots, z_{4|S|-1}]\!]$-algebra because of the presence of framings at the places in~$S$.
We define
\[
M(K_v, N) := S(K^v(N)K_v, \cL^v)_{\fm_{Q_N}'}^* \otimes_{R^{\psi}_{F, S, Q_N}} R^{\square, \psi}_{F, S, Q_N},
\]
which is a finite free $\cO[\![z_i]\!]$-module. 
Recall the group algebra~$S_N := \cO[\Delta_{Q_N}]$ and its action on $M(K_v, N)$ via diamond operators.
Choose a surjective $\cO$-linear ring homomorphism
\[
\cO[\![y_1, \ldots, y_q]\!] \to S_N
\]
whose kernel is contained in the ideal generated by $((1+y_i)^{p^N}: 1 \leq i \leq q)$.
We give $\cO[\![y_i]\!]$ the profinite topology.
For every open ideal~$J \subset \cO[\![y_i]\!]$ 
define $\bN(J) \subset \bZ_{>0}$ as the set of~$N>0$ such that~$J$ contains the kernel of
$\cO[\![y_i]\!] \to S_N$.
If~$N \in \bN(J)$, define
\[
M(K_v, J, N) := M(K_v, N) \otimes_{\cO[\![y_i]\!]} \cO[\![y_i]\!]/J.
\]
If~$\fa \subset \cO[\![z_i]\!]$ is an open ideal, define
\[
M(K_v, J, N, \fa) := M(K_v, J, N) \otimes_{\cO[\![z_i]\!]} \cO[\![z_i]\!]/\fa.
\]
Fix~$\fa$ and~$J$ and introduce
\begin{gather*}
S_\infty := \cO[\![y_1, \ldots, y_q, z_1, \ldots, z_{4|S|-1}]\!]\\
S_{\infty, J, \fa} := \cO[\![z_i]\!]/\fa \otimes_\cO \cO[\![y_i]\!]/J\\
(S_{\infty, J, \fa})_{\bN(J)} := \prod_{N \in \bN(J)}S_{\infty, J, \fa}.
\end{gather*}
As usual, the ultrafilter~$\mathfrak{F}$ defines a prime ideal~$x$ of $(S_{\infty, J, \fa})_{\bN(J)}$, and we define
\[
M(K_v, J, \infty, \fa) := (S_{\infty, J, \fa})_{\bN(J), x} \otimes_{(S_{\infty, J, \fa})_{\bN(J)}} \prod_{N \in \bN(J)} M(K_v, J, N, \fa).
\]
This is a finite $S_{\infty, J, \fa}$-module with a diagonal action of $R^{\square, \psi}_{F, S, Q_N}$,
and it is isomorphic to $M(K_v, J, N, \fa)$ for infinitely many values of~$N$.
By e.g.~\cite[Lemma~3.4.11]{GN} given inclusions $J' \subset J, \fa' \subset \fa$ of open ideals, and a normal subgroup~$K_v' \subset K_v$, there is an isomorphism
\[
M(K_v', J', \infty, \fa') \otimes_{S_{\infty, J', \fa'}[K_v/K_v']} S_{\infty, J, \fa} \isom M(K_v, J, \infty, \fa)
\]
which makes~$M(K_v, J, \infty, \fa)$ a cofiltered system of $R_\infty^\psi \otimes_{\cO} S_\infty$-modules with surjective transition maps.
We define
\begin{equation}\label{definition of M_infty}
M_\infty := \varprojlim_{J, \fa, K_v} M(K_v, J, \infty, \fa).
\end{equation}
The inverse limit over~$K_v$ exhibits an action of $R_\infty^\psi[D_v^\times]$ on~$M_\infty$, 
and there exists an injective $\cO$-linear ring homomorphism $S_\infty \to R_\infty^\psi$ which induces
the inverse limit action of~$S_\infty$ on~$M_\infty$.
We omit the standard verification that the $R_\infty^\psi[D_v^\times]$-action is arithmetic and minimal, and we prove the self-duality property.

Let~$H_v \subset D_v^\times$ be the upper-triangular Iwahori subgroup (if~$D_v$ is split) or the maximal compact subgroup (if~$D_v$ is not split).
Let~$\chi : H_v \to \cO^\times$ be a smooth tame character. 
Since the modules in the inverse limit~\eqref{definition of M_infty} defining~$M_\infty$ have finite $\cO$-length, and $\chi$ is finitely presented over~$\cO[\![H_v]\!]$, we have that
\begin{equation}\label{presentation of M_infty(chi)}
M_\infty(\chi) := M_\infty \otimes_{\cO[\![H_v]\!]} \chi \cong \varprojlim_{J, \fa, K_v} \bigl( M(K_v, J, \infty, \fa) \otimes_{\cO[\![H_v]\!]} \chi \bigr).
\end{equation}
Since $M_\infty$ is a minimal arithmetic $R^\psi_\infty$-module, $M_\infty(\chi)$ is a Cohen--Macaulay $R_\infty^{\eta, \tau(\chi), \psi_w}$-module of generic rank one.
Since~$R_\infty^{\eta, \tau(\chi), \psi_w}$ is Gorenstein, its canonical module is trivial, and so need to prove that
\[
\Hom_{R_\infty^{\eta, \tau(\chi), \psi_w}}(M_\infty(\chi), R_\infty^{\eta, \tau(\chi), \psi_w}) \cong M_\infty(\chi)
\]
as~$R_\infty^{\eta, \tau(\chi), \psi_w}$-modules.
Since~$\dim R_\infty^{\eta, \tau(\chi), \psi_w} = \dim S_\infty$, by~\cite[Lemma~4.12]{Manning} it suffices to prove that $\Hom_{S_\infty}(M_\infty(\chi), S_\infty) \cong M_\infty(\chi)$ 
as $R_\infty^{\eta, \tau(\chi), \psi_w}$-modules, or equivalently as $R_\infty^\psi$-modules.
Since these modules are $\fm_{S_\infty}$-adically complete, it suffices to prove that the inverse systems $\Hom_{S_\infty}(M_\infty(\chi), S_\infty)/(J, \fa)$
and $M_\infty(\chi)/(J, \fa)$ (indexed by~$J$ and~$\fa$) are $R^\psi_\infty$-linearly isomorphic, and
since these modules have finite $\cO$-length, it suffices to prove that
\[
M_\infty(\chi)/(J, \fa) \cong \Hom_{S_\infty}(M_\infty(\chi), S_\infty)/(J, \fa)
\]
for all~$(J, \fa)$.
Since~$M_\infty(\chi)$ is finite free over~$S_\infty$, the natural map
\[
\Hom_{S_\infty}(M_\infty(\chi), S_\infty)/(J, \fa) \to \Hom_{S_{\infty, J, \fa}}(M_\infty(\chi)/(J, \fa), S_{\infty, J, \fa})
\]
is an $R_\infty^\psi$-linear isomorphism.
So it suffices to prove that
\[
M_\infty(\chi)/(J, \fa) \cong \Hom_{S_{\infty, J, \fa}}(M_\infty(\chi)/(J, \fa), S_{\infty, J, \fa}).
\]
By~\eqref{presentation of M_infty(chi)} we have
\[
M_\infty(\chi)/(J, \fa) \cong \varprojlim_{K_v}\bigl( M(K_v, J, \infty, \fa) \otimes_{\cO[\![H_v]\!]} \chi \bigr),
\]
and since $M_\infty(\chi)/(J, \mathfrak{a})$ has finite $\cO$-length, %
the limit at the right-hand side is attained at some~$K_v$.
Hence it suffices to prove that
\[
M(K_v, J, \infty, \fa) \otimes_{\cO[\![H_v]\!]} \chi \cong \Hom_{S_{\infty, J, \fa}}(M(K_v, J, \infty, \fa) \otimes_{\cO[\![H_v]\!]} \chi, S_{\infty, J, \fa})
\]
as $R_\infty^\psi$-modules, for all~$J, \fa, K_v$. %

By construction, there exists~$N \in \bN(J)$ such that
\begin{align*}
M(K_v, J, \infty, \fa) \otimes_{\cO[\![H_v]\!]} \chi &\cong M(K_v, J, N, \fa) \otimes_{\cO[\![H_v]\!]} \chi\\ 
&\cong \left ( \left ( S(K^v(N)K_v, \cL)^*_{\fm'_{Q_N}} \otimes_{\cO[\![H_v]\!]} \chi \right ) \otimes_{\cO} \cO[\![z_i]\!] \right ) \otimes_{S_{\infty}} S_{\infty, J, \fa}.
\end{align*}
Since~$N \in \bN(J)$, the map $S_\infty \to S_{\infty, J, \fa}$ factors through $S_N[\![z_i]\!]$.
So
\[
M(K_v, J, \infty, \fa) \otimes_{\cO[\![H_v]\!]} \chi \cong \left ( S(K^v(N)H_v, \cL\otimes_{\cO} \chi)^*_{\fm'_{Q_N}} \otimes_{\cO} \cO[\![z_i]\!] \right ) \otimes_{S_N[\![z_i]\!]} S_{\infty, J, \fa},
\]
and the $R_\infty^\psi$-action factors through the action of $\bT(K(N)^vH_v)[\![z_i]\!]$ on $S(K^v(N)H_v, \cL\otimes_{\cO} \chi)^*_{\fm'_{Q_N}}[\![z_i]\!]$.
Since $S(K^v(N)H_v, \cL\otimes_{\cO} \chi)^*_{\fm'_{Q_N}}$ is finite free over $S_N$, it thus suffices to prove that
\[
    \Hom_{S_N[\![z_i]\!]}(S(K^v(N)H_v, \cL\otimes_{\cO} \chi)^*_{\fm'_{Q_N}}[\![z_i]\!], S_N[\![z_i]\!]) \cong S(K^v(N)H_v, \cL\otimes_{\cO} \chi)^*_{\fm'_{Q_N}}[\![z_i]\!]
\]
as $\bT(K(N)^vH_v)'[\![z_i]\!]$-modules.
Again by~\cite[Lemma~4.12]{Manning} applied to $\cO[\![z_i]\!] \to S_N[\![z_i]\!]$, we reduce to proving
\[
    \Hom_{\cO}(S(K^v(N)H_v, \cL\otimes_{\cO} \chi)^*_{\fm'_{Q_N}}, \cO) \cong S(K^v(N)H_v, \cL\otimes_{\cO} \chi)^*_{\fm'_{Q_N}}
\]
as $\bT(K(N)^vH_v)'$-modules, which follows from Lemma~\ref{duality at finite level}.

%% file: formulas-arXiv.tex
\section{Formulas for Galois deformation rings.}

\subsection{$\tld z$-gauges.}\label{subsec: gauges}
If~$\tld z_j \in \Adm^\vee(3, 0)_j$, we make the following definitions:
\begin{enumerate}
\item If $\tld z_j = t_{(2, 1)_j}$ then
\[
T^{\tld z_j} := \cO[a_0, a_1, a_2^{\pm 1}, b_0, c_0, c_1, d_1^{\pm 1}, d_0]^\wedge_p, \;\;\; A^{\tld z_j} := \diag(a_2, d_1)\fourmatrix{(v+p)^2 + a_1(v+p) + a_0}{b_0}{v(c_1(v+p)+c_0)}{(v+p)+d_0}.
\]
\item If $\tld z_j = t_{(1, 2)_j}$ then
\[
T^{\tld z_j} := \cO[a_0, a_1^{\pm 1}, b_0, b_1, c_0, d_0, d_1, d_2^{\pm 1}]^\wedge_p, \;\;\; A^{\tld z_j} := \diag(a_1, d_2)\fourmatrix{(v+p)+a_0}{b_1(v+p)+b_0}{vc_0}{(v+p)^2+d_1(v+p)+d_0}.
\]
\item If $\tld z_j = t_{(1, 2)_j}s$ then
\[
T^{\tld z_j} := \cO[a_0, a_1, b_0, b_1^{\pm 1}, c_0, c_1^{\pm 1}, d_0, d_1]^\wedge_p, \;\;\; A^{\tld z_j} := \diag(b_1, c_1)\fourmatrix{a_1(v+p)+a_0}{(v+p)+b_0}{v((v+p)+c_0)}{d_1(v+p)+d_0}.
\]
\item If $\tld z_j = t_{(3, 0)_j}$ then
\[
T^{\tld z_j} := \cO[a_0, a_1, a_2, a_3^{\pm 1}, c_0, c_1, c_2, d_0^{\pm 1}]^\wedge_p, \;\;\; A^{\tld z_j} := \diag(a_3, d_0)\fourmatrix{(v+p)^3+a_2(v+p)^2+a_1(v+p)+a_0}{0}{v(c_2(v+p)^2+c_1(v+p)+c_0)}{1}.
\]
\item If $\tld z_j = t_{(0, 3)_j}$ then
\[
T^{\tld z_j} := \cO[a_0^{\pm 1}, b_0, b_1, b_2, d_0, d_1, d_2, d_3^{\pm 1}]^\wedge_p, \;\;\; A^{\tld z_j} := \diag(a_0, d_3)\fourmatrix{1}{b_2(v+p)^2+b_1(v+p)+b_0}{0}{(v+p)^3+d_2(v+p)^2+d_1(v+p)+d_0}.
\]
\item If $\tld z_j = t_{(0, 3)_j}s$ then
\[
T^{\tld z_j} := \cO[a_0, a_1, a_2, b_0^{\pm 1}, c_0, c_1, c_2^{\pm 1}, d_0]^\wedge_p, \;\;\; A^{\tld z_j} := \diag(b_0, c_2)\fourmatrix{a_2(v+p)^2+a_1(v+p)+a_0}{1}{v((v+p)^2+c_1(v+p)+c_0)}{d_0}.
\]
\item If $\tld z_j = t_{(2, 1)_j}s$ then
\[
T^{\tld z_j} := \cO[a_0, b_0, b_1, b_2^{\pm 1}, c_0^{\pm 1}, d_0, d_1, d_2]^\wedge_p, \;\;\; A^{\tld z_j} := \diag(b_2, c_0)\fourmatrix{a_0}{(v+p)^2+b_1(v+p)+b_0}{v}{d_2(v+p)^2+d_1(v+p)+d_0}.
\]
\end{enumerate}
\subsection{Single-type deformation rings.}\label{subsec: single-type approximation}
Let~$\tld z \in \Adm^\vee\lweight$, and let $T^{\tld z} = \prod_{j \in \mJ} T^{\tld z_j}$.
Let~$\tau$ be an inertial $\cO$-type with 9-deep lowest alcove presentation~$(w, \nu)$.
If~$j \in \cJ$, we define the following ideals of~$T^{\tld z_j}$, where~$\kappa^{\tau}_j$ is the structure constant in Lemma~\ref{structure constants}.
We also put $I_{\nabla_1}^{\leq (2, 1)_j, \tau} = I_{\nabla_1}^{(2, 1)_j, \tau}$.

\begin{enumerate}
\item If~$\tld z_j = t_{(2, 1)_j}$ then
\begin{align*}
I^{\leq (3, 0)_j, \tau}_{\nabla_1} :=\; & (a_1+(\kappa_j^{\tau}-2)b_0c_1, d_0-(\kappa_j^{\tau}-1)b_0c_1, \kappa_j^{\tau} c_0-(\kappa_j^{\tau}-1)(\kappa_j^{\tau}-2)b_0c_1^2, \\
& \kappa_j^{\tau} a_0-b_0c_1((\kappa_j^{\tau}-1)^2(\kappa_j^{\tau}-2)b_0c_1-p\kappa_j^{\tau}), b_0((\kappa_j^{\tau}-1)(\kappa_j^{\tau}-2)b_0c_1-2p))\\
I^{(2, 1)_j, \tau}_{\nabla_1} :=\; & (b_0, a_1, d_0, c_0, a_0)\\
I^{(3, 0)_j, \tau}_{\nabla_1} :=\; &(a_1+(\kappa_j^{\tau}-1)^{-1}2p, d_0-(\kappa_j^{\tau}-2)^{-1}2p,\kappa_j^{\tau} c_0-2pc_1, \\
& \kappa_j^{\tau}a_0-(\kappa_j^{\tau}-1)^{-1}2p^2, (\kappa_j^{\tau}-1)(\kappa_j^{\tau}-2)b_0c_1-2p).
\end{align*}
\item If~$\tld z_j = t_{(1, 2)_j}$ then
\begin{align*}
I^{\leq (3, 0)_j, \tau}_{\nabla_1} :=\; & (d_1+(\kappa_j^{\tau}-2)c_0b_1, a_0-(\kappa_j^{\tau}-1)c_0b_1, \kappa_j^{\tau} b_0-(\kappa_j^{\tau}-1)(\kappa_j^{\tau}-2)c_0b_1^2, \\
& \kappa_j^{\tau} d_0-c_0b_1((\kappa_j^{\tau}-1)^2(\kappa_j^{\tau}-2)c_0b_1-p\kappa_j^{\tau}), c_0((\kappa_j^{\tau}-1)(\kappa_j^{\tau}-2)c_0b_1-2p)).\\
I^{(2, 1)_j, \tau}_{\nabla_1} :=\; & (c_0, d_1, a_0, b_0, d_0)\\
I^{(3, 0)_j, \tau}_{\nabla_1} :=\; &(d_1+(\kappa_j^{\tau}-1)^{-1}2p, a_0-(\kappa_j^{\tau}-2)^{-1}2p, \kappa_j^{\tau} b_0-2pb_1, \\
& \kappa_j^{\tau}d_0-(\kappa_j^{\tau}-1)^{-1}2p^2, (\kappa_j^{\tau}-1)(\kappa_j^{\tau}-2)c_0b_1-2p).
\end{align*}
\item If~$\tld z_j = t_{(1, 2)_j}s$ then
\begin{align*}
I^{\leq (3, 0)_j, \tau}_{\nabla_1} :=\; & (c_0+(\kappa^{\tau}_j-1)(a_1d_1+p), b_0-\kappa^{\tau}_j(a_1d_1+p), (\kappa^{\tau}_j+1)a_0+\kappa^{\tau}_j(\kappa^{\tau}_j-1)a_1(a_1d_1+p),\\
& (\kappa^{\tau}_j-2)d_0+\kappa^{\tau}_j(\kappa^{\tau}_j-1)d_1(a_1d_1+p), (a_1d_1+p)(\kappa^{\tau}_j(\kappa^{\tau}_j-1)a_1d_1+(\kappa^{\tau}_j-2)(\kappa^{\tau}_j+1)p)) \\
I^{(2, 1)_j, \tau}_{\nabla_1} :=\; & (c_0, b_0, a_0, d_0, a_1d_1+p)\\
I^{(3, 0)_j, \tau}_{\nabla_1} :=\; & (c_0+\frac 2 {\kappa^{\tau}_j}, b_0-\frac 2{\kappa^{\tau}_j-1}, (\kappa^{\tau}_j +1) a_0+2pa_1, (\kappa^{\tau}_j-2)d_0+2pd_1, (a_1d_1+p)-\frac 2 {\kappa^{\tau}_j(\kappa^{\tau}_j-1)}p)).
\end{align*}
\item If~$\tld z_j = t_{(3, 0)_j}$ then
\begin{align*}
I^{\leq (3, 0)_j, \tau}_{\nabla_1} :=\; & (a_0, a_1, a_2, (\kappa_j^{\tau}-2)c_1+2pc_2, (\kappa_j^{\tau}-1)c_0+pc_1) \\
I^{(2, 1)_j, \tau}_{\nabla_1} :=\; & T^{\tau_j}\\
I^{(3, 0)_j, \tau}_{\nabla_1} :=\; & I^{\leq (3, 0)_j, \tau}_{\nabla_1}.
\end{align*}
\item If~$\tld z_j = t_{(0, 3)_j}$ then
\begin{align*}
I^{\leq (3, 0)_j, \tau}_{\nabla_1} :=\; & (d_0, d_1, d_2, (\kappa^{\tau}_j-2)b_1+2pb_2, (\kappa^{\tau}_j-1)b_0+pb_1 ) \\
I^{(2, 1)_j, \tau}_{\nabla_1} :=\; & T^{\tau}\\
I^{(3, 0)_j, \tau}_{\nabla_1} :=\; & I^{\leq (3, 0)_j, \tau}_{\nabla_1}.
\end{align*}

\item If~$\tld z_j = t_{(0, 3)_j}s$ then
\begin{align*}
I^{\leq (3, 0)_j, \tau}_{\nabla_1} :=\; & (d_0a_2-(c_1-p), (\kappa_j^{\tau}-1)a_1+2pa_2, pa_1+\kappa_j^{\tau} a_0, (\kappa_j^\tau-2)c_1+2p, (\kappa_j^\tau-1)c_0+pc_1) \\
I^{(2, 1)_j, \tau}_{\nabla_1} :=\; & T^{\tau_j}& \\
I^{(3, 0)_j, \tau}_{\nabla_1} :=\; & I^{\leq (3, 0), \tau}_{\nabla_1}.
\end{align*}

\item If~$\tld z_j = t_{(2, 1)_j}s$ then
\begin{align*}
I^{\leq (3, 0)_j, \tau}_{\nabla_1} :=\; & (a_0d_2-(b_1-p), (\kappa_j^{\tau}-1)d_1+2pd_2, pd_1+\kappa_j^{\tau} d_0, (\kappa_j^\tau-2)b_1+2p, (\kappa_j^\tau-1)b_0+pb_1) \\
I^{(2, 1)_j, \tau}_{\nabla_1} :=\; & T^{\tau_j}& \\
I^{(3, 0)_j, \tau}_{\nabla_1} :=\; & I^{\leq (3, 0), \tau}_{\nabla_1}. 
\end{align*}

\end{enumerate}

\subsection{Local charts for~$\PhiMod_{K, 2}^{\et}$.}\label{subsec: charts for PhiMod}
Choose $\tld u \in \Adm^\vee \sweight$.
For all~$j \in \cJ$, we are going to construct a ring $S^{\tld u_j}$ and matrices $\Psi^{\tld u_j} \in M_2(S^{\tld u_j}[v+p])$, 
and for all $\tld z_j \in \{\tld u_j, \tld u_jt_{\pm\alpha_j}\}$,
a surjection
\begin{gather*}
\pr_{\tld z_j}: S^{\tld u_j} \to T^{\tld z_j}
\end{gather*}
such that
$\pr_{\tld z_j}(\Psi^{\tld u_j}) = A^{\tld z_j} \tld z_j^{-1}$.
It will be convenient to translate these matrices by the central element $t_{(3, 3)_j} = v^3$.
Then:
\begin{enumerate}
    \item If~$\tld z_j = t_{(2, 1)_j}$ then 
    \[
    A^{\tld z_j} \tld z_j^{-1}v^3 = \diag(a_2, d_2)\fourmatrix{v((v+p)^2 + a_1(v+p) + a_0)}{v^2b_0}{v^2(c_1(v+p)+c_0)}{v^2((v+p)+d_0)}.
    \]
    \item If~$\tld z_j = t_{(1, 2)_j}$ then 
    \[
    A^{\tld z_j} \tld z_j^{-1}v^3 = \diag(a_1, d_2)\fourmatrix{v^2((v+p)+a_0)}{v(b_1(v+p)+b_0)}{v^3c_0}{v((v+p)^2+d_1(v+p)+d_0)}
    \]
    \item If~$\tld z_j = t_{(1, 2)_j}s$ then 
    \[
    A^{\tld z_j} \tld z_j^{-1}v^3 = \diag(b_1, c_1)\fourmatrix{v^2((v+p)+b_0)}{v(a_1(v+p)+a_0)}{v^2(d_1(v+p)+d_0)}{v^2((v+p)+c_0)}
    \]
    \item If~$\tld z_j = t_{(3, 0)_j}$ then 
    \[
    A^{\tld z_j} \tld z_j^{-1}v^3 = \diag(a_3, d_0)\fourmatrix{(v+p)^3+a_2(v+p)^2+a_1(v+p)+a_0}{0}{v(c_2(v+p)^2+c_1(v+p)+c_0)}{v^3}
    \]
    \item If~$\tld z_j = t_{(0, 3)_j}$ then 
    \[
    A^{\tld z_j} \tld z_j^{-1}v^3 = \diag(a_0, d_3)\fourmatrix{v^3}{b_2(v+p)^2+b_1(v+p)+b_0}{0}{(v+p)^3+d_2(v+p)^2+d_1(v+p)+d_0}
    \]
    \item If~$\tld z_j = t_{(0, 3)_j}s$ then 
    \[
    A^{\tld z_j} \tld z_j^{-1}v^3 = \diag(b_0, c_2)\fourmatrix{v^3}{a_2(v+p)^2+a_1(v+p)+a_0}{v^3d_0}{v((v+p)^2+c_1(v+p)+c_0)}
    \]
    \item If~$\tld z_j = t_{(2, 1)_j}s$ then 
    \[
    A^{\tld z_j} \tld z_j^{-1}v^3 = \diag(b_2, c_0)\fourmatrix{v((v+p)^2+b_1(v+p)+b_0)}{v^2a_0}{v(d_2(v+p)^2+d_1(v+p)+d_0)}{v^3}.
    \]
\end{enumerate}
Then~$S^{\tld u_j}$ and the matrix $\Psi^{\tld u_j} \in M_2(S^{\tld u_j})$ are defined as follows.
To shorten the discussion, we will restrict to the case $\tld u_j \in \{t_{(2, 1)_j}, t_{(1, 2)}s\}$.
The missing case~$\tld u_j = t_{(1, 2)_j}$ can then be obtained from $t_{(1, 2)_j}$ by conjugation by the Iwahori-normalizing matrix $st_{(1, 0)_j}$.

\begin{enumerate}
\item If $\tld u_j = t_{(2, 1)_j}$ then
\[S^{\tld u_j} = \cO[\alpha^{\pm 1}, \delta^{\pm 1}, \alpha_0, \alpha_1, \alpha_2, \beta_0, \beta_1, \gamma_0, \gamma_1, \gamma_2, \delta_0, \delta_1]^\wedge_p\]
and
\[\Psi^{\tld u_j}v^3 = \diag(\alpha, \delta)\fourmatrix{(v+p)^3+\alpha_2(v+p)^2+\alpha_1(v+p)+\alpha_0}{v(\beta_1(v+p)+\beta_0)}{v(\gamma_2(v+p)^2+\gamma_1(v+p)+\gamma_0)}
{v((v+p)^2+\delta_1(v+p)+\delta_0)}.\]
\item If~$\tld u_j = t_{(1, 2)}s$ then 
\[S^{\tld u_j} = \cO[\alpha^{\pm 1}, \delta^{\pm 1}, \alpha_0, \alpha_1, \beta_0, \beta_1, \beta_2, \gamma_0, \gamma_1, \gamma_2, \delta_0, \delta_1]^\wedge_p\]
and
\[\Psi^{\tld u_j}v^3 = 
\diag(\alpha, \delta)\fourmatrix{v((v+p)^2+\alpha_1(v+p)+\alpha_0)}{\beta_2(v+p)^2+\beta_1(v+p)+\beta_0}{v(\gamma_2(v+p)^2+\gamma_1(v+p)+\gamma_0)}{v((v+p)^2+\delta_1(v+p)+\delta_0)}.\]
\end{enumerate}

The surjection $\pr_{\tld z_j}$
is defined as follows:
\begin{enumerate}
    \item If~$\tld u_j = \tld z_j = t_{(2, 1)_j}$ then
    \begin{multline*}
    \pr_{\tld z_j} : (\alpha, \delta, \alpha_2, \alpha_1, \alpha_0, \beta_1, \beta_0, \gamma_2, \gamma_1, \gamma_0, \delta_1, \delta_0) \mapsto \\
    (a_2, d_2, a_1-p, a_0-pa_1, -pa_0, b_0, -pb_0, c_1, c_0-pc_1, -pc_0, d_0-p, -pd_0)
\end{multline*}
    \item If~$\tld u_j = t_{(2, 1)_j}$ and $\tld z_j = t_{(1, 2)_j}$ then
    \begin{multline*}
    \pr_{\tld z_j}: (\alpha, \delta, \alpha_2, \alpha_1, \alpha_0, \beta_1, \beta_0, \gamma_2, \gamma_1, \gamma_0, \delta_1, \delta_0) \mapsto \\
    (a_1, d_2, a_0-2p, -2pa_0+p^2, p^2a_0, b_1, b_0, c_0, -2pc_0, p^2c_0, d_1, d_0)
    \end{multline*}
    \item If $\tld u_j = t_{(2, 1)_j}$ and $\tld z_j = t_{(3, 0)_j}$ then
    \begin{equation*}
    \pr_{\tld z_j}: (\alpha, \delta, \alpha_2, \alpha_1, \alpha_0, \beta_1, \beta_0, \gamma_2, \gamma_1, \gamma_0, \delta_1, \delta_0) \mapsto
    (a_3, d_0, a_2, a_1, a_0, 0, 0, c_2, c_1, c_0, -2p, p^2)
    \end{equation*}
    \item If $\tld u_j = \tld z_j = t_{(1, 2)_j}s$ then
    \begin{multline*}
    \pr_{\tld z_j} : (\alpha, \delta, \alpha_0, \alpha_1, \beta_0, \beta_1, \beta_2, \gamma_0, \gamma_1, \gamma_2, \delta_0, \delta_1) \mapsto\\
    (b_1, c_1, -pb_0, b_0-p, -pa_0, a_0-pa_1, a_1, -pd_0, d_0-pd_1, d_1, -pc_0, c_0-p)
    \end{multline*}
    \item If $\tld u_j = t_{(1, 2)_j}s$ and~$\tld z_j = t_{(0, 3)_j}s$ then
    \[\pr_{\tld z_j} : (\alpha, \delta, \alpha_0, \alpha_1, \beta_0, \beta_1, \beta_2, \gamma_0, \gamma_1, \gamma_2, \delta_0, \delta_1) \mapsto
    (b_0, c_2, p^2, -2p, a_0, a_1, a_2, p^2d_0, -2d_0p, d_0, c_0, c_1)
    \]
    \item If $\tld u_j = t_{(1, 2)_j}s$ and~$\tld z_j = t_{(2, 1)_j}s$ then 
    \[\pr_{\tld z_j} : (\alpha, \delta, \alpha_0, \alpha_1, \beta_0, \beta_1, \beta_2, \gamma_0, \gamma_1, \gamma_2, \delta_0, \delta_1) \mapsto 
    (b_2, c_0, b_0, b_1, p^2a_0, -2pa_0, a_0, d_0, d_1, d_2, p^2, -2p).
    \]
\end{enumerate}

\subsection{Multi-type deformation rings.}\label{subsec: multi-type approximation}
Recall from the previous section the projection $\pr_{\tld z_j}: S^{\tld u_j} \to T^{\tld z_j}$, and define
\[
K^{\lambda_j, \tau}_{\nabla_1} = \pr_{\tld z_j}^{-1}(I^{\lambda_j, \tau}_{\nabla_1}) \subset S^{\tld u_j}.
\]
We write down these ideals in the cases we will need for our arguments.

\begin{enumerate}
    \item If~$\tld u_j = \tld z_j = t_{(2, 1)_j}$ then
    \begin{align*}
        K^{(2, 1)_j, \tau_{\tld z}}_{\nabla_1} = & (\alpha_0+p\alpha_1+p^2\alpha_2+p^3, \beta_0+p\beta_1, \gamma_0+p\gamma_1+p^2\gamma_2, \delta_0+p\delta_1+p^2, \\
        & \beta_1, \alpha_2+p, \delta_1+p, \gamma_1+p\gamma_2, \alpha_1)
    \end{align*}
    \item If~$\tld u_j = t_{(2, 1)_j}$ and $\tld z_j = t_{(1, 2)_j}$ then
    \begin{align*}
        K^{(3, 0)_j, \tau_{\tld z}}_{\nabla_1} = & (\alpha_0+p\alpha_1+p^2\alpha_2+p^3, \alpha_1+2p\alpha_2+3p^2, \gamma_0+p\gamma_1+p^2\gamma_2, \gamma_1+2p\gamma_2 \\
        & \gamma_2\beta_1-2p(\kappa^{\tau_{\tld z}}_j-1)^{-1}(\kappa^{\tau_{\tld z}}_j-2)^{-1}, \delta_1+(\kappa^{\tau_{\tld z}}_j-1)^{-1}2p, \\
        & \alpha_2+(1-(\kappa^{\tau_{\tld z}}_j-2)^{-1})2p, \kappa^{\tau_{\tld z}}_j\beta_0-2p\beta_1, \kappa^{\tau_{\tld z}}_j\delta_0-(\kappa^{\tau_{\tld z}}_j-1)^{-1}2p^2)
        \end{align*}
    \begin{align*}
        K^{(2, 1)_j, \tau_{\tld z}}_{\nabla_1} = & (\alpha_0+p\alpha_1+p^2\alpha_2+p^3, \alpha_1+2p\alpha_2+3p^2, \gamma_0+p\gamma_1+p^2\gamma_2, \gamma_1+2p\gamma_2, \\
        & \gamma_2, \delta_1, \alpha_2+2p, \beta_0, \delta_0)
    \end{align*}
    \item If~$\tld u_j = t_{(2, 1)_j}$ and $\tld z_j = t_{(3, 0)_j}$ then
        \[
        K^{(3, 0)_j, \tau_{\tld z}}_{\nabla_1} = (\beta_1, \beta_0,  \delta_1+2p, \delta_0-p^2, \alpha_2, \alpha_1, \alpha_0, (\kappa_j^{\tau_{\tld z}}-2)\gamma_1+2p\gamma_2, 
        (\kappa^{\tau_{\tld z}}_j-1)\gamma_0+p\gamma_1)
        \]
    \item If $\tld u_j = \tld z_j = t_{(1, 2)_j}s$ then
    \begin{align*}
    K^{(2, 1)_j, \tau_{\tld z}}_{\nabla_1} = & (\alpha_0+p\alpha_1+p^2, \beta_0+p\beta_1+p^2\beta_2, \gamma_0+p\gamma_1+p^2\gamma_2, \delta_0+p\delta_1+p^2\\
    & \delta_1+p, \alpha_1+p, \beta_1+p\beta_2, \gamma_1+p\gamma_2, \beta_2\gamma_2+p) =\\
    & (\alpha_0, \alpha_1+p, \beta_0, \beta_1+p\beta_2, \gamma_0, \gamma_1+p\gamma_2, \delta_0, \delta_1+p, \beta_2\gamma_2+p) 
    \end{align*}
    \item If $\tld u_j = t_{(1, 2)_j}s$ and~$\tld z_j = t_{(0, 3)_j}s$ then
    \begin{align*}
    K^{(3, 0)_j, \tau_{\tld z}}_{\nabla_1} = & (\alpha_0-p^2, \alpha_1+2p, \gamma_0-p^2\gamma_2, \gamma_1+2p\gamma_2, \\
    & \gamma_2\beta_2-(\delta_1-p), (\kappa^{\tau_{\tld z}}_j-1)\beta_1+2p\beta_2, p\beta_1+\kappa^{\tau_{\tld z}}_j \beta_0, (\kappa^{\tau_{\tld z}}_j-2)\delta_1 + 2p, (\kappa_j^{\tau_{\tld z}}-1)\delta_0+p\delta_1)
    \end{align*}
    \item If $\tld u_j = t_{(1, 2)_j}s$ and~$\tld z_j = t_{(2, 1)_j}s$ then
    \begin{align*}
    K^{(3, 0)_j, \tau_{\tld z}}_{\nabla_1} = & (\delta_0-p^2, \delta_1+2p, \beta_0-p^2\beta_2, \beta_1+2p\beta_2, \\
    & \beta_2\gamma_2-(\alpha_1-p), (\kappa^{\tau_{\tld z}}_j-1)\gamma_1+2p\gamma_2, p\gamma_1+\kappa^{\tau_{\tld z}}_j \gamma_0, (\kappa^{\tau_{\tld z}}_j-2)\alpha_1 + 2p, 
    (\kappa_j^{\tau_{\tld z}}-1)\alpha_0+p\alpha_1)
    \end{align*}
\end{enumerate}